\newcommand{\Con}{\ensuremath{\mathcal{C}}}
\newcommand{\D}{\ensuremath{{\mathcal D}}}
\newcommand{\mb}[1]{\ensuremath{\mathbb{#1}}}
\newcommand{\N}{\mb{N}}
\newcommand{\R}{\mb{R}}
\newcommand{\sgn}{\mathop{\mathrm{sgn}}}
\newcommand{\lara}[1]{\langle #1 \rangle}
\newfont{\bl}{msbm10 scaled \magstep2}
\newcommand{\beq}{\begin{equation}}
\newcommand{\eeq}{\end{equation}}
\newcommand{\notmid}{\mid\kern-0.5em\not\kern0.5em}
\newcommand{\eps}{\varepsilon}
\newenvironment{pr}{\begin{proof}[\textbf{Proof:}] \ }{\end{proof}}
\newtheorem{thm}{Theorem}[section]
\newtheorem{lem}[thm]{Lemma}
\newtheorem{prop}[thm]{Proposition}
\newtheorem{ex}[thm]{Example}
\newtheorem{cor}[thm]{Corollary}
\newtheorem{rem}[thm]{Remark}
\newtheorem{defi}[thm]{Definition}
\newcommand{\LLS}{Lorentzian length space }
\newcommand{\LLSn}{Lorentzian length space}
\newcommand{\LpLS}{Lorentzian pre-length space }
\newcommand{\LpLSn}{Lorentzian pre-length space}
\newcommand{\Xll}{\ensuremath{(X,d,\ll,\leq,\tau)} }
\newtheorem{pop}[thm]{Proposition}
\renewcommand{\labelenumi}{(\roman{enumi})}
\newcommand{\mi}{\R^2_1}
\newcommand{\lm}[1]{\mathbb{L}^2(#1)}
\newcommand{\ma}{\measuredangle}
\newcommand{\ct}{\Delta \bar x \bar y \bar z}
\newcommand{\arcosh}{\mathrm{arcosh}}
\newcommand{\mc}[1]{\mathcal{#1}}
\newcommand{\widebar}[1]{\overline{#1}}
\newcommand{\ubar}[1]{\underaccent{\bar}{#1}}
\renewcommand{\labelenumi}{(\roman{enumi})}
\renewcommand\theenumi\labelenumi
\newtheorem{ithm}{Theorem}[section]
\title{Hyperbolic angles in Lorentzian length spaces and timelike curvature bounds}
\author{Tobias Beran\thanks{{\tt tobias.beran@univie.ac.at}, Faculty of Mathematics, University of Vienna, Austria.}\ \ and Clemens S\"amann\thanks{{\tt clemens.saemann@univie.ac.at}, Faculty of Mathematics, University of Vienna, Austria. Current address: {\tt clemens.saemann@maths.ox.ac.uk} Mathematical Institute, University of Oxford, UK.}}
\begin{document}
 \maketitle
 
 \begin{abstract}
Within the synthetic-geometric framework of Lorentzian (pre-)len\-gth spaces developed in Kunzinger and S\"amann (Ann.\ Glob.\ Anal.\ Geom.\ 54(3):399--447, 2018) we introduce a notion of a hyperbolic angle, an angle between timelike curves and related concepts like timelike tangent cone and exponential map. This provides valuable technical tools for the further development of the theory and paves the way for the main result of the article, which is the characterization of timelike curvature bounds (defined via triangle comparison) with an angle monotonicity condition. Further, we improve on a geodesic non-branching result for spaces with timelike curvature bounded below.
\bigskip

\noindent
\emph{Keywords:} metric geometry, Lorentz geometry, Lorentzian length spa\-ces, hyperbolic angles, synthetic curvature 
bounds
\medskip

\noindent
\emph{MSC2020:}
28A75, 
51K10, 
53C23, 
53C50, 
53B30, 
53C80, 
83C99 

\end{abstract}

 \tableofcontents

\section{Introduction}

In the theory of (metric) length spaces, Alexandrov spaces and CAT$(K)$-spaces angles play an important role. They are an indispensable tool and in particular allow one to characterize curvature bounds with a monotonicity condition on angles.
Thus a fruitful way forward in the theory of \emph{Lorentzian (pre-)length spaces} \cite{KS:18} is to introduce the notion of an hyperbolic angle into the Lorentzian synthetic setting. As compared to the metric case, several difficulties have to be overcome, foremost the issue of keeping track of the time orientation and causal relations of the points and curves considered. Thus several proofs become more involved than one would expect from the metric case.
\medskip

Lorentzian (pre-)length spaces have been developed to allow one to define timelike or causal curvature bounds for spacetimes of low regularity or for situations where one does not even have a Lorentzian metric, like in some approaches to Quantum Gravity, e.g.\ the causal set approach \cite{BLMS:87} or causal Fermion systems \cite{Fin:18} (see also \cite[Subsec.\ 5.3]{KS:18}). Triangle comparison theorems in Lorentzian geometry were pioneered by Harris in \cite{Har:82} and triangle comparison (to model spaces) is the key concept to define timelike and causal curvature bounds in \cite{KS:18}. The study of spacetimes (and cone structures) of low regularity is a very active field with connections to General Relativity and in particular to singularity theorems, the cosmic censorship conjecture and (in-)extendibility  (cf.\ \cite{CG:12, FS:12, Sae:16, GL:17, Sbi:18, GGKS:18, GLS:18, Min:19a, GKSS:20, Gra:20,LLS:21,KOSS:22}). Questions of (in-)extendibility of spacetimes can be formulated in the setting of \LLSn s and this approach enabled \cite{GKS:19}, for the first time, to relate inextendibility to a blow-up of (synthetic) curvature.

Furthermore, as pioneered by Kronheimer and Penrose \cite{KP:67} and Busemann \cite{Bus:67}, causality theory and (parts of) Lorentzian geometry can be studied decoupled from the manifold structure \cite[Subsec.\ 3.5]{KS:18}, \cite{ACS:20, BGH:21}. A large class of \LLSn s have been investigated in \cite{AGKS:21}, where warped products of a one-dimensional base with metric length space fibers are considered. These so-called \emph{generalized cones} have particularly nice causal properties. Moreover, one can relate Alexandrov curvature bounds of the fibers to timelike curvature bounds of the entire space and vice versa, and a first synthetic singularity theorem was established in this setting. Recently, the gluing of \LpLSn s was developed in \cite{BR:22} and applied to gluing of spacetimes, thereby establishing an analogue of a result by Reshetnyak for CAT$(K)$-spaces, which roughly states that gluing is compatible with upper curvature bounds.

Cavalletti and Mondino \cite{CM:20} took the field even further by introducing a synthetic notion of Ricci curvature bounds by using techniques from optimal transport analogous to the Lott-Villani-Sturm theory of metric measure spaces with Ricci curvature bounded below \cite{LV:09,Stu:06a,Stu:06b}. They built on work by McCann \cite{McC:20} and Mondino-Suhr \cite{MS:22}, who characterize timelike Ricci curvature bounds in terms of synthetic notions \`a la Lott-Villani-Sturm for smooth spacetimes. A related topic was then to find canonical measures in the Lorentzian setting akin to Hausdorff measures in the metric one. This has been achieved recently in \cite{McCS:22}, where also a synthetic dimension for \LpLSn s is given, the compatibility of the Lorentzian measures with the volume measure of continuous spacetimes is shown and its relation to \cite{CM:20} is studied.

Finally, there is the related approach of Sormani and Vega \cite{SV:16} that introduces a metric on Lorentzian manifolds with a suitable time function, the so-called \emph{null distance}. Its relation to spacetime convergence has been explored in \cite{AB:22}, while in \cite{KS:22} the null distance is introduced on \LLSn s and a kind of compatibility between the two approaches is established.
\medskip

In the final stages of preparing the article we have been made aware of the preprint \cite{BMS:22} by Barrera, Montes de Oca and Solis, where the authors also introduce angles in \LLSn s. Compared to our work they are investigating only curvature bounds from below but discuss an angle comparison condition and first variation formula, which we do not. On the other hand in our more comprehensive study we consider bounds from below and above, exponential and logarithmic map, triangle inequality of angles and equivalence of monotonicity comparison and timelike curvature bounds. Thus our two works nicely complement each other and their simultaneous appearance is a clear indication that this direction of research will prove fruitful.
\bigskip

In the following subsection we present the main results of our article and outline the structure of the paper.

\subsection{Main results and outline of the article}

The plan of the paper is as follows. In Subsection \ref{subsec-not} we introduce the relevant background on Lorentzian 
geometry and fix some notations and conventions. Then, to conclude the introduction we give a 
brief review of the theory of \LLSn s in Subsection \ref{subsec-lls} and of curvature comparison in Subsection \ref{subsec-cur-com}.

In Section \ref{sec-ang} we introduce angles in \LpLSn s in timelike triangles and between timelike curves. A main tool is the Lorentzian version of the law of cosines (Lemma \ref{lorLawOfCosines}) and corollaries thereof. As a compatibility check we establish that in a smooth and strongly causal spacetime the angle between continuously differentiable timelike curves agrees with the synthetic angle in the sense of \LpLSn s. We also show that one can alternatively use $K$-comparison angles for the definition of angles (for any $K\in\R$).

After these technical preparations we study in detail angles between timelike curves that have the same time orientation in Section \ref{sec-ang-same-to}. A central result is the triangle inequality.

\setcounter{section}{3}
\setcounter{ithm}{0}
\begin{ithm}[Triangle inequality for (upper) angles]
 Let \Xll be a strongly causal and locally causally closed \LpLS with $\tau$ locally finite-valued and locally continuous. Let $\alpha,\beta,\gamma\colon[0,B)\rightarrow X$ be timelike curves with coinciding time orientation starting at $x:=\alpha(0)=\beta(0)=\gamma(0)$. Then
 \begin{equation}
  \ma_x(\alpha,\gamma)\leq \ma_x(\alpha,\beta) + \ma_x(\beta,\gamma)\,.
 \end{equation}
\end{ithm}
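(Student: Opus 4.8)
The plan is to mirror the classical proof of the triangle inequality for upper angles in metric geometry, transported to the Lorentzian model space via the law of cosines (Lemma \ref{lorLawOfCosines}), while carefully controlling causal relations and time orientation throughout. Recall that $\ma_x(\alpha,\beta)$ is the $\limsup$ as $s,t\to 0$ of the comparison angle $\widetilde{\ma}(\alpha(s),x,\beta(t))$ at $\bar x$ of a timelike comparison triangle realized in the model space $\mi$; by the stated equivalence with $K$-comparison angles we may fix $K=0$ and work in $\mi$ throughout. The conceptual backbone is that in $\mi$ the hyperbolic angle between two future-directed timelike vectors equals the hyperbolic distance between the corresponding directions on the unit hyperboloid, so that at the level of a single realized configuration the desired inequality is nothing but the triangle inequality for hyperbolic distance. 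The entire difficulty lies in passing from this model statement, valid for one joint realization, to the three \emph{separately} realized comparison triangles attached to the pairs $(\alpha,\beta)$, $(\beta,\gamma)$ and $(\alpha,\gamma)$.

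First I would record, as corollaries of Lemma \ref{lorLawOfCosines}, the monotonic dependence of $\widetilde{\ma}$ on the side opposite to the vertex $x$, together with a \emph{gluing} (Alexandrov-type) estimate: given comparison triangles for the hinges $(\alpha(s),\beta(t))$ and $(\beta(t),\gamma(u))$ at $x$, glue them in $\mi$ along the common future side corresponding to $x\,\beta(t)$, placing $\bar\alpha(s)$ and $\bar\gamma(u)$ on opposite sides of it. The angle at $\bar x$ of the glued figure is $\widetilde{\ma}(\alpha(s),x,\beta(t))+\widetilde{\ma}(\beta(t),x,\gamma(u))$, and comparing the closing side of this figure with the opposite side of the $(\alpha,\gamma)$-comparison triangle, through the monotonicity corollary, should yield
\begin{equation}
 \widetilde{\ma}(\alpha(s),x,\gamma(u))\le \widetilde{\ma}(\alpha(s),x,\beta(t))+\widetilde{\ma}(\beta(t),x,\gamma(u)).
\end{equation}
As in the metric case this hinge inequality fails for an arbitrary intermediate parameter; I expect to need an intermediate-value argument selecting $t=t(s,u)$ so that the glued configuration closes up appropriately (so that its closing side dominates the opposite side of the $(\alpha,\gamma)$-triangle). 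This is where the local finiteness and local continuity of $\tau$ enter, making $t\mapsto\widetilde{\ma}$ vary continuously so that the selection is possible.

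With the hinge inequality in hand, the $\limsup$ bookkeeping is routine but must be done with care. Fix $\eps>0$. Since $\ma_x(\alpha,\beta)$ and $\ma_x(\beta,\gamma)$ are joint $\limsup$s, there is $\de>0$ with $\widetilde{\ma}(\alpha(s),x,\beta(t))<\ma_x(\alpha,\beta)+\eps$ and $\widetilde{\ma}(\beta(t),x,\gamma(u))<\ma_x(\beta,\gamma)+\eps$ whenever $0<s,t,u<\de$. Choosing sequences $s_n,u_n\to 0$ that realize $\ma_x(\alpha,\gamma)$, together with admissible intermediate parameters $t_n\to 0$ furnished by the previous step, the hinge inequality gives $\widetilde{\ma}(\alpha(s_n),x,\gamma(u_n))<\ma_x(\alpha,\beta)+\ma_x(\beta,\gamma)+2\eps$ for large $n$; letting $n\to\infty$ and then $\eps\to 0$ yields the claim.

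The main obstacle is the Lorentzian causal bookkeeping underlying every step. One must ensure that all three comparison triangles actually exist, i.e.\ that the relevant points are timelike related so that $\tau$ is positive and Lemma \ref{lorLawOfCosines} applies, and that the gluing in $\mi$ keeps every segment future directed; it is precisely here that the hypothesis of coinciding time orientation is used, placing all comparison directions in the future cone so that within each realized configuration the relevant directions live in hyperbolic space, where the triangle inequality is available. Equally delicate is showing that the intermediate parameter $t(s,u)$ can be chosen small while remaining causally admissible for both hinges simultaneously as $s,u\to 0$: strong causality, local causal closedness and the local continuity of $\tau$ are the tools that keep the configuration from degenerating and that supply the continuity needed for the intermediate-value selection. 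I expect this selection of a causally admissible intermediate point, rather than any single computation, to be the crux of the argument.
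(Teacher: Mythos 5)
Your main construction is, in essence, the paper's own argument for its first case: glue comparison triangles of $\Delta x\,\alpha(s)\,\beta(t)$ and $\Delta x\,\beta(t)\,\gamma(u)$ in $\mi$ along the common side, select the intermediate parameter by an intermediate-value argument, and close via the monotonicity in the law of cosines. Two corrections to that part. First, your selection criterion is reversed: since the comparison angle at $\bar x$ is a \emph{decreasing} function of the side opposite $\bar x$ (Remark \ref{rem-loc-mon}; that side is not the longest one of the triangle), what you need is that the closing side $\bar\tau(\bar a,\bar c)$ of the glued figure be \emph{dominated by} $\tau(\alpha(s),\gamma(u))$, not dominate it. The paper secures this by running the intermediate-value argument on the position of $\bar b$ relative to the segment $[\bar a\,\bar c]$, choosing the parameter on $\beta$ so that $\bar a,\bar b,\bar c$ become collinear; then $\bar\tau(\bar a,\bar c)=\tau(\alpha(s),\beta(t))+\tau(\beta(t),\gamma(u))\leq\tau(\alpha(s),\gamma(u))$ by the \emph{reverse} triangle inequality. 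Second, that reverse triangle inequality is only available along a causal chain $\alpha(s)\leq\beta(t)\leq\gamma(u)$, so the whole mechanism runs only when $\beta$ actually passes through $I(\alpha(s),\gamma(u))$.

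The genuine gap is that such an intermediate parameter need not exist, and your expectation that strong causality, local causal closedness and continuity of $\tau$ will always furnish one is false. Setting $s_-:=\inf\beta^{-1}(I^+(\alpha(s)))$ and $s_+:=\sup\beta^{-1}(I^-(\gamma(u)))$, the admissible parameters form the interval $(s_-,s_+)$, which is empty whenever $\beta$ leaves $I^-(\gamma(u))$ before entering $I^+(\alpha(s))$, i.e.\ $s_-\geq s_+$. This happens even for three straight timelike lines through the origin of $\mi$: there are open ranges of pairs $(s,u)$ with $\alpha(s)\ll\gamma(u)$ but with no $t$ such that $\alpha(s)\ll\beta(t)\ll\gamma(u)$. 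Since $\ma_x(\alpha,\gamma)$ is a $\limsup$ over \emph{all} admissible pairs $(s,u)$, you cannot restrict attention to pairs admitting an intermediate point; an upper bound on the $\limsup$ must hold eventually for every pair, including these. The paper devotes the second half of its proof to exactly this configuration: it takes the boundary points $b_\pm:=\beta(s_\pm)$, uses local causal closedness to get $x\ll\alpha(s)\leq b_-$ and $x\ll b_+\leq\gamma(u)$, places comparison triangles of $\Delta x\,\alpha(s)\,\gamma(u)$, $\Delta x\,\alpha(s)\,b_-$ and $\Delta x\,b_+\,\gamma(u)$ in one planar figure, and shows that the assumed failure of the triangle inequality forces $\bar b_-$ strictly below and $\bar b_+$ strictly above the timelike segment $[\bar a\,\bar c]$; concatenating pieces of $[\bar x\,\bar b_-]$, $[\bar a\,\bar c]$ and $[\bar x\,\bar b_+]$ gives a future directed timelike curve from $\bar b_-$ to $\bar b_+$, hence $\tau(x,b_-)=\bar\tau(\bar x,\bar b_-)<\bar\tau(\bar x,\bar b_+)=\tau(x,b_+)$, contradicting $b_+\leq b_-$ (which holds because $s_+\leq s_-$ and $\beta$ is future directed). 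Without an argument covering this case, your proof is incomplete, and it is precisely this case — not the one you sketch — that has no metric-space counterpart.
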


Two timelike curves of the same time orientation and starting at the same point are defined to be equivalent if they have zero angle. Together with the triangle inequality this makes the space of (future/past) directed timelike directions at a point into a metric space. This allows one to define the \emph{timelike tangent cone} as the Minkowski cone over the metric space of (future/past) directed timelike directions. Also, one can define an exponential and a logarithmic map and we establish foundational properties and relations of these objects.

Moreover, we introduce a monotonicity condition for angles at the past/ future endpoint of a timelike triangle, called \emph{future/past $K$-monotonicity comparison}. This gives then a bound on the angle between timelike geodesics of the same time orientation (Corollary \ref{cor-K-mon-ang-bou}), and that angles exist, if finite (Lemma \ref{lem-K-ang-com-ang-ex-fudi}). Another central result is that timelike curvature bounds imply future/past $K$-monotonicity comparison as follows.

\setcounter{ithm}{9}
\begin{ithm}[Triangle comparison implies future $K$-monotonicity comparison]
 Let $\Xll$ be a locally strictly timelike geodesically connected Lor\-entzian pre-length space and let $K\in\R$. If $X$ has timelike curvature bounded below (above) by $K$, it also satisfies future $K$-monotonicity comparison from below (above).
\end{ithm}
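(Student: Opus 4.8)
The plan is to establish the lower-bound case; the upper-bound case is identical after reversing every inequality. Fix a point and pass to a comparison neighbourhood $U$ furnished by the timelike curvature bound, chosen small enough that any two timelike related points of $U$ are joined by a maximal strictly timelike geodesic (local strict timelike geodesic connectedness) and that every timelike triangle in $U$ admits a comparison triangle in the model space $M_K$. The assertion of future $K$-monotonicity comparison concerns a timelike triangle with past endpoint $x$, realized by two future-directed timelike geodesics $\alpha,\beta$ from $x$ together with the geodesic joining their tips; what must be shown is that the $K$-comparison angle $\widetilde{\ma}^K_x(\alpha(s),\beta(t))$ — computed from the three time separations $\tau(x,\alpha(s))=s$, $\tau(x,\beta(t))=t$ and $\tau(\alpha(s),\beta(t))$ through the Lorentzian law of cosines (Lemma \ref{lorLawOfCosines}) — depends monotonically on $s$ and $t$, in the direction prescribed by the lower bound.

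First I would reduce to moving one tip at a time, say from $(s,t)$ to $(s,t')$ with $t'\le t$; the move in $s$ is symmetric, and a general pair $(s',t')$ with $s'\le s$ and $t'\le t$ is reached by composing the two. For the move in $t$, view $\beta(t')$ as a point on the side $[x,\beta(t)]$ of the triangle $\{x,\alpha(s),\beta(t)\}$, let $\bar\Delta$ with vertices $\bar x,\bar\alpha(s),\bar\beta(t)$ be its comparison triangle in $M_K$, and let $\bar\beta(t')$ be the point on $[\bar x,\bar\beta(t)]$ at time separation $t'$ from $\bar x$. Since $\bar\beta(t')$ lies on the geodesic ray from $\bar x$ through $\bar\beta(t)$, the two legs issuing from $\bar x$ span the same directions, so the hyperbolic angle at $\bar x$ between the legs to $\bar\alpha(s)$ and $\bar\beta(t')$ equals the angle of $\bar\Delta$ at $\bar x$, which by definition of the comparison angle is $\widetilde{\ma}^K_x(\alpha(s),\beta(t))$.

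The core of the argument then combines two ingredients. Triangle comparison for curvature bounded below by $K$, applied to $\{x,\alpha(s),\beta(t)\}$ with the side point $\beta(t')$ and the vertex $\alpha(s)$, gives the inequality between the genuine separation $\tau(\alpha(s),\beta(t'))$ and its model counterpart $\bar\tau(\bar\alpha(s),\bar\beta(t'))$ imposed by the lower bound. Next, the $K$-law of cosines (Lemma \ref{lorLawOfCosines} and its corollaries) shows that, with the timelike legs $s$ and $t'$ held fixed, the comparison angle is a strictly monotone function of the opposite time separation — and here a larger timelike opposite side yields a smaller hyperbolic angle, the sign reversal characteristic of the Lorentzian law of cosines. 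Feeding the separation inequality into this monotonicity in the separation variable converts it into the corresponding inequality between the comparison angle $\widetilde{\ma}^K_x(\alpha(s),\beta(t'))$ and the hyperbolic angle at $\bar x$ identified above, namely $\widetilde{\ma}^K_x(\alpha(s),\beta(t))$, which is precisely the monotone dependence on $t$ demanded by future $K$-monotonicity comparison from below.

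The step I expect to be the main obstacle is the causal bookkeeping that keeps every triangle in the argument a genuine timelike triangle. Two future-directed geodesics from $x$ have timelike related tips only when their parameters are sufficiently unbalanced, so as $t'$ decreases $\alpha(s)$ and $\beta(t')$ need not stay timelike related, and one must both restrict to the admissible range of parameters and check that the same relation persists in the comparison configuration in $M_K$; this is exactly the recurring difficulty of tracking time orientation and causal relations emphasised in the introduction. A smaller, purely computational point is to verify the monotonicity of the $K$-comparison angle in the opposite separation uniformly across all model spaces $M_K$, $K\in\R$ — including the correct sign and the admissible size range, not merely the Minkowski case — which I would isolate beforehand as a lemma on the law of cosines in $M_K$.
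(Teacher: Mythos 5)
Your core argument coincides with the paper's: the statement is proved there as the forward direction of Theorem \ref{thm-K-ang-com}, and the proof is exactly your two ingredients --- the corresponding points $\bar y',\bar z'$ on the sides of the comparison triangle span the same hyperbolic angle at $\bar x$ as the vertices themselves, the curvature bound compares $\tau(y',z')$ with $\bar\tau(\bar y',\bar z')$, and the monotonicity of the law of cosines in the side opposite the angle (Remark \ref{rem-loc-mon}, which is precisely the ``uniform across $K$'' lemma you wanted to isolate) converts the side inequality into the angle inequality. Your reduction to moving one tip at a time is unnecessary, since the angle identification at $\bar x$ works for both points simultaneously (this is how the paper argues), but it is harmless: the intermediate pair $(\alpha(s'),\beta(t))$ stays timelike related by the push-up property coming from the reverse triangle inequality.

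The genuine gap is your claim that the upper-bound case is ``identical after reversing every inequality,'' together with the deferred check ``that the same relation persists in the comparison configuration.'' For curvature bounded \emph{below} it does persist: $q_1\ll q_2$ forces $\bar q_1\ll\bar q_2$ because $0<\tau(q_1,q_2)\leq\bar\tau(\bar q_1,\bar q_2)$, and for null-related $q_1\leq q_2$ one still gets $\bar q_1\leq\bar q_2$ by applying this to points slightly below $q_1$ on its side (these are timelike related to $q_2$ by push-up) and using closedness of causality in $\lm{K}$. For curvature bounded \emph{above}, however, the comparison inequality $\tau(q_1,q_2)\geq\bar\tau(\bar q_1,\bar q_2)$ gives no lower bound on the model separation, so $\alpha(s)\ll\beta(t')$ in $X$ is perfectly compatible with $\bar\alpha(s)$ and $\bar\beta(t')$ being \emph{spacelike} related in $\lm{K}$; no restriction of parameters avoids this, because the monotonicity condition quantifies over all causally related pairs on the sides. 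In that case the law of cosines no longer ties the hyperbolic angle at $\bar x$ to the (vanishing) model time separation, and your step ``feed the separation inequality into the monotonicity in the separation variable'' has nothing to act on. The paper closes exactly this case with a different tool, the extended law of cosines (Corollary \ref{cor-ext-loc}): when the side opposite the angle is spacelike, the angle at $\bar x$ is strictly larger than the comparison angle of any causal triangle with the same legs, which directly yields $\tilde\ma_x^K(y',z')\leq\ma^{\lm{K}}_{\bar x}(\bar y',\bar z')=\tilde\ma_x^K(y,z)$. Without this supplement your proposal proves only the lower-bound half of the statement.
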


In Section \ref{sec-ang-arb-to} we establish the triangle inequality between timelike curves of arbitrary time orientation in Theorem \ref{thm-ang-tri-equ-oth-cas}. Moreover, we improve upon a geodesic non-branching result of \cite{KS:18}, where we do not need any additional hypotheses, as follows.

\setcounter{section}{4}
\setcounter{ithm}{7}
\begin{ithm}[Timelike non-branching]
Let $X$ be a strongly causal \LpLS with timelike curvature bounded below by some $K\in\mb{R}$. Then timelike distance realizers cannot branch.
\end{ithm}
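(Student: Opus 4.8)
The plan is to argue by contradiction, showing that two branching timelike distance realizers would have to meet at \emph{zero} hyperbolic angle, and that timelike curvature bounded below does not tolerate this. The mechanism is: a straight distance realizer subtends zero angle with itself at an interior point, so the triangle inequality for angles collapses the angle between the two branches to zero, and a curvature-below rigidity then forces the branches to coincide.

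First I would set up the contradiction and localize. Suppose two timelike distance realizers branch, and let $p$ be a branch point. Since $X$ has timelike curvature bounded below by $K$, the point $p$ possesses a comparison neighborhood $U$ in which $\tau$ is finite-valued and continuous and which is timelike geodesically connected, so the angle machinery of Sections~\ref{sec-ang}--\ref{sec-ang-arb-to} is available inside $U$. Because the hypotheses (strong causality, curvature bounded below) are invariant under reversal of the time orientation, I may assume the two realizers share a common past segment ending at $p$ and separate towards the future. Denote by $\lambda\col[0,\eta)\map X$ the common past-directed branch at $p$ and by $\gamma^+,\sigma^+\col[0,\eta)\map X$ the two distinct future-directed branches, all emanating from $p=\lambda(0)=\gamma^+(0)=\sigma^+(0)$.

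Next I would show that each hinge formed by the common branch with a forward branch is degenerate. The curve obtained by concatenating the time-reverse of $\lambda$ with $\gamma^+$ is precisely one of the original distance realizers, so the reverse triangle inequality holds with equality along it, $\tau(\lambda(s),\gamma^+(t))=\tau(\lambda(s),p)+\tau(p,\gamma^+(t))$. Feeding this equality into the Lorentzian law of cosines (Lemma~\ref{lorLawOfCosines}) collapses the $K$-comparison triangle to a timelike segment and hence forces the comparison angle, and therefore the (upper) angle, to vanish: $\ma_p(\lambda,\gamma^+)=0$. The identical argument applied to $\sigma$ gives $\ma_p(\lambda,\sigma^+)=0$. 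Since $\lambda$ is past-directed while $\gamma^+$ and $\sigma^+$ are future-directed, I am squarely in the regime of the arbitrary--time-orientation triangle inequality (Theorem~\ref{thm-ang-tri-equ-oth-cas}), which yields
\[
 \ma_p(\gamma^+,\sigma^+)\ \le\ \ma_p(\gamma^+,\lambda)+\ma_p(\lambda,\sigma^+)\ =\ 0,
\]
so the two forward branches subtend zero angle at $p$.

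Finally I would upgrade zero angle to genuine coincidence, which I expect to be the crux of the argument and the place where curvature bounded below is indispensable. Here I would invoke that timelike curvature bounded below by $K$ yields future $K$-monotonicity comparison, together with the resulting control on angles between future-directed timelike geodesics (Corollary~\ref{cor-K-mon-ang-bou} and Lemma~\ref{lem-K-ang-com-ang-ex-fudi}): a vanishing angle forces the $K$-comparison angles $\tilde{\ma}_p(\gamma^+(t),\sigma^+(t))$ to vanish for small $t$, and degeneracy of these comparison triangles collinearizes $p,\gamma^+(t),\sigma^+(t)$ in the model space $\lm{K}$. Thus one of $\gamma^+(t),\sigma^+(t)$ lies on the maximal timelike segment through the other, and maximality of distance realizers then forces $\gamma^+=\sigma^+$ on a neighborhood of $p$, contradicting branching. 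The principal obstacles I anticipate are exactly this last rigidity step — converting an equality of angles into an honest coincidence of curves, where the lower curvature bound cannot be dispensed with — and the careful bookkeeping of time orientations and sign conventions in the law of cosines when passing between the single past branch and the two future branches.
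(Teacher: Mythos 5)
Your strategy is exactly the angle-based route that the authors mention, and then deliberately abandon, in the paragraph preceding Theorem \ref{thm-non-bra}: collapse the angle between the two forward branches to zero via the mixed-orientation triangle inequality, then use the lower curvature bound to turn zero angle into coincidence. The problem is that this route does not prove the theorem as stated. Your central step, $\ma_p(\gamma^+,\sigma^+)\leq\ma_p(\gamma^+,\lambda)+\ma_p(\lambda,\sigma^+)$, invokes Theorem \ref{thm-ang-tri-equ-oth-cas}(ii), whose hypotheses include local causal closedness and geodesic prolongation; moreover its proof runs through the future-only triangle inequality (Theorem \ref{thm-ang-tri-equ}) and Lemma \ref{lem-ang-geo}, which again assume local causal closedness. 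Geodesic prolongation you could legitimately drop here, since $\lambda$ does extend as a geodesic (namely by $\gamma^+$), exactly as observed in the proof of Corollary \ref{cor-tri-ine-alo-geo}; but local causal closedness is a genuine additional hypothesis that appears nowhere in the statement you are proving, which assumes only strong causality and timelike curvature bounded below. This is precisely why the paper's final proof avoids angles completely: the angle machinery buys the result only under these extra assumptions, and the authors say so explicitly.

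The final rigidity step is also under-argued. Zero angle together with Corollary \ref{cor-K-mon-ang-bou} does force $\tilde\ma^K_p(\gamma^+(s),\sigma^+(t))=0$, i.e.\ degenerate comparison triangles; but collinearity in $\lm{K}$ translates back to $X$ only as the relation $\tau(\gamma^+(s),\sigma^+(t))=t-s$ whenever the two points are causally related --- it does not place $\gamma^+(s)$ on the curve $\sigma^+$, and ``maximality of distance realizers'' alone cannot force coincidence. What does force it is strong causality: from the degeneracy one gets $\sigma^+(s)\in I(\gamma^+(s-\eps),\gamma^+(s+\eps))$ for all small $\eps>0$, and since such timelike diamonds form a neighborhood basis at $\gamma^+(s)$, one concludes $\sigma^+(s)=\gamma^+(s)$ (one also needs a continuity argument for $\tau$ to see that the causal relatedness and degeneracy hold for \emph{all} $s<t$). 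This is exactly Case 1 of the paper's proof. Once that argument is in hand, the angles become dispensable, which is the paper's actual structure: either every triangle $x\ll\alpha(s)\ll\beta(t)$ (or with $\alpha,\beta$ exchanged) is degenerate, and the diamond argument gives $\alpha=\beta$; or some such triangle is non-degenerate, and then one contradicts the curvature bound directly by taking the triangle with past vertex $p=\alpha(-r)=\beta(-r)$, noting that the branch point $x$ lies on two different sides, and comparing the two corresponding comparison points $\bar x_1,\bar x_2$ against the far vertex in $\lm{K}$. That two-case argument uses nothing beyond the stated hypotheses, which is what makes the theorem stronger than what your proposal delivers.
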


Finally, in Subsection \ref{subsec-ang-com}, we define a general $K$-monotonicity comparison condition (Definition \ref{def-ang-com}), show that it implies the existence of angles in Lemma \ref{lem-K-ang-com-ang-ex} and, this being the main result of the article, that it characterizes timelike curvature bounds (Definition \ref{def-tri-com}) as follows.

\setcounter{ithm}{12}
\begin{ithm}[Equivalence of triangle and monotonicity comparison]
 Let $\Xll$ be a locally strictly timelike geodesically connected \LpLS and let $K\in\R$. Then $X$ has timelike curvature bounded below (above) by $K$ if and only if it satisfies $K$-monotonicity comparison from below (above).
\end{ithm}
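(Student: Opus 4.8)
The plan is to prove the two implications separately, running the "below'' and "above'' cases in parallel since they differ only by reversing the comparison inequalities. The implication \emph{timelike curvature bound $\Rightarrow$ $K$-monotonicity comparison} is essentially already in hand: the result of Section~\ref{sec-ang-same-to} (triangle comparison implies future $K$-monotonicity comparison) gives, from a lower (upper) curvature bound, the \emph{future} half of the condition in Definition~\ref{def-ang-com}. The \emph{past} half I would obtain by time-duality: passing to the time-dual pre-length space (reverse $\ll$ and $\leq$, set $\tau^{\mathrm{op}}(x,y):=\tau(y,x)$) preserves the hypotheses and the curvature bound, while interchanging future and past endpoints of timelike triangles, and hence interchanging future and past $K$-monotonicity comparison; applying the future statement to the dual space yields past $K$-monotonicity comparison for $X$. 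Together these are exactly $K$-monotonicity comparison.

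The converse, \emph{$K$-monotonicity comparison $\Rightarrow$ triangle comparison}, is the substantive direction. Fix a timelike geodesic triangle with vertices $x\ll y\ll z$ inside a comparison neighbourhood $U$ small enough that a comparison triangle exists in $\lm{K}$. By the usual reduction it suffices to establish the distance comparison of Definition~\ref{def-tri-com} for a single point $w$ on one side together with the opposite vertex. I would choose $w$ on the long side $[x,z]$ and draw the timelike geodesic $\sigma$ from the apex $y$ to $w$ (its existence is guaranteed, after shrinking $U$, by local strict timelike geodesic connectedness). This cevian splits the triangle into two timelike sub-triangles sharing $\sigma$, each again contained in a comparison neighbourhood.

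At the foot $w$ the directions along $[x,z]$ toward $x$ and toward $z$ have opposite time orientation, so the triangle inequality for angles of arbitrary time orientation (Theorem~\ref{thm-ang-tri-equ-oth-cas}) bounds the sum of the two angles that $\sigma$ makes with the two halves of $[x,z]$. Combining this with $K$-monotonicity comparison, which relates each such actual angle to the corresponding $K$-comparison angle, yields an inequality on the sum of the two comparison angles at $w$ in $\lm{K}$. A Lorentzian straightening (Alexandrov-type) lemma in the model space, which I would prove via the Lorentzian law of cosines (Lemma~\ref{lorLawOfCosines}), then converts this angle-sum inequality into the desired comparison for $\tau$ evaluated between $w$ and the apex. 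Subdividing $[x,z]$ dyadically and passing to the limit upgrades this point--vertex inequality to the full side comparison defining the curvature bound.

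The crux of the whole argument is the causal bookkeeping. One must ensure that every cevian and every sub-triangle arising in the subdivision is a genuine timelike triangle with a consistent causal configuration, lies in a comparison neighbourhood, and has finite, well-defined comparison angles, so that both the monotonicity hypothesis and the model-space law of cosines apply; in particular the position of the apex $y$ relative to $w$ in the causal order (to the future or to the past of $w$) will split the argument into cases. Unlike the metric setting, the reverse triangle inequality and the sign conventions of $\lm{K}$ force one to track the direction of every monotonicity and comparison inequality, and the straightening lemma must be formulated for the correct Lorentzian timelike configuration rather than transcribed from the metric proof. Once that lemma and the requisite existence of the intermediate geodesics are in place, the remaining steps parallel the classical Alexandrov argument.
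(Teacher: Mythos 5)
Your first implication has a genuine gap. $K$-monotonicity comparison (Definition~\ref{def-ang-com}) quantifies over \emph{all} pairs of timelike distance realizers emanating from a common point, including the pair consisting of one future directed and one past directed realizer --- the signed-angle case $\sigma=+1$, i.e.\ the angle at the \emph{middle} vertex $y$ of a triangle $x\ll y\ll z$. Future $K$-monotonicity (Definition~\ref{def-ang-com-fu}) concerns only the angle at the past endpoint, and its time dual only the angle at the future endpoint; passing to the time-dual space therefore gives the past-endpoint case but can never produce the mixed-orientation case, since time reversal maps mixed pairs to mixed pairs. So ``future case $+$ duality $=$ full monotonicity'' is false as stated, and the mixed case is exactly where the signed angle and the reversal of the inequality for $\sigma=+1$ must be tracked. (There is also a structural point: in the paper, Theorem~\ref{thm-K-ang-com-fu} is \emph{deduced from} the theorem you are proving, so it cannot be cited here; the endpoint cases themselves must be proved directly, which the paper does via the monotonicity statement of the law of cosines, Remark~\ref{rem-loc-mon}.)

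Your converse is both circular and needlessly heavy. The cevian/straightening/subdivision scheme invokes the triangle inequality for angles of mixed time orientation (Theorem~\ref{thm-ang-tri-equ-oth-cas}), whose part (ii) assumes a timelike curvature bound \emph{below} --- precisely what is to be proved --- together with local causal closedness and geodesic prolongation; even part (i) and Theorem~\ref{thm-ang-tri-equ} require strong causality and local causal closedness, none of which are hypotheses of the equivalence theorem. Moreover, the cevian from $y$ to $w\in[x,z]$ need not exist at all: already in Minkowski space, for $w$ in a large middle portion of $[x,z]$ the apex $y$ is \emph{not causally related} to $w$, a case your future/past dichotomy misses and in which (for upper bounds) the assertion $\bar\tau(\bar y,\bar w)=0$ needs its own limiting argument. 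The paper's proof bypasses all of this: any two sides of the triangle share a vertex, and the law-of-cosines monotonicity (Remark~\ref{rem-loc-mon}), supplemented by the extended law of cosines (Corollary~\ref{cor-ext-loc}) for non-causally-related comparison points, provides a direct two-way dictionary between the distance comparison $\tau(q_1,q_2)\lessgtr\bar\tau(\bar q_1,\bar q_2)$ and the signed-angle inequality at that shared vertex --- which \emph{is} the monotonicity condition. No point--vertex reduction, dyadic subdivision, or angle triangle inequality is needed in either direction.
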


The proof of the Lorentzian law of cosines Lemma \ref{lorLawOfCosines} and of a related lemma is outsourced to the Appendix \ref{PrLorLOC}.

\setcounter{section}{1}

\subsection{Notation and conventions}\label{subsec-not}

We fix some notation and conventions as follows. One of our main examples are \emph{spacetimes} $(M,g)$, where the Lorentzian metric $g$ is of different regularity classes. Here $M$ denotes a smooth, connected, second countable Hausdorff manifold and the Lorentzian metric $g$ on $M$ is continuous, $\mathcal{C}^2$ or smooth. Note that our convention is that $g$ is of signature $(-+++\ldots)$. A vector $v\in TM$ is called
\begin{align}
\begin{cases}
 \text{\emph{timelike}}\\
 \text{\emph{null}}\\
 \text{\emph{causal}}\\
 \text{\emph{spacelike}}
\end{cases}
 \text{\quad if \qquad}
 g(v,v)\qquad
 \begin{cases}
  < 0\,,\\
  = 0 \text{ and } v\neq 0\,,\\
  \leq 0\text{ and } v\neq 0\,,\\
  >0\text{ or } v=0\,.
 \end{cases}
\end{align} 
Moreover, we assume that $(M,g)$ is time-oriented (i.e., there exists a continuous timelike vector field $\xi$, that is, $g(\xi,\xi)<0$ everywhere).
We call $(M,g)$ a \emph{continuous/$\mathcal{C}^2$-/smooth spacetime}. Furthermore, a {causal} vector $v\in TM$ is \emph{future/past directed} if $g(v,\xi)<0$ (or $g(v,\xi)>0$, respectively), where $\xi$ is the global timelike vector field giving the time orientation of the spacetime $(M,g)$. Analogously, one defines the causal character and time orientation of sufficiently smooth curves into $M$. The \emph{(Lorentzian) length} $L^g(\gamma)$ of a causal curve $\gamma\colon[a,b]\rightarrow M$ is defined as $L^g(\gamma):=\int_a^b \sqrt{-g(\dot\gamma,\dot\gamma)}$ and the \emph{time separation function} is defined as follows. For $x,y\in M$ set $\tau(x,y):=\sup\, \{ L^g(\gamma): \gamma \text{ f.d.\ causal from } x \text{ to } y\}\cup\{0\}$.

Two events $x,y\in M$ are \emph{timelike} related if there is a future directed timelike curve from $x$ to $y$, denoted by 
$x\ll y$. Analogously, $x\leq y$ if there is a future directed causal curve from $x$ to $y$ or $x=y$. The spacetime $(M,g)$ 
is \emph{strongly causal} if for every point $p\in M$ and for every neighborhood $U$ of $p$ there is a neighborhood $V$ of 
$p$ such that $V\subseteq U$ and all causal curves with endpoints in $V$ are contained in $U$, (in which case $V$ is 
called \emph{causally convex} in $U$).
\medskip

Finally, let us recall the notion of sectional curvature on a smooth semi-Riemannian manifold.
\begin{defi}[Sectional curvature]
Let $(M,g)$ be a semi-Riemannian manifold, $p\in M$ a point. A non-degenerate subspace $P\subseteq T_pM$ is \emph{non-degenerate} if $(g|_p)|_P:P\times P\to\R$ is a non-degenerate symmetric bilinear map.
Let $P\subseteq T_pM$ be a non-degenerate $2$-dimensional subspace (or plane). Let $v,w\in P$ be linearly independent. Let $R$ be the Riemann curvature tensor of $g$, then the \emph{sectional curvature} of $M$ at $p$ on the plane $P$ is $\frac{g(R(v,w)w,v)}{g(v,v)g(w,w)-g(v,w)^2}$.
\end{defi}

\subsection{A brief introduction to Lorentzian (pre-)length spaces}\label{subsec-lls}

Kronheimer and Penrose \cite{KP:67} studied causality theory from an abstract point-of-view and their basic object is a so-called \emph{causal space}. Here we use a slightly more general version of this.

\begin{defi}[Causal space]
A set $X$ with two binary relations $\ll,\leq$, where both are transitive, $\leq$ is reflexive, and any timelike related pairs $p\ll q$ are also causally related $p\leq q$ (sometimes denoted as ${\ll}\subseteq{\leq}$), is called a \emph{causal space}.
\end{defi}

We define the chronological and causal futures and pasts and the chronological and causal diamonds as follows.
\begin{itemize}
\item $I^+(p):=\{q\in X:p\ll q\}$, $I^-(q):=\{p\in X:p\ll q\}$,
\item $J^+(p):=\{q\in X:p\leq q\}$, $J^-(q):=\{p\in X:p\leq q\}$,
\item $I(p,q):=I^+(p)\cap I^-(q)$, $J(p,q):=J^+(p)\cap J^-(q)$.
\end{itemize}

A \LpLS generalizes spacetimes by taking the causal relations and the time separation function as its fundamental objects, thereby foregoing the smooth structure of the manifold completely.

\begin{defi}[\LpLSn]
A \emph{\LpLSn} $\Xll$ is a causal space $(X,\ll,\leq)$ together with a metric $d$ on $X$ and a map $\tau:X\times X\to[0,\infty]$ satisfying
\begin{itemize}
\item $\tau$ is lower semicontinuous (with respect to $d$),
\item $\tau(p,r)\geq\tau(p,q)+\tau(q,r)$ for $p\leq q\leq r$ (reverse triangle inequality) and
\item $\tau(p,q)>0\Leftrightarrow p\ll q$.
\end{itemize}
\end{defi}

To introduce the notion of \emph{intrinsic} time separation functions, we define the length of curves as follows.
\begin{defi}[Causal, timelike and null curves]
Let $(X,\ll,\leq,d,\tau)$ be a \LpLSn.
A non-constant locally Lipschitz continuous curve $\gamma\colon I\rightarrow X$ is \emph{future directed causal} or \emph{future directed timelike} if $\forall t_1,t_2\in I, t_1<t_2,\; \gamma(t_1)\ll\gamma(t_2)$ or $\gamma(t_1)\leq\gamma(t_2)$, respectively. The \emph{causal character} of a future directed causal curve is \emph{timelike} if it is timelike and \emph{null} if $\forall t_1,t_2\in I, t_1<t_2,\; \gamma(t_1)\not\ll\gamma(t_2)$.

For past directed causal and past directed timelike, we reverse these relations. Upon parameter reversal, they are future directed causal / timelike.
\end{defi}

\begin{defi}[Length of curves]
Let $(X,\ll,\leq,d,\tau)$ be a \LpLSn. 
We can define the length of future directed causal curves as in metric spaces, replacing the supremum with an infimum in the definition of the variational length.  We define the \emph{length} $L_\tau(\gamma)=\inf\{\sum_i\tau(\gamma(t_i),\gamma(t_{i+1})):(t_i)\text{ a partition of } I\}$. 
\end{defi}

One of the main examples of an \LpLS is a spacetime $(M,g)$ together with its timelike and causal relations $\ll,\leq$, its time separation function $\tau$ and a metric induced by a complete Riemannian background metric, cf.\ \cite[Ex.\ 2.11]{KS:18}. Moreover, spacetimes of low regularity (i.e., with continuous metric and sufficiently well-behaved causality) and Lorentz-Finsler spaces are \LpLSn s as well, cf.\ \cite[Prop.\ 5.8, Prop.\ 5.14]{KS:18}.

\begin{defi}[Intrinsic space]
A \LpLS is \emph{strictly intrinsic} or \emph{geodesic} if for all $p< q$ there exists a future directed causal curve $\gamma$ from $p$ to $q$ of length $L_\tau(\gamma)=\tau(p,q)$. Such a curve is called a \emph{distance realizer}.

A  Lorentzian  pre-length  space  is \emph{intrinsic} if for all $p< q$ and all $\varepsilon>0$ there exists a future directed causal curve $\gamma$ from $p$ to $q$ of length $L_\tau(\gamma)>\tau(p,q)-\varepsilon$.  Such a curve is called an \emph{$\varepsilon$-distance realizer}.
\end{defi}

A subset $A\subseteq X$ in a \LpLS is called \emph{(causally) convex} if for any two points $p,q\in A$ their causal diamond is contained in $A$, i.e., $J(p,q)\subseteq A$.

A key technical tool in smooth semi-Riemannian geometry is the existence of geodesically convex neighborhoods, 
in which the causality is particularly simple and where one has a complete description of length-maximizing
curves. The analogue of this notion in the present context is the following. 
A \LpLS $X$ is called \emph{localizable\/} if every $x\in X$ has an open, so-called \emph{localizing\/} %
neighborhood $\Omega_x$ such that
\begin{enumerate}[label=(\roman*)]
	\item The $d$-length of all causal curves contained in $\Omega_x$
	is uniformly bounded.
	\item \label{def-loc-LpLS} The localizing neighborhood $\Omega_x$ can be turned into a \LpLS by using the restrictions $d\rvert_{\Omega_x\times\Omega_x}$, $\ll\rvert_{\Omega_x\times \Omega_x}$ and $\leq\rvert_{\Omega_x\times\Omega_x}$, and define the local time separation function $\omega_x\colon \Omega_x \times \Omega_x\rightarrow [0,\infty)$ as follows. For $p,q\in\Omega_x$ we set $\omega_x(p,q):=\sup\{L_\tau(\gamma):\, \gamma:[a,b]\to\Omega_x$ is $\leq$-causal from $p$ to $q\}\cup\{0\}$. For $p<q$ we require that there is an $\omega_x$-realizer $\gamma_{p,q}$ in $\Omega_x$ from $p$ to $q$, and that this makes $\Omega_x$ into a \LpLSn. See e.g.\ \cite[chapter 1.7.2]{Ber:20} or \cite[Def.\ 3.16]{KS:18}.
	\item For every $y\in\Omega_x$ we have $I^\pm(y)\cap\Omega_x\neq\emptyset$.
\end{enumerate}
If, in addition, the neighborhoods $\Omega_x$ can be chosen such that
 \begin{enumerate}
 	\item[(iv)]\label{def-loc-LpLS-4} Whenever $p,q\in\Omega_x$ satisfy $p\ll q$ then $\gamma_{p,q}$ is timelike 
 	and strictly longer than any future-directed 
 	causal curve in $\Omega_x$ from $p$ to $q$ that contains a null segment,
 \end{enumerate}
 then \Xll is called {\em regularly localizable}.
 \bigskip

Locally distance realizing curves can be thought of as an analog of geodesics.

\begin{defi}[Geodesic]
    Let $X$ be a \LpLSn. A (say) future directed causal curve $\gamma\colon I\rightarrow X$ is a \emph{geodesic} if for each $t\in I$ there is a neighborhood $[a,b]$ of $t$ (i.e., $a<t<b$, but allowing for equality at the endpoints of $I$) such that $\gamma\rvert_{[a,b]}$ is a \emph{distance realizer}, i.e.\ $\tau(\gamma(a),\gamma(b))=L_\tau(\gamma\rvert_{[a,b]})$.
\end{defi}

In localizable spaces, the notion of a geodesic was previously defined using localizable neighborhoods as follows.
\begin{defi}[Geodesic \`a la {\cite[Def.\ 4.1]{GKS:19}} ]
     Let $X$ be a localizable \LpLSn. A (say) future directed causal curve $\gamma\colon I\rightarrow X$ is a \emph{geodesic} if it is \emph{locally maximal}, i.e., for every $t_0\in I$ there is a localizing neighborhood $\Omega_{\gamma(t_0)}$ and a neighborhood $[a,b]$ of $t_0$ in $I$ such that $\gamma\rvert_{[a,b]}$ is maximal from $\gamma(a)$ to $\gamma(b)$ in $\Omega_{\gamma(t_0)}$.
    \end{defi}
In this article we will use the first definition and the latter notion actually satisfies the first one when everything is considered in the \LpLS $\Omega_x$. Thus, when developing the theory here, it is enough to use the first version, and not employ localizability unless explicitly stated.
\medskip
    
\begin{defi}
We call $\tau$ \emph{locally finite-valued} if every point has a neighborhood $U$ such that $\tau<\infty$ on $U\times U$. Similarly, we call $\tau$ \emph{locally continuous} if every point has a neighborhood $U$ such that $\tau$ is continuous on $U\times U$.

Moreover, $X$ is \emph{locally causally closed} if every point has a neighborhood $U$ such that for all $x_n, y_n\in U$ with $x_n \to x\in \bar U$, $y_n\to y\in \bar U$ and $x_n\leq y_n$ for all $n\in\N$, then $x\leq y$. 

Finally, $X$ is \emph{causally path-connected} if for all $x,y\in X$ with $x<y$ (or $x\ll y$) there is a future directed causal (or timelike) curve from $x$ to $y$. 
\end{defi}
\begin{rem}
Note that by \cite[Lem.\ 4.3]{GKS:19} an intrinsic and strongly causal \LpLS is locally finite-valued. On the other hand, if $\tau$ is locally finite-valued, then $X$ is chronological, i.e, $\ll$ is irreflexive: By \cite[Prop.\ 2.14]{KS:18} one has that for all $x\in X$ either $\tau(x,x)=0$ or $\tau(x,x)=\infty$. The latter is excluded by assumption, so $\tau(x,x)=0$, which is equivalent to $x\not\ll x$.

The definition of local causal closedness is the original definition given in \cite[Def.\ 3.4]{KS:18}. However, in \cite{ACS:20} an issue with non-strongly-causal \LpLSn s was pointed out and an alternative definition has been proposed in \cite[Def.\ 2.19]{ACS:20}. We opted for the former one as we are only concerned with strongly causal spaces and for compatibility with previous results in \cite{KS:18,AGKS:21}. 

Any timelike distance realizer can be parametrized with respect to $\tau$-arclength as long as $\tau$ is locally finite-valued and locally continuous, cf.\ \cite[Subsec.\ 3.7]{KS:18}.

\end{rem}

\begin{defi}[\LLSn]
A \emph{Lorentzian length space} is a Lorentzian pre-length space which is locally causally closed, causally path connected, intrinsic and localizable.
\end{defi}
\begin{defi}\label{def:cbb-max-cc-fu} 
An open set $U\subseteq X$ is called \emph{timelike geodesically connected} if whenever $x,y \in U$ with $x\ll y$, there exists a future-directed maximal geodesic in $U$ from $x$ to $y$. 
An open set $U\subseteq X$ is called \emph{strictly timelike geodesically connected} if whenever $x$, $y \in U$ with $x\ll y$, there exists a future-directed maximal geodesic in $U$ from $x$ to $y$, and that any future-directed maximal geodesic in $U$ from $x$ to $y$ is timelike. 
$X$ is called \emph{locally strictly timelike geodesically connected} if it is covered by strictly timelike geodesically connected neighborhoods.
\end{defi}
\subsection{Curvature comparison for \LpLSn s}\label{subsec-cur-com}
Curvature bounds for metric spaces, generalizing sectional curvature bounds of Riemannian manifolds, are defined by comparing distances in triangles to distances in comparison triangles in two-dimensional (Riemannian) manifolds of constant curvature, see e.g.\ \cite{BH:99, BBI:01, AKP:22}. Timelike and causal curvature bounds were introduced analogously for \LpLSn s in \cite{KS:18}. In this setting we measure distances with the time separation, so we restrict to causal triangles.

\begin{defi}[Geodesic triangles]
A \emph{timelike (geodesic) triangle} $\Delta = (p_1,p_2,p_3)$ in a \LpLS $X$ consists of three points $p_1\ll p_2\ll p_3\in X$ (with $\tau(p_i,p_j)<\infty$ for $i<j$) and three future directed causal distance realizing curves $\alpha_{ij}$ connecting $p_i$ to $p_j$ (for $i<j$). Analogously, we define a \emph{causal triangle}.

An \emph{admissible causal (geodesic) triangle} $(p_1,p_2,p_3)$ in a \LpLS $X$ consists of three points $p_1\ll p_2\leq p_3$ or $p_1\leq p_2\ll p_3\in X$ (with $\tau(p_i,p_j)<\infty$ for $i<j$) and three possibly constant\footnote{Of course, if e.g.\ $p_1\leq p_2\ll p_3$, only $\alpha_{1,2}$ can be constant, and $\alpha_{2,3}$ in the other case.} future directed causal distance realizing curves $\alpha_{ij}$ connecting $p_i$ to $p_j$ (for $i<j$). We call the sides between two vertices $p_i\ll p_j$ a \emph{timelike side} (although it need not be realized via a timelike curve). 

We call $p_1$ the \emph{past endpoint} and $p_3$ the \emph{future endpoint} of the triangle. A causal or timelike triangle is called \emph{non-degenerate} if the reverse triangle inequality $\tau(p,r)\geq\tau(p,q)+\tau(q,r)$ is strict, and it is called \emph{degenerate in the strict sense} if the sides $\alpha_{12}$ and $\alpha_{23}$ are (reparametrized) parts of the longest side $\alpha_{13}$.
\end{defi}

\subsubsection{Comparison (model) spaces}

Here we recall the two-dimensional Lorentzian manifolds of constant curvature $K\in\R$, i.e., (scaled) (anti-)de Sitter spacetime and Minkowski spacetime.

\begin{defi}[Model spaces]\label{dfnConstCurvLor}
For integers $0\leq m\leq n$ denote by $\R^{n}_m$ the vector space $\R^{n}$ together with the inner product $b(v,w)=-\sum_{i=1}^m v_iw_i+\sum_{i=m+1}^{n}v_iw_i$, where $v=(v_1,\ldots,v_{n})$, $w=(w_1,\ldots,w_{n})$. Let $K\in\R$.

The Lorentzian \emph{$K$-planes} or the \emph{comparison spaces} of constant curvature $K$ are the following.
\begin{itemize}
\item Positive curvature $K>0$: We define $\lm{K}$ to be the universal cover of $\{v\in\R^{3}_1:b(v,v)=\frac{1}{K^2}\}$. 
\item Negative curvature $K<0$: We define $\lm{K}$ to be the universal cover of $\{v\in\R^{3}_2:b(v,v)=-\frac{1}{K^2}\}$. 
\item Curvature $K=0$ (flat): $\lm{0}:=\R^2_1$, the two-dimensional Minkowski spacetime.
\end{itemize}
The (finite timelike) diameter of $\lm{K}$ is $D_K=\frac{\pi}{\sqrt{-K}}$ for $K<0$ and $D_K=\infty$ for $K\geq0$\footnote{This is the maximum time separation which is finite.}.
\end{defi}

\begin{defi}
Three numbers $a_{12},a_{23},a_{13}\geq0$ satisfying the reverse triangle inequality $a_{12}+a_{23}\leq a_{13}$ (making $a_{13}$ the largest) are said to satisfy the \emph{timelike size bounds for $K$} if $a_{13}<D_K$.\footnote{$a_{13}<D_K$ is trivial if $K\geq0$.} 
\end{defi}

Andersson-Howard \cite{AH:98} introduced semi-Riemannian curvature bou\-nds and Alexander-Bishop developed the theory further in \cite{AB:08}, by characterizing smooth semi-Riemannian sectional curvature bounds by triangle comparison. In the semi-Riemannian setting it is necessary to distinguish the causal character of the tangent planes considered. Otherwise only spaces of constant curvature would fulfill the sectional curvature bounds, see \cite[Prop.\ 8.28]{ONe:83}.

Let $M$ be a semi-Rie\-mann\-ian manifold. Let $p\in M$ be a point and $P\subseteq T_pM$ be a plane. Then $P$ is called \emph{spacelike} if $g|_p$ is positive or negative definite on $P$, \emph{timelike} if $g|_p$ is non-degenerate and indefinite on $P$.

\begin{defi}[Sectional curvature comparison]
$M$ satisfies \emph{sectional curvature comparison from below (or above) by $K\in\R$} if for all points $p\in M$ and all spacelike planes $P\subseteq T_pM$, the sectional curvature is $\geq K$ (or $\leq K$) and for all timelike planes $P\subseteq T_pM$, the sectional curvature is $\leq K$ (or $\geq K$).
\end{defi}
\begin{rem}
Equivalently, sectional curvature comparison from below can be written in compact form as $g(R(v,w)w,v)\geq K(g(v,v)g(w,w)-g(v,w)^2)$ (and with inequality reversed for sectional curvature comparison from above).
\end{rem}

\begin{rem}
Sectional curvature comparison is not transitive in dimension larger than two: if $M$ satisfies sectional curvature comparison from below / above by $K\in\R$, it does not automatically satisfy it from below / above for any $\tilde{K}\neq K$. But if $M$ satisfies timelike sectional curvature comparison from below (or above) by $K$ (i.e., the sectional curvature comparison just for timelike planes), it automatically satisfies timelike curvature comparison from below (or above) by any $\tilde{K}\geq K$ (or $\tilde{K}\leq K$) (but note the reversal of the inequality as compared to the Riemannian case). For two dimensional Lorentzian manifolds there are only timelike tangent planes, hence sectional curvature comparison is timelike sectional curvature comparison. We give a detailed account for this in the case of \LpLSn s (and the Lorentzian $K$-planes) and triangle comparison in Lemma \ref{lem-rel-dif-K}.
\end{rem}

Note that comparison triangles exist, when the side-lengths satisfy size bounds for $K\in\R$. To be precise \cite[Lem.\ 2.1]{AB:08} gives the following.
\begin{pop}[Comparison triangles exist]\label{lorTrianglesExist}
Given three non-negative reals $a_{12},a_{23},a_{13}$ satisfying the reverse triangle inequality $a_{12}+a_{23}\leq a_{13}$ and timelike size bounds for $K\in\R$, there exists a causal triangle $\Delta p_1 p_2 p_3$ in a normal neighborhood in the $K$-plane such that $\tau(p_i,p_j)=a_{ij}$ (for $i<j$).

Any two such triangles $\Delta p_1 p_2 p_3$, $\Delta q_1 q_2 q_3$ (for the same side-lengths $a_{ij}$) in the $K$-plane are related by an isometry $\varphi$ mapping one to the other. The isometry $\varphi$ is unique unless $a_{13}=0$ (making all $a_{ij}=0$) or the reverse triangle inequality is actually an equality.
\end{pop}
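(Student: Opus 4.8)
The plan is to build the comparison triangle by fixing two of its vertices on a timelike geodesic and then sweeping the third vertex through a one-parameter family indexed by the hyperbolic angle at the past vertex, invoking the intermediate value theorem for existence; uniqueness will follow from the transitivity properties of the isometry group of the model space. Since $a_{13}<D_K$, I would first place the two points $p_1,p_3$ with $\tau(p_1,p_3)=a_{13}$ on a single future-directed timelike geodesic inside a normal (geodesically convex) neighborhood $U$ of the $K$-plane $\lm{K}$, where any two chronologically related points are joined by a unique maximal geodesic realizing $\tau$. Without loss of generality fix a base point $p_1=o$ and set $p_3=\exp_o(a_{13}u_0)$ for a future unit timelike $u_0\in T_o\lm{K}$.

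For the third vertex I would consider, for $\theta\geq0$, the point $p_2(\theta):=\exp_o(a_{12}u_\theta)$, where $u_\theta$ is obtained from $u_0$ by a hyperbolic rotation (Lorentz boost) of rapidity $\theta$ in $T_o\lm{K}$; thus $\theta$ is exactly the hyperbolic angle $\ma_o$ between $\overline{p_1p_2}$ and $\overline{p_1p_3}$. The heart of the argument is that $f(\theta):=\tau(p_2(\theta),p_3)$ is continuous and strictly decreasing on the range where $p_2(\theta)\ll p_3$, with $f(0)=a_{13}-a_{12}$ (the collinear, degenerate configuration) and $f$ decreasing to $0$ as $p_2(\theta)$ reaches the past null cone of $p_3$. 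In the flat case $K=0$ this is the explicit Minkowski computation: with $p_1=(0,0)$, $p_3=(a_{13},0)$, $p_2=a_{12}(\cosh\theta,\sinh\theta)$ one gets
\begin{equation}
 f(\theta)^2 = a_{13}^2 - 2a_{13}a_{12}\cosh\theta + a_{12}^2,
\end{equation}
which is manifestly continuous and strictly decreasing in $\theta\geq0$. For $K\neq0$ the same monotonicity is read off from the law of cosines in $\lm{K}$, where $\cosh\theta$ again enters with a negative coefficient. Since the reverse triangle inequality gives $0\leq a_{23}\leq a_{13}-a_{12}=f(0)$, the intermediate value theorem produces a $\theta^*$ with $f(\theta^*)=a_{23}$, and $\Delta p_1p_2(\theta^*)p_3$ is the desired triangle. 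The degenerate boundary cases ($a_{12}=0$, $a_{23}=0$, or $a_{12}+a_{23}=a_{13}$) are covered either by the endpoint $\theta^*=0$ or by placing the relevant vertex on a null geodesic, and one checks directly that $p_1\leq p_2\leq p_3$ holds throughout.

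For uniqueness up to isometry I would use that the isometry group of $\lm{K}$ acts transitively and that the stabilizer of a point acts transitively (through boosts) on future unit timelike directions. Given a second triangle $\Delta q_1q_2q_3$ with the same side lengths, choose the isometry $\varphi$ sending $q_1\mapsto p_1$ and the direction of $\overline{q_1q_3}$ to $u_0$; since $\tau(q_1,q_3)=a_{13}=\tau(p_1,p_3)$ and geodesics are unique in the normal neighborhood, $\varphi(q_3)=p_3$. The hyperbolic angle at $q_1$ solves $f(\theta)=a_{23}$, and strict monotonicity of $f$ forces it to equal the same $\theta^*$ as above, so $\varphi(q_2)=p_2$ up to the reflection fixing the geodesic through $p_1,p_3$. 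The isometry is therefore forced except when a nontrivial isometry fixes all three vertices, which happens exactly when they are collinear, i.e. $a_{12}+a_{23}=a_{13}$ (the reflection across the common geodesic fixes them), or when $a_{13}=0$, forcing $p_1=p_2=p_3$ (any stabilizer element works).

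The hard part will be the monotonicity and surjectivity of $f$ in the curved cases $K\neq0$: one must work in the universal cover of the (anti-)de Sitter surface, verify the explicit form of the law of cosines there, and use the size bound $a_{13}<D_K$ to guarantee that the entire configuration stays inside a normal neighborhood where $\tau$ is realized by a unique geodesic and where $f$ is genuinely monotone (monotonicity can fail once one passes the timelike diameter for $K<0$). By contrast, the flat case reduces to the elementary computation displayed above, and the uniqueness clause is essentially linear-algebraic once transitivity of the isometry group on pointed future timelike directions is in hand.
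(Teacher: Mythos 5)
There is no in-paper proof to compare against here: the paper states this proposition as a quotation of \cite[Lem.\ 2.1]{AB:08} and never proves it, so your argument stands or falls on its own. On its own it is essentially sound, and it is the standard realizability argument: fix the long side $[p_1p_3]$, sweep the hinge at the past vertex through the rapidity $\theta$, observe that the opposite side length $f(\theta)$ decreases continuously and strictly from $a_{13}-a_{12}$ to $0$, and apply the intermediate value theorem; uniqueness then follows from transitivity of the isometry group on pointed future unit timelike directions plus strict monotonicity of $f$, the residual freedom being exactly the reflection across $[p_1p_3]$. One simplification you missed: you do not need to re-derive the curved law of cosines yourself, since Lemma \ref{lorLawOfCosines} and the monotonicity statement of Remark \ref{rem-loc-mon} supply precisely the input you call ``the hard part,'' and their proofs in the appendix nowhere use Proposition \ref{lorTrianglesExist}, so invoking them is not circular. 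Three loose ends should still be tightened. First, when $a_{12}=0$ your family collapses to $p_2(\theta)\equiv p_1$; the fix you gesture at (sweep $p_2$ along a future null ray from $p_1$, along which $\tau(\cdot,p_3)$ decreases continuously from $a_{13}$ to $0$) works but needs to be written as a separate intermediate-value argument, including the uniqueness step for that case. Second, for $K<0$ the claim that the whole configuration lies in a normal neighborhood is exactly where the size bound $a_{13}<D_K$ does its work (timelike geodesics in $\lm{K}$ refocus only at distance $D_K$, so the diamond $J(p_1,p_3)$ sits inside a normal neighborhood in which $\tau$ is realized by unique geodesics); this should be verified rather than asserted. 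Third, your parenthetical that $a_{13}=0$ forces $p_1=p_2=p_3$ is not quite right: three distinct points on a null geodesic also have all pairwise time separations zero, and such configurations with different affine ratios are not even related by an isometry. This is harmless only because that degenerate case is excluded from the uniqueness clause, and under the paper's definition of an admissible causal triangle (which requires timelike relations among the vertices) it supports no genuine triangle at all.
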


\subsubsection{Triangle comparison}

In this subsection we introduce timelike curvature bounds as in \cite[Subsec.\ 4.3]{KS:18}.

Let $\Xll$ be a \LpLSn , $\lm{K}$ be the $K$-plane with time separation function $\bar{\tau}$ and $p_1\ll p_2\ll p_3$ be three timelike related points in $X$. A \emph{comparison triangle} of $\Delta p_1 p_2 p_3$ in the Lorentzian $K$-plane is a timelike triangle with vertices $\widebar{p_1}\ll\widebar{p_2}\ll\widebar{p_3}$ in the $K$-plane with agreeing side-lengths, i.e., $\tau(p_i,p_j)=\bar{\tau}(\widebar{p_i},\widebar{p_j})$ for $i<j$. We say the vertex $p_i$ corresponds to $\widebar{p_i}$. Moreover, we say that the side $\alpha_{ij}$ connecting $p_i$ to $p_j$ ($i<j$) corresponds to the side $\widebar{\alpha_{ij}}$ connecting $\widebar{p_i}$ to $\widebar{p_j}$. 

For a point $q$ on some side $\alpha_{ij}$ of the triangle we define the corresponding point $\widebar{q}$ on $\widebar{\alpha_{ij}}$ by requiring equal distances to the endpoints of the curve it is on, i.e., we require $\tau(p_i,q)=\bar{\tau}(\widebar{p_i},\widebar{q})$ (and then automatically, $\tau(q,p_j)=\bar{\tau}(\widebar{q},\widebar{p_j})$ as $\alpha_{ij}$ and $\widebar{\alpha_{ij}}$ are distance realizing). Note that it might be necessary to specify which side $q$ should be considered to be on (as two sides can partially overlap in $X$, but not in $\lm{K}$, unless the triangle is degenerate). If we take two such points $q_1,q_2$ (usually on different sides), we can compare the time separation of $q_1$ to $q_2$ with the time separation of the corresponding points in the comparison situation. This sets the stage for timelike and causal curvature comparison.

\begin{defi}[Timelike curvature bounds]\label{def-tri-com}
Let $X$ be a \LpLS and $K\in\R$. An open subset $U$ is called a \emph{timelike $\geq K$-comparison neighborhood} (or \emph{timelike $\leq K$-comparison neighborhood}) or just \emph{comparison neighborhood} if
\begin{itemize}
\item $\tau$ is finite and continuous on $U\times U$,
\item $U$ is strict timelike geodesically connected and
\item for all timelike triangles $\Delta p_1p_2p_3$ in $U$ satisfying timelike size bounds for $K$, $q_1,q_2$ two points on some sides $\alpha$ and $\beta$ and all (any) comparison situations $\Delta \widebar{p_1}\widebar{p_2}\widebar{p_3},\widebar{q_1},\widebar{q_2}$ in the $K$-plane, the time separation satisfies 
\[\tau(q_1,q_2)\leq\bar{\tau}(\widebar{q_1},\widebar{q_2})\quad\text{ (or }\tau(q_1,q_2)\geq\bar{\tau}(\widebar{q_1},\widebar{q_2})\text{)}\,.\]
\end{itemize}
We say $X$ has \emph{timelike curvature bounded below by $K$} if it is covered by timelike $\geq K$-comparison neighborhoods. Likewise, it has \emph{timelike curvature is bounded above by $K$} if it is covered by timelike $\leq K$-comparison neighborhoods.
\end{defi}
\begin{rem}
Note that in the case of timelike curvature bounded below, $q_1\ll q_2$ implies $\widebar{q_1}\ll\widebar{q_2}$, and in the case of timelike curvature bounded above, $\widebar{q_1}\ll\widebar{q_2}$ implies $q_1\ll q_2$. Furthermore, if $U$ is also a locally causally closed neighborhood we also have for curvature bounded below that $q_1\leq q_2$ implies $\widebar{q_1}\leq\widebar{q_2}$ and in curvature bounded above the other implication holds.
\end{rem}

In an analogous way we can define causal curvature bounds for admissible causal triangles where two vertices are not necessarily timelike related, cf.\ \cite[Subsec. 4.5]{KS:18}.

\begin{rem}[Automatic size bounds]\label{lorAutomaticSizeBounds}
If $X$ is strongly causal and has timelike curvature bounded above / below, we can restrict ourselves to comparison neighborhoods $U$ where timelike size bounds are automatically satisfied and of the form of timelike diamonds, and there exists a basis of the topology of such neighborhoods, see \cite[Rem.\ 2.1.8]{Ber:20}.
\end{rem}

%
%

\section{Angles}\label{sec-ang} 
In this section we introduce comparison angles, establish foundational properties and define the notion of an angle between timelike curves.

\subsection{Definition and basic properties}

\begin{defi}[$K$-comparison angles]
 Let $K\in\R$, let \Xll be a \LpLS and let $x,y,z\in X$ be \emph{causally} related, i.e.\ $x\leq y\leq z$ or a permutation of this, and all time separations between these points finite. If $K<0$ we demand that the side-lengths of $\Delta x y z$ satisfy the timelike size bounds for $K$. Let $x\ll y$ and $x\ll z$ and let $\ct$ be a comparison triangle of $\Delta x y z$ in $\lm{K}$.
The \emph{$K$-comparison (hyperbolic)} angle at $x$ is defined as
 \begin{equation}
  \tilde\ma_x^K(y,z):=\ma^{\lm{K}}_{\bar x}(\bar y,\bar z)\,.
 \end{equation}

 Additionally, for notational simplicity we set $\tilde\ma_x(y,z)$ $:=\tilde\ma_x^0(y,z)$. 

To make the notation more concise we define the signed angle: If the angle is at a future or past endpoint, we set $\sigma=-1$, if it is not, $\sigma=1$.
We define the \emph{signed angle $\tilde\ma_x^{K,\mathrm{S}}(y,z):=\sigma\tilde\ma_x^K(y,z)$}. 
\end{defi}
\begin{rem}
Note that in case $x\ll y\ll z$ we have $\tilde\ma_x^{K,\mathrm{S}}(y,z)=-\tilde\ma_x^K(y,z)$, so minus the ``usual'' angle, see Figure \ref{fig-ang-tri}. Moreover, we will sometimes refer to the value of $\sigma$ when we want to distinguish cases depending on the time orientation of the vertices. Note also that $\tilde\ma_x^K(y,z)=\tilde\ma_x^K(z,y)$.
\end{rem}

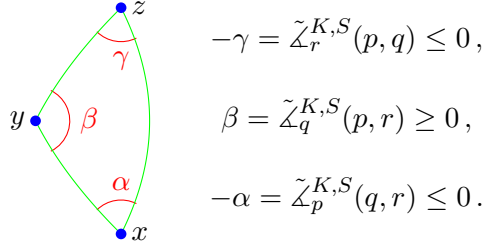
\begin{figure}[h!]
\begin{center}
\bgroup
\def\arraystretch{2.2}
\begin{tabular}{cc}
\multirow{3}{*}{
\begin{tikzpicture}[x=1.5cm,y=1.5cm]
\draw [shift={(-0.25,-1.)},color=red]  (63.43494882292201:0.3) arc (63.43494882292201:135.:0.3) ; 
\draw [shift={(-1.,0.)},color=red]  (-63.43494882292201:0.3) arc (-63.43494882292201:63.43494882292201:0.3) ;
\draw [shift={(-0.25,1.)},color=red]  (-135.:0.3) arc (-135.:-63.43494882292201:0.3) ;
\draw[color=green,smooth,samples=100,domain=-1:1] plot({-\x*\x/4},{\x});
\draw[color=green,smooth,samples=100,domain=-1:0] plot({\x*\x/4-\x/2-1},{\x});
\draw[color=green,smooth,samples=100,domain=0:1] plot({\x*\x/4+\x/2-1},{\x});
\fill[color=blue] (-0.25,-1.) circle (2pt);
\fill[color=blue] (-1.,0.) circle (2pt);
\fill[color=blue] (-0.25,1.) circle (2pt);
\draw (-0.25,-1.) node[anchor=west]{$x$} (-1.,0.) node[anchor=east]{$y$} (-0.25,1.) node[anchor=west]{$z$};
\draw[color=red] (-0.25,-1.) +(0,0.3) node[anchor=south]{$\alpha$} (-1.,0.) +(0.3,0) node[anchor=west]{$\beta$} (-0.25,1.) +(0,-0.3) node[anchor=north]{$\gamma$};
\end{tikzpicture}
} 
& $-\gamma=\tilde\ma_r^{K,S}(p,q)\leq 0\,,$\\
& $\beta=\tilde\ma_q^{K,S}(p,r)\geq 0\,,$\\
& $-\alpha=\tilde\ma_p^{K,S}(q,r)\leq 0\,.$ 
\end{tabular} 
\egroup
\end{center}
\caption{The three signed angles of a triangle.}\label{fig-ang-tri}
\end{figure}

\begin{rem}[Angles in comparison spaces]
Let $\bar x,\bar y,\bar z\in \lm{K}$, such that $\bar x\ll \bar y$, $\bar x\ll \bar z$ then $\ma^{\lm{K}}$ is defined as follows:
 \begin{equation}
  \ma^{\lm{K}}_{\bar x}(\bar y,\bar z)= \arcosh\Bigl(\bigl|\lara{\gamma_{\bar x,\bar y}'(0),\gamma_{\bar x,\bar z}'(0)}\bigr|\Bigr)\,,
 \end{equation}
where $\gamma_{r,s}$ is the unit speed timelike geodesic from $r$ to $s$ and $\lara{\cdot,\cdot}$ is the metric on $\lm{K}$. We have an analogous equation at the other vertices. Note that $\arcosh\Bigl(\bigl|\lara{\gamma_{\bar x,\bar y}'(0),\gamma_{\bar x,\bar z}'(0)}\bigr|\Bigr)$ is defined as the argument is greater equal than one by the reverse Cauchy Schwarz inequality for timelike vectors: $|\lara{v,w}|\geq |v||w|$.

The sign $\sigma$ of the signed angle $\tilde\ma_x^{K,\mathrm{S}}(y,z)$ is just the sign of $\lara{\gamma_{\bar x,\bar y}'(0),\gamma_{\bar x,\bar z}'(0)}$ (which is the reason to define it this way).
\end{rem}
Physically, the angle is a monotonously increasing function in the change of velocity of a particle switching from $\alpha$ to $\beta$.
Moreover, note that in case $\sigma=1$, the angles might seem to go against our (spatial) intuition: The angle $\omega$ between two parts of a straight line in Minkowski space is $0$ here, in contrast to the Euclidean angle taking the maximum value $\pi$, as indicated in Figure \ref{fig-zeroAngle}.
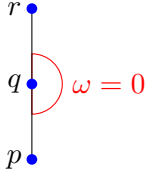
\begin{figure}[h!]
\begin{center}
\begin{tikzpicture}
\draw [color=red] (0,0) -- (-90.:0.4) arc (-90.:90.:0.4) -- cycle;
\draw (0,-1) -- (0,1);
\fill[color=blue] (0,-1) circle(2pt) (0,0) circle(2pt) (0,1) circle(2pt);
\draw (0,-1) node[anchor=east]{$p$} (0,0) node[anchor=east]{$q$} (0,1) node[anchor=east]{$r$};
\draw[color=red] (0,0) +(0.4,0) node[anchor=west]{$\omega=0$};
\end{tikzpicture}
\end{center}
\caption{The $\omega=0$ angle with $\sigma=1$ appears large to our Euclidean intuition.}\label{fig-zeroAngle}
\end{figure}


A fundamental tool in (semi-)Riemannian and metric geometry is the law of cosines. For the convenience of the reader we include a proof in our setting in the appendix \ref{PrLorLOC}. Note that it also could be derived from  \cite[Lem.\ 2.2. and Lem.\ 2.3]{AB:08}. The definition of the comparison angle leads to the \emph{hyperbolic law of cosines} as follows.

\begin{lem}[Law of cosines]\label{lorLawOfCosines}
For $p,q,r$ in the Lorentzian $K$-plane forming a finite causal triangle (not necessarily in this order), let $a=\max(\tau(p,q),$ $\tau(q,p))$, $b=\max(\tau(q,r),\tau(r,q))$, $c=\max(\tau(p,r),\tau(r,p))$ be the side-lengths (note that in each maximum, one of the two arguments is $0$) with $a,b>0$, i.e., $q$ is timelike related to $p$ and $r$ but $p$ and $r$ need only be causally related. Let $\omega=\tilde{\ma}^K_q(p,r)$ be the hyperbolic angle at $q$, $\sigma$ be the sign of the signed angle $\tilde{\ma}^{K,\mathrm{S}}_q(p,r)$ and the scaling factor $s=\sqrt{|K|}$. Then we have:
\smallskip

\begin{tabular}{cl}
$a^2+b^2=c^2-2ab\sigma\cosh(\omega)$&for $K=0$\,,\\
$\cos(sc) = \cos(sa) \cos(sb)-\sigma\cosh(\omega) \sin(sa) \sin(sb)$&for $K<0$\,,\\
$\cosh(sc) = \cosh(sa) \cosh(sb)+\sigma\cosh(\omega) \sinh(sa) \sinh(sb)$&for $K>0$\,.
\end{tabular}
\end{lem}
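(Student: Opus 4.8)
The plan is to prove the three cases by direct computation in the respective $K$-plane model, reducing each to the reverse Cauchy--Schwarz setup already isolated in the ``Angles in comparison spaces'' remark. The strategy is to realize the triangle with explicit unit-speed timelike geodesics emanating from the vertex $q$, compute the relevant inner product $\langle \gamma_{\bar q,\bar p}'(0),\gamma_{\bar q,\bar r}'(0)\rangle$, and relate it to the side-length $c$ via the geometry of the model space. The sign $\sigma$ enters precisely through the sign of this inner product, as noted in the remark, so keeping $\sigma\cosh(\omega) = -\langle \gamma_{\bar q,\bar p}'(0),\gamma_{\bar q,\bar r}'(0)\rangle$ (or its appropriate analogue) is the bookkeeping device that unifies the cases.

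\emph{First} I would handle the flat case $K=0$, where $\lm{0}=\R^2_1$ is Minkowski space. Here I place $q$ at the origin and write $\bar p = q + a\,u$, $\bar r = q + b\,v$ where $u,v$ are the unit timelike velocity vectors $\gamma_{\bar q,\bar p}'(0)$ and $\gamma_{\bar q,\bar r}'(0)$, so $\langle u,u\rangle = \langle v,v\rangle = -1$. Then $c^2 = -\langle \bar p - \bar r, \bar p - \bar r\rangle = -\langle a u - b v, a u - b v\rangle = a^2 + b^2 + 2ab\langle u,v\rangle$. Since $\sigma\cosh(\omega)$ equals $\langle u,v\rangle$ by the definition of the signed angle, substituting gives $c^2 = a^2 + b^2 + 2ab\,\sigma\cosh(\omega)$, which rearranges to the stated identity $a^2 + b^2 = c^2 - 2ab\sigma\cosh(\omega)$. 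The only subtlety is verifying the causal configuration so that $\bar p - \bar r$ is causal and $c$ is indeed its (possibly vanishing) length, but this is guaranteed by the hypothesis that $p,q,r$ form a finite causal triangle.

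\emph{Next}, for $K\neq 0$ I would pass to the hyperboloid models of Definition \ref{dfnConstCurvLor} and use their ambient linear structure. The cleanest route is to use the exponential map at $\bar q$: in these constant-curvature models a unit-speed timelike geodesic from $\bar q$ reads $\gamma(t) = \cos(st)\,\bar q + \frac{1}{s}\sin(st)\,w$ for $K>0$ (with $w = \gamma'(0)$ a unit timelike vector in $T_{\bar q}$, and a hyperbolic-function analogue for $K<0$). I would then compute the ambient inner product $b(\bar p,\bar r)$ of the two endpoints directly from these geodesic formulas, expand using $\langle u,v\rangle = \sigma\cosh(\omega)$ together with the normalization of $\bar q$ and the orthogonality relations, and finally identify $b(\bar p,\bar r)$ with the appropriate cosine/cosh of $sc$ via the standard relation between ambient inner product and geodesic distance on a pseudosphere. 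Matching real and imaginary parts (equivalently, tracking the three scalar products $b(\bar q,\bar q)$, $b(\bar q,w)=0$, $b(u,v)$) yields the two trigonometric identities.

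\emph{The hard part} will be the careful handling of signs and the universal-cover/branch issues in the $K\neq 0$ models: the pseudospheres carry the inner product with the ``wrong'' sign in one slot (radius $\pm 1/K^2$ in $\R^3_1$ versus $\R^3_2$), and relating the ambient inner product $b(\bar p,\bar r)$ to $\cos(sc)$ or $\cosh(sc)$ requires selecting the correct branch of arccos/arcosh consistent with the timelike diameter bound $D_K$ and the size hypothesis imposed when $K<0$. Getting $\sigma$ to land on the correct term — so that it multiplies $\cosh(\omega)\sin(sa)\sin(sb)$ (resp.\ $\sinh(sa)\sinh(sb)$) and not the leading product — is exactly where the signed-angle convention pays off, but it demands disciplined tracking of which vectors are future- versus past-directed. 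Because the computation is routine once the model is set up, I would relegate the full verification to the appendix (as the authors do), presenting here only the reduction to the inner-product computation and the case split on the sign of $K$.
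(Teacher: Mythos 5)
Your proposal takes a genuinely different route from the paper. The paper does not compute in the model spaces at all: for $K=0$ it quotes Equation (2.1) of Alexander--Bishop, and for $K\neq 0$ it imports a unified (complexified) law of cosines for geodesic hinges from Kirchberger's thesis, written in terms of energies $E(\gamma)=-\tau^2$, and then converts via $\cos(ix)=\cosh(x)$, $\sin(ix)=i\sinh(x)$; the degenerate case $c=0$ (null opposite side) is then handled by a separate limiting argument $p_\eps\to p$ along the side $[qp]$. Your plan instead derives everything from scratch by placing the hinge in the explicit models and expanding ambient inner products. This is more self-contained, and it has a real advantage the paper's proof lacks: since the identification between the ambient product $b(\bar p,\bar r)$ and the side length (e.g.\ $b(\bar p,\bar r)=\tfrac{1}{s^2}\cosh(sc)$ in the $K>0$ model) holds for all \emph{causally} related pairs, the null case $c=0$ is covered uniformly and no limiting argument is needed — likewise your $K=0$ computation $c^2=-\langle \bar p-\bar r,\bar p-\bar r\rangle$ is pure vector algebra and is indifferent to whether $\bar p-\bar r$ is timelike or null.

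However, two concrete errors must be fixed. First, you have swapped the geodesic parametrizations for $K>0$ and $K<0$. In the $K>0$ (de Sitter-type) model inside $\R^3_1$, unit-speed \emph{timelike} geodesics are $\gamma(t)=\cosh(st)\,\bar q+\tfrac1s\sinh(st)\,w$; the trigonometric formula $\cos(st)\,\bar q+\tfrac1s\sin(st)\,w$ that you assign to $K>0$ is the $K<0$ (anti-de Sitter) parametrization in $\R^3_2$ — this is exactly why the finite timelike diameter $D_K=\pi/\sqrt{-K}$ occurs only for $K<0$. As written, your curve for $K>0$ does not even lie on the hyperboloid $\{b(v,v)>0\}$, so the computation would fail at the first step; with the two cases interchanged it goes through and reproduces both stated formulas. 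Second, your opening paragraph declares $\sigma\cosh(\omega)=-\langle\gamma_{\bar q,\bar p}'(0),\gamma_{\bar q,\bar r}'(0)\rangle$, whereas the paper's convention (and the relation you actually, and correctly, use in the $K=0$ computation) is $\sigma\cosh(\omega)=+\langle\gamma_{\bar q,\bar p}'(0),\gamma_{\bar q,\bar r}'(0)\rangle$; with the spurious minus sign the flat identity would come out as $c^2=a^2+b^2-2ab\sigma\cosh(\omega)$, which contradicts the lemma. Neither slip is structural, but both must be corrected before the sketch for $K\neq0$ can be completed; once they are, the deferred "hard part" is a short computation: cross terms vanish by $b(\bar q,u)=b(\bar q,v)=0$, and the size bound $c<D_K$ for $K<0$ pins down the branch of $\arccos$.
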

\begin{rem}[Analyticity and monotonicity in the law of cosines]\label{rem-loc-mon}¸
Note these three formulae join up to an \emph{analytic formula}: For the first equation, use $c^2=a^2+b^2+2ab\sigma\cosh(\omega)$. For the second equation, use $\frac{1-\cos(sc)}{|K|} = \frac{1-\cos(sa) \cos(sb)}{|K|}+\sigma\cosh(\omega) \frac{\sin(sa) \sin(sb)}{|K|}$. For the third equation, use $\frac{\cosh(sc)-1}{|K|} = \frac{\cosh(sa) \cosh(sb)-1}{|K|}+\sigma\cosh(\omega) \frac{\sinh(sa) \sinh(sb)}{|K|}$. Then the left- resp.\ right-hand-sides join up to an analytic formula in $c$ and $K$ resp.\ $a$, $b$, $\omega$ and $K$. 

In particular, fixing two sides and varying the third, $\omega$ is a strictly increasing function in the longest side and a strictly decreasing function in the other two sides.
\end{rem}

In the case $\sigma=-1$, we can even make the side opposite the angle spacelike.

\begin{cor}[Extended law of cosines]\label{cor-ext-loc}
Let $p,q,r$ in the Lorentzian $K$-plane with pairwise finite $\tau$-distances. We assume $p\ll q$ and $p\ll r$, but $q$ and $r$ not causally related and not all three on a single geodesic. Let $a=\tau(p,q)$, $b=\tau(p,r)$ be the side-lengths. Let $\omega=\ma^{\lm{K}}_p(q,r)$. \footnote{Note that this angle is still defined as an angle in the model space $\lm{K}$ even though the side opposite the angle is spacelike.} Then we have\\
\begin{tabular}{cl}
$a^2+b^2< 2ab\cosh(\omega)$&for $K=0$\,,\\
$1 < \cos(sa) \cos(sb)+\cosh(\omega) \sin(sa) \sin(sb)$&for $K<0$\,,\\
$1 > \cosh(sa) \cosh(sb)-\cosh(\omega) \sinh(sa) \sinh(sb)$&for $K>0$\,.
\end{tabular}
\end{cor}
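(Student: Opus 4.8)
The plan is to prove all three inequalities by a direct computation in the corresponding model space $\lm{K}$, exploiting that the side opposite the angle $\omega$ at $p$ is precisely the side that the law of cosines (Lemma \ref{lorLawOfCosines}) controls, except that it is now spacelike rather than causal, which is what turns the equality into a strict inequality. Since $p\ll q$ and $p\ll r$, the vertex $p$ is the past endpoint of the configuration, so the sign is $\sigma=-1$. Writing $u:=\gamma_{p,q}'(0)$ and $v:=\gamma_{p,r}'(0)$ for the future-directed unit timelike initial velocities of the geodesics issuing from $p$, this means $\lara{u,v}=-\cosh(\omega)$, with $\lara{u,u}=\lara{v,v}=-1$.

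First I would place $p$ and express $q$ and $r$ through the explicit unit-speed geodesic formulas on the respective hyperquadric. For $K=0$ one simply has $q=p+au$ and $r=p+bv$ in $\R^2_1$. For $K>0$, working on $\{x\in\R^3_1:b(x,x)=1/s^2\}$ with $s=\sqrt{K}$ and $b(p,u)=b(p,v)=0$, the timelike geodesic from $p$ with initial velocity $u$ is $t\mapsto\cosh(st)\,p+\tfrac1s\sinh(st)\,u$, so $q=\cosh(sa)\,p+\tfrac1s\sinh(sa)\,u$ and analogously for $r$. For $K<0$, on $\{x\in\R^3_2:b(x,x)=-1/s^2\}$ with $s=\sqrt{-K}$, the timelike geodesics are trigonometric, $t\mapsto\cos(st)\,p+\tfrac1s\sin(st)\,u$. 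In each case the routine check that $b(\gamma,\gamma)$ and $b(\gamma',\gamma')$ are constant confirms these curves stay on the hyperquadric and are unit-speed timelike.

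Next I would compute the single quantity that detects the causal character of the pair $(q,r)$. For $K=0$ this is $\lara{q-r,\,q-r}=-a^2-b^2+2ab\cosh(\omega)$, directly from the three inner products above. For $K\neq0$ it is the ambient product $b(q,r)$: the cross terms vanish because $b(p,u)=b(p,v)=0$, leaving $s^2 b(q,r)=\cosh(sa)\cosh(sb)-\cosh(\omega)\sinh(sa)\sinh(sb)$ for $K>0$ and $-s^2 b(q,r)=\cos(sa)\cos(sb)+\cosh(\omega)\sin(sa)\sin(sb)$ for $K<0$. The remaining ingredient is the dictionary between causal character and this quantity: in a normal (geodesically convex) neighborhood any two points are joined by a unique geodesic, and inserting the timelike versus spacelike geodesic formulas shows that $q,r$ are strictly spacelike (i.e.\ not causally related) exactly when $\lara{q-r,q-r}>0$ for $K=0$, when $s^2 b(q,r)<1$ for $K>0$, and when $-s^2 b(q,r)>1$ for $K<0$. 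Substituting the formulas above, these three conditions are literally the three claimed inequalities, and the ``not all on a single geodesic'' hypothesis rules out the borderline null case and hence forces strictness.

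The main obstacle I anticipate is not the algebra but making this causal dictionary rigorous for the curved models: one must invoke the explicit classification of geodesics on the hyperquadrics (cf.\ \cite{ONe:83}) and, since for $K\neq0$ the model space $\lm{K}$ is a universal cover rather than the hyperquadric itself, ensure that the whole configuration lies in a single normal neighborhood so that the ambient computation of $b(q,r)$ faithfully reflects the intrinsic causal relation of $q$ and $r$. The finiteness of all $\tau$-distances, together with the timelike size bounds in the case $K<0$, places $p,q,r$ in such a neighborhood, so representatives on the hyperquadric may be chosen and the classification applies, yielding the asserted strict inequalities.
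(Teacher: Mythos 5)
Your route is genuinely different from the paper's: the paper argues intrinsically, sliding $q$ along the geodesic $[pq]$ to the point $q'$ with $q'\leq r$, $q'\not\ll r$, applying the law of cosines (Lemma \ref{lorLawOfCosines}, with $\sigma=-1$ and null side $c=0$) to $\Delta p q' r$ and to a second triangle with sides $a,b,0$, and concluding via the monotonicity of the angle in the side length $a$ (Remark \ref{rem-loc-mon}); no ambient embedding or covering space ever enters. Your algebra, on the other hand, is correct: with $\lara{u,v}=-\cosh(\omega)$ one indeed gets $\lara{q-r,q-r}=2ab\cosh(\omega)-a^2-b^2$ in $\R^2_1$ and the stated expressions for $s^2b(q,r)$ on the hyperquadrics, and for $K=0$ your proof is complete, since in Minkowski space ``not causally related'' is precisely $\lara{q-r,q-r}>0$.

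The genuine gap is the causal dictionary for $K\neq0$, which is exactly the step you defer to ``classification of geodesics plus normal neighborhoods''. The model spaces $\lm{K}$ are universal covers, and on them the equivalences you assert are false. For $K<0$ the hyperquadric in $\R^3_2$ is totally vicious (it carries closed timelike curves), so the dictionary can only refer to the cover, and in the cover there are timelike related pairs --- shift by one full period of the unwrapped time coordinate, with small spatial separation --- whose ambient product still satisfies $-s^2b(q,r)>1$. Worse, for $K>0$ even the one implication you actually need, namely ``$q,r$ not causally related $\Rightarrow s^2b(q,r)<1$'', fails for general pairs in the cover: writing the cover conformally with time $\eta\in(-\pi/2,\pi/2)$, unwrapped space $\theta\in\R$ and strip causality, one has $s^2b(q,r)<1\Leftrightarrow\cos(\Delta\theta)<\cos(\Delta\eta)$, and pairs with $|\Delta\theta|$ near $2\pi$ and moderate $|\Delta\eta|$ are not causally related although $\cos(\Delta\theta)>\cos(\Delta\eta)$. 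What saves the statement is the hypothesis that $q$ and $r$ lie in a common $I^+(p)$, which forces $|\Delta\theta|+|\Delta\eta|<2\pi$ and hence the correct cosine comparison; this is where the real geometric work sits, and it is absent from your proposal. The normal-neighborhood patch cannot substitute for it: the corollary imposes no size bounds for $K>0$ (where $D_K=\infty$), and admissible configurations of the above type have $q$ and $r$ joined by no geodesic of $\lm{K}$ at all, so no convex normal neighborhood contains $\{p,q,r\}$ (for $K<0$ you also import ``timelike size bounds'', a hypothesis the corollary never makes). Finally, a small correction: strictness does not come from the ``not all on one geodesic'' hypothesis --- that hypothesis is redundant, being implied by $p\ll q$, $p\ll r$ and $q,r$ not causally related --- but from non-causal-relatedness itself, since null related points are causally related.
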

\begin{proof}
Let $q,r$ be not causally related. Without loss of generality we assume $\tau(p,r)\geq \tau(p,q)$ and consider the timelike geodesic $\alpha$ from $p$ to $q$.  We consider the parameter $t'=\sup \alpha^{-1}(I^-(r))$, then the point $q':=\alpha(t')$ is the point along $\alpha$ being null related to $r$, i.e., $q'\leq r$, $q'\not\ll r$. Note $\omega:=\ma^{\lm{K}}_p(q,r)=\ma^{\lm{K}}_p(q',r)$. We define the side-lengths: $a'=\tau(p,q')<a=\tau(p,q)$, $b=\tau(p,r)$. We apply the law of cosines (Lemma \ref{lorLawOfCosines} with $\sigma=-1$ and a null side) to this situation. We now form a new triangle $\Delta \bar p \bar q \bar r$ with $\tau(\bar p,\bar q)=a$, $\tau(\bar p,\bar r)=b$ and $\bar q\leq \bar r$ null related. Let $\bar\omega:=\ma_{\bar p}^{\lm{K}}(\bar q,\bar r)$. The triangles $\Delta \bar p \bar q \bar r$ and $\Delta p q' r$ only differ by the value of $\omega$ and the side-length $a>a'$. Now we apply the monotonicity statement of the law of cosines to $a$. Thus $\omega$ is a decreasing function of $a$, thus $\omega<\bar \omega$, and the formula in the law of cosines holds with equality for $a,b,c=0,\bar \omega$. Replacing $\bar \omega$ by $\omega$, one easily sees whether the right-hand-side increases or decreases.
\end{proof}
\begin{rem}
The extended law of cosines fits nicely with the monotonicity of the law of cosines, i.e., taking a timelike triangle and making the side opposite the angle null makes the angle larger, and similarly going from a null side to a spacelike side opposite the angle increases the angle. 
\end{rem}

The following is helpful for calculations and is an example how one can utilize the law of cosines. The proof is outsourced to the appendix \ref{PrLorLOC}.

\begin{lem}[Calculating one-sided comparison situations] \label{lorOneSidedCalcs}
(1) Let $K\in\mb{R}$ and let $a+b+c\leq d$, and $d$ satisfy timelike size bounds for $K$. Then construct a timelike triangle $\Delta p_1p_2p_3$ in $\lm{K}$ with sidelengths $a,b+c,d$. We get a point $q$ on the side $[p_2p_3]$ with $\tau(p_2,q)=b$ (see Figure \ref{fig-one_sided_comparisons}). We denote $x=\tau(p_1,q)$ and $s=\sqrt{|K|}$. Then we have
\begin{align*}
\text{for }K=0\\
x^2&=\frac{bd^2+a^2c}{b+c}-bc\,,\\
\text{for }K<0\\
\cos(sx)&=(\cos(sd)-\cos(sa)\cos(s(b+c)))\ \times\\
&\qquad\frac{\sin(sb)}{\sin(s(b+c))}+\cos(sa)\cos(sb)\,,\\
\text{for }K>0\\
\cosh(sx)&=(\cosh(sd)-\cosh(sa)\cosh(s(b+c)))\ \times\\
&\qquad\frac{\sinh(sb)}{\sinh(s(b+c))}+\cosh(sa)\cosh(sb)\,, \text{ and}
\end{align*}
\[
x^2=\frac{bd^2+a^2c}{b+c}-bc+O(s^2d^4)
\text{ for $K\to0$ or $d\to0$}\,.
\]
(2) Under the same conditions, we can construct a timelike triangle $\Delta p_1p_2p_3$ in $\lm{K}$ with sidelengths 
$a+b,c,d$. We get a point $q$ on the side $[p_1p_2]$ with $\tau(p_1,q)=a$ (see Figure \ref{fig-one_sided_comparisons}). We 
denote $x=\tau(q,p_3)$ and $s=\sqrt{|K|}$. Then we have

\begin{align*}
\text{for }K=0\\
x^2&=\frac{bd^2+c^2a}{b+a}-ba\,,\\
\text{for }K<0\\
\cos(sx)&=(\cos(sd)-\cos(sc)\cos(s(b+a)))\ \times\\
&\qquad \frac{\sin(sb)}{\sin(s(b+a))}+\cos(sc)\cos(sb)\,,\\
\text{for }K>0\\
\cosh(sx)&=(\cosh(sd)-\cosh(sc)\cosh(s(b+a)))\ \times\\
\qquad&\frac{\sinh(sb)}{\sinh(s(b+a))}+\cosh(sc)\cosh(sb)\,, \text{ and}
\end{align*}
\[
x^2=\frac{bd^2+c^2a}{b+a}-ba+O(s^2d^4)\text{ for $K\to0$ or $d\to 0$}\,.
\]

(3) Now let $K\in\mb{R}$ and let $a+b\leq c+d$, and $c+d$ satisfy timelike size bounds for $K$. Then construct a timelike triangle 
$\Delta p_1p_2p_3$ in $\lm{K}$ with sidelengths $a,b,c+d$. We get a point $q$ on the side $[p_1p_3]$ with $\tau(p_1,q)=c$ (see 
Figure \ref{fig-one_sided_comparisons}). We denote $x=\max(\tau(p_2,q),\tau(q,p_2))$ and $s=\sqrt{|K|}$. Then we have
\begin{align*}
\text{for }K=0\\
x^2&=\frac{cb^2+a^2d}{c+d}-cd\,,\\
\text{for }K<0\\
\cos(sx)&=(\cos(sb)-\cos(sa)\cos(s(c+d)))\ \times\\
&\frac{\sin(sc)}{\sin(s(c+d))}+\cos(sa)\cos(sc)\,,\\
\text{for }K>0\\
\cosh(sx)&=(\cosh(sb)-\cosh(sa)\cosh(s(c+d)))\ \times\\
&\frac{\sinh(sc)}{\sinh(s(c+d))}+\cosh(sa)\cosh(sc)\,, \text{ and}
\end{align*}
\[
x^2=\frac{cb^2+a^2d}{c+d}-cd+O(s^2(c+d)^4)\text{ for $K\to0$ or $c+d\to0$}\,.
\]
Note that in each case, the equations can be transformed to match the left-hand-sides of the equations in the analyticity statement of the law of cosines \ref{lorLawOfCosines}, and then the left- and right-hand-sides join up to an analytic formula.
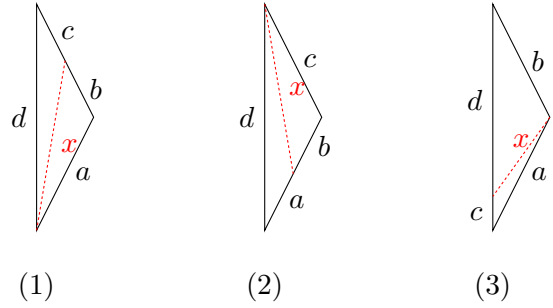
\begin{figure}[h!]
\begin{center}
\begin{tikzpicture}[x=1.5cm,y=1.5cm]
\begin{scope}
\draw (0,0) -- (0.5,1) -- (0,2) -- cycle;
\draw[color=red,dash pattern=on 1pt off 1pt] (0,0) -- ({(0.5+0)/2},{(1+2)/2});
\draw 
({(0+0.5)/2},{(0+1)/2}) node[anchor=west]{$a$}
({(2*0.5+0.5+0)/4},{(2*1+1+2)/4}) node[anchor=west]{$b$}
({(0.5+0+2*0)/4},{(1+2+2*2)/4}) node[anchor=west]{$c$}
({(0+0)/2},{(0+2)/2}) node[anchor=east]{$d$};
\draw[color=red]({(0+(0.5+0)/2)/2},{(0+(1+2)/2)/2}) node[anchor=west]{$x$};
\draw (0,-0.5) node{(1)};
\end{scope}

\begin{scope}[shift={(2,0)}]
\draw (0,0) -- (0.5,1) -- (0,2) -- cycle;
\draw[color=red,dash pattern=on 1pt off 1pt] (0,2) -- ({(0+0.5)/2},{(0+1)/2});
\draw 
({(2*0+0+0.5)/4},{(2*0+0+1)/4}) node[anchor=west]{$a$}
({(0+0.5+2*0.5)/4},{(0+1+2*1)/4}) node[anchor=west]{$b$}
({(0.5+0)/2},{(1+2)/2}) node[anchor=west]{$c$}
({(0+0)/2},{(0+2)/2}) node[anchor=east]{$d$};
\draw[color=red]({(0+(0+0.5)/2)/2},{(2+(0+1)/2)/2}) node[anchor=west]{$x$};
\draw (0,-0.5) node{(2)};
\end{scope}

\begin{scope}[shift={(4,0)}]
\draw (0,0) -- (0.5,1) -- (0,2) -- cycle;
\draw[color=red,dash pattern=on 1pt off 1pt] (0.5,1) -- (0,0.3);
\draw 
({(0+0.5)/2},{(0+1)/2}) node[anchor=west]{$a$}
({(0.5+0)/2},{(1+2)/2}) node[anchor=west]{$b$}
({(0+0)/2},{(0+0.3)/2}) node[anchor=east]{$c$}
({(0+0)/2},{(0.3+2)/2}) node[anchor=east]{$d$};
\draw[color=red]({(0.5+0)/2},{(1+0.3)/2}) node[anchor=south]{$x$};
\draw (0,-0.5) node{(3)};
\end{scope}

\end{tikzpicture}
\end{center}
\caption{The three one-sided comparison situations (case (3) consists of two cases: one can distinguish whether $x$ points to 
the future or the past).}\label{fig-one_sided_comparisons}
\end{figure}
\end{lem}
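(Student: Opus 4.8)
The plan is to prove all three parts by the same two-step recipe: apply the hyperbolic law of cosines (Lemma~\ref{lorLawOfCosines}) once in the full comparison triangle $\Delta p_1p_2p_3$ to solve for the quantity $\sigma\cosh(\omega)$ at a suitably chosen vertex, and once in the sub-triangle cut off by $q$, which shares that very vertex; then substitute and simplify to the displayed $K=0$, $K<0$, $K>0$ expressions.

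The geometric observation that makes this work is that $q$ lies on the geodesic realising one of the sides, so the geodesic from the shared vertex to $q$ is an initial subarc of the geodesic realising that full side, and hence has the \emph{same} initial velocity. Concretely, in case~(1) the shared vertex is $p_2$: since $q\in[p_2p_3]$ sits at $\tau$-arclength $b$ from $p_2$, we have $\gamma'_{p_2,q}(0)=\gamma'_{p_2,p_3}(0)$, so not only the angle $\omega=\tilde\ma^K_{p_2}(p_1,p_3)=\tilde\ma^K_{p_2}(p_1,q)$ but also its sign $\sigma$ (the sign of $\langle\gamma'_{p_2,p_1}(0),\gamma'_{p_2,p_3}(0)\rangle$) coincide between the big triangle $\Delta p_1p_2p_3$ (sides $a,b+c,d$) and the sub-triangle $\Delta p_1p_2q$ (sides $a,b,x$). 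Thus $\sigma\cosh(\omega)$ is a common quantity: I would read it off Lemma~\ref{lorLawOfCosines} in the big triangle and insert it into the formula for the sub-triangle, after which the purely algebraic simplification produces the claimed identities. Cases~(2) and~(3) are identical in spirit, only the bookkeeping changes: in~(2) the shared vertex is again $p_2$ (now $\gamma'_{p_2,q}(0)=\gamma'_{p_2,p_1}(0)$, sub-triangle $\Delta qp_2p_3$), while in~(3) the shared vertex is the past endpoint $p_1$ (with $\gamma'_{p_1,q}(0)=\gamma'_{p_1,p_3}(0)$, sub-triangle $\Delta p_1p_2q$). The size bound $d<D_K$ (resp.\ $c+d<D_K$) for $K<0$ is precisely what guarantees existence of the comparison triangle via Proposition~\ref{lorTrianglesExist} and that the denominators $\sin(s(b+c))$, $\sin(s(b+a))$, $\sin(s(c+d))$ are nonzero, so the divisions are legitimate.

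For the trailing asymptotic identities I would invoke the analyticity recorded in Remark~\ref{rem-loc-mon}: after rewriting the $K\neq 0$ formulas in the form indicated at the end of the statement, both sides become analytic in $K$ (and in the lengths), reducing at $K=0$ to the stated quadratic formula, so $x^2=(x^2)|_{K=0}+O(K)$. Since in these formulas $s$ always multiplies a length, $s$ carries dimension (length)$^{-1}$ and $K=\pm s^2$ dimension (length)$^{-2}$; as $x^2$ is a (length)$^2$, the coefficient of $K$ in the expansion is homogeneous of degree $4$ in the lengths, and all side-lengths are $\le d$, which bounds the correction by $O(s^2d^4)$ both as $K\to 0$ and as $d\to 0$.

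The step I expect to be the main obstacle is case~(3). There $p_2$ and $q$ need not be timelike related, which is exactly why $x=\max(\tau(p_2,q),\tau(q,p_2))$ is taken. One must check that whenever $p_2$ and $q$ \emph{are} causally related, Lemma~\ref{lorLawOfCosines} applies in both sub-cases $p_2\ll q$ and $q\ll p_2$ with the same sign $\sigma=-1$ (both $\gamma'_{p_1,p_2}(0)$ and $\gamma'_{p_1,q}(0)$ are future-directed timelike at $p_1$, so their inner product is negative) and produces the same formula for $x^2$; the genuinely spacelike configurations fall outside the law of cosines and correspond to the formal value $x^2\le 0$, which is the regime of Corollary~\ref{cor-ext-loc}. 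The second delicate point is making the error term uniformly $O(s^2d^4)$ rather than merely $o(1)$, for which the analyticity-plus-scaling argument above is essential.
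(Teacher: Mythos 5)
Your proposal is correct and is essentially the paper's own proof: the paper likewise applies the law of cosines (Lemma~\ref{lorLawOfCosines}) twice at the shared vertex, where the angle and its sign agree because the segment to $q$ is a subarc of the corresponding full side, then cancels $\cosh(\omega)$ and solves for $x$, treating the two sub-cases of (3) ($q\ll p_2$ and $p_2\ll q$) uniformly since $\sigma=-1$ at $p_1$ in both. The only cosmetic differences are that the paper obtains (2) as the time-reverse of (1) rather than by a second direct computation, and justifies the $O(s^2d^4)$ statements by directly inserting the power series of $\cos$, $\sin$, $\cosh$, $\sinh$ instead of your analyticity-plus-scaling argument.
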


At this point we introduce the angle between timelike curves.

\begin{defi}[Angles between curves]
 Let \Xll be a \LpLSn, where $\tau$ is locally finite-valued. Let $\alpha,\beta\colon[0,\eps)\rightarrow X$ be two future or past directed timelike curves with $\alpha(0)=\beta(0)=:x$. We define
\begin{align}
  A_K&:=\{(s,t)\in(0,\eps)^2\colon\\
  &\alpha(s)\leq\beta(t)\text{ and } \Delta x \alpha(s)\beta(t)\text{satisfies size bounds for }K \text{ or }\\
  &\beta(t)\leq\alpha(s)\text{ and } \Delta x\beta(t)\alpha(s)\text{ satisfies size bounds for }K\}\,.
 \end{align}
 The \emph{upper angle} between $\alpha$ and $\beta$ at $x$ is
 \begin{equation}
  \ma_x(\alpha,\beta):=\limsup_{(s,t)\in A_0;\,s,t\searrow 0} \tilde\ma_x(\alpha(s),\beta(t))\,.
 \end{equation}
Moreover, if $\lim_{(s,t)\in A_0;\,s,t\searrow 0} \tilde\ma_x(\alpha(s),\beta(t))$ exists and is finite, then we call $\ma_x(\alpha,\beta)$ the \emph{angle} between $\alpha$ and $\beta$ at $x$, and say \emph{the angle between $\alpha$ and $\beta$ at $x$ exists}. 
Similarly to the signed angle for three points, we define the signed angle between curves: If $x$ is the future or past endpoint, i.e., $\alpha$ and $\beta$ have the same time orientation, we set $\sigma=-1$ and if it is not, i.e., $\alpha$ and $\beta$ have different time orientation, we set $\sigma=1$. We define the signed angle $\ma_x^{\mathrm{S}}(\alpha,\beta)=\sigma\ma_x(\alpha,\beta)$. 
\end{defi}
\begin{rem}
The set $A_K$ is precisely the set of $(s,t)$ for which $\tilde\ma_x^K(\alpha(s),\beta(t))$ is defined. Also, note that the signed angle between $\alpha$ and $\beta$ has the same sign as the signed angle of $\Delta\alpha(s)x\beta(t)$ at $x$ for any $s,t>0$.
\end{rem}

Note that the definition of the upper angle of two future or past directed timelike curves $\alpha,\beta$ makes sense, i.e., there are $s,t\to 0$ with $(s,t)\in A_K$: In case $\alpha$ and $\beta$ have a different time orientation, i.e., $\sigma=1$, the condition $\alpha(s)\leq\beta(t)$ or conversely is automatically satisfied. In the other case, i.e., $\sigma=-1$, say without loss of generality $\alpha,\beta$ future directed. Then for fixed $t\in[0,\eps)$, there is a $\delta>0$ such that $\alpha([0,\delta))\subseteq I^-(\beta(t))$, i.e., $x\ll \alpha(s)\ll\beta(t)$ for all $s\in[0,\delta)$.

\bigskip

Another useful concept is the one of a hinge:
\begin{defi}[$K$-comparison hinges]
Let \Xll be a \LpLSn, where $\tau$ is locally finite-valued. We call two future or past directed timelike geodesics $\alpha:[0,a]\to X$, $\beta:[0,b]\to X$ with $\alpha(0)=\beta(0)=:x$ such that the angle between them exists a \emph{hinge}.

Let $(\alpha,\beta)$ be a hinge, and let $K\in\mb{R}$. Then a \emph{comparison hinge} is a hinge $(\bar{\alpha},\bar{\beta})$ in $\lm{K}$, where the sides have the same time orientation and with $\ma_x(\alpha,\beta)=\ma^{\lm{K}}_{\bar{x}}(\bar{\alpha},\bar{\beta})$, $L_\tau(\alpha)=L(\bar{\alpha})$ and $L_\tau(\beta)=L(\bar{\beta})$. 
\end{defi}
\begin{rem}
Comparison hinges always exist and are unique up to (unique) isometry of $\lm{K}$, if $L_\tau(\alpha)$ and $L_\tau(\beta)$ satisfy timelike size bounds for $K$, where the isometry is unique if none of $L_\tau(\alpha),L_\tau(\beta)$ and $\ma_x(\alpha,\beta)$ is zero.
\end{rem}

As a first compatibility check we establish below that in a ($\Con^2$, strongly causal) spacetime the angle between two timelike curves with respect to the Lorentzian metric and the angle as defined above agree.
\begin{pop}[Angles agree]\label{hypAnglesAgreeSpt}
Let $\alpha,\beta:[0,\varepsilon)\to M$ be future or past directed $\Con^1$-regular $g$-timelike curves in a strongly causal $\Con^2$-spacetime $(M,g)$ with $\alpha(0)=\beta(0)=x$. Then the angle $\ma_x(\alpha,\beta)$ in the \LpLS is the same as the Lorentzian angle in $(M,g)$, i.e.,
\begin{equation}
 \ma_x(\alpha,\beta)=\arcosh\Bigl(\frac{\bigl|g(\alpha'(0),\beta'(0))\bigr|}{|\alpha'(0)|\,|\beta'(0)|}\Bigr)\,.
\end{equation}
\end{pop}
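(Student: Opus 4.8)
The plan is to reduce everything, via the $K=0$ law of cosines (Lemma \ref{lorLawOfCosines}), to a single limit, and then compute it from the second-order expansion of the squared time separation. Fix $g$-normal coordinates centred at $x$, so that $g_{\mu\nu}(x)=\eta_{\mu\nu}$ and $\d_\lambda g_{\mu\nu}(x)=0$, and (shrinking $\eps$) work inside a convex normal neighbourhood of $x$, which exists and is causally well behaved since $(M,g)$ is strongly causal. Write $u=\alpha'(0)$, $w=\beta'(0)$ and set $V(s):=\exp_x^{-1}(\alpha(s))$, $W(t):=\exp_x^{-1}(\beta(t))$; since $D\exp_x|_0=\id$ these are $\Con^1$ with $V(s)=su+o(s)$ and $W(t)=tw+o(t)$. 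For $(s,t)\in A_0$ put $a:=\tau(x,\alpha(s))$, $b:=\tau(x,\beta(t))$, and let $c$ be the time separation of the causally related pair $\alpha(s),\beta(t)$. Because radial geodesics maximize in a normal neighbourhood, $a^2=-g_x(V(s),V(s))$ and $b^2=-g_x(W(t),W(t))$ hold \emph{exactly}, whence $a=s|u|+o(s)$ and $b=t|w|+o(t)$, where $|u|=\sqrt{-g_x(u,u)}$, etc. Solving the $K=0$ law of cosines for the signed comparison angle gives
\begin{equation}
 \sigma\cosh\bigl(\tilde\ma_x(\alpha(s),\beta(t))\bigr)=\frac{c^2-a^2-b^2}{2ab}\,,
\end{equation}
so it suffices to show that the right-hand side converges, as $(s,t)\in A_0$ with $s,t\searrow0$, to $g_x(u,w)/(|u||w|)$.

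The key input is the generalized law of cosines for the squared Lorentzian distance: since both points lie in a common normal neighbourhood,
\begin{equation}
 c^2=\tau\bigl(\exp_xV,\exp_xW\bigr)^2=-g_x(V-W,V-W)+Q(V,W)\,,
\end{equation}
where the curvature correction $Q$ equals, up to higher order, $-\tfrac13\,g_x(R(V,W)W,V)$ (obtained from the Jacobi-field/second-variation computation along the connecting geodesic, valid since $g$ is $\Con^2$). The decisive point is that $Q$ vanishes whenever $V=0$ or $W=0$ and is homogeneous of degree two in \emph{each} of $V$ and $W$ separately, so that $Q(V(s),W(t))=O(|V|^2|W|^2)=O(s^2t^2)=o(st)$. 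Expanding the flat term gives $-g_x(V-W,V-W)-a^2-b^2=2g_x(V,W)=2st\,g_x(u,w)+o(st)$, while $2ab=2st\,|u||w|+o(st)$. Crucially all error terms are $o(st)$ rather than merely $o(s^2+t^2)$, hence are controlled uniformly in the relative size of $s$ and $t$; dividing, the quotient tends to $g_x(u,w)/(|u||w|)$ irrespective of how $(s,t)\to(0,0)$ within $A_0$. The full limit therefore exists (not just the $\limsup$), so the upper angle equals it.

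It remains to read off the two cases from the sign $\sigma$. If $\alpha,\beta$ carry the same time orientation, then $\sigma=-1$, both $u,w$ lie in the same cone, $g_x(u,w)<0$, and the limit is $-|g_x(u,w)|/(|u||w|)$; comparing with $\sigma\cosh(\tilde\ma)$ yields $\ma_x(\alpha,\beta)=\arcosh\bigl(|g_x(u,w)|/(|u||w|)\bigr)$. If the time orientations differ, then $\sigma=+1$, the vectors lie in opposite cones, $g_x(u,w)>0$, and the same computation (now with the triangle ordered $\beta(t)\ll x\ll\alpha(s)$) gives $+|g_x(u,w)|/(|u||w|)$, again reproducing the claimed formula.

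I expect the main obstacle to be exactly the uniformity flagged above. A crude estimate on the metric perturbation only yields $c^2=-g_x(V-W,V-W)+O\bigl((s+t)^4\bigr)$, and $O((s+t)^4)$ is \emph{not} $o(st)$ in the degenerate regimes where one parameter is much smaller than the other (e.g.\ $s\sim t^2$) or where $\alpha(s),\beta(t)$ approach a null relation and the comparison triangle degenerates. Overcoming this requires the sharper bilinear-in-each-slot bound $|Q(V,W)|\le C|V|^2|W|^2$, equivalently the facts that $c^2-(-g_x(V-W,V-W))$ vanishes identically when either $V=0$ or $W=0$ and has vanishing mixed Hessian at the origin (Synge's coincidence limit for the world function). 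Finally one checks that null-related pairs need no separate treatment: since the argument never divides by $c$, both the identity for $\sigma\cosh(\tilde\ma)$ and its limit are valid whether $c>0$ or $c=0$, so all of $A_0$ is handled simultaneously.
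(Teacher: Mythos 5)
Your route is genuinely different from the paper's: the paper proves this by a blow-up argument, rescaling a causally convex normal neighbourhood via $\varphi_\eps(v)=\exp_x(\eps v)$, showing that the rescaled curves and the rescaled metric $\tilde g_\eps=\frac{1}{\eps^2}(\varphi_\eps)_*g$ converge in $\mathcal{C}^1$ to the linear curves $t\mapsto t\alpha'(0)$, $t\mapsto t\beta'(0)$ and to $g_x$, and then letting the law of cosines pass to the limit. You instead expand the two-point function $\tau^2$ directly. Your reduction through the $K=0$ law of cosines, the exact identities $a^2=-g_x(V,V)$, $b^2=-g_x(W,W)$ (Gauss lemma in a causally convex normal neighbourhood, which is where strong causality enters), and the treatment of the flat term $2g_x(V,W)=2st\,(g_x(u,w)+o(1))$ are all correct, and you rightly identify that everything hinges on an estimate for $Q:=c^2+g_x(V-W,V-W)$ that is uniform in the ratio $s/t$.

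However, that central estimate is both overclaimed and not justified by what you cite. First, $Q$ is \emph{not} ``homogeneous of degree two in each of $V$ and $W$ separately''; only the putative leading term $-\tfrac13 g_x(R(V,W)W,V)$ has that structure, while the exact $Q$ carries remainders with no such homogeneity. Second, the asserted equivalence between $|Q(V,W)|\le C|V|^2|W|^2$ and ``$Q$ vanishes on the axes and has vanishing mixed Hessian at the origin'' is false: for a $\mathcal{C}^2$ function those facts yield only $Q=o(|V|^2+|W|^2)$ by Taylor's theorem, and this is \emph{not} $o(st)$ in exactly the anisotropic regimes (say $t\sim s^2$) that you flag yourself; conversely, the bilinear bound $O(|V|^2|W|^2)$ would require roughly four bounded mixed derivatives of the two-point function, which a $\mathcal{C}^2$ metric cannot supply. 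What actually closes the argument is the weaker, sufficient estimate $Q=o(|V|\,|W|)$, obtained from the representation
\begin{equation}
Q(V,W)=\int_0^1\!\!\int_0^1 \frac{\partial^2}{\partial r\,\partial\rho}\,Q(rV,\rho W)\,dr\,d\rho\,,
\end{equation}
which is valid precisely because $Q$ vanishes identically on both axes, combined with (a) continuity of the mixed second derivatives $\partial_{V}\partial_{W}Q$ near the origin and (b) the Synge coincidence limit $\partial_V\partial_W Q(0,0)=0$: then $|Q(V,W)|\le |V||W|\sup|\partial_V\partial_W Q|=|V||W|\,o(1)$ uniformly in the ratio. Point (a) itself needs an argument at $\mathcal{C}^2$ regularity: the full two-point function is only $\mathcal{C}^1$ by naive counting, but its \emph{first} derivatives are expressible through $\exp^{-1}$, which is $\mathcal{C}^1$, so the mixed second derivatives (each slot differentiated once) exist and are continuous even though pure second derivatives are not controlled. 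As written, with the bound $O(s^2t^2)$ and the claimed ``equivalence'' as justification, the decisive step of your proof is not established; with the replacement just described, it is.
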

\begin{proof}
First, note that by \cite[Ex.\ 3.24(i)]{KS:18} $(M,g)$ gives rise to a \LLS and in particular the time separation function is locally finite. Second, by \cite[Lem.\ 2.21(i)]{KS:18} any $g$-timelike curve is timelike in the relation sense.

Let $U$ be a causally convex normal neighbourhood of $x$. We now consider $\varphi_\varepsilon:\frac{1}{\varepsilon}\exp_x^{-1}(U)\to U$ given by $\varphi_\varepsilon(v)=\exp_x(\varepsilon v)$. We pullback the curves and the metric via $\varphi_\varepsilon$, i.e., $\tilde{\alpha}_\varepsilon(t):=\varphi^{{-1}}_\varepsilon(\alpha(\varepsilon t))$ and $\tilde{\beta}_\varepsilon(t):=\varphi^{{-1}}_\varepsilon(\beta(\varepsilon t))$ for $t\in [0,1)$ and note that they converge uniformly in $\mathcal{C}^1$ to $\tilde{\alpha}(t):=t\alpha'(0)$ and $\tilde{\beta}(t):=t\beta'(0)$ as $\varepsilon\searrow 0$. Similarly, the metric $\tilde{g}_\varepsilon=\frac{1}{\varepsilon^2} (\varphi_\varepsilon)_*g$ converges locally uniformly in $\mathcal{C}^1$ to $g_x$, which we can assume to be the Minkowski metric. In particular, we obtain a time separation function $\tilde{\tau}_\varepsilon$ on $\frac{1}{\varepsilon}\exp_x^{-1}(U)$ from the metric $\tilde{g}_\varepsilon$. Note that as $U$ is causally convex, we have $\varepsilon\tilde{\tau}_\varepsilon(p,q)=\tau(\varphi_\varepsilon(p),\varphi_\varepsilon(q))$.

As the (Minkowski-)comparison angle is scale-invariant, we have that $\tilde{\ma}_x(\alpha(\varepsilon s),\beta(\varepsilon t))=\tilde{\ma}_0(\tilde{\alpha}_\varepsilon(s),\tilde{\beta}_\varepsilon(t))$, where the latter uses $\tilde{\tau}_\varepsilon$ to define the comparison angle. Finally, since $\alpha'(0)$ and $\beta'(0)$ are bounded away from the null cone, we have $\frac{\tilde{\tau}(x,\tilde{\alpha}(s))}{\tilde{\tau}_\varepsilon(x,\tilde{\alpha}_\varepsilon(s))} \to 1$ as well as $\frac{\tilde{\tau}(x,\tilde{\beta}(s))}{\tilde{\tau}_\varepsilon(x,\tilde{\beta}_\varepsilon(s))} \to 1$, and $\tilde{\tau}(\tilde{\alpha}(s),\tilde{\beta}(t)) - \tilde{\tau}_\varepsilon(\tilde{\alpha}_\varepsilon(s),\tilde{\beta}_\varepsilon(t)) \to 0$, where $\tilde\tau$ denotes the Minkowski time separation.

By the law of cosines formula, we see that the comparison angle between $\tilde{\alpha}_\varepsilon(s)$ and $\tilde{\beta}_\varepsilon(t)$ converges to the comparison angle between $\tilde{\alpha}(s)$ and $\tilde{\beta}(t)$ which is $\arcosh\left(\frac{|g_x(\alpha'(0),\beta'(0))|}{|\alpha'(0)||\beta'(0)|}\right)$.

\end{proof}

\medskip

As a consequence we obtain that the angle between timelike curves does not depend on the comparison angle used. This is an analog of \cite[Prop.\ II.3.1, p.\ 184]{BH:99} in the metric case.
\begin{pop}[All $\tilde{\ma}^K$ converge to $\ma$]\label{lorSphericalangleNoDepK}
Let $X$ be a strongly causal \LpLS with $\tau$ locally finite-valued, $\alpha,\beta$ timelike curves (each can be future or past directed) with $\alpha(0)=\beta(0)=x$. Then for all $K\in\R$, we have that 
\begin{align}
 \limsup_{(s,t)\in A_K;\, s,t\searrow 0}\tilde{\ma}^K_x(\alpha(s),\beta(t))=\ma_x(\alpha,\beta)\,,
\end{align}
and the limit superior on the left-hand-side is a limit and finite if and only if the angle between $\alpha$ and $\beta$ exists. 
\end{pop}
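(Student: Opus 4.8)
The plan is to compare, triangle by triangle, the $K$-comparison angle of the triple $x,\alpha(s),\beta(t)$ with its $0$-comparison angle, and to show that the two differ negligibly once the triangle is small, so that passing to the limit superior yields the same value. Two reductions come first. The sign $\sigma$ attached to the configuration $x,\alpha(s),\beta(t)$ is fixed by the causal relations alone, hence is the \emph{same} in every model plane $\lm{K}$; only the magnitude of the angle can depend on $K$. Next, the admissibility domains agree near the origin: for $K\geq 0$ the size bounds are vacuous, while for $K<0$ they reduce to the largest side being $<D_K$, which holds once the sides are small. Since $\alpha(s),\beta(t)\to x$ as $s,t\searrow 0$ and the three side-lengths of $\Delta x\alpha(s)\beta(t)$ tend to $0$ (this is where strong causality and local finiteness of $\tau$ enter, via monotonicity of $\tau$ along the timelike curves and the reverse triangle inequality), there is a $\delta>0$ with $A_K\cap(0,\delta)^2=A_0\cap(0,\delta)^2$, so both limit superiors run over the same $(s,t)$.

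The heart of the matter is an asymptotic comparison: if $(a_n,b_n,c_n)$ are the side-lengths of valid causal triangles with $\max(a_n,b_n,c_n)\to 0$, then $\tilde\ma^K-\tilde\ma^0\to 0$, read as ``both diverge, or both converge to the same finite limit''. To prove it I would solve the law of cosines (Lemma \ref{lorLawOfCosines}) for $\cosh(\omega)$, writing $\cosh\tilde\ma^K_x=|f_K(a,b,c)|$ with $a,b$ the sides adjacent to $x$ and $c$ the opposite side, where
\[
 f_0=\frac{c^2-a^2-b^2}{2ab},\qquad
 f_K=\frac{\cosh(sc)-\cosh(sa)\cosh(sb)}{\sinh(sa)\sinh(sb)}\ (K>0),
\]
and the analogous $\cos$-expression for $K<0$, $s=\sqrt{|K|}$. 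Expanding the hyperbolic/trigonometric functions exactly as in the analyticity statement of Remark \ref{rem-loc-mon} and the $O(s^2d^4)$ error terms of Lemma \ref{lorOneSidedCalcs} shows that the leading parts of numerator and denominator reproduce $c^2-a^2-b^2$ and $2ab$, with a correction of relative order $(\text{side})^2$, i.e.\ $f_K=f_0\bigl(1+O(s^2\max(a,b,c)^2)\bigr)$. Hence $\cosh\tilde\ma^K_x/\cosh\tilde\ma^0_x\to 1$. Writing $\arcosh(u)=\ln(u+\sqrt{u^2-1})$ then gives $\tilde\ma^K-\tilde\ma^0\to0$: on bounded angles the ratio bound forces $|f_K-f_0|\to0$ and one invokes uniform continuity of $\arcosh$ on compacta, while if $\tilde\ma^0\to\infty$ the relation $f_K/f_0\to1$ makes $\ln\frac{|f_K|+\sqrt{f_K^2-1}}{|f_0|+\sqrt{f_0^2-1}}\to0$.

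With this in hand the conclusion is formal. Given a sequence $(s_n,t_n)\in A_0$, $s_n,t_n\searrow0$, realizing $\ma_x(\alpha,\beta)=\limsup\tilde\ma^0_x(\alpha(s_n),\beta(t_n))$, the asymptotic comparison shows $\tilde\ma^K$ along the same sequence has the same limit, so $\limsup_{A_K}\tilde\ma^K\geq\ma_x(\alpha,\beta)$; running the argument with the roles of $K$ and $0$ reversed gives the opposite inequality, hence equality. The same comparison shows that $\lim_{A_0}\tilde\ma^0$ exists and is finite if and only if $\lim_{A_K}\tilde\ma^K$ does, with equal values, which is precisely the asserted characterization of when the angle exists.

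I expect the main obstacle to be the uniformity of $f_K/f_0\to1$ over the \emph{shape} of the shrinking triangles, particularly in the regime where the angle blows up, $|f_0|\to\infty$. The crude bound $|c^2-a^2-b^2|\geq 2ab$ is too weak there; one must use that the true denominator $a^2+b^2-c^2$ (case $\sigma=-1$, where $c\leq|b-a|$ by the reverse triangle inequality) or the true excess $c^2-a^2-b^2$ (case $\sigma=1$, where $c$ is the longest side) is exactly the quantity appearing in $f_0$, so that the $O(\text{side}^4)$ numerator correction is divided by this same quantity and the relative error stays $O(\text{side}^2)$, uniformly in the shape. A secondary technical point worth isolating is the preliminary claim that the side-lengths genuinely tend to $0$, i.e.\ $\tau(x,\alpha(s))\to0$ along a timelike curve, which must be extracted from strong causality and local finiteness rather than from continuity of $\tau$.
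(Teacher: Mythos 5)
Your proposal is correct and takes essentially the same route as the paper's proof: solve the law of cosines for the $\cosh$ of the $K$-comparison angle, Taylor-expand the trigonometric/hyperbolic expressions, and show the error relative to the Minkowski quotient $\sigma\frac{a^2+b^2-c^2}{2ab}$ vanishes uniformly in the shape of the shrinking triangle, whence the limit superiors over $A_K$ and $A_0$ coincide (in the finite and the divergent regime alike). The two obstacles you isolate are exactly the points the paper addresses: your requirement that the numerator corrections be measured against $a^2+b^2-c^2$ itself (absorbable since $|f_0|\geq 1$) is the paper's inductive estimate $a^n+b^n-c^n=o(a^2+b^2-c^2)+o(ab)$, and the fact that the side-lengths tend to $0$ is what the paper assumes by restricting to small $s,t$ via Remark \ref{lorAutomaticSizeBounds}.
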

\begin{pr}
We restrict to $t,s$ small enough to have the timelike size bounds automatically satisfied (cf.\ Remark \ref{lorAutomaticSizeBounds}). Setting $l=\sqrt{|K|}$, $\sigma=\pm1$ the appropriate sign and the side-lengths $a=\max(\tau(x,\alpha(s)),\tau(\alpha(s),x))$, $b=\max(\tau(x,\beta(t)),\tau(\beta(t),x))$, $c=\max(\tau(\alpha(s),\beta(t)),\tau(\beta(t),\alpha(s)))$ we apply the law of cosines (Lemma \ref{lorLawOfCosines}): If $K>0$, we obtain
\[\cosh(\tilde{\ma}^K_x(\alpha(s),\beta(t)))=\sigma\frac{\cosh(la) \cosh(lb)-\cosh(lc)}{\sinh(la) \sinh(lb)}=:(\star)\,.\]

We first look at the denominator of this fraction: Note $\sinh(x)=x+o(x)$ as $x\to 0$, so we have 
\begin{equation*}
\sinh(la)\sinh(lb)=l^2ab + o(la)\cdot lb + la\cdot o(lb)=l^2ab\cdot (1+o(1))\,.
\end{equation*}
Now we have $\frac{1}{\sinh(la)\sinh(lb)}=\frac{1+o(1)}{l^2ab}$ and are done with the denominator.

Now we look at the enumerator of this fraction: We use the Taylor series of $\cosh$, i.e., $\cosh(x)=1+\sum_{n\geq2, \text{ even}} \frac{x^n}{n!}$. For the whole enumerator, we thus have
\begin{align}
&\cosh(la)\cosh(lb)-\cosh(lc)\\
&=\sum_{n\geq2, \text{ even}} \frac{(la)^n}{n!} + \sum_{n\geq2, \text{ even}} \frac{(lb)^n}{n!} + o(l^2ab) -\sum_{n\geq2, \text{ even}} \frac{(lc)^n}{n!}\,,
\end{align}
where we abbreviated the sum of terms where $a^2b^2$ appeared by $o(l^2ab)$. We now look at each $n$ separately, together with the denominator: for $n=2$, we get the desired $\frac{l^2}{l^2}\cdot\frac{a^2+b^2-c^2}{2ab}$. For $n>2$, we get $\frac{l^n}{n!}$ times
\begin{align*}
&a^n+b^n-c^n\\
&= (a^2+b^2)(a^{n-2}+b^{n-2}-c^{n-2})+(a^2+b^2-c^2)c^{n-2} -a^2 b^{n-2} - a^{n-2}b^2\\
&=o(1)(a^{n-2}+b^{n-2}-c^{n-2})-(a^2+b^2-c^2)o(1) + o(ab)\,,
\end{align*}
now we use induction on $n$ to get $a^n+b^n-c^n=o(a^2+b^2-c^2)$. 

In total, we have
\begin{align}
(\star)&=\sigma \frac{(a^2+b^2-c^2)\cdot\Bigl(1+\sum_{n\geq4, \text{ even}}\frac{l^{n-2}}{n!}o(a^2+b^2-c^2)\Bigr)+o(l^2ab)}{2ab}\\
&\times\,(1+o(1))\,.
\end{align}
Note that the sum in the enumerator is
\begin{align}
(a^2+b^2-c^2)\,o(1)\sum_{n\geq4, \text{ even}}\frac{l^{n-2}}{n!}\sum_{k=0}^{n-2}c^k\,, 
\end{align}
which is $o(a^2+b^2-c^2)$. So we have
\begin{equation*}
(\star)=\sigma \frac{(a^2+b^2-c^2)\cdot(1+o(1)) + o(ab)}{2ab}\cdot (1+o(1))\,.
\end{equation*}

For $a,b,c\to0$, this converges if and only if 
\[\cosh(\tilde{\ma}^0_x(\alpha(s),\beta(t)))=\sigma\frac{a^2+b^2-c^2}{2ab}\,,\]
converges, and they converge to the same value, and similarly for $K<0$.
\end{pr}


\section{Angles between timelike curves of the same time orientation}\label{sec-ang-same-to}

In this section we study the angle between curves and, in particular, geodesics that have the same time orientation, i.e., are all future or past directed. Some proofs of the results here are given later in Section \ref{sec-ang-arb-to}, where these results are proven in full generality, i.e., where the curves can have a different time orientation. However, no logical issues arise from this.
\medskip

We start by establishing that (upper) angles satisfy the (usual) triangle inequality if all the curves have the same time orientation.

\begin{thm}[Triangle inequality for (upper) angles]\label{thm-ang-tri-equ}
 Let \Xll be a strongly causal and locally causally closed \LpLS with $\tau$ locally finite-valued and locally continuous. Let $\alpha,\beta,\gamma\colon[0,B)\rightarrow X$ be  timelike curves with coinciding time orientation starting at $x:=\alpha(0)=\beta(0)=\gamma(0)$. Then
 \begin{equation}
  \ma_x(\alpha,\gamma)\leq \ma_x(\alpha,\beta) + \ma_x(\beta,\gamma)\,.
 \end{equation}
\end{thm}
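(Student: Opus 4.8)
The plan is to reduce the statement to an \emph{approximate} triangle inequality for Minkowski comparison angles and then pass to the limit. First I would reduce to the case $\sigma=-1$ (say all three curves future directed) and, invoking Proposition~\ref{lorSphericalangleNoDepK}, compute every comparison angle with $K=0$, so that throughout $\cosh\tilde\ma_x(y,z)=\frac{\tau(x,y)^2+\tau(x,z)^2-c^2}{2\,\tau(x,y)\,\tau(x,z)}$, where $c$ is the $\tau$-distance between $y$ and $z$ (max convention). Pick a sequence $(s_n,t_n)\to(0,0)$ in $A_0$ realizing the limit superior defining $\ma_x(\alpha,\gamma)$, and abbreviate $A_n:=\alpha(s_n)$, $C_n:=\gamma(t_n)$. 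Since $\tau$ is lower semicontinuous, the sets $I^-(A_n)$ and $I^-(C_n)$ are open and both contain $x$; hence for each $n$ I can choose $r_n\downarrow 0$ small enough that $B_n:=\beta(r_n)$ lies in $I^+(x)\cap I^-(A_n)\cap I^-(C_n)$, i.e.\ $x\ll B_n\ll A_n$ and $x\ll B_n\ll C_n$. This guarantees $(s_n,r_n),(r_n,t_n)\in A_0$ and that all three comparison angles $\tilde\ma_x(A_n,B_n)$, $\tilde\ma_x(B_n,C_n)$, $\tilde\ma_x(A_n,C_n)$ are defined.

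It then suffices to prove the \emph{approximate triangle inequality} $\tilde\ma_x(A_n,C_n)\le\tilde\ma_x(A_n,B_n)+\tilde\ma_x(B_n,C_n)+\varepsilon_n$ with $\varepsilon_n\to0$: taking $\limsup_n$ and using $\tilde\ma_x(A_n,C_n)\to\ma_x(\alpha,\gamma)$ together with $\limsup_n\tilde\ma_x(A_n,B_n)\le\ma_x(\alpha,\beta)$ and $\limsup_n\tilde\ma_x(B_n,C_n)\le\ma_x(\beta,\gamma)$ (both immediate from the $\limsup$ definition of the upper angle, as $(s_n,r_n),(r_n,t_n)\to(0,0)$ in $A_0$) yields the claim; the case $\ma_x(\alpha,\gamma)=\infty$ is covered by the same estimate, since it forces the right-hand side to diverge as well. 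The geometric content is that in a genuine Minkowski tangent space the numbers $\cosh\tilde\ma_x(\cdot,\cdot)$ are exactly the Gram products $-\langle u,u'\rangle$ of the future timelike unit direction vectors $u,u'$, the comparison angles are their hyperbolic distances on the unit hyperboloid, and the inequality is just the triangle inequality of that hyperbolic metric. One convenient way to organise the estimate is to glue the two Minkowski comparison triangles of $(x,A_n,B_n)$ and $(x,B_n,C_n)$ along $\bar x\bar B_n$ into one copy of $\lm{0}$, placing the three directions on a common hyperbolic geodesic with $\bar B_n$ in between, so that the angle subtended at $\bar x$ by $\bar A_n,\bar C_n$ is $\tilde\ma_x(A_n,B_n)+\tilde\ma_x(B_n,C_n)$; since the two configurations share the adjacent sides $\tau(x,A_n),\tau(x,C_n)$, the strict monotonicity of the law of cosines in the opposite side (Remark~\ref{rem-loc-mon}, transparent from the explicit $K=0$ formula, which is strictly decreasing in $c$) recasts the inequality as a lower bound on the true $\tau$-distance between $A_n$ and $C_n$ in terms of the glued configuration.

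The main obstacle is exactly this last estimate: at finite scale the comparison angles need \emph{not} satisfy the triangle inequality (equivalently, the four points $x,A_n,B_n,C_n$ need not embed isometrically in $\lm{0}$), and one must show that the resulting defect is $o(1)$ as the points coalesce. I would establish this by the same law-of-cosines asymptotics used in the proof of Proposition~\ref{lorSphericalangleNoDepK}: expanding the identity $\cosh\bigl(\tilde\ma_x(A_n,B_n)+\tilde\ma_x(B_n,C_n)\bigr)=\cosh\tilde\ma_x(A_n,B_n)\cosh\tilde\ma_x(B_n,C_n)+\sinh\tilde\ma_x(A_n,B_n)\sinh\tilde\ma_x(B_n,C_n)$ in the six $\tau$-distances and comparing with $\cosh\tilde\ma_x(A_n,C_n)$, the difference is controlled by a Gram-type determinant of the three directions which is nonnegative in the Minkowski model and whose deviation is governed by how far the reverse triangle inequalities among $A_n,B_n,C_n$ are from equality; all of these error contributions are of higher order in $\max(s_n,r_n,t_n)$. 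Here the one-sided comparison computations of Lemma~\ref{lorOneSidedCalcs} are the natural tool for pinning down the glued opposite side explicitly, and Corollary~\ref{cor-ext-loc} handles the degenerate sub-cases where an opposite side becomes null or spacelike. I expect the bookkeeping of these higher-order terms — uniformly over the a priori unconstrained ratios of $s_n,r_n,t_n$, which is precisely why $r_n$ must be chosen adaptively after $s_n,t_n$ — to be the technically heaviest part, and it is exactly where local finiteness and local continuity of $\tau$ and the coalescence of the points enter; note in particular that no differentiability of $\alpha,\beta,\gamma$ is available, so one cannot simply pass to tangent vectors and must argue with the distances directly.
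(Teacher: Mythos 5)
Your reduction has the right skeleton (pick points realizing the $\limsup$ for $\ma_x(\alpha,\gamma)$, bound the other two comparison angles by their upper angles, and conclude from an approximate triangle inequality for comparison angles), but the entire content of the theorem sits in that approximate inequality $\tilde\ma_x(A_n,C_n)\le\tilde\ma_x(A_n,B_n)+\tilde\ma_x(B_n,C_n)+\varepsilon_n$, and the mechanism you propose for it cannot work. The decisive obstruction is scale invariance: for $K=0$ every comparison angle and every reverse triangle inequality among the six $\tau$-distances of the quadruple $(x,A_n,B_n,C_n)$ is invariant under rescaling all six distances by $\lambda>0$, so the coalescence $s_n,r_n,t_n\to0$ produces no smallness whatsoever --- there are no ``higher-order terms in $\max(s_n,r_n,t_n)$''. (The expansions in the proof of Proposition \ref{lorSphericalangleNoDepK} compare different curvatures, where $\sqrt{|K|}\cdot a\to 0$ is a genuine small parameter; that mechanism is absent once $K=0$ is fixed.) Worse, the inequality you need is simply false at the level of distance data: the six numbers $\tau(x,B)=1$, $\tau(x,A)=2$, $\tau(x,C)=4$, $\tau(B,A)=1$, $\tau(B,C)=3$, $\tau(A,C)=1$ satisfy every reverse triangle inequality for $x\ll B\ll A\ll C$, yet they give $\tilde\ma_x(A,B)=\tilde\ma_x(B,C)=0$ while $\tilde\ma_x(A,C)=\arcosh(19/16)>0$. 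Hence no law-of-cosines bookkeeping or ``Gram-type determinant'' estimate applied to a single quadruple can produce your $\varepsilon_n$; whatever rules out such configurations must use hypotheses your argument never touches --- local causal closedness, continuity of $\tau$, and above all the whole curve $\beta$ rather than one point on it.

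Note also that your configuration $B_n\ll A_n$, $B_n\ll C_n$ is unsuitable even for the correct gluing strategy. Gluing the comparison triangles of $(x,B_n,A_n)$ and $(x,B_n,C_n)$ in $\lm{0}$ along $[\bar x\bar B_n]$, the desired angle inequality becomes, via the monotonicity of the law of cosines (Remark \ref{rem-loc-mon}: the angle at $x$ is decreasing in the opposite side), a \emph{lower} bound $\tau(A_n,C_n)\ge\bar\tau(\bar A_n,\bar C_n)$; but in your configuration the reverse triangle inequality $\tau(B_n,C_n)\ge\tau(B_n,A_n)+\tau(A_n,C_n)$ only bounds $\tau(A_n,C_n)$ from \emph{above} --- the wrong direction, and exactly what the numerical example exploits. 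The paper's proof instead works (by contradiction) with the configuration $\alpha(r)\ll\beta(s)\ll\gamma(t)$, where the reverse triangle inequality $\tau(\alpha(r),\gamma(t))\ge\tau(\alpha(r),\beta(s))+\tau(\beta(s),\gamma(t))$ points the right way, and crucially it does not fix one point of $\beta$: it lets $s$ range over $\beta^{-1}(I(\alpha(r),\gamma(t)))=(s_-,s_+)$ and selects $s$ by an intermediate value argument so that the two glued comparison triangles flatten into a single timelike line; and when no such intermediate point exists ($s_-\ge s_+$) it runs a separate argument with the two extreme points $b_\pm=\beta(s_\pm)$, producing a causality contradiction ($\bar b_-\ll\bar b_+$ in the comparison plane versus $\tau(x,b_-)\ge\tau(x,b_+)$ in $X$). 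These two mechanisms, together with the openness of $I^\pm$, the monotonicity of $\tau(x,\beta(\cdot))$, and local causal closedness they rely on, are precisely what is missing from your proposal; the ``technically heaviest part'' you defer is not bookkeeping but the theorem itself.
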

\begin{pr}
 This proof is a direct adaptation of the proof of \cite[Prop.\ I.1.14]{BH:99} or of \cite[Thm.\ 3.6.34]{BBI:01} in the metric setting, with several adaptions needed to keep track of the causality.

We establish the case where $\alpha,\beta,\gamma$ are all future directed. The prospective inequality is only non-trivial if $\ma_x(\alpha,\beta)<\infty$ and $\ma_x(\beta,\gamma)<\infty$ --- so let that be the case. Moreover, assume to the contrary that the inequality does not hold, so that there is an $\varepsilon>0$ such that
  \begin{equation}
  \ma_x(\alpha,\gamma)> \ma_x(\alpha,\beta) + \ma_x(\beta,\gamma) + 4\varepsilon\,.
 \end{equation}
By definition of the upper angles, there is a $B'\in(0,B)$ small enough such that the initial segments (defined on $[0,B']$) of $\alpha,\beta,\gamma$ are contained in a causally closed neighborhood, where $\tau$ is finite and continuous, and for all $r,t,s\in(0,B')$ we have the upper bound
 \begin{align}
  \tilde\ma_x(\alpha(r),\beta(s))&<\ma_x(\alpha,\beta) + \varepsilon\,,\\
  \tilde\ma_x(\beta(s),\gamma(t))&<\ma_x(\beta,\gamma) + \varepsilon\,.
 \end{align}
whenever the comparison angles exist, i.e., whenever $\alpha(r)\leq\beta(s)$ and $\beta(s)\leq\gamma(t)$ or the other way round.
 
Furthermore, there are sequences $\tilde r_n\searrow 0$ and $t_n\searrow 0$ with 
 \begin{align}
  \tilde\ma_x(\alpha(\tilde r_n),\gamma(t_n))&>\ma_x(\alpha,\gamma) - \frac{\varepsilon}{2}\,.
 \end{align}
 if $\ma_x(\alpha,\gamma)<\infty$, otherwise we can have that $\tilde\ma_x(\alpha(\tilde r_n),\gamma(t_n))>C$ for any $C>0$.
 
In particular $\alpha(\tilde r_n),\gamma(t_n)$ are causally related, without loss of generality we assume $\alpha(r_n)\leq \gamma(t_n)$ for all $n$. For $n\in\N$ let $0<r_n < \tilde r_n$ be so close to $\tilde r_n$ that by the (local) continuity of $\tau$ and the continuity in the law of cosines we get 
 \begin{align}\label{eq-lem-ang-tri-ine-rt}
  \tilde\ma_x(\alpha(r_n),\gamma(t_n))&>\ma_x(\alpha,\gamma) - \varepsilon\,,
 \end{align}
 and $\alpha(r_n)\ll \gamma(t_n)$ for all $n\in\N$.

We define
\begin{align*}
s^n_-&:=\inf\{\beta^{-1}(I^+(\alpha(r_n)))\}\,,\\
s^n_+&:=\sup\{\beta^{-1}(I^-(\gamma(t_n)))\}\,.
\end{align*}

For large enough $n$, we have $s^n_-<B'$ and $s^n_+<B'$: $I^-(\beta(\frac{B'}{2}))$ contains an initial segment of $\alpha$ and $\gamma$, thus $\alpha(r_n),\gamma(r_n)\in I^-(\beta(\frac{B'}{2}))$ for large enough $n$. Then we have $\beta(\frac{B'}{2})\in I^+(\alpha(r_n))\cap I^+(\gamma(t_n))$, thus $s^n_-\leq\frac{
B'}{2}$. As $X$ is chronological, we have $\beta(s)\not\ll\gamma(t_n)$ for $s\geq\frac{B'}{2}$, thus $s^n_+\leq\frac{B'}{2}$. We pick a particular such $n$ and will drop the subscript $n$ from now on.

We distinguish the cases $s_-<s_+$ and $s_-\geq s_+$. Consider first the case $s_-<s_+$: It is easy to see that $\beta^{-1}(I(\alpha(r),\gamma(t)))$ is the open interval $(s_-,s_+)$. For $s\in(s_-,s_+)$, we get a \emph{timelike tetrad} $x\ll\alpha(r)\ll\beta(s)\ll\gamma(t)$.
 
 Set $a:=\alpha(r),b(s):=\beta(s),c:=\gamma(t)$ and note that by construction, our indirect assumption reads
 \begin{equation}\label{eq-lem-ang-tri-ine-con}
  \tilde\ma_x(a,c)>\tilde\ma_x(a,b(s))+\tilde\ma_x(b(s),c)\,,
 \end{equation}
for all $s\in(s_-,s_+).$

We now create a comparison situation where all side lengths and two of the angles are realized: Choose comparison triangles for $\Delta xab(s)$ and $\Delta xb(s)c$ in the Minkowski plane $\mb{R}^2_1$, which share the side $[\bar x\bar b_s]$ and such that the points $\bar a_s$ and $\bar c_s$ are on different sides of the line $[\bar{x}\bar{b}_s]$, i.e., we pick points $\bar{x},\bar{a}_s,\bar{b}_s,\bar{c}_s$, such that $\tau(x,a)=\bar\tau(\bar x,\bar a_s)$, $\tau(x,b(s))=\bar\tau(\bar x,\bar b_s)$, $\tau(x,c)=\bar\tau(\bar x,\bar c_s)$, $\tau(a,b(s))=\bar\tau(\bar a_s,\bar b_s)$, $\tau(b(s),c)=\bar\tau(\bar b_s,\bar c_s)$, see Figure \ref{fig-flattened_tetrahedron}. This realizes the angles $\tilde{\ma}_x(a,b(s))$ and $\tilde{\ma}_x(b(s),c)$. Note that $\bar a_s,\bar b_s,\bar c_s$ may depend on $s$ in general. This choice of points can be made continuously in $s$, even when one of the sides becomes null (i.e., when extending $s$ to $s=s_-$ or $s=s_+$).

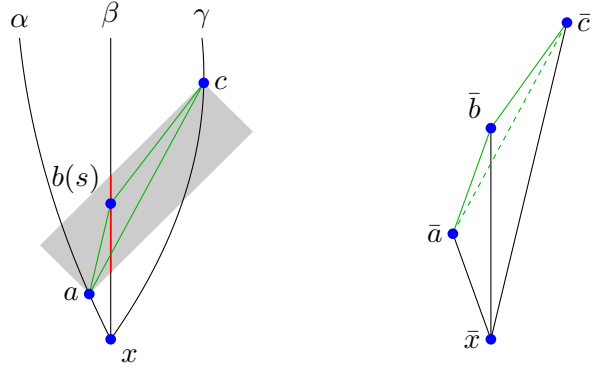
\begin{figure}[h!]
 \begin{center}
\begin{tikzpicture}[x=2cm,y=2cm]

\begin{scope} 
\fill[color=black!20] ({-0.5*0.3+0.1*0.3*0.3},{0.3}) -- ({(-0.5*0.3+0.1*0.3*0.3-0.3+0.7*1.7-0.2*1.7*1.7+1.7)/2},{(-(-0.5*0.3+0.1*0.3*0.3)+0.3+0.7*1.7-0.2*1.7*1.7+1.7)/2}) -- ({0.7*1.7-0.2*1.7*1.7},{1.7}) -- ({(-0.5*0.3+0.1*0.3*0.3+0.3+0.7*1.7-0.2*1.7*1.7-1.7)/2},{(-0.5*0.3+0.1*0.3*0.3+0.3-(0.7*1.7-0.2*1.7*1.7)+1.7)/2}) -- cycle;

\draw[smooth,samples=100,domain=0:2] plot({-0.5*\x+0.1*\x*\x},{\x});
\draw[smooth,samples=100,domain=0:2] plot(0,{\x});
\draw[smooth,samples=100,domain=0:2] plot({0.7*\x-0.2*\x*\x},{\x});
\draw[color=red, line width=0.6pt] (0,{0.3-(-0.5*0.3+0.1*0.3*0.3)/(((-0.5*0.3+0.1*0.3*0.3-0.3+0.7*1.7-0.2*1.7*1.7+1.7)/2)-(-0.5*0.3+0.1*0.3*0.3))*(((-(-0.5*0.3+0.1*0.3*0.3)+0.3+0.7*1.7-0.2*1.7*1.7+1.7)/2)-0.3)}) -- (0,{1.7-(0.7*1.7-0.2*1.7*1.7)/(((-0.5*0.3+0.1*0.3*0.3+0.3+0.7*1.7-0.2*1.7*1.7-1.7)/2)-(0.7*1.7-0.2*1.7*1.7))*(((-0.5*0.3+0.1*0.3*0.3+0.3-(0.7*1.7-0.2*1.7*1.7)+1.7)/2)-1.7)});

\draw[color=green!70!black] ({-0.5*0.3+0.1*0.3*0.3},{0.3}) -- ({0.7*1.7-0.2*1.7*1.7},{1.7}) -- (0,0.9) -- cycle;

\fill[color=blue] (0,0) circle(2pt) ({-0.5*0.3+0.1*0.3*0.3},{0.3}) circle(2pt) ({0.7*1.7-0.2*1.7*1.7},{1.7}) circle(2pt) (0,0.9) circle(2pt); 
\draw (0,0) node[anchor=north west]{$x$} 
({-0.5*2+0.1*2*2},{2}) node[anchor=south]{$\alpha$} 
({0},{2}) node[anchor=south]{$\beta$} 
({0.7*2-0.2*2*2},{2}) node[anchor=south]{$\gamma$} 
({-0.5*0.3+0.1*0.3*0.3},{0.3}) node[anchor=east]{$a$}
(0,0.9) node[anchor=south east]{$b(s)$}
({0.7*1.7-0.2*1.7*1.7},{1.7}) node[anchor=west]{$c$}
;

\end{scope}

\begin{scope}[shift={(2.5,0)}] 
\draw (0,0) -- (-0.25,0.7) (0,0) -- (0,1.4) (0,0) -- (0.5,2.1);
\draw[color=green!70!black] (-0.25,0.7) -- (0,1.4) (0.5,2.1) -- (0,1.4);
\draw[color=green!70!black,dash pattern=on 2pt off 2pt] (-0.25,0.7) -- (0.5,2.1);
\fill[color=blue] (0,0) circle(2pt) (-0.25,0.7) circle(2pt) (0,1.4) circle(2pt) (0.5,2.1) circle(2pt);
\draw (0,0) node[anchor=east]{$\bar{x}$} (-0.25,0.7) node[anchor=east]{$\bar{a}$} (0,1.4) node[anchor=south east]{$\bar{b}$} (0.5,2.1) node[anchor=west]{$\bar{c}$};
\end{scope}

\end{tikzpicture}
 \end{center}
\caption{On the left, the configuration in $X$ is shown. The grey area is $I(a,c)$, the red line is $\beta([s_-,s_+])$. In the comparison picture on the right, two points are connected by a line if this line has the same length as the corresponding line in $X$. The dashed line need not have the same length.}\label{fig-flattened_tetrahedron}
\end{figure}

We claim that there is an $s\in (s_-,s_+)$ such that $\bar{a}_s,\bar{b}_s,\bar{c}_s$ lie on a line (which then is timelike). For $s\in (s_-,s_+)$ let $L_s$ be the line connecting $\bar{a}_s$ with $\bar{c}_s$. Setting $s=s_-$, we get that $a$ and $b(s_-)$ are null related, but we can still form the comparison situation (denoted as above). In this new comparison configuration, it is obvious that $\bar{b}(s_-)=:\bar b_-$ lies below $L_{s_-}=:L_-$ as both $\bar{b}_-$ and $\bar{c}_s$ lie on the same side of $[\bar x \bar{a}_s]$, and $\bar{b}_-$ is in the causal future of $\bar a_s$ and $\bar{c}_s$ is in the timelike future of $\bar{a}_s$ as the family of lines $(L_s)_s$ are causal and hence cannot tilt too much, see Figure \ref{fig-flattened_tetrahedron_null_above_below}.

Similarly, setting $s=s_+$, we get that $b(s_+)$ and $c$ are null related. Here, it is obvious that $\bar{b}_{s_+}$ lies above $L_{s_+}$.

Now, by continuity of all the points, there is a value $s\in(s_-,s_+)$ such that $\bar{b}_s$ lies on $L_s$.
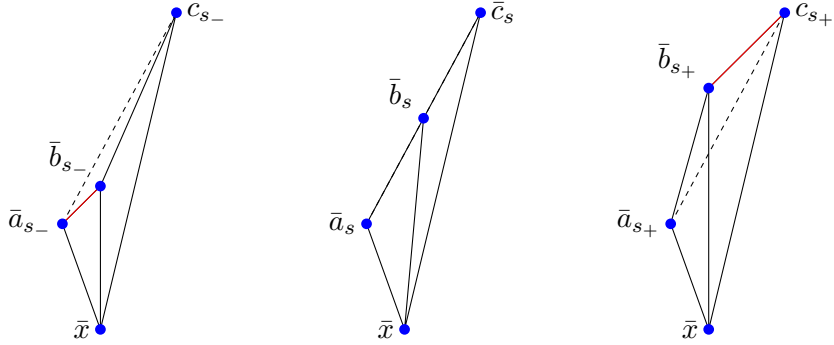
\begin{figure}[h!]
\begin{center}
\begin{tikzpicture}[x=2cm,y=2cm]

\begin{scope}[xshift=0cm]
\draw (0,0) -- (-0.25,0.7) -- (0,0.95) -- cycle -- (0.5,2.1) -- (0,0.95);
\draw[color=red] (-0.25,0.7) -- (0,0.95);
\draw[dash pattern=on 2pt off 2pt] (-0.25,0.7) -- (0.5,2.1);
\fill[color=blue] (0,0) circle(2pt) (-0.25,0.7) circle(2pt) (0,0.95) circle(2pt) (0.5,2.1) circle(2pt);
\draw (0,0) node[anchor=east]{$\bar{x}$} (-0.25,0.7) node[anchor=east]{$\bar{a}_{s_-}$} (0,0.95) node[anchor=south east]{$\bar{b}_{s_-}$} (0.5,2.1) node[anchor=west]{$\bar{c}_{s_-}$};
\end{scope}

\begin{scope}[shift={(2,0)}]
\draw (0,0) -- (-0.25,0.7) -- ({(-0.25+0.5)/2},{(0.7+2.1)/2}) -- cycle -- (0.5,2.1) -- ({(-0.25+0.5)/2},{(0.7+2.1)/2});
\draw[dash pattern=on 2pt off 2pt] (-0.25,0.7) -- (0.5,2.1);
\fill[color=blue] (0,0) circle(2pt) (-0.25,0.7) circle(2pt) ({(-0.25+0.5)/2},{(0.7+2.1)/2}) circle(2pt) (0.5,2.1) circle(2pt);
\draw (0,0) node[anchor=east]{$\bar{x}$} (-0.25,0.7) node[anchor=east]{$\bar{a}_s$} ({(-0.25+0.5)/2},{(0.7+2.1)/2}) node[anchor=south east]{$\bar{b}_s$} (0.5,2.1) node[anchor=west]{$\bar{c}_s$};
\end{scope}

\begin{scope}[shift={(4,0)}]
\draw (0,0) -- (-0.25,0.7) -- (0,1.6) -- cycle -- (0.5,2.1) -- (0,1.6);
\draw[color=red] (0.5,2.1) -- (0,1.6);
\draw[dash pattern=on 2pt off 2pt] (-0.25,0.7) -- (0.5,2.1);
\fill[color=blue] (0,0) circle(2pt) (-0.25,0.7) circle(2pt) (0,1.6) circle(2pt) (0.5,2.1) circle(2pt);
\draw (0,0) node[anchor=east]{$\bar{x}$} (-0.25,0.7) node[anchor=east]{$\bar{a}_{s_+}$} (0,1.6) node[anchor=south east]{$\bar{b}_{s_+}$} (0.5,2.1) node[anchor=west]{$\bar{c}_{s_+}$};
\end{scope}
\end{tikzpicture}
\end{center}
\caption{As the red lines are null and the dashed lines are timelike, we deduce that $\bar{b}_{s_-}$ is below $L_{s_-}$ and $\bar{b}_{s_+}$ is above $L_{s_+}$.}\label{fig-flattened_tetrahedron_null_above_below}
\end{figure}

Now we use this $s$ and drop the $s$-subscript, i.e., $\bar b:=\bar b_s$. Then we have $\bar\tau(\bar{a},\bar{c})=\bar\tau(\bar{a},\bar{b})+\bar\tau(\bar{b},\bar{c})$ as they lie on a straight line (a timelike Minkowski-geodesic). We choose $\tilde{c}$ such that $\Delta \bar{x} \bar{a} \tilde{c}$ is a comparison triangle of $\Delta xac$, with $\bar{c}$ and $\tilde{c}$ on the same side of $[x\bar{a}]$. In this situation, the angles under consideration are given by
\begin{itemize}
\item $\tilde{\ma}_x(a,b)={\ma}_{\bar{x}}(\bar{a},\bar{b})$,
\item $\tilde{\ma}_x(b,c)={\ma}_{\bar{x}}(\bar{b},\bar{c})$,
\item $\tilde{\ma}_x(a,c)={\ma}_{\bar{x}}(\bar{a},\tilde{c})$.
\end{itemize}
By the reverse triangle inequality, we know that $\tau(a,c)\geq\tau(a,b)+\tau(b,c)$, so $\bar{\tau}(\bar{a},\tilde{c})\geq\tau(\bar{a},\bar{b})+\tau(\bar{b},\bar{c})=\tau(\bar{a},\bar{c})$. Thus the triangles $\Delta\bar{x}\bar{a}\bar{c}$ and $\Delta\bar{x}\bar{a}\tilde{c}$ have all side lengths equal except possibly $[\bar{a}\bar{c}]$ and $[\bar{a}\tilde{c}]$, where we know $[\bar{a}\tilde{c}]$ is not the shorter one. Consequently, by the monotonicity statement in the law of cosines (Remark \ref{rem-loc-mon} with $\sigma=-1$, $K=0$), we conclude that ${\ma}_{\bar{x}}(\bar{a},\tilde{c})\leq{\ma}_{\bar{x}}(\bar{a},\bar{c})={\ma}_{\bar{x}}(\bar{a},\bar{b})+{\ma}_{\bar{x}}(\bar{b},\bar{c})$, where the last equality is due to the additivity of hyperbolic angles in the plane. This is a contradiction to the assumption, i.e., Equation \eqref{eq-lem-ang-tri-ine-con}.

At this point we consider the case $s_-\geq s_+$. Now we may not get a single point on $\beta$, but we may get two of them: Set $b_\pm=\beta(s_\pm)$. Then we have $x\ll\alpha(r)\leq b_-$ and $x\ll b_+\leq\gamma(t)$. We can form the following comparison situation:

For the triangle $\Delta x \alpha(r) \gamma(t)$ we get a comparison triangle $\Delta \bar x \bar{a} \bar{c}$. In the same diagram, we add a comparison triangle for $\Delta x \alpha(r) b_-$ and call the additional point $\bar b_-$, and add a comparison triangle for $\Delta x b_+ \gamma(t)$ and call the additional point $\bar b_+$. We set it up so that $\bar{a}$ is to the left of $\bar{c}$, $\bar b_-$ is to the right of $\bar{a}$ and $\bar b_+$ is to the left of $\bar{c}$, see Figure \ref{fig-flattened_tetrahedron_two_bs}. We again define $L$ as the (future directed timelike) straight line from $\bar{a}$ to $\bar{c}$. For the angles we get that
\begin{align*}
\omega_1&:=\tilde\ma_{\bar{x}}(\bar{a},\bar{b}_-)<\ma_x(\alpha,\beta)+\varepsilon\,,\\
\omega_2&:=\tilde \ma_{\bar{x}}(\bar{b}_+,\bar{c})<\ma_x(\beta,\gamma)+\varepsilon\,,\\
\omega_3&:=\tilde\ma_{\bar{x}}(\bar{a},\bar{c})>\ma_x(\alpha,\gamma)-\varepsilon\,.
\end{align*}

\begin{figure}[h!]
 \begin{center}
\begin{tikzpicture}[x=2cm,y=2cm]

\begin{scope} 

\fill[color=black!20] ({(-0.5*0.3+0.1*0.3*0.3)},{0.3}) -- ({((-0.5*0.3+0.1*0.3*0.3)-0.3+0+0.9)/2},{(-(-0.5*0.3+0.1*0.3*0.3)+0.3+0+0.9)/2}) -- ({0},{0.9}) -- ({((-0.5*0.3+0.1*0.3*0.3)+0.3+0-0.9)/2},{((-0.5*0.3+0.1*0.3*0.3)+0.3-0+0.9)/2}) -- cycle;

\draw[smooth,samples=100,domain=0:2] plot({-0.5*\x+0.1*\x*\x},{\x});
\draw[smooth,samples=100,domain=0:2] plot(0,{\x});
\draw[smooth,samples=100,domain=0:2] plot({0.7*\x-0.2*\x*\x},{\x});

\draw[color=green!70!black] ({-0.5*0.3+0.1*0.3*0.3},{0.3}) -- (0,0.9);
\draw[color=red] (0.47258047596141257430774001539149,0.91358047596141257430774001539149) -- ({-0.5*0.3+0.1*0.3*0.3},{0.3}) (0,0.9) -- (0.33272996566405883964384364758751,0.56727003433594116035615635241249);

\fill[color=blue] (0,0) circle(2pt) ({-0.5*0.3+0.1*0.3*0.3},{0.3}) circle(2pt) 
(0,0.9) circle(2pt)
(0.33272996566405883964384364758751,0.56727003433594116035615635241249) circle(2pt) 
(0.47258047596141257430774001539149,0.91358047596141257430774001539149) circle(2pt) 
; 
\draw (0,0) node[anchor=north west]{$x$} 
({-0.5*2+0.1*2*2},{2}) node[anchor=south]{$\alpha$} 
({0},{2}) node[anchor=south]{$\gamma$} 
({0.7*2-0.2*2*2},{2}) node[anchor=south]{$\beta$} 
({-0.5*0.3+0.1*0.3*0.3},{0.3}) node[anchor=east]{$a$}
(0,0.9) node[anchor=south east]{$c$}
(0.33272996566405883964384364758751,0.56727003433594116035615635241249) node[anchor=west]{$b_+$}
(0.47258047596141257430774001539149,0.91358047596141257430774001539149) node[anchor=west]{$b_-$}
;

\end{scope}

\begin{scope}[shift={(2.5,0)},x=3cm,y=3cm]

\draw (0,0) -- (-0.25,0.7) (0,0) -- (0.5,2.1) (0,0) -- ({-0.25+0.25},{0.7+0.25}) (0,0) -- ({0.5-0.25},{2.1-0.25});
\draw[color=green!70!black] (-0.25,0.7) -- (0.5,2.1);
\draw[color=red] (-0.25,0.7) -- ({-0.25+0.25},{0.7+0.25}) (0.5,2.1) -- ({0.5-0.25},{2.1-0.25});

\fill[color=blue] (0,0) circle(2pt) (-0.25,0.7) circle(2pt) (0.5,2.1) circle(2pt) ({-0.25+0.25},{0.7+0.25}) circle(2pt) ({0.5-0.25},{2.1-0.25}) circle(2pt);
\draw (0,0) node[anchor=east]{$\bar{x}$} (-0.25,0.7) node[anchor=east]{$\bar{a}$} (0.5,2.1) node[anchor=west]{$\bar{c}$} ({-0.25+0.25},{0.7+0.25}) node[anchor=south]{$\bar b_-$} ({0.5-0.25},{2.1-0.25}) node[anchor=south]{$\bar b_+$};

\coordinate (x) at (0,0);
\coordinate (a) at (-0.25,0.7);
\coordinate (c) at (0.5,2.1);
\coordinate (bm) at ({-0.25+0.25},{0.7+0.25});
\coordinate (bp) at ({0.5-0.25},{2.1-0.25});

\draw 
pic[draw,angle radius={1.5*0.4cm},color=orange]{angle = c--x--a} 
pic[draw,angle radius={1.5*0.8cm},color=orange]{angle = bm--x--a} 
pic[draw,angle radius={1.5*0.8cm},color=orange]{angle = c--x--bp} 
;
\draw[color=orange] 
({0.5*0.2/sqrt(0.5*0.5+2.1*2.1)},{2.1*0.2/sqrt(0.5*0.5+2.1*2.1)}) node[anchor=west]{$\omega_3$}
({0.5*0.4/sqrt(0.5*0.5+2.1*2.1)},{2.1*0.4/sqrt(0.5*0.5+2.1*2.1)}) node[anchor=west]{$\omega_2$}
({-0.25*0.4/sqrt(0.25*0.25+0.7*0.7)},{0.7*0.4/sqrt(0.25*0.25+0.7*0.7)}) node[anchor=east]{$\omega_1$}
;

\end{scope}

\end{tikzpicture}
 \end{center}
\caption{As the red lines are null and the green line is timelike, $b_-$ is below $L$ and $b_+$ is above $L$.}\label{fig-flattened_tetrahedron_two_bs}
\end{figure}
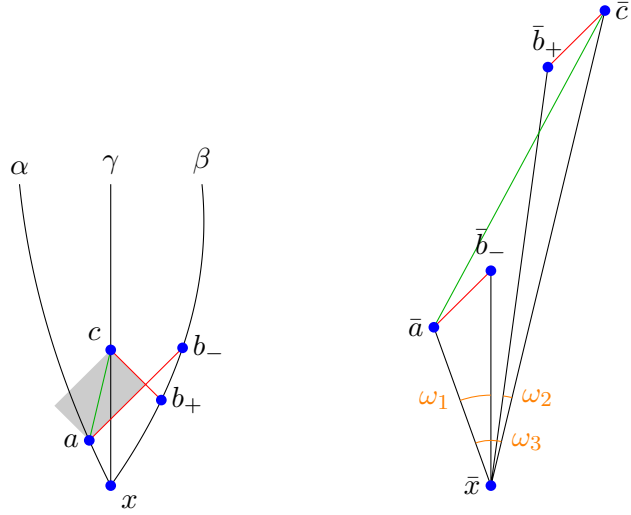

By assumption, we obtain $\omega_3>\omega_1+\omega_2+\varepsilon$, thus the line $[\bar{x}\bar{b}_-]$ is to the left of $[\bar{x}\bar{b}_+]$ (this follows from angle additivity in the plane). As $[\bar{a}\bar{b}_-]$ is future directed null and $L=[\bar{a}\bar{c}]$ is future directed timelike and both go to the right, $\bar{b}_-$ lies below $L$. As $[\bar{b}_+ \bar{c}]$ is null and $L=[\bar{a}\bar{c}]$ is timelike and both go to the right, $\bar{b}_+$ lies above $L$. 

We now construct a future directed timelike curve from $\bar{b}_-$ to $\bar{b}_+$ in the comparison space (that means $b_+\leq b_-$, but $\bar{b}_+\gg\bar{b}_-$): 

We begin at $\bar{b}_-$. As $\bar{b}_-$ lies below $L$, we use the extension of the line $[\bar{x}\bar{b}_-]$ to reach the line $L$, then go along a part of $L$. As $\bar{b}_+$ lies above $L$, we use the end of the line $[\bar{x}\bar{b}_+]$ to reach $\bar{b}_+$. 

In particular, $\bar{\tau}(\bar{b}_-,\bar{b}_+)>0$, and by the reverse triangle inequality $\tau(x,b_-)=\bar{\tau}(\bar{x},\bar{b}_-)<\bar{\tau}(\bar{x},\bar{b}_+)=\tau(x,b_+)$. However, in $X$ we have $\tau(x,b_-)\geq\tau(x,b_+)$, a contradiction. 

Finally, note that in case $\ma_x(\alpha,\gamma)=\infty$, one obtains analogous contradictions in the cases $s_-<s_+$ and $s_-\geq s_+$, as the comparison angle $\tilde\ma_{x}(a,c)$ is finite but arbitrarily large.
\end{pr}

\begin{pop}[Angle bound implies quantitative timelike relation] \label{angleBoundImpliesTimelikeRelation}
Let $X$ be chronological, $\tau$ be locally continuous, and let $\alpha,\beta:[0,\eta)\to X$ be two future directed timelike geodesics parametrized by $\tau$-arclength with $x:=\alpha(0)=\beta(0)$ such that $\ma_x(\alpha,\beta)=:\omega<\infty$.

Then for all $0<c<e^{-\omega}$ there is a $T>0$ small enough such that for all $0<t<T$ we have $\alpha(ct)\ll\beta(t)$.

\end{pop}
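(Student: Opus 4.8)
The plan is to carry out everything with the Minkowski ($K=0$) comparison angle, which is the one entering the definition of $\ma_x(\alpha,\beta)$, and to reduce the statement to a single explicit law-of-cosines computation. Since $\alpha,\beta$ are future directed timelike geodesics parametrized by $\tau$-arclength, for small parameters one has $\tau(x,\alpha(s))=s$ and $\tau(x,\beta(t))=t$. For a fixed $t$ I would introduce the scalar $f(s):=\tau(\alpha(s),\beta(t))$, which is continuous in $s$ on a small interval by local continuity of $\tau$, and which satisfies $f(s)>0\Leftrightarrow\alpha(s)\ll\beta(t)$. Whenever $\alpha(s)\ll\beta(t)$ we get $x\ll\alpha(s)\ll\beta(t)$, so $(s,t)\in A_0$, and Lemma~\ref{lorLawOfCosines} applied at the past endpoint $x$ (so $\sigma=-1$) gives
\[
\cosh\bigl(\tilde\ma_x(\alpha(s),\beta(t))\bigr)=\frac{s^2+t^2-f(s)^2}{2st}.
\]
The crucial elementary fact is $\tfrac12(\rho+\rho^{-1})=\cosh(\ln(1/\rho))$: as $f(s)\to0$ the right-hand side tends to $\tfrac12(s/t+t/s)$, so the angle tends to $\ln(t/s)$, and the null boundary at ratio $s/t=e^{-\omega}$ yields exactly the angle $\omega$. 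For a ratio $s/t\le c<e^{-\omega}$ the limiting (null) angle would be $\ge-\ln c>\omega$, which is the source of the contradiction.

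First I would fix $\varepsilon>0$ with $-\ln c>\omega+\varepsilon$ (possible since $c<e^{-\omega}$), and, using that $\ma_x(\alpha,\beta)=\omega$ is a $\limsup$, choose $T>0$ so small that the segments $\alpha|_{[0,cT]}$ and $\beta|_{[0,T]}$ are realizers lying in a neighbourhood where $\tau$ is finite and continuous, and so that $\tilde\ma_x(\alpha(s),\beta(t))<\omega+\varepsilon$ for all $(s,t)\in A_0$ with $0<s,t<T$. Assume towards a contradiction that $\alpha(ct)\not\ll\beta(t)$ for some $0<t<T$; then $f(ct)=0$. The reverse relation $\beta(t)\ll\alpha(ct)$ is excluded, since it would force $\tau(x,\alpha(ct))=ct\ge\tau(x,\beta(t))=t$, contradicting $c<1$. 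As $f(0)=t>0$ and $f$ is continuous, there is a first zero $s^\ast:=\min\{s\in(0,ct]:f(s)=0\}$, so $s^\ast\le ct$, i.e.\ $s^\ast/t\le c<1$, and $f>0$ on $[0,s^\ast)$.

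Finally, letting $s\nearrow s^\ast$ we have $\alpha(s)\ll\beta(t)$ and $f(s)\to f(s^\ast)=0$, so by the displayed formula
\[
\tilde\ma_x(\alpha(s),\beta(t))\longrightarrow \arcosh\!\Bigl(\tfrac{(s^\ast)^2+t^2}{2s^\ast t}\Bigr)=\ln\bigl(t/s^\ast\bigr)\ge -\ln c>\omega+\varepsilon .
\]
But every such $(s,t)$ lies in $A_0$ with $0<s<s^\ast\le ct<T$ and $t<T$, so $\tilde\ma_x(\alpha(s),\beta(t))<\omega+\varepsilon$ throughout, forcing the limit to be $\le\omega+\varepsilon$ — a contradiction. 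Hence $f(ct)>0$, i.e.\ $\alpha(ct)\ll\beta(t)$, for all $0<t<T$.

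The hard part will be that the transition from a timelike to a non-timelike relation can a priori happen through a \emph{spacelike} configuration, where $\tilde\ma_x$ is not defined at all, so one cannot simply read off a large comparison angle at $s=ct$. The device that resolves this is to track the continuous scalar $f(s)=\tau(\alpha(s),\beta(t))$ and pass to its first zero $s^\ast$, at which the configuration is necessarily null and the comparison angle attains the extremal value $\ln(t/s^\ast)\ge-\ln c$. This keeps the entire argument inside $A_0$ and uses only chronology (for the equivalence $\tau>0\Leftrightarrow{\ll}$ and to rule out the reverse relation) together with local continuity of $\tau$, matching the weak hypotheses of the statement.
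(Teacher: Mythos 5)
Your proof is correct and follows essentially the same route as the paper's: both use the $\limsup$ definition of the angle to bound the Minkowski comparison angle by $\omega+\varepsilon$ for all small admissible parameters, pass to the null boundary of the set $\{s:\alpha(s)\ll\beta(t)\}$ (your first zero $s^\ast$ of $f$ is the paper's $s_t$), and evaluate the law-of-cosines formula in that degenerate limit, where $\cosh$ of the limiting angle equals $\tfrac12\bigl(s^\ast/t+t/s^\ast\bigr)=\cosh\bigl(\ln(t/s^\ast)\bigr)$, forcing $s^\ast/t\geq e^{-\omega-\varepsilon}$. The only differences are organizational: you argue by contradiction with $s^\ast$ capped by $ct$ from the outset (which incidentally spares you the paper's auxiliary step of shrinking $T$ via $T_1=\min(\inf(\beta^{-1}(I^+(\alpha(\delta)))),\delta)$ to guarantee the parameters stay below $\delta$), whereas the paper directly derives $s_t\geq t\,e^{-\omega-\varepsilon}$ by solving the quadratic inequality and then tunes $\varepsilon$ to the given $c$.
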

\begin{proof}
By the definition of upper angles, we get that for all $\varepsilon>0$ there is a $\delta>0$ such that for all $s<\delta$, $t<\delta$ such that $\alpha(s)\ll\beta(t)$, $\tilde{\ma}_x(\alpha(s),\beta(t))<\omega+\varepsilon$.

Let $T_1=\min(\inf(\beta^{-1}(I^+(\alpha(\delta))),\delta)$. Then for $t<T_1$ and $s$ such that $\alpha(s)\ll\beta(t)$, we automatically have $s,t<\delta$ (and $T_1>0$ as $X$ is chronological and $\tau$ continuous).

For each $t$, we consider the set of parameters $s$ such that $\alpha(s)\ll\beta(t)$, i.e., $\alpha^{-1}(I^-(\beta(t)))=[0,s_t)\subseteq[0,\eta)$. 
By continuity of $\tau$, we have $\tau(\alpha(s),\beta(t))\to 0$ as $s\nearrow s_t$. 

Now by the definition of comparison angles we obtain
\begin{align}
\cosh(\omega+\varepsilon)&>\cosh(\tilde{\ma}_x(\alpha(s),\beta(t)))\\
&=\frac{s^2+t^2-\tau(\alpha(s),\beta(t))^2}{2\,s\,t}\,.
\end{align}
Again by continuity of $\tau$ this also holds in the limit $s\nearrow s_t$, i.e.,
\begin{equation}
 \cosh(\omega+\varepsilon)\geq\frac{s_t^2+t^2}{2\,s_t\,t}\,.
 \end{equation}
For simplicity, we set
$o:=\cosh(\omega+\varepsilon)$ and get the quadratic inequality
\begin{equation*}
0\geq s_t^2-2o s_t t+t^2\,,
\end{equation*}
yielding $t(o-\sqrt{o^2-1})\leq s_t\leq t(o+\sqrt{o^2-1})$. Note $o-\sqrt{o^2-1}=\cosh(\omega+\varepsilon)-\sqrt{\cosh(\omega+\varepsilon)^2-1}=\cosh(\omega+\varepsilon)-\sinh(\omega+\varepsilon)=\exp(-\omega-\varepsilon)$.
 In particular, we have that
\begin{equation}
s_t\geq t \exp(-\omega-\varepsilon)\,.
\end{equation}
That is, for $s<t \exp(-\omega-\varepsilon)$ we have $s<s_t$ and thus $\alpha(s)\ll\beta(t)$ by construction.

Finally, for all $c<e^{-\omega}$ we set $\varepsilon:=-\log(ce^\omega)>0$ and obtain $\alpha(s)\ll\beta(t)$ for all $0<t < T_1=T_1(\varepsilon)$, yielding the claim.
\end{proof}

At this point we introduce the notion of \emph{direction} of a timelike geodesic.

\begin{defi}[Direction]
Let $\Xll$ be a \LpLS with $\tau$ locally finite-valued.
For $x\in X$ we define $D_x^+:=\{\alpha\colon[0,\eps)\rightarrow X$ future directed timelike geodesic with $\alpha(0)=x$ for some $\eps>0\}$. We define a relation $\sim$ on $D_x^+$ by saying $\alpha\sim\beta$ if $\ma_x(\alpha,\beta)=0$. If $\alpha\sim\beta$, we say that $\alpha$ and $\beta$ \emph{have the same direction at $x$}. Similarly, we define $D_x^-:=\{\alpha\colon[0,\eps)\rightarrow X$ past directed timelike geodesic with $\alpha(0)=x$ for some $\eps>0\}$, and define a relation on $D_x^-$ by $\alpha\sim\beta$ if $\ma_x(\alpha,\beta)=0$.
\end{defi}

\begin{lem}[Properties of angles and direction]\label{lem-ang-geo}
 Let $\Xll$ be a strongly causal and locally causally closed \LpLS with $\tau$ locally finite-valued.
 \begin{enumerate}
  \item\label{lem-ang-geo-same} Let $\alpha\colon [a,b)\rightarrow X$ be a future directed timelike geodesic, let $t_0\in [a,b)$ and set $x:=\alpha(t_0)$, then $\ma_x(\alpha,\alpha)=0$. 
  \item\label{lem-ang-geo-int} Let $\alpha\colon (a,b)\rightarrow X$ be a future directed timelike geodesic, let $t_0\in (a,b)$ and set $x:=\alpha(t_0)$. Then $\ma_x(\alpha,\alpha^-)=0$, where $\alpha^-(t)=\alpha(-t)$ ($t\in (-b,-a)$) is the reversed curve, which is a past directed timelike geodesic.
  \item Let $x\in X$, the relation $\sim$ of having the same direction at $x$ is an equivalence relation on $D_x^\pm$.
 \end{enumerate}
\end{lem}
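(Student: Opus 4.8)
The unifying idea is that any three points lying on a single distance-realizing geodesic form a \emph{degenerate} comparison triangle, and the angle at the vertex through which the geodesic passes is therefore zero. All three parts are then reductions to this observation, together with the triangle inequality for the transitivity claim.

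For \ref{lem-ang-geo-same} I would work with the forward segment $s\mapsto\alpha(t_0+s)$, which starts at $x$. For small $s,t>0$ the three points $x$, $\alpha(t_0+s)$, $\alpha(t_0+t)$ all lie on the distance realizer $\alpha$, so (say for $s<t$) one has the equality $\tau(x,\alpha(t_0+t))=\tau(x,\alpha(t_0+s))+\tau(\alpha(t_0+s),\alpha(t_0+t))$, i.e.\ the reverse triangle inequality is sharp. Hence any Minkowski comparison triangle is degenerate: the comparison points are collinear on a timelike line, and the two unit-speed geodesics issuing from $\bar x$ towards the comparison points of $\alpha(t_0+s)$ and $\alpha(t_0+t)$ coincide. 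Thus $\tilde\ma_x(\alpha(t_0+s),\alpha(t_0+t))=\arcosh(1)=0$. Since $\tau$ is locally finite-valued, such $(s,t)$ are cofinal in $A_0$, so the $\limsup$ defining $\ma_x(\alpha,\alpha)$ is in fact a limit and equals $0$.

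For \ref{lem-ang-geo-int} I would represent the two segments issuing from $x$ as $s\mapsto\alpha(t_0+s)$ (future directed) and $u\mapsto\alpha(t_0-u)$ (past directed, a piece of $\alpha^-$). Because $t_0$ is interior, for small $s,u$ the restriction of $\alpha$ to $[t_0-u,t_0+s]$ is still a distance realizer, giving $\tau(\alpha(t_0-u),\alpha(t_0+s))=\tau(\alpha(t_0-u),x)+\tau(x,\alpha(t_0+s))$. Again the comparison triangle degenerates onto a timelike line, but now $x$ is the middle vertex, so $\sigma=1$, and the two unit-speed geodesics from $\bar x$ have opposite velocities $-v$ and $v$; their inner product is $+1$, whence $\tilde\ma_x(\alpha(t_0+s),\alpha(t_0-u))=\arcosh(1)=0$. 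This is precisely the vanishing angle between a geodesic and its continuation depicted in Figure \ref{fig-zeroAngle}. Passing to the limit gives $\ma_x(\alpha,\alpha^-)=0$.

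For the equivalence relation in the last item, reflexivity $\alpha\sim\alpha$ is exactly \ref{lem-ang-geo-same}, applied on $D_x^+$ and, by the time-reversed argument, on $D_x^-$. Symmetry is immediate from $\tilde\ma_x^K(y,z)=\tilde\ma_x^K(z,y)$, which persists in the limit defining $\ma_x$, so that $\ma_x(\alpha,\beta)=\ma_x(\beta,\alpha)$. Transitivity is the only genuinely nontrivial step and is where all the weight lies: if $\ma_x(\alpha,\beta)=0$ and $\ma_x(\beta,\gamma)=0$, then, since all curves in $D_x^+$ (respectively $D_x^-$) share the same time orientation, the triangle inequality of Theorem \ref{thm-ang-tri-equ} applies and yields $\ma_x(\alpha,\gamma)\leq\ma_x(\alpha,\beta)+\ma_x(\beta,\gamma)=0$; non-negativity of upper angles then forces $\ma_x(\alpha,\gamma)=0$. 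I expect the main obstacle to be purely this reliance on Theorem \ref{thm-ang-tri-equ}, without which transitivity could fail; the one other point demanding care is the sign bookkeeping in \ref{lem-ang-geo-int}, where using $\sigma=1$ and the opposite velocities is essential to obtain $\arcosh(1)$ rather than the undefined $\arcosh(-1)$.
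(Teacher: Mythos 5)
Your proposal is correct and follows essentially the same route as the paper: parts \ref{lem-ang-geo-same} and \ref{lem-ang-geo-int} are reduced to the observation that three points on a locally maximizing segment form a degenerate triangle whose Minkowski comparison triangle is a straight timelike line (hence comparison angle $\arcosh(1)=0$), and transitivity in part (iii) is obtained from Theorem \ref{thm-ang-tri-equ} exactly as in the paper, with symmetry and reflexivity immediate. The only cosmetic difference is that the paper treats \ref{lem-ang-geo-same} and \ref{lem-ang-geo-int} in one stroke via the two orderings $x\ll\alpha(s)\ll\alpha(t)$ and $\alpha(s)\ll x\ll\alpha(t)$, while you spell out the velocity computation; note also that the absolute value in the definition of $\ma^{\lm{K}}$ already rules out any $\arcosh(-1)$ issue, so that worry is moot.
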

\begin{pr}
 \begin{enumerate}
  \item[(i),(ii)] \setcounter{enumi}{2} Let $t_0<s<t<b$ or $a<s<t_0<t<b$ be close enough to $t_0$ such that $\alpha$ maximizes on $[\min(t_0,s),t]$. Then the triangle $x\ll \alpha(s)\ll\alpha(t)$ or $\alpha(s)\ll x\ll \alpha(t)$ is degenerate and the corresponding comparison triangle $\ct$ in $\mi=\lm{0}$ is a straight timelike segment, and so $\tilde\ma_x(\alpha(s),\alpha(t))=\ma_{\bar x}^{\lm{0}}(\bar y,\bar z)=0$. Thus, the angle between $\alpha$ and itself exists and is zero.
  \item Symmetry of $\sim$ is clear from the definition and reflexivity follows from point (i) above. To show transitivity let $\alpha,\beta,\gamma\in D_x^+$ with $\alpha\sim\beta$ and $\beta\sim\gamma$. Then by the triangle inequality for upper angles (Theorem \ref{thm-ang-tri-equ}) we get that
  \begin{equation}
   0\leq \ma_x(\alpha,\gamma)\leq \ma_x(\alpha,\beta)+\ma_x(\beta,\gamma)=0\,,
  \end{equation}
which shows that the angle between $\alpha$ and $\gamma$ exists and equals zero. So we have $\alpha\sim\gamma$, as required.
 \end{enumerate}
\end{pr}

Let us emphasize point \ref{lem-ang-geo-int} above: Contrary to the metric case the hyperbolic angle between the incoming and outgoing segments of an interior point of a timelike geodesic is zero and not $\pi$, cf.\ Figure \ref{fig-zeroAngle}.
\medskip

The notion of direction allows us to define the space of (future or past directed) timelike directions at a point.
\begin{lem}
Let $\Xll$ be a strong\-ly causal, locally causally closed \LpLS with $\tau$ locally finite-valued and so that angles between (future directed) timelike geodesics always exist, and $x\in X$. Then $\D_x^+:=D_x^+\slash\sim$ and $\D_x^-:=D_x^-\slash\sim$ are metric spaces with metric $\ma_x$.
\end{lem}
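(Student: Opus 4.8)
The plan is to verify the metric axioms for $\ma_x$ on the quotients $\D_x^\pm=D_x^\pm/\!\sim$, since essentially all the substantive work has already been carried out in the preceding results. I will argue for $\D_x^+$; the case of $\D_x^-$ is identical after reversing the time orientation. Because every curve in $D_x^+$ is future directed, any triple of curves in $D_x^+$ has coinciding time orientation, so the triangle inequality Theorem~\ref{thm-ang-tri-equ} applies to all such triples; moreover, by the standing hypothesis the angle $\ma_x(\alpha,\beta)$ exists for every $\alpha,\beta\in D_x^+$, hence is finite and non-negative.

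First I would show that $\ma_x$ descends to a well-defined function on equivalence classes. Suppose $\alpha\sim\alpha'$ and $\beta\sim\beta'$, i.e.\ $\ma_x(\alpha,\alpha')=0$ and $\ma_x(\beta,\beta')=0$. Applying Theorem~\ref{thm-ang-tri-equ} to the triple $(\alpha,\alpha',\beta)$ and using $\ma_x(\alpha,\alpha')=0$ gives $\ma_x(\alpha,\beta)\leq\ma_x(\alpha',\beta)$; applying it to $(\alpha',\beta',\beta)$ and using $\ma_x(\beta',\beta)=0$ gives $\ma_x(\alpha',\beta)\leq\ma_x(\alpha',\beta')$, whence $\ma_x(\alpha,\beta)\leq\ma_x(\alpha',\beta')$. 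Exchanging the primed and unprimed curves yields the reverse inequality, so $\ma_x(\alpha,\beta)=\ma_x(\alpha',\beta')$ and $\ma_x([\alpha],[\beta]):=\ma_x(\alpha,\beta)$ is independent of the chosen representatives.

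It then remains to check the metric axioms on $\D_x^+$. Non-negativity is immediate from the definition of the comparison angle through $\arcosh$. Symmetry follows from the symmetry $\tilde\ma_x(\alpha(s),\beta(t))=\tilde\ma_x(\beta(t),\alpha(s))$ of comparison angles, which persists in the limit defining $\ma_x$. For the identity of indiscernibles, $\ma_x([\alpha],[\beta])=0$ is equivalent to $\ma_x(\alpha,\beta)=0$, i.e.\ to $\alpha\sim\beta$, which is precisely $[\alpha]=[\beta]$; here I invoke that $\sim$ is an equivalence relation, established in Lemma~\ref{lem-ang-geo}. Finally, the triangle inequality on the quotient is just Theorem~\ref{thm-ang-tri-equ} read on equivalence classes.

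Since the triangle inequality, the existence (and thus finiteness) of angles, and the equivalence-relation property are all already available, I expect no serious obstacle. The only point that genuinely requires care is the well-definedness step, where one must apply the triangle inequality in exactly the right form to trap $\ma_x(\alpha,\beta)$ and $\ma_x(\alpha',\beta')$ on opposite sides of the same inequality.
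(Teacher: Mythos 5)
Your proposal is correct and takes essentially the same approach as the paper, which disposes of this lemma in a brief remark following the definition: symmetry and the triangle inequality come from Theorem \ref{thm-ang-tri-equ}, positive definiteness holds by construction of the quotient (using that $\sim$ is an equivalence relation, Lemma \ref{lem-ang-geo}), and the angle is independent of the chosen representatives. Your explicit two-sided trapping argument is precisely the standard way to fill in the well-definedness step that the paper merely asserts.
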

\begin{defi}[Space of timelike directions]
  Let $\Xll$ be a strong\-ly causal, locally causally closed \LpLS with $\tau$ locally finite-valued and so that angles between (future directed) timelike geodesics always exist. We call the metric space $(\D_x^+,\ma_x)$ the \emph{space of future directed timelike directions} at $x$, and $(\D_x^-,\ma_x)$ the \emph{space of past directed timelike directions} at $x$. Furthermore, we denote the metric completion of $(\D_x^\pm,\ma_x)$ by $(\Sigma_x^\pm,\ma_x)$.
\end{defi}

This definition makes sense as the angle $\ma_x$ is symmetric and satisfies the triangle inequality by Theorem \ref{thm-ang-tri-equ}. Moreover, the angle does not depend on the choice of representative and then by construction $\ma_x$ is positive definite.
\bigskip

For technical reasons it is useful to also introduce the completion $\Sigma_x^\pm$ of the space of timelike directions $\D_x^\pm$. However, we will later show in Proposition \ref{pop-D-plus-complete} that for a large class of \LpLSn s with a lower curvature bound the space of timelike directions $\D_x^\pm$ is already complete.
\bigskip

At this point we use angles to introduce another notion of curvature bound by demanding that the comparison angles behave monotonically along the sides of a timelike geodesic triangle. The analogous concept in metric geometry is called \emph{monotonicity condition} in \cite[Def.\ 4.3.1]{BBI:01}.
\begin{defi}[Future $K$-monotonicity comparison]\label{def-ang-com-fu}
 Let $\Xll$ be a \LpLS and let $K\in\R$. We say that $X$ satisfies \emph{future timelike $K$-monotonicity comparison from below (above)} if every point in $X$ possesses a neighborhood $U$ such that
\begin{enumerate}
\item $\tau|_{U\times U}$ is finite and continuous.
\item $U$ is strictly timelike geodesically connected.
\item Let $x\ll y\ll z$ be timelike related and forming a timelike geodesic triangle in $U$, whose side lengths satisfy timelike size bounds for $K$. Let $z'\in[x,z]$ and $y'\in[x,y]$ with $y'\neq x\neq z'$ and $z'$ causally related to $y'$, then we have
\begin{equation}
 \tilde\ma_x^K(y',z')\geq \tilde\ma_x^K(y,z)\qquad (\tilde\ma_x^K(y',z')\leq \tilde\ma_x^K(y,z))\,.
\end{equation}
\end{enumerate}
\end{defi}
We can define \emph{past $K$-monotonicity comparison} analogously by choosing $x'\in[x,z]$ and $y'\in[y,z]$ using the comparison angles at $z$ instead. Note that $y'$ is on a different side here as compared to the future case. Moreover, let us remark that $x'$, $y'$ or $z'$ are named as such because they \emph{seem to be in the same direction} as $x$, $y$ or $z$ from the point we are measuring the angle at. For an illustration of $K$-monotonicity comparison see Figure \ref{fig-angle_comparison-fu}.

\begin{figure}[h!]
\begin{center}
\begin{tikzpicture}[line cap=round,line join=round,x=2.0cm,y=2.0cm]
\draw [shift={(0.,-1.)},color=green,fill=green,fill opacity=0.1] (0,0) -- (45.:0.3) arc (45.:116.56505117707799:0.3) -- cycle;
\draw[ smooth,samples=100,domain=-1.0:1.0] plot({\x*\x/4.0-1.0/4.0},\x);
\draw[ smooth,samples=100,domain=0.0:1.0] plot({-\x*\x/4.0-\x/2.0+1.0/2.0+1.0/4.0},\x);
\draw[ smooth,samples=100,domain=-1.0:0.0] plot({-\x*\x/4.0+\x/2.0+1.0/2.0+1.0/4.0},\x);
\draw[color=blue, smooth,samples=100,domain=0.0:0.89564392373896] plot({-\x*\x/4.0-\x/2.0+0.4375},{\x-0.5});

\begin{scope}[xshift=4cm]
\fill [shift={(-0.5,-1.)},draw=green,fill=green!30,] (0,0) -- (74.9598086914095:0.4) arc (74.9598086914095:92.56650651538125:0.4) -- cycle;
\fill [shift={(-0.5,-1.)},draw=green,fill=green!30,] (0,0) -- (63.43494882292201:0.3) arc (63.43494882292201:105.34925915763955:0.3) -- cycle;
\draw[] (-0.5,-1.) -- (0.,0.) -- (-1.0128640109411684,0.868402399749306) -- cycle;
\draw[color=blue] (-0.5418540240528695,-0.06625804298215877) -- (-0.5,-1.) -- (-0.3578657479246279,-0.4710322504642892) -- cycle;
\end{scope}
\end{tikzpicture}
\caption{When comparing with $\lm{K}$ the interior side is shorter than expected --- the $K$-angle has to behave accordingly.}\label{fig-angle_comparison-fu}
\end{center}
\end{figure}
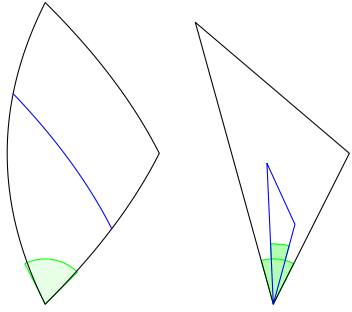

A direct consequence of $K$-monotonicity and Proposition \ref{lorSphericalangleNoDepK} is the following corollary.
\begin{cor}[Monotonicity implies bound on angle of geodesics]\label{cor-K-mon-ang-bou}
Let $\Xll$ be a \LpLS that satisfies future/past $K$-monotonicity comparison from below (above) for some $K\in\R$. Then for any $x\in X$ and $\alpha,\beta\colon[0,B]\rightarrow X$ future/past directed timelike geodesics with $\alpha(0)=\beta(0)=x$ one has that
\begin{equation}
 \ma_x(\alpha,\beta)\geq \tilde\ma_x^ K(\alpha(s),\beta(t)) \qquad \Bigl(\ma_x(\alpha,\beta)\leq \tilde\ma_x^ K(\alpha(s),\beta(t))\Bigr)\,,
\end{equation}
for all $s,t\in[0,B]$ small enough such that timelike size bounds for $K$ are satisfied and chosen such that $\alpha(s)$ and $\beta(t)$ are causally related.
\end{cor}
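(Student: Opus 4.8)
The plan is to let the $K$-monotonicity condition force the $K$-comparison angle $\tilde\ma_x^K(\alpha(s'),\beta(t'))$ to be monotone as the parameters shrink, and then to identify the resulting limit with $\ma_x(\alpha,\beta)$ via Proposition \ref{lorSphericalangleNoDepK}. I treat the future, curvature-bounded-below case; the bound-above statement follows by reversing the monotonicity inequality throughout, and the past statements follow by time reversal.

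First I would fix $x$ and pass to a neighborhood $U$ furnished by future $K$-monotonicity comparison from below, on which $\tau$ is finite and continuous and which is strictly timelike geodesically connected. Shrinking the domains of $\alpha,\beta$, I may assume their initial segments lie in $U$, that $\alpha|_{[0,s]}$ and $\beta|_{[0,t]}$ are genuine distance realizers (geodesics are locally maximizing), and that the relevant timelike size bounds for $K$ hold. By Proposition \ref{lorSphericalangleNoDepK} we then have $\ma_x(\alpha,\beta)=\limsup_{(s',t')\in A_K;\,s',t'\searrow 0}\tilde\ma_x^K(\alpha(s'),\beta(t'))$, which is the object I want to bound from below by a single value $\tilde\ma_x^K(\alpha(s),\beta(t))$.

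Now fix admissible $s,t$ with, say, $\alpha(s)\leq\beta(t)$ (the case $\beta(t)\leq\alpha(s)$ being symmetric after swapping roles). Suppose first $\alpha(s)\ll\beta(t)$, so that $x\ll\alpha(s)\ll\beta(t)$ is a timelike geodesic triangle in $U$ with $\alpha|_{[0,s]}$ and $\beta|_{[0,t]}$ as the two sides issuing from $x$. For every $(s',t')\in A_K$ with $s'\leq s$ and $t'\leq t$, the points $\alpha(s')\in[x,\alpha(s)]$ and $\beta(t')\in[x,\beta(t)]$ are causally related, so the defining inequality of future $K$-monotonicity comparison from below (Definition \ref{def-ang-com-fu}) gives $\tilde\ma_x^K(\alpha(s'),\beta(t'))\geq\tilde\ma_x^K(\alpha(s),\beta(t))$. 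Since $s',t'\searrow 0$, restricting the limit superior to $s'\leq s,\ t'\leq t$ does not change its value, and combining with the identity from Proposition \ref{lorSphericalangleNoDepK} yields $\ma_x(\alpha,\beta)\geq\tilde\ma_x^K(\alpha(s),\beta(t))$.

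It remains to treat the case where $\alpha(s)$ and $\beta(t)$ are merely null related, and this is the main obstacle, because the monotonicity hypothesis is phrased only for genuinely timelike triangles $x\ll y\ll z$. I would bridge the gap by approximating from the timelike side: choosing $t_n\searrow t$ gives $\beta(t)\ll\beta(t_n)$, whence $\alpha(s)\ll\beta(t_n)$ by push-up, since the reverse triangle inequality forces $\tau(\alpha(s),\beta(t_n))\geq\tau(\alpha(s),\beta(t))+\tau(\beta(t),\beta(t_n))>0$. The timelike case just proved gives $\ma_x(\alpha,\beta)\geq\tilde\ma_x^K(\alpha(s),\beta(t_n))$ for every $n$, and letting $t_n\to t$ the continuity of $\tau$ on $U\times U$ together with the analyticity of the law of cosines (Remark \ref{rem-loc-mon}) makes $\tilde\ma_x^K(\alpha(s),\beta(t_n))\to\tilde\ma_x^K(\alpha(s),\beta(t))$, even though the opposite side degenerates to a null segment. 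For curvature bounded above one replaces the monotonicity inequality by its reverse, so that every comparison angle appearing in the limit superior lies below $\tilde\ma_x^K(\alpha(s),\beta(t))$ and hence so does the limit superior, giving the reversed conclusion $\ma_x(\alpha,\beta)\leq\tilde\ma_x^K(\alpha(s),\beta(t))$.
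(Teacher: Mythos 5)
Your proof is correct and is precisely the argument the paper intends: the corollary is stated there without proof, as a ``direct consequence'' of Definition \ref{def-ang-com-fu} and Proposition \ref{lorSphericalangleNoDepK}, which is exactly the combination you spell out (monotonicity bounds every comparison angle appearing in the limit superior, and the proposition identifies that limit superior with $\ma_x(\alpha,\beta)$). Your additional treatment of the merely null-related case via push-up, approximation $t_n\searrow t$, and continuity of the law of cosines fills in a boundary detail that the paper leaves implicit, since its monotonicity hypothesis is phrased only for timelike outer triangles.
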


Moreover, $K$-monotonicity yields that the limit superior in the definition of the angle between geodesics is actually a (not necessarily finite) limit, see e.g. \cite[Prop.\ II.3.1]{BH:99} or \cite[Prop.\ 4.3.2]{BBI:01} for an analogous result in the metric case.
\begin{lem}[Monotonicity implies existence of angles]\label{lem-K-ang-com-ang-ex-fudi}
Let $\Xll$ be a \LpLSn. If it satisfies timelike $K$-monotonicity comparison from above or below, the angle between any two future directed timelike geodesics starting at the same point $x$ exists if it is finite (the limit superior in the definition of upper angles is a limit, but we do not know if it is finite).

If it satisfies timelike $K$-monotonicity comparison from below, we know the limit is finite (in particular, the angle exists).
\end{lem}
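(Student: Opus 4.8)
The plan is to reduce everything to the $K$-comparison angle and then exploit that $K$-monotonicity comparison makes the relevant two-parameter family of comparison angles monotone. By Proposition \ref{lorSphericalangleNoDepK} the limit superior defining $\ma_x(\alpha,\beta)$ may be computed with $\tilde\ma^K$ in place of $\tilde\ma^0$, and it is a genuine limit exactly when the angle exists. So I set $f(s,t):=\tilde\ma^K_x(\alpha(s),\beta(t))$ on $A_K$ and, assuming both curves future directed (so $x$ is the past endpoint; the past-directed case is symmetric via past $K$-monotonicity), I restrict to $s,t$ so small that all triangles lie in a monotonicity neighborhood $U$ and the size bounds hold automatically (Remark \ref{lorAutomaticSizeBounds}).

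First I would establish coordinatewise monotonicity of $f$. Given $(s,t)$ with $\alpha(s)\ll\beta(t)$, the triangle $\Delta x\,\alpha(s)\,\beta(t)$ is a timelike geodesic triangle in $U$ whose two sides issuing from $x$ are initial segments of $\alpha$ and $\beta$ (as these are geodesics, their initial pieces are distance realizers). For $0<s'\le s$ and $0<t'\le t$ with $\alpha(s'),\beta(t')$ causally related, the points $\alpha(s')\in[x,\alpha(s)]$ and $\beta(t')\in[x,\beta(t)]$ are admissible choices in Definition \ref{def-ang-com-fu}, so future $K$-monotonicity yields $f(s',t')\ge f(s,t)$ in the lower case and $f(s',t')\le f(s,t)$ in the upper case. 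The boundary case where $\alpha(s),\beta(t)$ are merely null related is handled by decreasing one parameter slightly to re-enter the chronological region and invoking continuity of $f$ on $A_K$ (via the law of cosines, Lemma \ref{lorLawOfCosines}).

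Next, monotonicity upgrades the limit superior to a limit. Because $A_K$ meets every coordinate box $(0,s_0]\times(0,t_0]$ near the origin (for fixed small $t$, sufficiently small $s$ gives $\alpha(s)\ll\beta(t)$), a standard box argument shows that $\lim_{(s,t)\to0,\,(s,t)\in A_K}f(s,t)$ exists and equals $\sup f$ in the lower case and $\inf f$ in the upper case: given $M$ below the supremum (resp.\ above the infimum), pick a base point with $f>M$ (resp.\ $f<M$), made timelike by the perturbation above, and conclude by monotonicity that $f>M$ (resp.\ $f<M$) on the entire box beneath it. In the upper case $\inf f\ge0$, so the angle automatically exists and is finite. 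In the lower case the limit is the supremum, which a priori could be $+\infty$, which is why finiteness is a separate assertion.

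The main obstacle is finiteness in the lower case, that is $\sup f<\infty$. Monotonicity from below only bounds small-scale comparison angles from below by large-scale ones, so it cannot by itself cap the limit; one genuinely needs the lower bound to furnish an a priori \emph{upper} estimate, the Lorentzian feature absent in the metric setting, where angles lie in $[0,\pi]$. I would fix one base triangle $\Delta x\,\alpha(s_0)\,\beta(t_0)$ with comparison triangle $\bar x\bar a_0\bar c_0$ in $\lm{K}$ and try to show that for all smaller $(s,t)$ the comparison angle cannot exceed the reference hinge angle $\omega_0=\ma^{\lm{K}}_{\bar x}(\bar a_0,\bar c_0)$: one wants $\tau(\alpha(s),\beta(t))$ to be at least the separation of the model points $\bar q_1\in[\bar x\bar a_0]$, $\bar q_2\in[\bar x\bar c_0]$ with matching distances to $\bar x$, and since the comparison angle is strictly decreasing in the length of the opposite side (Remark \ref{rem-loc-mon}) while the angle at $\bar x$ between the two model geodesics is independent of where $\bar q_1,\bar q_2$ sit on them, this would give $f(s,t)\le\omega_0$, hence $\sup f\le\omega_0<\infty$. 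The delicate point, and the step I expect to require the most care, is extracting this lower estimate on $\tau(\alpha(s),\beta(t))$ from the monotonicity hypothesis — equivalently, controlling that the causal region between $\alpha$ and $\beta$ does not pinch as $(s,t)\to0$ — which is the genuinely Lorentzian difficulty without a metric analogue.
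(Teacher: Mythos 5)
Your proof of the first assertion, and of finiteness in the from-above case, is essentially the paper's own argument: the paper defers to Lemma \ref{lem-K-ang-com-ang-ex}, which does exactly what you do --- inside a monotonicity neighborhood the initial segments of $\alpha,\beta$ are distance realizers, so the comparison angles $f(s,t)=\tilde\ma^K_x(\alpha(s),\beta(t))$ form a monotone family as $(s,t)\searrow0$, the limit superior is therefore a limit (identified with $\ma_x(\alpha,\beta)$ via Proposition \ref{lorSphericalangleNoDepK}), and under monotonicity from above that limit is the infimum of a family of non-negative numbers, hence finite.

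Concerning your ``main obstacle'': it is not a gap you failed to close but a defect of the statement as printed, and your suspicion is correct. For two future directed geodesics the vertex $x$ is the past endpoint of every comparison triangle, so $\sigma=-1$, and the general Lemma \ref{lem-K-ang-com-ang-ex} --- the very result to which the paper defers the proof of this statement --- asserts finiteness for $\sigma=-1$ precisely under monotonicity from \emph{above}, and under monotonicity from below only in the mixed-orientation case $\sigma=+1$, where the signed angle is squeezed against $0$ from the other side. This matches your sign analysis, and also Corollary \ref{cor-K-mon-ang-bou}: monotonicity from below yields only the lower bound $\ma_x(\alpha,\beta)\geq\tilde\ma^K_x(\alpha(s),\beta(t))$, which cannot cap the supremum. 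So the second paragraph of the statement has ``below'' where the paper's own proof supports ``above'', and you should not attempt to prove it as written. Indeed, the rescue you sketch cannot work even in principle: under monotonicity from below you have already established $f(s,t)\geq f(s_0,t_0)=\omega_0$ for all $(s,t)\leq(s_0,t_0)$, so the bound $f(s,t)\leq\omega_0$ you aim for would force $f\equiv\omega_0$; and the estimate $\tau(\alpha(s),\beta(t))\geq\bar\tau(\bar q_1,\bar q_2)$ that your argument needs is exactly the curvature-bounded-above (equivalently, monotonicity-from-above) comparison inequality, not a consequence of the from-below hypothesis.
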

\begin{proof}
This will be proven later in more generality in Lemma \ref{lem-K-ang-com-ang-ex}.
\end{proof}

The connection of future $K$-monotonicity comparison to timelike curvature bounds is as follows.
\begin{thm}[Triangle comparison implies future $K$-monotonicity comparison]\label{thm-K-ang-com-fu}
 Let $\Xll$ be a locally strictly timelike geodesically connected Lor\-entzian pre-length space and let $K\in\R$. If $X$ has timelike curvature bounded below (above) by $K$, it also satisfies future $K$-monotonicity comparison from below (above).
\end{thm}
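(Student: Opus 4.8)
The plan is to show that every timelike $\ge K$- (resp.\ $\le K$-) comparison neighbourhood $U$ furnished by the curvature bound is already a neighbourhood witnessing future $K$-monotonicity comparison from below (resp.\ above). Conditions (i) and (ii) of Definition~\ref{def-ang-com-fu} ($\tau$ finite and continuous on $U\times U$, and $U$ strictly timelike geodesically connected) are built into the definition of a comparison neighbourhood (Definition~\ref{def-tri-com}), so only the monotonicity inequality in (iii) has to be verified.

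So I would fix a timelike triangle $x\ll y\ll z$ in $U$ whose side lengths satisfy the timelike size bounds for $K$, together with points $z'\in[x,z]$, $y'\in[x,y]$ with $y'\neq x\neq z'$ and $y'$, $z'$ causally related. Since $U$ is strictly timelike geodesically connected the sides are timelike geodesics, so $x\ll y'$ and $x\ll z'$. Choosing a comparison triangle $\Delta\bar x\bar y\bar z$ in $\lm{K}$, let $\bar y'\in[\bar x,\bar y]$, $\bar z'\in[\bar x,\bar z]$ be the points corresponding to $y'$, $z'$, so that $a:=\tau(x,y')=\bar\tau(\bar x,\bar y')$ and $b:=\tau(x,z')=\bar\tau(\bar x,\bar z')$. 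The first key point is that the comparison angle at $x$ depends only on the initial directions of the defining geodesics: as $\bar y'$ and $\bar y$ lie on the same geodesic ray issuing from $\bar x$, and likewise $\bar z'$ and $\bar z$, we have
\begin{equation}
 \tilde\ma^K_x(y,z)=\ma^{\lm{K}}_{\bar x}(\bar y,\bar z)=\ma^{\lm{K}}_{\bar x}(\bar y',\bar z')\,,
\end{equation}
and this identity persists even when $\bar y'$, $\bar z'$ fail to be causally related, since the angle is still defined through the initial directions (cf.\ the footnote to Corollary~\ref{cor-ext-loc}).

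The second key point compares the two $\lm{K}$-triangles sharing the sides $a,b$ at the apex $x$. Triangle comparison from below, applied to the points $y'$, $z'$ on the sides $[x,y]$, $[x,z]$, gives $\tau(y',z')\le\bar\tau(\bar y',\bar z')$ and $\tau(z',y')\le\bar\tau(\bar z',\bar y')$, whence the side opposite the apex satisfies $c'':=\max(\tau(y',z'),\tau(z',y'))\le \bar c:=\max(\bar\tau(\bar y',\bar z'),\bar\tau(\bar z',\bar y'))$. Now $\tilde\ma^K_x(y',z')$ is, by definition, the apex angle of the $\lm{K}$-triangle with sides $a,b$ and opposite side $c''$, while $\ma^{\lm{K}}_{\bar x}(\bar y',\bar z')$ is the apex angle with the same sides $a,b$ and opposite side $\bar c$. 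Since $x$ is the past endpoint (so $\sigma=-1$), the apex angle is a strictly decreasing function of the opposite side by the monotonicity in the law of cosines (Remark~\ref{rem-loc-mon}), so $c''\le\bar c$ yields $\tilde\ma^K_x(y',z')\ge\ma^{\lm{K}}_{\bar x}(\bar y',\bar z')=\tilde\ma^K_x(y,z)$, as claimed. The upper bound is entirely analogous: triangle comparison from above reverses all distance inequalities, hence $c''\ge\bar c$, and the same monotonicity gives $\tilde\ma^K_x(y',z')\le\tilde\ma^K_x(y,z)$.

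The step that I expect to require real care is the \emph{causal-boundary case}, where $y'$ and $z'$ are merely null related in $X$. Then, in the absence of local causal closedness, the comparison points $\bar y'$, $\bar z'$ need not be causally related in $\lm{K}$ (for a lower bound only $q_1\ll q_2\Rightarrow\bar q_1\ll\bar q_2$ is automatic), and if they turn out spacelike separated the comparison of the two apex angles is governed by the \emph{extended} law of cosines (Corollary~\ref{cor-ext-loc}) rather than the ordinary one. The plan is to dispatch this by a continuity argument: using continuity of $\tau$ on $U$ one approximates $z'$ along $[x,z]$ by points $z'_n$ with $y'\ll z'_n$, where the timelike case already applies, and then passes to the limit through the analytic dependence of the comparison angle on the side lengths (Remark~\ref{rem-loc-mon}), invoking Corollary~\ref{cor-ext-loc} to control the sign of the correction exactly at the null/spacelike transition. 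I expect this boundary analysis — rather than the interior timelike computation — to be the main obstacle, and it is precisely here that the asymmetry between the two bounds (only a lower bound can produce spacelike comparison pairs out of causal ones) must be accounted for.
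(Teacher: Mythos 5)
Your interior argument is exactly the paper's: the paper proves this statement as the first half of Theorem~\ref{thm-K-ang-com}, and there, as in your write-up, conditions (i)--(ii) are read off from Definition~\ref{def-tri-com}, the angle $\tilde\ma^K_x(y,z)$ is realized at $\bar x$ by the comparison points $\bar y',\bar z'$ on the same rays, and the monotonicity in the law of cosines (Remark~\ref{rem-loc-mon}) applied to the two model triangles with common sides $a,b$ converts the distance inequality into the angle inequality (the paper phrases this as a contradiction, you phrase it directly; that difference is cosmetic). Your approximation argument for the lower bound when $y'\leq z'$ are merely null related is also sound: since the sides are timelike, one can slide $z'$ (or $y'$) to make the pair timelike related, apply the already-proved case, and pass to the limit using continuity of $\tau$ and of the law of cosines; this cleanly settles a case over which the paper's write-up passes silently (it assumes ``$\bar y'\leq\bar z'$'' and never revisits the lower bound).

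The genuine problem is that your diagnosis of the boundary case is backwards, and this leaves a gap in your upper-bound branch. Under a \emph{lower} bound, $q_1\ll q_2$ forces $0<\tau(q_1,q_2)\leq\bar\tau(\bar q_1,\bar q_2)$, so only null pairs can acquire spacelike comparison points; under an \emph{upper} bound, $\bar\tau(\bar q_1,\bar q_2)\leq\tau(q_1,q_2)$ imposes no positivity, so even a timelike related pair $y'\ll z'$ can have spacelike separated $\bar y',\bar z'$. In that situation your step ``$\ma^{\lm{K}}_{\bar x}(\bar y',\bar z')$ is the apex angle with the same sides $a,b$ and opposite side $\bar c$'' is false: with $\bar c=0$ that formula gives the null-opposite-side angle, whereas the actual angle between the rays is \emph{strictly larger} by the extended law of cosines (Corollary~\ref{cor-ext-loc}). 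The desired inequality survives because the error has a favorable sign, $\tilde\ma^K_x(y',z')\leq\arcosh\bigl(\frac{a^2+b^2}{2ab}\bigr)<\ma^{\lm{K}}_{\bar x}(\bar y',\bar z')=\tilde\ma^K_x(y,z)$ in the $K=0$ notation, but establishing this requires invoking Corollary~\ref{cor-ext-loc} explicitly for the upper bound, which is precisely the extra step in the paper's proof (``in case of timelike curvature bounded above there is an issue if $\bar y'$ is not causally related to $\bar z'$''). Your approximation trick cannot repair this branch, since $y',z'$ may already be timelike related and there is nothing to approximate; conversely, for the lower bound the extended law of cosines points the \emph{wrong} way (a spacelike opposite side would make the ray angle too large), so there it is your limit argument, not Corollary~\ref{cor-ext-loc}, that does the work and shows that configuration is in fact vacuous. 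Reassign the extended law of cosines to the upper-bound branch and your proof is complete.
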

\begin{proof}
This will be proven later in more generality in Theorem \ref{thm-K-ang-com}.
\end{proof}
\bigskip

Analogously to the metric case (cf.\ \cite[Thm.\ 4.3.11]{BBI:01} and \cite[Prop.\ II.3.3]{BH:99}), angles are semi-continuous.
\begin{pop}[Semi-continuity of angles] \label{pop-ang-semicont-fudi}
 Let $\Xll$ be a Lor\-entzian pre-length space with timelike curvature bounded below. Then angles of timelike geodesics of the same time orientation are lower semicontinuous. To be precise, for $x\in X$ and $\alpha,\alpha_n,\beta,\beta_n\colon[0,b]\rightarrow X$ all future or all past directed timelike geodesics starting at $x$ with $\alpha_n\to\alpha$, $\beta_n\to\beta$ pointwise, then
\begin{equation}
\ma_x(\alpha,\beta)\leq \liminf_n \ma_x(\alpha_n,\beta_n)\,. 
\end{equation}
\end{pop}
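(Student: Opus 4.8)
The plan is to derive the statement from the fact that, under a lower timelike curvature bound, each angle is a finite monotone limit of comparison angles and, in particular, dominates any single comparison angle along its sides. I treat the case where all curves are future directed; the past directed case is entirely analogous, with past $K$-monotonicity comparison in place of future $K$-monotonicity comparison. Let $K$ be a lower curvature bound and fix a timelike $\geq K$-comparison neighborhood $U$ of $x$; on $U\times U$ the function $\tau$ is finite and continuous, and by Remark~\ref{lorAutomaticSizeBounds} I may assume that timelike size bounds for $K$ hold automatically in $U$. By Theorem~\ref{thm-K-ang-com-fu}, $X$ satisfies future $K$-monotonicity comparison from below, so Lemma~\ref{lem-K-ang-com-ang-ex-fudi} guarantees that all angles occurring are finite and are genuine limits, and Corollary~\ref{cor-K-mon-ang-bou} supplies the key inequality
\[
 \ma_x(\mu,\nu)\;\geq\;\tilde\ma_x^K(\mu(s),\nu(t))
\]
valid for any two future directed timelike geodesics $\mu,\nu$ from $x$ and all small parameters $s,t$ with $\mu(s),\nu(t)$ causally related. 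The overall goal is to show $\liminf_n\ma_x(\alpha_n,\beta_n)\geq\ma_x(\alpha,\beta)-\varepsilon$ for every $\varepsilon>0$.

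Fix $\varepsilon>0$. Since $\ma_x(\alpha,\beta)$ is finite and, by Proposition~\ref{lorSphericalangleNoDepK}, equals $\lim_{(s,t)\in A_K,\,s,t\searrow0}\tilde\ma_x^K(\alpha(s),\beta(t))$ for any fixed $K$, I would first pin down a \emph{single} pair $(s_0,t_0)\in A_K$ with $s_0,t_0>0$ small, $\alpha(s_0),\beta(t_0)\in U$ and (say) $\alpha(s_0)\ll\beta(t_0)$ — such pairs exist arbitrarily close to the origin by the discussion following the definition of angles between curves (or by Proposition~\ref{angleBoundImpliesTimelikeRelation}) — for which
\[
 \tilde\ma_x^K(\alpha(s_0),\beta(t_0))\;>\;\ma_x(\alpha,\beta)-\varepsilon .
\]
Monotonicity from below makes this transparent: as the parameters shrink the comparison angle increases, so $\ma_x(\alpha,\beta)$ is in fact the supremum of these comparison angles.

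Next I would let $n\to\infty$ with $s_0,t_0$ frozen. Pointwise convergence gives $\alpha_n(s_0)\to\alpha(s_0)$ and $\beta_n(t_0)\to\beta(t_0)$, so these points lie in $U$ for large $n$. Because $\tau$ is lower semicontinuous and $\tau(x,\alpha(s_0))$, $\tau(x,\beta(t_0))$, $\tau(\alpha(s_0),\beta(t_0))$ are all strictly positive (as $x\ll\alpha(s_0)\ll\beta(t_0)$), one gets $x\ll\alpha_n(s_0)\ll\beta_n(t_0)$ for all large $n$; hence the triangle $\Delta x\,\alpha_n(s_0)\,\beta_n(t_0)$ and its comparison angle are defined. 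Continuity of $\tau$ on $U\times U$ forces the three side lengths to converge to those of $\Delta x\,\alpha(s_0)\,\beta(t_0)$, and since the comparison angle depends analytically, hence continuously, on the side lengths by Remark~\ref{rem-loc-mon}, I obtain $\tilde\ma_x^K(\alpha_n(s_0),\beta_n(t_0))\to\tilde\ma_x^K(\alpha(s_0),\beta(t_0))$. Applying Corollary~\ref{cor-K-mon-ang-bou} to the pair $(\alpha_n,\beta_n)$ for each large $n$ yields $\ma_x(\alpha_n,\beta_n)\geq\tilde\ma_x^K(\alpha_n(s_0),\beta_n(t_0))$, whence
\[
 \liminf_{n}\ma_x(\alpha_n,\beta_n)\;\geq\;\tilde\ma_x^K(\alpha(s_0),\beta(t_0))\;>\;\ma_x(\alpha,\beta)-\varepsilon .
\]
Letting $\varepsilon\searrow0$ then proves the claim.

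The main obstacle is the bookkeeping that keeps $\tilde\ma_x^K(\alpha_n(s_0),\beta_n(t_0))$ both defined and convergent along the sequence: one must guarantee that $\alpha_n(s_0)$ and $\beta_n(t_0)$ remain causally related, which is precisely what lower semicontinuity of $\tau$ delivers from the strict relation $\alpha(s_0)\ll\beta(t_0)$. Conceptually, the lower curvature bound enters twice and cannot be dispensed with: it forces each angle to be a finite monotone limit — so that a single comparison configuration $(s_0,t_0)$ nearly realizes $\ma_x(\alpha,\beta)$ — and, through Corollary~\ref{cor-K-mon-ang-bou}, it makes the comparison angle a genuine lower bound for the true angle. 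Without it the comparison angle need not be monotone and the argument collapses.
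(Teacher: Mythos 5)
Your proposal is correct and follows essentially the same route as the paper's proof (given for the general signed version in Proposition \ref{pop-ang-semicont}): curvature bounded below yields $K$-monotonicity comparison, Corollary \ref{cor-K-mon-ang-bou} makes each comparison angle $\tilde\ma_x^K(\alpha_n(s_0),\beta_n(t_0))$ a lower bound for $\ma_x(\alpha_n,\beta_n)$, openness of $\ll$ (equivalently, lower semicontinuity of $\tau$) keeps these comparison angles defined for large $n$, continuity of $\tau$ lets them converge at frozen parameters, and a double limit finishes the argument. The only differences are cosmetic: you phrase the inner limit via a fixed $\varepsilon$ and a single pair $(s_0,t_0)$ instead of sequences $s_k,t_k\searrow 0$, and your appeal to Remark \ref{lorAutomaticSizeBounds} (which assumes strong causality) could be replaced by noting that size bounds for small triangles follow directly from continuity of $\tau$ on the comparison neighborhood.
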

\begin{pr}
 This will be proven later in more generality in Proposition \ref{pop-ang-semicont}.
\end{pr}

In case of timelike curvature bounded above, the angle between geodesics is in fact continuous.

\begin{pop}[Angle between geodesics is continuous]\label{pop-ang-cont-geodesics-fudi}
Let $\Xll$ be a \LpLS with timelike curvature bounded above. Then angles are continuous for geodesics, i.e., 
for $x\in X$ and $\alpha,\alpha_n,\beta,\beta_n:[0,b]\to X$ all future or all past directed timelike geodesics starting at $x$ with $\alpha_n\to\alpha$, $\beta_n\to\beta$ pointwise, then 
\begin{equation}
\ma_x(\alpha,\beta)= \lim_n\ma_x(\alpha_n,\beta_n)\,.
\end{equation}
\end{pop}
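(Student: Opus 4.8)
The plan is to prove the two one-sided estimates $\limsup_n \ma_x(\alpha_n,\beta_n)\le\ma_x(\alpha,\beta)$ (upper semicontinuity) and $\ma_x(\alpha,\beta)\le\liminf_n\ma_x(\alpha_n,\beta_n)$ (lower semicontinuity) separately; I treat the future directed case, the past directed one being analogous. Throughout I work in a comparison neighborhood $U$ of $x$, so that $\tau$ is finite and continuous there. Since $X$ has timelike curvature bounded above by some $K$, its comparison neighborhoods are strictly timelike geodesically connected, so $X$ is locally strictly timelike geodesically connected and Theorem \ref{thm-K-ang-com-fu} applies: $X$ satisfies future $K$-monotonicity comparison from above. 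In particular, Corollary \ref{cor-K-mon-ang-bou} gives the key upper bound $\ma_x(\delta_1,\delta_2)\le\tilde\ma_x^K(\delta_1(s),\delta_2(t))$ for any future directed timelike geodesics $\delta_1,\delta_2$ from $x$ and all small $s,t$ with $\delta_1(s),\delta_2(t)$ causally related and the size bounds satisfied.

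For upper semicontinuity, fix $\varepsilon>0$. By Proposition \ref{lorSphericalangleNoDepK} and Lemma \ref{lem-K-ang-com-ang-ex-fudi} the comparison angles $\tilde\ma_x^K(\alpha(s),\beta(t))$ converge to $\ma_x(\alpha,\beta)$ as $s,t\searrow 0$, so I may choose small $s,t$ with $\alpha(s)\ll\beta(t)$ (possible since such pairs are cofinal, as noted after the definition of the angle) and $\tilde\ma_x^K(\alpha(s),\beta(t))<\ma_x(\alpha,\beta)+\varepsilon$ (trivial if $\ma_x(\alpha,\beta)=\infty$). The crucial technical point is that $\tilde\ma_x^K(\alpha_n(s),\beta_n(t))$ is \emph{defined for large $n$}: because $\tau(\alpha(s),\beta(t))>0$ and $\tau$ is lower semicontinuous, the axiom $\tau(p,q)>0\Leftrightarrow p\ll q$ forces $\alpha_n(s)\ll\beta_n(t)$ for large $n$, so $x\ll\alpha_n(s)\ll\beta_n(t)$ is a genuine timelike triangle. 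Since $\alpha_n(s)\to\alpha(s)$, $\beta_n(t)\to\beta(t)$ and $\tau$ is continuous on $U\times U$, the three pairwise time separations converge, whence by the analytic law of cosines (Lemma \ref{lorLawOfCosines} and Remark \ref{rem-loc-mon}) $\tilde\ma_x^K(\alpha_n(s),\beta_n(t))\to\tilde\ma_x^K(\alpha(s),\beta(t))$. Combining with the bound $\ma_x(\alpha_n,\beta_n)\le\tilde\ma_x^K(\alpha_n(s),\beta_n(t))$ yields $\limsup_n\ma_x(\alpha_n,\beta_n)\le\tilde\ma_x^K(\alpha(s),\beta(t))<\ma_x(\alpha,\beta)+\varepsilon$, and letting $\varepsilon\searrow 0$ proves upper semicontinuity.

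The main obstacle is lower semicontinuity, since the monotonicity coming from an upper curvature bound points the wrong way and by itself only produces upper bounds on angles. I would circumvent this by bootstrapping through the triangle inequality. First, applying the upper semicontinuity just established to the sequence $(\alpha_n)$ together with the constant sequence $\alpha$ gives $\limsup_n\ma_x(\alpha_n,\alpha)\le\ma_x(\alpha,\alpha)=0$ by Lemma \ref{lem-ang-geo}, so $\ma_x(\alpha_n,\alpha)\to 0$, and likewise $\ma_x(\beta_n,\beta)\to 0$. Now the triangle inequality for upper angles (Theorem \ref{thm-ang-tri-equ}), applied twice to the future directed timelike geodesics $\alpha,\alpha_n,\beta_n,\beta$ starting at $x$, gives
\[
 \ma_x(\alpha,\beta)\le\ma_x(\alpha,\alpha_n)+\ma_x(\alpha_n,\beta_n)+\ma_x(\beta_n,\beta)\,.
\]
Taking $\liminf_n$ and discarding the two vanishing terms yields $\ma_x(\alpha,\beta)\le\liminf_n\ma_x(\alpha_n,\beta_n)$, which together with upper semicontinuity gives the asserted equality.

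The only hypotheses needed beyond the upper curvature bound are those guaranteeing that the triangle inequality (Theorem \ref{thm-ang-tri-equ}) is available at $x$, namely strong causality and local causal closedness; these I take to be in force in the comparison neighborhood, and confirming this compatibility with Definition \ref{def-tri-com} is the one bookkeeping point to verify carefully.
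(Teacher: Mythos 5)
Your proof is correct and takes essentially the same route as the paper's: the paper (via the forward reference to Proposition \ref{pop-ang-cont-geodesics}) likewise uses the upper curvature bound to get monotonicity comparison (Theorem \ref{thm-K-ang-com}) and hence semicontinuity of angles (Proposition \ref{pop-ang-semicont}), applies this to conclude $\ma_x(\alpha_n,\alpha)\to 0$ and $\ma_x(\beta_n,\beta)\to 0$ using $\ma_x(\alpha,\alpha)=\ma_x(\beta,\beta)=0$, and then sandwiches $\ma_x(\alpha_n,\beta_n)$ by two applications of the triangle inequality (Theorem \ref{thm-ang-tri-equ}); your only deviation --- obtaining the $\limsup$ bound directly from Corollary \ref{cor-K-mon-ang-bou} and the law of cosines rather than from a second triangle-inequality application --- is an inessential reshuffling, since that direct argument is exactly how the paper proves its semicontinuity proposition. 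Your closing caveat is also on target: the additional hypotheses needed for Theorem \ref{thm-ang-tri-equ} (strong causality, local causal closedness) are tacitly in force in the paper's own proof as well, Proposition \ref{pop-ang-cont-geodesics} adding local causal closedness to its hypotheses explicitly.
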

\begin{pr}
  This will be proven later in more generality in Proposition \ref{pop-ang-cont-geodesics}.
\end{pr}

If the curves considered are not geodesics, then there is no semi-continuity of angles, as is well-known in the smooth spacetime case. We include an example below for the convenience of the reader.

\begin{ex}
Let $X=\mb{R}^{2}_1$ be two-dimensional Minkowski spacetime and consider the future directed timelike curves $\gamma_n(t)=(t,\frac{1}{2}\frac{t}{1+nt})$ ($t\in[0,\infty)$). They all start at $\gamma_n(0)=(0,0)=:x$ and  converge uniformly to the curve $\gamma(t)=(t,0)$. Nevertheless, they do not converge in a $\Con^1$-way. We calculate the angles in the spacetime sense (note that by Proposition \ref{hypAnglesAgreeSpt} these angles agree with the angles in the synthetic sense), so $\gamma_n'(0)=(1,\frac{1}{2})$, making $\ma_x(\gamma_n,\gamma)=\arcosh(\frac{2}{\sqrt{3}}) > 0=\ma_x(\gamma,\gamma)$, contradicting upper semicontinuity.

For $\beta(t)=(t,\frac{2}{3}t)$ we have $\ma_x(\gamma_n,\beta)=\arcosh(\frac{4}{\sqrt{15}}) < \arcosh(\frac{3}{\sqrt{5}})=\ma_x(\gamma,\beta)$, contradicting lower semicontinuity, but the $\gamma_n$s are not geodesics (although the limits $\beta$ and $\gamma$ are).
\end{ex}

As there is no automatic local existence of timelike geodesics and, moreover, the time of existence need not be locally uniform, we have to exclude these situations when considering the question of whether the space of timelike directions is complete. For convenience we give a name to the following condition which will be used several times.
\begin{defi}[Non-lingering]\label{def:nonLing}
Let $\Xll$ be a \LpLSn. A point $x\in X$ has the \emph{uniform future/past non-lingering property} if there is an $s>0$ and an $\varepsilon>0$ such that all future/past directed timelike geodesics $\gamma$ starting at $x=\gamma(0)$, when parametrized by $d$-arclength and extended maximally, are defined at least up to parameter $s$, and $d(x,\gamma(s))>\varepsilon$. Moreover, we say that $\Xll$ is \emph{future/past non-lingering} if every point $x\in X$ has the uniform future/past non-lingering property.
\end{defi}
By using cylindrical neighborhoods and the fact that the time coordinate is a temporal function on such a neighborhood (cf.\ \cite[Prop.\ 1.10]{CG:12}), it directly follows that any continuous spacetime is future and past non-lingering.
\bigskip

As pointed out before, given a lower curvature bound, we can give sufficient conditions so that the space of timelike directions is complete. We are now ready to provide a proof of this fact.

\begin{pop}[Space of timelike directions complete]\label{pop-D-plus-complete}
Let $\Xll$ be a locally compact, strongly causal, locally causally closed, $d$-compatible and regularly localizable Lorentzian length space with timelike curvature bou\-nded below. If $x\in X$ has the uniform future non-lingering property, then $\D_x^+$ is a complete metric space, i.e., $\Sigma_x^+=\D_x^+$.
\end{pop}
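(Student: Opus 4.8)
The plan is to show that every $\ma_x$-Cauchy sequence in $\D_x^+$ converges, which amounts to producing a future directed timelike geodesic $\gamma$ starting at $x$ that represents the limiting direction. So let $([\gamma_n])_n$ be Cauchy in $(\D_x^+,\ma_x)$ and choose representatives $\gamma_n\in D_x^+$ parametrized by $d$-arclength. Since $x$ has the uniform future non-lingering property (Definition \ref{def:nonLing}), there are $s>0$ and $\varepsilon>0$, \emph{independent of $n$}, such that every $\gamma_n$ is defined on $[0,s]$ with $d(x,\gamma_n(s))>\varepsilon$, while $d(x,\gamma_n(t))\le t$ shows the $\gamma_n$ are uniformly $d$-Lipschitz. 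By local compactness the points $\gamma_n(t)$ remain in a fixed compact set, so the limit curve machinery for Lorentzian length spaces (using strong causality and local causal closedness, cf.\ \cite{KS:18}) yields a subsequence converging uniformly on $[0,s]$ to a future directed causal curve $\gamma$ with $d(x,\gamma(s))\ge\varepsilon>0$. As each $\gamma_n$ is a distance realizer and limits of maximal causal curves are maximal, $\gamma$ is a geodesic; it remains to verify that $\gamma$ is timelike and that $[\gamma_n]\to[\gamma]$.

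The \textbf{main obstacle} is to rule out that the limit degenerates to a null geodesic, since non-lingering only prevents $\gamma$ from being constant, not from tilting towards the null cone. This is exactly where the Cauchy condition must be exploited: being Cauchy, the sequence is $\ma_x$-bounded, say $\ma_x(\gamma_n,\gamma_m)\le C<\infty$ for all $n,m$. I would fix the representative $\gamma_1$, set $h:=\tau(x,\gamma_1(s))>0$, and shrink $s$ if needed so that all configurations lie in a comparison neighborhood where timelike size bounds are automatic (Remark \ref{lorAutomaticSizeBounds}); it then suffices to bound $u_n:=\tau(x,\gamma_n(s))>0$ below uniformly in $n$, because lower semicontinuity of $\tau$ will give $\tau(x,\gamma(s))\ge\liminf_n u_n>0$, hence $x\ll\gamma(s)$ and $\gamma$ timelike (a distance realizer with a timelike subsegment is timelike).

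To obtain the uniform bound I would argue as follows. If $\gamma_n(s)$ and $\gamma_1(s)$ are causally related the bound is immediate from monotonicity; in general they are spacelike separated, and here the extended law of cosines (Corollary \ref{cor-ext-loc}) is the right tool. The comparison angle $\omega_n:=\ma^{\lm{K}}_x(\gamma_n(s),\gamma_1(s))$ appearing there equals, by the retraction performed in the proof of Corollary \ref{cor-ext-loc}, a genuine $K$-comparison angle with a null opposite side (obtained by sliding one vertex in to the null threshold along its geodesic). Since timelike curvature bounded below yields future $K$-monotonicity comparison from below (Theorem \ref{thm-K-ang-com-fu}), Corollary \ref{cor-K-mon-ang-bou} gives $\omega_n\le\ma_x(\gamma_n,\gamma_1)\le C$. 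Feeding $\omega_n\le C$ into the extended law of cosines with sides $u_n$ and $h$ then yields (in the flat case, and analogously for general $K$)
\begin{equation}
 u_n^2+h^2<2\,u_n\,h\,\cosh(\omega_n)\le 2\,u_n\,h\,\cosh(C)\,,
\end{equation}
whose smaller root gives $u_n\ge h\,e^{-C}>0$, uniformly in $n$, as required.

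Finally, with $\gamma\in D_x^+$ established, I would close the argument via lower semicontinuity of angles. Because $X$ has timelike curvature bounded below, Proposition \ref{pop-ang-semicont-fudi} applies: for each fixed $m$, using the constant sequence $\gamma_m$ together with the convergent subsequence $\gamma_n\to\gamma$, one gets $\ma_x(\gamma_m,\gamma)\le\liminf_n\ma_x(\gamma_m,\gamma_n)$. Since $(\gamma_n)$ is Cauchy, the right-hand side is arbitrarily small once $m$ is large, so $\ma_x(\gamma_m,\gamma)\to 0$, i.e.\ $[\gamma_m]\to[\gamma]$ in $\D_x^+$. A Cauchy sequence possessing a convergent subsequence converges to the same limit, so the original sequence converges to $[\gamma]$; hence $\D_x^+$ is complete and $\Sigma_x^+=\D_x^+$.
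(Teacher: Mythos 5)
Your overall strategy follows the paper's quite closely (limit curve via non-lingering and compactness, timelikeness of the limit forced by the Cauchy angle bound through the law of cosines, and convergence closed off by semicontinuity of angles), but one step you dismiss in a half-sentence is in fact the crux, and it fails as stated. The claim ``if $\gamma_n(s)$ and $\gamma_1(s)$ are causally related the bound is immediate from monotonicity'' is not true in the dangerous sub-case $\gamma_n(s)\ll\gamma_1(s)$. There the law of cosines gives $\cosh\bigl(\tilde\ma_x(\gamma_n(s),\gamma_1(s))\bigr)=\frac{u_n^2+h^2-c_n^2}{2u_nh}$ with $c_n=\tau(\gamma_n(s),\gamma_1(s))$, and the angle bound $\le C$ only yields $c_n\ge h-2u_n\cosh(C)$; this is perfectly compatible with $u_n\to 0$. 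Indeed, the nearly degenerate configuration $c_n\approx h-u_n$ (i.e.\ $\gamma_n(s)$ lying on or close to a distance realizer from $x$ to $\gamma_1(s)$) has comparison angle close to $0$ no matter how small $u_n$ is, so no angle bound can rule out the collapse of $\gamma_n(s)$ toward the null cone of $x$ in this case. Excluding exactly this collapse is where the paper's proof has to work: passing to the limit one obtains a point $u=\lim_n\gamma_n(s)$ with $x\le u$, $x\not\ll u$, $u\neq x$ (by non-lingering), and $\tau(u,\gamma_1(s))=\tau(x,\gamma_1(s))$, and then concatenating a null curve from $x$ to $u$ with a realizer from $u$ to $\gamma_1(s)$ produces a distance realizer containing a null segment, contradicting \emph{regular localizability}. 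Your proposal never invokes regular localizability for this purpose, so the uniform bound $u_n\ge he^{-C}$, and with it the timelikeness of $\gamma$, is unproven. (Your treatment of the spacelike-separated pair via the null threshold $\gamma_n(\sigma_n)\le\gamma_1(s)$, Corollary \ref{cor-K-mon-ang-bou} and the law of cosines with a null opposite side is correct; that is actually the harmless case, and the paper sidesteps it entirely by choosing parameters with $\gamma(s)\ll\gamma_n(t)$ so that all compared pairs are timelike related.)

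A second, more easily repaired error: lower semicontinuity of $\tau$ gives $\tau(x,\gamma(s))\le\liminf_n u_n$, i.e.\ exactly the opposite of the inequality you write. To transfer the lower bound on $u_n$ to the limit point you need upper semicontinuity, so you must instead use that $\tau$ is finite and \emph{continuous} on the comparison neighborhood (part of Definition \ref{def-tri-com}), which you have already arranged by shrinking $s$; note also that concluding ``$x\ll\gamma(s)$ implies $\gamma$ timelike'' itself uses regular localizability (maximal causal curves between timelike related points are timelike). Your final step, applying Proposition \ref{pop-ang-semicont-fudi} together with the Cauchy property to get $[\gamma_m]\to[\gamma]$, is correct and is a clean equivalent of the paper's concluding limit computation.
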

\begin{proof}
Note that, by strong causality and local compactness of $(X,d)$, small causal diamonds are compact.

Let $\gamma_n:[0,L^d(\gamma_n)]\to X$ be future directed timelike geodesics, $d$-unit-speed-parametrized, with $\gamma_n(0)=x$ and $[\gamma_n]\in \D_x^+$ forming a Cauchy sequence. We can assume by the non-lingering property that all $\gamma_n$ are initially but not completely contained in a compact set $J(x,y)$ (for some $y\gg x$), which is at the same time a $d$-compatible neighborhood. Denoting the initial segments by $\gamma_n\rvert_{[0,b_n]}$, we can assume that $\gamma_n([0,b_n])\subseteq J(x,y)$, $\gamma_n(b_n)\in \partial J(x,y)\cap \partial J^-(y)$ for all $n\in\N$. By being in a $d$-compatible neighborhood, we get that the $b_n$s stay bounded.

By the limit curve theorem \cite[Thm.\ 3.7]{KS:18}, we find a subsequence of $(\gamma_n)_n$ which converges uniformly to a future directed causal curve $\gamma\colon[0,b]\rightarrow X$ which also has $\gamma(b)\in\partial J(x,y)\cap\partial J^-(y)$. Without loss of generality we denote this subsequence again by $(\gamma_{n})_n$ and note that $\gamma$ is non-constant. By \cite[Prop.\ 3.17]{KS:18}, the limit curve $\gamma$ is maximizing. We claim it is timelike and $[\gamma_n]\to[\gamma]$.

Now let $\varepsilon>0$, and $n$ be large enough such that $\ma_x(\gamma_n,\gamma_m)<\varepsilon$ for all $m\geq n$. Let $s,t$ be such that $\gamma(s)\ll\gamma_n(t)$ (these exist: for each $t$ there is an $s>0$ such that this holds as $\gamma$ is future directed causal). As $\gamma_k(s)\to\gamma(s)$, we get that for large enough $k$, also $\gamma_k(s)\ll\gamma_n(t)$ and we can calculate the comparison angle using the law of cosines (Lemma \ref{lorLawOfCosines} with $\sigma=-1$) as follows (we only do the $K=0$ case, the others are similar, just a bit more involved).
\begin{align}\label{eq-pop-D-plus-complete}
\cosh(\tilde{\ma}_x(\gamma_k(s),\gamma_n(t)))&=\\
&\frac{\tau(x,\gamma_k(s))^2+\tau(x,\gamma_n(t))^2-\tau(\gamma_k(s),
\gamma_n(t))^2}{2\tau(x,\gamma_k(s))\tau(x,\gamma_n(t))}\,.
\end{align}
Lemma \ref{lem-K-ang-com-ang-ex-fudi} implies that $X$ satisfies future $K$-monotonicity (where $K$ is the timelike curvature bound from below). Moreover, by Corollary \ref{cor-K-mon-ang-bou} we conclude that $\varepsilon>\ma_x(\gamma_k,\gamma_n)\geq \tilde \ma_x^{K}(\gamma_k(s),\gamma_n(t))$. When taking the limit as $k\to\infty$, we have to be careful. Denoting the enumerator in the left-hand-side of Equation \eqref{eq-pop-D-plus-complete}  by $x_k$ and the denominator by $y_k$, we have $1\leq \frac{x_k}{y_k}<\cosh(\varepsilon)=:1+\tilde\varepsilon$. We have that $y_k\to0$ precisely when $x_k\to0$. If they do not converge to $0$, $\tau(x,\gamma(s))>0$ and thus $\gamma$ is timelike by regular localizability. Thus we have a valid timelike triangle. Taking $k\to\infty$, and then $s,t\to 0$, we get that the upper angle satisfies $\ma_x(\gamma_n,\gamma)\leq\varepsilon$. Letting $\varepsilon\to0$ yields a $\ma_x$-convergent subsequence of $([\gamma_n])_n$ and hence we are done in this case.

Now we exclude the other cases: If $y_k\to 0$, we have $\tau(x,\gamma_k(s))\to 0$ and together with $x_k\to 0$ this implies 
$\tau(\gamma_k(s),\gamma_n(t))\to\tau(x,\gamma_n(t))$. Note that this also works for $K\neq 0$ (e.g.\ we have a $\cosh(\tau(x,\gamma_k(s)))$ factor on one of the terms, which converges to $1$). Denoting the points $u:=\gamma(s)$ and $z:=\gamma_n(t)$, we have $x\leq u$ but $x\not\ll u$ and $\tau(u,z)=\tau(x,z)>0$, as $\gamma_n$ is timelike. If $x\neq u$, we concatenate the geodesics from $x$ to $u$ and from $u$ to $z$ having length $0+\tau(u,z)=\tau(x,z)$, i.e., a distance realizer containing a null segment (the segment from $x$ to $u$), in contradiction to regular localizability. Thus $x=u$ and hence $s=0$ --- a contradiction to $s>0$.
\end{proof}

\subsection{Timelike tangent cones}

In metric geometry the \emph{tangent cone} is a generalization of tangent spaces of smooth manifolds (see e.g.\ \cite[p.\ 321]{BBI:01}) and a valuable tool, especially in Alexandrov spaces with curvature bounded below. Using the space of timelike directions at a point we introduce in an analogous way the \emph{timelike tangent cone} at a point in an \LpLSn. To this end we use the concept of the \emph{Minkowski cone} over a metric space, introduced in \cite[Sec.\ 2]{AGKS:21}. 

\begin{defi}[Minkowski cone]
Let $(Y,d)$ be a metric space. Then the \emph{Minkowski cone} $\mathrm{Cone}(Y)$ of $Y$ is a \LpLSn, where the underlying metric space is $([0,\infty)\times Y)/(\{0\}\times Y)$, equipped with the cone metric $d_c$ (cf.\ \cite[Def.\ 3.6.16]{BBI:01})
\[
d_c((s,p),(t,q))=\begin{cases}
\sqrt{s^2+t^2-2st\cos d(p,q)} &\text{if }d(p,q)\leq \pi\,,\\
s+t&\text{if }d(p,q)\geq \pi\,.
\end{cases}
\]
Moreover, the causal relation $\leq$ and the time separation function $\tau$ are defined via
\[
(s,p)\leq(t,q)\Leftrightarrow s^2+t^2-2st\cosh d(p,q)\geq 0\text{ and }s\leq t\,,
\]
\[\tau((s,p),(t,q))=\sqrt{s^2+t^2-2st\cosh d(p,q)}\text{ if $(s,p)\leq(t,q)$}\,,
\]
and $\ll$ induced by $\tau$ (i.e., $(s,p)\ll(t,q)\Leftrightarrow \tau((s,p),(t,q))>0$). See Section 2 in \cite{AGKS:21} for more details, and how Minkowski cones (without the vertex) can be viewed as instances of \emph{generalized cones} (\cite[Ex.\ 3.31]{AGKS:21}). 
\end{defi}

\begin{rem}[Minkowski cone not localizable]
Proposition 2.2 and Corollary 2.4 in \cite{AGKS:21} establish that the Minkowski cone over a geodesic length space is a geodesic \LpLS and that the time separation $\tau$ is continuous. However, it cannot be localizable as the vertex $0$ is \emph{isolated} with respect to $\ll$, i.e., there is no point $x$ such that $x\ll 0$.
\end{rem}

However, Minkowski cones have localizable neighborhoods at all points except the vertex $0$, which we establish below but first we need the following lemma, whose proof is elementary.

\begin{lem}[Useful properties for Minkowski cones]\label{lem-min-con-use}
Let $(Y,d)$ be a metric space and $X:=\mathrm{Cone}(Y)$ the Minkowski cone over $(Y,d)$. Let $p=(s,y)\leq q=(t,y')$.
\begin{enumerate}
 \item If $s>0$ one has that $d(y,y')\leq \log(t)-\log(s)$.
 \item If $d(y,y')\leq \pi$, then $d_c(p,q)\leq (t-s) + t d(y,y')$.
\end{enumerate}
\end{lem}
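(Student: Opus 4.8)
The plan is to unwind both statements directly from the definitions of the cone causal relation and the cone metric, reducing each to a single one-variable inequality. Throughout I would write $\theta := d(y,y')$, and recall that the hypothesis $p=(s,y)\leq q=(t,y')$ unpacks to $s\leq t$ together with $s^2+t^2-2st\cosh\theta\geq 0$.

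For (i), since $s>0$ (and hence $t\geq s>0$) I would divide the causal-relation inequality by $2st$ to obtain
\[
\cosh\theta \leq \frac{s^2+t^2}{2st} = \frac12\Bigl(\frac{t}{s}+\frac{s}{t}\Bigr).
\]
The key observation is that the right-hand side is exactly $\cosh\bigl(\log(t/s)\bigr)$. Because $\cosh$ is increasing on $[0,\infty)$ and both $\theta\geq 0$ and $\log(t/s)\geq 0$ (the latter since $t\geq s$), monotonicity yields $\theta\leq\log(t/s)=\log t-\log s$, which is the claim. This part is immediate once the hyperbolic-cosine identity is spotted.

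For (ii), assume $\theta\leq\pi$, so that $d_c(p,q)=\sqrt{s^2+t^2-2st\cos\theta}$. Both $d_c(p,q)$ and the target $(t-s)+t\theta$ are non-negative (the latter because $t\geq s$ and $\theta\geq 0$), so it suffices to compare their squares. I would expand
\[
\bigl((t-s)+t\theta\bigr)^2-\bigl(s^2+t^2-2st\cos\theta\bigr) = -2st(1-\cos\theta)+2t\theta(t-s)+t^2\theta^2,
\]
and then invoke the elementary estimate $1-\cos\theta\leq\theta^2/2$ to bound $-2st(1-\cos\theta)\geq -st\theta^2$. Substituting produces the lower bound $t^2\theta^2-st\theta^2+2t\theta(t-s)=t\theta(t-s)(\theta+2)\geq 0$, non-negative term by term. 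Hence the squared inequality holds, and taking square roots gives $d_c(p,q)\leq (t-s)+t\theta$.

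There is no substantial obstacle here, consistent with the lemma being labelled elementary; the only points requiring slight care are recognizing $\tfrac12(t/s+s/t)=\cosh(\log(t/s))$ in (i), and using the sharp bound $1-\cos\theta\leq\theta^2/2$ in (ii). The weaker estimate $\cos\theta\leq 1$ is insufficient, since it cannot absorb the cross term $2t\theta(t-s)$; the quadratic-in-$\theta$ control is exactly what makes the cancellation into the non-negative product $t\theta(t-s)(\theta+2)$ work.
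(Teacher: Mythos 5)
Your proposal is correct, and it supplies exactly the kind of elementary argument the paper has in mind: the paper states this lemma with no proof at all ("whose proof is elementary"), so there is nothing to diverge from. Both steps check out --- in (i) the identity $\tfrac12(t/s+s/t)=\cosh(\log(t/s))$ together with monotonicity of $\cosh$ on $[0,\infty)$ and $t\geq s$ gives the claim, and in (ii) the expansion of the squared difference plus the bound $1-\cos\theta\leq\theta^2/2$ yields the non-negative quantity $t\theta(t-s)(\theta+2)$, so the lemma follows; your closing remark that the crude bound $\cos\theta\leq 1$ cannot work is also accurate, since one needs an upper bound on $1-\cos\theta$ that is quadratic in $\theta$ to absorb the cross term.
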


\begin{prop}[Minkowski cones nearly localizable]\label{prop-min-con-nea-loc}
Let $(Y,d)$ be a metric space which is strictly intrinsic. Then any $0\neq x\in X:=\mathrm{Cone}(Y)$ has a localizable neighborhood. Moreover, $0\in X$ has a neighborhood $\Omega_0$ that satisfies all the conditions of a localizable neighborhood except that $I^-(0)\cap \Omega_0=\emptyset$. We call such a neighborhood \emph{nearly localizable}. 
\end{prop}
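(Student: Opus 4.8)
The plan is to verify the three defining properties (i)--(iii) of a localizing neighborhood at each point, and to observe that for the vertex only the nonemptiness of $I^-(0)\cap\Omega_0$ can fail. Throughout I use that, by \cite[Prop.\ 2.2, Cor.\ 2.4]{AGKS:21}, $X=\mathrm{Cone}(Y)$ is a geodesic \LpLS with continuous time separation, which is moreover finite since $\tau((s,y),(t,y'))^2=s^2+t^2-2st\cosh d(y,y')\le(t-s)^2$; in particular distance realizers between causally related points exist globally. The computational backbone is the reformulation of causality contained in Lemma \ref{lem-min-con-use}: since $\cosh(\log(t/s))=\tfrac{s^2+t^2}{2st}$, for $s,t>0$ one has $(s,y)\le(t,y')$ iff $s\le t$ and $d(y,y')\le\log(t/s)$, and $(s,y)\ll(t,y')$ iff $s<t$ and $d(y,y')<\log(t/s)$.

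First I would fix the neighborhoods. For a regular point $x\neq0$ choose $p^-\ll x\ll p^+$ and set $\Omega_x:=I(p^-,p^+)$; for the vertex choose $p^+\gg0$ and set $\Omega_0:=I^-(p^+)$. Both are open neighborhoods of the respective point, as $I^\pm$ are open. The key structural fact is that these diamonds are \emph{causally convex}, which reduces to push-up in $X$: if $p^-=(s_0,y_0)\ll(s,y)\le(u,y'')$, the reformulation gives $d(y_0,y)<\log(s/s_0)$ and $d(y,y'')\le\log(u/s)$, hence $d(y_0,y'')<\log(u/s_0)$, i.e.\ $p^-\ll(u,y'')$ (the symmetric statement and the vertex cases are immediate). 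Causal convexity of $J(p,q)$ then follows from transitivity of $\leq$ together with push-up, and $J(p,q)\subseteq\Omega$ whenever $p,q\in\Omega$.

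With convexity in hand, property (ii) is essentially free: for $p<q$ in $\Omega$ the global realizer $\gamma$ lies in $J(p,q)\subseteq\Omega$, so the local time separation satisfies $\omega(p,q)=\tau(p,q)$ and $\gamma$ is an $\omega$-realizer; restricting the (lower semicontinuous, reverse-triangle-inequality-satisfying) data of $X$ to the open convex set $\Omega$ yields a \LpLS. For property (i) I would bound the $d_c$-length of a causal curve $p_0\le\dots\le p_N$ in $\Omega$, writing $p_i=(s_i,y_i)$ with all $s_i$ below the radial coordinate $b$ of $p^+$. On a fine partition the angular steps are small, so part (2) of Lemma \ref{lem-min-con-use} gives $d_c(p_i,p_{i+1})\le(s_{i+1}-s_i)+s_{i+1}\,d(y_i,y_{i+1})$, while the reformulation gives $d(y_i,y_{i+1})\le\log s_{i+1}-\log s_i$. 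The first part telescopes to $s_N-s_1$, and $\sum_i s_{i+1}(\log s_{i+1}-\log s_i)$ is a right-endpoint Riemann sum converging to $\int s\,d(\log s)=s_N-s_1$ as the mesh tends to $0$. Hence $L^{d_c}(\gamma)\le 2(s_N-s_1)\le 2b$, a uniform bound; an initial segment issuing from the vertex, if present, contributes only its radial length and does not affect this.

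Finally, property (iii): given $y=(s,w)\in\Omega$ with $s>0$, the radial neighbours $(s\pm\varepsilon,w)$ are chronologically below and above $y$, and by continuity of $\tau$ and transitivity of $\ll$ they lie in $\Omega$ for small $\varepsilon$, so $I^\pm(y)\cap\Omega\neq\emptyset$. For $\Omega_0$ this covers every $y\neq0$; moreover $I^+(0)\cap\Omega_0\neq\emptyset$ is clear, whereas $I^-(0)=\emptyset$ because $(s,y)\ll0$ would force $s<0$. This single failure is exactly what makes $\Omega_0$ only \emph{nearly} localizable. I expect the main obstacle to be property (i) near the vertex: the angular variation of a causal curve blows up like $\log(1/s)$ as $s\to0$, and only the $s$-weighting in the cone metric compensates, so the crux is the telescoping/integral estimate above rather than any soft compactness argument.
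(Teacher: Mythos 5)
Your proof is correct and follows essentially the same route as the paper's: verify conditions (i)--(iii) directly, with the uniform bound on $d_c$-lengths of causal curves via Lemma \ref{lem-min-con-use} as the only substantive step, and observe that at the vertex only $I^-(0)\cap\Omega_0\neq\emptyset$ fails. The differences are in implementation. The paper takes radial slabs $\{(t,y):t_-<t<t_+\}$ (resp.\ $\{(t,y):0\leq t<t_+\}$) as localizing neighborhoods, for which causal convexity is automatic because the radial coordinate is monotone along causal curves; your chronological diamonds $I(p^-,p^+)$ and $I^-(p^+)$ work just as well but force you to prove push-up, which you do correctly via the reformulation of $\leq$ and $\ll$. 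For the length bound, the paper extends the curve backwards to the vertex and passes to a discrete refinement with $s_{i+1}'\leq 2s_i'$, using concavity of $\log$ to get a factor $2$ per step (hence the bound $4t_+$); you instead split off the initial vertex segment and let the mesh tend to zero in the telescoping sum, which gives the sharper constant $2$. Your limit argument is sound, but to make it airtight you should record two points explicitly: first, that $d_c$-partition sums increase under refinement, so it suffices to bound sums over arbitrarily fine partitions; second, that the excess of your right-endpoint sum over $\int s\,d(\log s)$ is at most $\mathrm{mesh}\cdot\log(s_N/s_1)$, which tends to zero only because $s_1>0$ is fixed once the vertex segment has been removed --- this is precisely the $\log(1/s)$ blow-up your closing sentence identifies, and it is why the splitting (or, in the paper, the factor-$2$ refinement) cannot be dispensed with.
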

\begin{pr}
  As (nearly) localizable neighborhoods, we choose the sets $\{(t,y):t_-<t<t_+\}$ and $\{(t,y):0\leq t<t_+\}$, respectively, 
for any $0<t_-<t_+$. These neighborhoods are causally convex, and any two causally related points can be connected by a 
$\tau$-distance realizer (\cite[Cor.\ 2.4]{AGKS:21}). Furthermore, by \cite[Prop.\ 2.2]{AGKS:21} $\tau$ is continuous, and 
except at $0$ there are no $\ll$-isolated points. For $d$-compatibility, the $d_c$-length of causal curves within such a neighborhood is at most $4t_+$. This can be shown by using Lemma \ref{lem-min-con-use}:
Parametrize a (future directed) causal curve $\gamma$ from $p$ to $q$ as $\gamma(r)=(r,\beta(r))$, where $r\in[s,t]$ 
and $\beta\colon[s,t]\rightarrow Y$. We can without loss of generality assume that $s=0$, hence $p=0\in\mathrm{Cone}(Y)$ 
(otherwise extend $\gamma$ to $0$ --- this is always possible, but not necessarily within the neighborhood). At this point, let $0=s_0 < s_1 < \ldots < s_M=t$ be a 
partition of $[0,t]$. We choose a refinement $0=s_0=s_0' < s_1' < \ldots < s_{M'}'=t$ of $(s_i)_{i=0}^M$ such that 
$s_{i+1}'\leq 2 s_i'$ and $d(\beta(s_i'),\beta(s_{i+1}'))\leq \pi$ for all $i\neq 0$. Thus
\begin{align}
 \sum_{i=0}^{M-1} d_c(\gamma(s_i),\gamma(s_{i+1})) &\leq d_c(0,\gamma(s_1')) + \sum_{i=1}^{M'-1} 
d_c(\gamma(s_i'),\gamma(s_{i+1}'))\\
&\leq s_1' + \sum_{i=1}^{M'-1} (s_{i+1}'-s_i')\,+ s_{i+1}'\, d(\beta(s_i'),\beta(s_{i+1}'))\\
&\leq 2 t + \sum_{i=1}^{M'-1}s_{i+1}'\,(\log(s_{i+1}')-\log(s_i'))\\
&\leq 2 t + \sum_{i=1}^{M'-1} \frac{s_{i+1}'}{s_i'}\, (s_{i+1}'-s_i')\leq 2t + 2 (t-s_1') \leq 4t\,.
\end{align}
where we used the concavity of the logarithm. Taking the supremum over all partitions of $[0,t]$ yields $L^{d_c}(\gamma)\leq 4 t$ and $t\leq t_+$ if $q$ lies in such a neighborhood.
\end{pr}

Localizability of the Minkowski cone away from the vertex also works if $Y$ is only a locally strictly intrinsic metric space by using timelike diamonds $I((t_-,y),(t_+,y))$ for $y\in Y$ and $t_-,t_+$ close enough to each other so that $\mathrm{pr}_Y(I((t_-,y),(t_+,y)))$ is contained in a strictly intrinsic neighborhood.

%

\begin{rem}[Minkowski cone geodesic]
 The Minkowski cone is geodesic precisely when the base $Y$ is. This can be seen easily from \cite[Lem.\ 2.3]{AGKS:21} and adding the vertex. In the metric case an analogous statement holds, see \cite[Thm.\ 3.6.17]{BBI:01}.
\end{rem}

\begin{rem}[Cone metric does not induce Euclidean topology]\label{rem-dc-not-euc-top}
This is a warning that usually, e.g., for spacetimes, the cone metric $d_c$ induces a coarser topology than one might at first expect. In any case we have $d_c(0,p)=\tau(0,p)$ for all $p\in\mathrm{Cone}(Y)$ and hence
\begin{equation}
 B^{d_c}_1(0)=\{p\in \mathrm{Cone}(Y): \tau(0,p)<1\}\,,
\end{equation}
i.e., the hyperboloid $\{p\in \mathrm{Cone}(Y):\tau(0,p)=r\}$ of radius $r>0$ is bounded, closed and not compact for non-compact $Y$.

For example, if $Y$ is the $n-1$-dimensional hyperbolic space $\mathbb{H}^{n-1}$, then the Minkowski cone $\mathrm{Cone}(Y)$ can be identified with $I^+(0)\cup\{0\}$ in $n$-dimensional Minkowski spacetime $\R^n_1$, and clearly the topology does not agree with the Euclidean one of $\R^n$. This can be most easily seen (in without loss of generality $n=1+1$ dimensions) by considering the sequence $p_k:=(\frac{1}{k}, y_k)$, where $y_k\in\mathbb{H}^{1}\cong\R^1$ is given by $y_k=(k^2, \sqrt{k^4-1})$ in the realization $\mathbb{H}^{1}\subseteq \mb{R}^2_1$. Then $d_c(0,p_k)=\frac{1}{k}$, hence $p_k\to 0\in \mathrm{Cone}(Y)$, but $p_k\cong \frac{1}{k}\cdot y_k=(k,\sqrt{k^2-\frac{1}{k^2}})\not\to0$ in $\R^2$ with respect to the Euclidean topology ($\frac{1}{k}\cdot y_k$ even diverges). Here we used the identification of $p=(t,y)$ with $t\cdot \psi(y)\in I^+(0)\cup\{0\}\subseteq\R^n_1$, given in \cite[Rem.\ 2.1]{AGKS:21}.
\end{rem}

Despite Remark \ref{rem-dc-not-euc-top} we will use the cone metric $d_c$ on the Minkowski cone $\mathrm{Cone}(Y)$ over a metric space $(Y,d)$, as it is a \emph{canonical} choice (in contrast to e.g.\ the usual metric on Minkowski space).
\medskip

In a Minkowski cone, the space of directions at the vertex is, as expected, essentially the base space.

\begin{pop}[Space of timelike directions in Minkowski cone]
 Let $(Y,d)$ be any metric space and consider the Minkowski cone $X:=\mathrm{Cone}(Y)$ over $Y$. Then the space of 
timelike directions at $0\in X$, $(\mc{D}_0,\ma_0)$, is isometric to $(Y,d)$ as metric spaces.
\end{pop}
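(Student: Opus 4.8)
The plan is to realise the isometry explicitly through the radial rays of the cone. For each $y\in Y$ put $\gamma_y\colon[0,\infty)\to X$, $\gamma_y(r):=(r,y)$. A direct computation with the cone time separation gives $\tau(0,\gamma_y(r))=r$ and $\tau(\gamma_y(r_1),\gamma_y(r_2))=r_2-r_1$ for $0\le r_1<r_2$ (here $\cosh d(y,y)=1$), so $\gamma_y$ is future directed timelike with $0$ as past endpoint, and $L_\tau(\gamma_y|_{[0,r]})=r=\tau(0,\gamma_y(r))$; thus every initial segment is a distance realiser and $\gamma_y$ is a geodesic. This yields a map $\iota\colon Y\to\mc{D}_0$, $\iota(y):=[\gamma_y]$.

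Next I would compute the angle between two rays. Since $0\ll\gamma_y(s)$ and $0\ll\gamma_{y'}(t)$, the vertex is the past endpoint and $\sigma=-1$; moreover, whichever of the two causal relations holds between $\gamma_y(s)$ and $\gamma_{y'}(t)$, the side opposite the vertex satisfies $c^2=s^2+t^2-2st\cosh d(y,y')$. Inserting $a=s$, $b=t$ and this $c$ into the $K=0$ law of cosines (Lemma \ref{lorLawOfCosines}), the terms $s^2+t^2$ cancel and one is left with $\cosh(\tilde\ma_0(\gamma_y(s),\gamma_{y'}(t)))=\cosh d(y,y')$, i.e.\ the comparison angle equals $d(y,y')$ for every admissible pair $(s,t)\in A_0$. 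For fixed $y,y'$ such admissible pairs with $s,t\searrow0$ always exist, because $(s,y)\le(t,y')$ as soon as $s\le t$ and $t\ge s\,e^{d(y,y')}$. Hence $\ma_0(\gamma_y,\gamma_{y'})$ is a genuine limit and equals $d(y,y')$; in particular $\iota$ takes values in $\mc{D}_0$ and is distance preserving, so it is injective.

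The main point is surjectivity: every future directed timelike geodesic $\delta$ from $0$ must be a reparametrised radial ray. I would restrict $\delta$ to an interval $[0,r_0]$ on which it is a distance realiser and write $\delta(r)=(h(r),\beta(r))$ for $r>0$, so that $h(r)=\tau(0,\delta(r))$. Since a distance realiser saturates the reverse triangle inequality on every subtriple, for $0<r_1<r_2\le r_0$ one obtains $\tau(\delta(r_1),\delta(r_2))=h(r_2)-h(r_1)$. Comparing with the cone expression $\tau(\delta(r_1),\delta(r_2))=\sqrt{h(r_1)^2+h(r_2)^2-2h(r_1)h(r_2)\cosh d(\beta(r_1),\beta(r_2))}$ and squaring forces $\cosh d(\beta(r_1),\beta(r_2))=1$, whence $\beta\equiv y$ is constant on $(0,r_0]$. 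Thus $\delta$ is a reparametrisation of $\gamma_y$, and the computation of the previous paragraph (or Lemma \ref{lem-ang-geo}) gives $\ma_0(\delta,\gamma_y)=0$, i.e.\ $[\delta]=\iota(y)$.

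Putting the three steps together, $\iota$ is a distance preserving bijection of $(Y,d)$ onto $(\mc{D}_0,\ma_0)$, which is the assertion. I expect the surjectivity step to be the only delicate one, but the rigidity computation above settles it cleanly, using merely the reverse triangle inequality and the explicit cone time separation, and requiring neither intrinsicness of $Y$ nor the geodesic-structure results of \cite{AGKS:21}.
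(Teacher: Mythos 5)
Your proof is correct and follows essentially the same route as the paper: the map $y\mapsto[\gamma_y]$ via radial rays, the law-of-cosines computation showing $\tilde\ma_0(\gamma_y(s),\gamma_{y'}(t))=d(y,y')$ on all admissible pairs, and the rigidity argument that a distance realizer from the vertex must have constant $Y$-component (the paper phrases this contrapositively, deducing $\tau<t-s$ from $p\neq q$, which is the same computation). Your version is slightly more explicit (exhibiting admissible pairs and using additivity of $\tau$ along realizers directly), but it is not a different argument.
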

\begin{proof}
We consider the map $\Phi\colon Y\to\mc{D}_0$ that maps $y\in Y$ to the (equivalence class of the) curve 
$\gamma_y(t):=(t,y)$ $(t\in[0,1])$. The curve $\gamma_y\colon[0,1]\rightarrow X$ is a maximizer as 
$\tau((s,y),(t,y))=\sqrt{s^2+t^2-2st}=|s-t|$. For two such curves $\gamma_{y_1},\gamma_{y_2}\colon[0,1]\rightarrow X$, we 
have that $\ma_0(\gamma_{y_1},\gamma_{y_2})=d(y_1,y_2)$ as the definition of the time separation function $\tau$ in the 
Minkowski cone is made precisely such that $\tilde{\ma}_0(\gamma_{y_1}(s),\gamma_{y_2}(t))=d(y_1,y_2)$ whenever $s,t$ are 
such that $\gamma_{y_1}(s),\gamma_{y_2}(t)$ are timelike related (Law of Cosines). Thus, $\Phi$ is an isometric embedding. 

Conversely, let $\tilde{\gamma}$ be any geodesic in $X$ starting at $0$, without loss of generality parametrized by the 
$t$-coordinate.  Let $[0,\varepsilon]$ be a part of the domain where $\tilde{\gamma}$ maximizes. So let $0<s<t\leq\varepsilon$. Define $(s,p):=\tilde{\gamma}(s)$ and $(t,q):=\tilde{\gamma}(t)$, we indirectly assume $p\neq q$. Then $\tau(0,(t,q))=t$, 
$\tau(0,(s,p))=s$ and $\tau((s,p),(t,q))=\sqrt{s^2+t^2-2st\cosh(d(p,q))}<t-s$ as $d(p,q)>0$, contradicting $\tilde{\gamma}$ being a distance realizer when restricted to $[0,\varepsilon]$. Thus $\tilde{\gamma}$ can only realize $\tau$ if $p=q$ for all $0<s<t\leq\varepsilon$, hence $\tilde{\gamma}\sim\gamma_p$ and so $\Phi$ is surjective.
\end{proof}
\bigskip

At this point we are in the position to introduce the notion of a timelike tangent cone.

\begin{defi}[Timelike tangent cone]
Let $\Xll$ be a \LpLS with $\tau$ locally finite-valued and so that angles between future/past directed timelike 
geodesics always exist and let $x\in X$. The \emph{future/past timelike tangent cone} $T^\pm_x$ at $x$ is the Minkowski 
cone $\mathrm{Cone}(\Sigma_x)$ over the metric space $\Sigma^\pm_x$ of the completion of future/past directed timelike 
directions $\D^\pm_x$.
\end{defi}

The following is straightforward:
\begin{lem}\label{spaceOfDirectionsForSpt}
Let $M$ be an $n+1$-dimensional strongly causal spacetime with metric of $\Con^2$-regularity, and consider it as a 
Lorentzian length space. Let $p\in M$, then the space of future directed timelike directions at $p$, $\D_p^+$, is isometric to $n$-dimensional hyperbolic space $\mb{H}^n$ and the tangent cone $T_x^+$ is isometric to $I^+(0)\subseteq T_xM$.
\end{lem}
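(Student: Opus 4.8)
The plan is to identify the space of future directed timelike directions $\D_p^+$ with the hyperboloid model of $\mb{H}^n$ sitting inside the tangent space, and then to invoke the cone identification recorded in Remark \ref{rem-dc-not-euc-top}. Since $(T_pM,g_p)$ is an $(n+1)$-dimensional Minkowski vector space, the set
\[
H:=\{v\in T_pM : g_p(v,v)=-1,\ v\text{ future directed}\}
\]
is the upper sheet of the unit hyperboloid and carries the structure of $n$-dimensional hyperbolic space, with distance $d_{\mb{H}^n}(v,w)=\arcosh(-g_p(v,w))=\arcosh(|g_p(v,w)|)$; the last equality holds because two future directed timelike vectors satisfy $g_p(v,w)\le -1<0$ by the reverse Cauchy--Schwarz inequality. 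I would then study the map
\[
\Psi\colon \D_p^+\to H,\qquad [\alpha]\mapsto \frac{\alpha'(0)}{|\alpha'(0)|},
\]
sending the direction of a future directed timelike geodesic to its normalized initial velocity.

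First I would check that $\Psi$ is a well-defined isometric bijection. By Proposition \ref{hypAnglesAgreeSpt} the synthetic angle between any two $\Con^1$-regular timelike curves $\alpha,\beta$ starting at $p$ equals the Lorentzian angle $\arcosh(|g_p(\alpha'(0),\beta'(0))|/(|\alpha'(0)|\,|\beta'(0)|))=\arcosh(|g_p(\hat\alpha,\hat\beta)|)$, where $\hat\alpha,\hat\beta\in H$ are the normalized velocities. Hence $\ma_p(\alpha,\beta)=0$ if and only if $|g_p(\hat\alpha,\hat\beta)|=1$, which by the equality case of the reverse Cauchy--Schwarz inequality forces $\hat\alpha=\hat\beta$; this shows at once that $\Psi$ descends to equivalence classes and is injective. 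The same identity gives $\ma_p(\alpha,\beta)=d_{\mb{H}^n}(\hat\alpha,\hat\beta)$, so $\Psi$ is distance preserving. For surjectivity, given $v\in H$ the curve $t\mapsto\exp_p(tv)$ is a timelike geodesic which, inside a convex normal neighbourhood of $p$, is locally maximizing and hence a geodesic in the synthetic sense with initial direction $v$; thus $v$ lies in the image. Therefore $\Psi$ is an isometry and $\D_p^+\isom\mb{H}^n$.

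For the tangent cone, since $\mb{H}^n$ is complete and $\Psi$ is an isometry, $\D_p^+$ is complete, so $\Sigma_p^+=\D_p^+\isom\mb{H}^n$ and $T_p^+=\mathrm{Cone}(\Sigma_p^+)\isom\mathrm{Cone}(\mb{H}^n)$. It then remains to identify $\mathrm{Cone}(\mb{H}^n)$ with $I^+(0)\cup\{0\}\subseteq T_pM$. This is exactly the identification recalled in Remark \ref{rem-dc-not-euc-top}, following \cite[Rem.\ 2.1]{AGKS:21}: the map $(t,v)\mapsto t\,v$ (with $v\in H\subseteq T_pM$, the vertex going to $0$) sends the Minkowski cone over the unit hyperboloid onto the future timelike cone of the Minkowski space $T_pM$, and by the very definition of the cone's $\tau$ and $\leq$ it is an isomorphism of Lorentzian pre-length spaces. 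This yields $T_p^+\isom I^+(0)\subseteq T_pM$ (including the vertex), as claimed.

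The main obstacle I expect is bookkeeping rather than conceptual: one must ensure that the synthetic directions correspond precisely to genuine, locally maximizing timelike geodesics, so that $\Psi$ is surjective and $\D_p^+$ is the full hyperboloid --- this rests on the existence of the exponential map and the maximality of short timelike geodesics in convex normal neighbourhoods of a $\Con^2$ spacetime. A secondary point worth spelling out is that the identification of $\mathrm{Cone}(\mb{H}^n)$ with $I^+(0)$ is an isometry of Lorentzian pre-length spaces, matching $\tau$, $\leq$ and the cone metric $d_c$, with the caveat from Remark \ref{rem-dc-not-euc-top} that $d_c$ does not induce the Euclidean topology on $T_pM$; thus the word \emph{isometric} must be read in the sense of the canonical cone metric and not the Euclidean one.
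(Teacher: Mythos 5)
Your proof is correct and follows essentially the same route as the paper's: both identify directions with unit future-directed timelike vectors via the (normalized) initial velocity, use Proposition \ref{hypAnglesAgreeSpt} to equate the synthetic angle with the hyperbolic distance on the unit hyperboloid, and conclude with the identification $\mathrm{Cone}(\mb{H}^n)\cong I^+(0)\subseteq\R^{n+1}_1$ from \cite[Rem.\ 2.1]{AGKS:21}. The extra care you take --- surjectivity via the exponential map, injectivity via the equality case of the reverse Cauchy--Schwarz inequality, and completeness of $\D_p^+$ so that $\Sigma_p^+=\D_p^+$ --- only makes explicit steps the paper leaves implicit.
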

\begin{proof}
In a $\Con^2$-spacetime, geodesics are solutions to the geodesic equation. Thus, future directed timelike geodesics passing through $x$ are in one-to-one correspondence to future directed timelike vectors in $T_xM\cong\mb{R}^{n+1}_1$. As angles agree (Lemma \ref{hypAnglesAgreeSpt}), geodesics have the same angle as the vectors they correspond to, and we can restrict to unit future directed timelike vectors, which form $\mb{H}^n$. The usual metric on $\mb{H}^n$ is just defined as $d(p,q)=\ma_0(p,q)$, which is the metric on $\D_x^+$. For the tangent cone, we just note that $\mathrm{Cone}(\mb{H}^n)\cong I^+(0)\subseteq \R^{n+1}_1$, see \cite[Rem.\ 2.1]{AGKS:21}.
\end{proof}

A natural question is that of whether $\Sigma_x^\pm$  is a geodesic length space. Moreover, in the metric case it is not hard to see that the completion of a length space is a length space (\cite[Exc.\ 2.4.18]{BBI:01}). We give an example below, where $\D_x^+$ is not intrinsic, and then later in Proposition \ref{SigmaIsIntrinsic} give sufficient conditions for $\D^\pm_x$ to be intrinsic.

\begin{ex}
In essence, this example is the Minkowski cone over a non-intrinsic space. However, it is defined as a subset of Minkowski spacetime and not as a Minkowski cone over a metric space.
Let $X=\{(t,x,y)\in\R^3_1: x^2\geq yt\}\subseteq \mb{R}^3_1$ be the closed exterior of a tilted double cone, given by $yt=\frac{1}{4}((y+t)^2-(y-t)^2)=x^2$, see Figure \ref{fig-non-con-con}. It is a non-convex, conical (with respect to $0$) subset of Minkowski spacetime, and we consider it as a \LpLS by restricting $\tau$ from Minkowski space. We only have to consider $\tilde{\ma}$ as angles exist and are invariant under scaling. The timelike geodesics through $0$ are just straight lines. Thus, the space of directions $\D_0^+$ at $0$ is a non-convex subset of $\mb{H}^n$, and thus not intrinsic. Note $X$ was not intrinsic as well.

If we intrinsify $X$ (see \cite[Thm.\ 1.7.3]{Ber:20}), i.e., considering $X$ with the new time separation function $\hat\tau(p,q):=\sup\{L_\tau(\gamma): \gamma$ future directed causal from $p$ to $q\}\cup\{0\}$, the lengths of geodesics stay the same, but angles will change. We need the other geodesics as well: For $p,q$ not both on the cone such that the straight line connection is not contained in $X$, the connecting geodesic will be straight outside the cone, will be tangential to the cone where it touches it, and at such points it might switch over to have a part contained in the cone (which is a geodesic in the part of the cone which can be considered as a $1+1$-dimensional Lorentzian manifold). Now we have a distance realizer $\gamma$ from $p$ to $q$. It is intuitively clear that the straight lines from $0$ to $\gamma(t)$ form a distance realizer in $\D_0^+$, making it strictly intrinsic. 

\begin{figure}[h!]
\begin{center}
\begin{tikzpicture}[x=1.5cm,y=1.5cm]
\draw[rotate=-45] (0,1) ellipse(1.5cm and 0.375cm);
\draw[rotate=-45] (0,-1) ellipse(1.5cm and 0.375cm);
\draw[rotate=-45] (1,1) -- (-1,-1) (1,-1)--(-1,1);
\draw[color=red] (1.5,-1)--(0.2,0.5) arc (221:180:1) ;
\end{tikzpicture}
\end{center}
\caption{The example where $\D_x^+$ is not intrinsic, but neither is $X$ itself. The red curve maximizes the length of all causal curves connecting the endpoints, while not realizing the time separation.}\label{fig-non-con-con}
\end{figure}
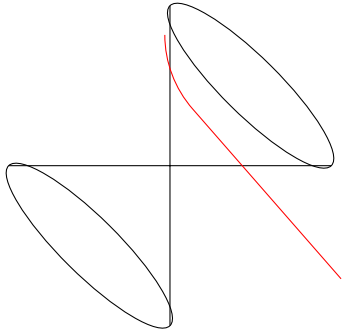
\end{ex}

We have the following sufficient criterion for the completion of the space of timelike directions to be intrinsic.
\begin{pop}[Sufficient conditions for space of timelike directions to be intrinsic]\label{SigmaIsIntrinsic} 
Let \Xll be a strongly causal, locally causally closed \LpLS with $\tau$ locally finite valued and timelike curvature bounded above by $K_+\in\R$ and below by $K_-\in\R$. Then $\Sigma_x^\pm$ is intrinsic. If additionally $X$ is locally compact, $d$-compatible, regularly localizable and future and past non-lingering, then already $\D_x^\pm$ itself is intrinsic.
\end{pop}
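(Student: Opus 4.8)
The plan is to reduce the statement to the existence of approximate midpoints in the angular metric, which I build from distance realizers between points on the two curves, controlling the resulting angles by the two curvature bounds. Reversing the time orientation, it suffices to treat the future directed case. First note that since $X$ has timelike curvature bounded below by $K_-$, Theorem \ref{thm-K-ang-com-fu} and Lemma \ref{lem-K-ang-com-ang-ex-fudi} guarantee that all angles between future directed timelike geodesics at $x$ are finite, so $(\D_x^+,\ma_x)$ is a genuine metric space and $(\Sigma_x^+,\ma_x)$ is its completion, hence complete. By a standard result in metric geometry (cf.\ \cite[Sec.\ 2.4]{BBI:01}), a complete metric space is intrinsic once every pair of points admits $\eps$-approximate midpoints for all $\eps>0$; moreover, since $\D_x^+$ is dense in $\Sigma_x^+$, it is enough to produce such approximate midpoints for pairs $[\alpha],[\beta]\in\D_x^+$.

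So fix future directed timelike geodesics $\alpha,\beta$ from $x$ with $\omega:=\ma_x(\alpha,\beta)$; if $\omega=0$ then $[\alpha]=[\beta]$ and there is nothing to show, so assume $\omega>0$. Work in a neighbourhood $U$ of $x$ which is simultaneously a $\geq K_-$- and a $\leq K_+$-comparison neighbourhood, so that $\tau$ is finite and continuous on $U\times U$, $U$ is strictly timelike geodesically connected, and size bounds are automatic (Remark \ref{lorAutomaticSizeBounds}). Choose $c$ with $0<c<e^{-\omega}$; by Proposition \ref{angleBoundImpliesTimelikeRelation} there is $T>0$ with $\alpha(ct)\ll\beta(t)$ for all $0<t<T$. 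For such $t$ let $\delta_t$ be the timelike distance realizer in $U$ from $\alpha(ct)$ to $\beta(t)$, form a Minkowski comparison triangle $\Delta\bar x\,\overline{\alpha(ct)}\,\overline{\beta(t)}$ in $\lm{0}$, and let $\bar w(t)$ be the point where the bisector of the angle at $\bar x$ meets the opposite side. Let $w(t)$ be the corresponding point on $\delta_t$ (equal $\tau$-distance to $\alpha(ct)$), and let $\zeta_t\in\D_x^+$ be the direction of the geodesic in $U$ from $x$ to $w(t)$, which exists and is timelike since $x\ll\alpha(ct)\ll w(t)$.

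The core estimate is that $\tilde\ma_x^{K_+}(\alpha(ct),w(t))\to\omega/2$ and $\tilde\ma_x^{K_+}(w(t),\beta(t))\to\omega/2$ as $t\searrow 0$. Granting this, the conclusion follows quickly: as $X$ has curvature bounded above by $K_+$, Theorem \ref{thm-K-ang-com-fu} shows it satisfies future $K_+$-monotonicity comparison from above, so Corollary \ref{cor-K-mon-ang-bou}, applied to the geodesic pairs $\alpha,\zeta_t$ and $\zeta_t,\beta$ at the parameters reaching $\alpha(ct)$, $w(t)$, $\beta(t)$, gives $\ma_x(\alpha,\zeta_t)\leq\tilde\ma_x^{K_+}(\alpha(ct),w(t))$ and $\ma_x(\zeta_t,\beta)\leq\tilde\ma_x^{K_+}(w(t),\beta(t))$. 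Hence $\ma_x(\alpha,\zeta_t),\ma_x(\zeta_t,\beta)\leq\omega/2+o(1)$, while the triangle inequality (Theorem \ref{thm-ang-tri-equ}) gives $\ma_x(\alpha,\zeta_t)+\ma_x(\zeta_t,\beta)\geq\omega$; therefore both quantities converge to $\omega/2$, so for small $t$ the direction $\zeta_t$ is an $\eps$-approximate midpoint of $[\alpha]$ and $[\beta]$. This establishes that $\Sigma_x^\pm$ is intrinsic.

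The main obstacle is justifying the core convergence. Since $\bar w(t)$ bisects the comparison angle, $\ma^{\lm{0}}_{\bar x}(\overline{\alpha(ct)},\bar w(t))=\tfrac12\,\tilde\ma_x^0(\alpha(ct),\beta(t))$, and the right-hand side tends to $\omega/2$ by Proposition \ref{lorSphericalangleNoDepK} (the angle exists by the lower bound, so the limit along $s=ct$ equals $\omega$). Now $\ma^{\lm{0}}_{\bar x}(\overline{\alpha(ct)},\bar w(t))$ is computed from $\tau(x,\alpha(ct))$, $\tau(\alpha(ct),w(t))$ and the comparison radius $\bar\tau(\bar x,\bar w(t))$, whereas $\tilde\ma_x^{K_+}(\alpha(ct),w(t))$ uses the same two lengths, the true radius $\tau(x,w(t))$, and curvature $K_+$; so the two quantities differ only through $\bar\tau(\bar x,\bar w(t))$ versus $\tau(x,w(t))$ and through $K_+$ versus $0$. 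The two-sided curvature bounds trap $\tau(x,w(t))$ between its $\lm{K_+}$- and $\lm{K_-}$-comparison values, and by the one-sided law-of-cosines computations (Lemma \ref{lorOneSidedCalcs}, case (3)) with the expansions of Remark \ref{rem-loc-mon}, all of these differ from $\bar\tau(\bar x,\bar w(t))$ only by terms of higher order in the vanishing diameter of the configuration; likewise the discrepancy between the $K_+$- and $0$-comparison angles of a fixed shrinking triangle tends to $0$ exactly as in the proof of Proposition \ref{lorSphericalangleNoDepK}. As $\omega>0$ keeps the sub-triangles non-degenerate, the comparison angle depends continuously on this data, so both errors vanish and the claimed limits follow. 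Finally, the additional hypotheses in the second assertion are precisely those of Proposition \ref{pop-D-plus-complete}, which gives $\Sigma_x^\pm=\D_x^\pm$; combined with the first part this shows $\D_x^\pm$ is itself intrinsic, and the past directed statements follow by reversing the time orientation.
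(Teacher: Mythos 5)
Your proposal is correct and follows the same overall skeleton as the paper's proof: reduce to $\eps$-midpoints on the dense subset $\D_x^+\subseteq\Sigma_x^+$ via \cite[Thm.\ 2.4.16]{BBI:01}, pick a point on a distance realizer $\delta_t$ between $\alpha(ct)$ and $\beta(t)$ (the paper uses $s=te^{-\omega-1}$, you use $ct$ with $c<e^{-\omega}$, both via Proposition \ref{angleBoundImpliesTimelikeRelation}), and control the two resulting angles using the two-sided curvature bounds. Where you genuinely diverge is in how the midpoint candidate is selected and how the final estimate is assembled. The paper runs an intermediate-value argument in the parameter $r$ along the realizer to pin $\tilde\ma_x^{K_+}(\alpha(s),\gamma_t(r))$ exactly at $\omega/2$, and then chases the second angle through a chain of four model triangles (one in $\lm{K_-}$, two Minkowski comparisons, one in $\lm{K_+}$) together with planar angle additivity, requiring estimates uniform in $r$ and explicit $\eps$-bookkeeping ($5\eps$, $6\eps$). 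You instead take the explicit bisector point $\bar w(t)$ in the Minkowski comparison triangle, transfer it to $\delta_t$, bound \emph{both} sub-angles from above by roughly $\omega/2$ via the $K_+$-bound (Theorem \ref{thm-K-ang-com-fu} plus Corollary \ref{cor-K-mon-ang-bou}), and close with a symmetric squeeze against the triangle-inequality lower bound $\ma_x(\alpha,\zeta_t)+\ma_x(\zeta_t,\beta)\geq\omega$. This avoids the continuity-in-$r$/IVT step and the four-triangle chain entirely and is, in my view, cleaner; what it does not avoid is the core analytic work, which is the same in both proofs: the lower bound $K_-$ and upper bound $K_+$ trap $\tau(x,w(t))$ between its two comparison radii, Lemma \ref{lorOneSidedCalcs} shows these agree with the Minkowski radius up to $O(t^4)$ in the square (against denominators of order $t^2$, which is where the fixed ratio $c>0$ is essential), and the $K_+$-versus-$0$ angle discrepancy vanishes as in Proposition \ref{lorSphericalangleNoDepK}.

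Three small repairs. First, the relevant case of Lemma \ref{lorOneSidedCalcs} for your configuration (point on the side $[\alpha(ct),\beta(t)]$ joining the two non-$x$ vertices, distance measured from $x$) is case (1), not case (3); case (3) concerns a point on the longest side measured to the middle vertex (the paper's own proof cites ``the first case''). Second, you should fix representatives of $[\alpha],[\beta]$ parametrized by $\tau$-arclength before invoking Proposition \ref{angleBoundImpliesTimelikeRelation}, as the paper does, since that proposition is stated for such parametrizations. Third, you should say a word about why a single neighborhood $U$ can be taken to be a comparison neighborhood for both $K_-$ and $K_+$ simultaneously (e.g.\ nest a $K_+$-comparison timelike diamond inside a $K_-$-comparison one using Remark \ref{lorAutomaticSizeBounds}); the paper glosses over this point as well, so it is not a gap relative to the paper's own standard, but your argument uses both bounds on the same triangle $\Delta x\,\alpha(ct)\,\beta(t)$ and this is where that fact is needed.
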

\begin{proof}
We only establish the future case, the past case is completely analogous.

We want to employ \cite[Theorem 2.4.16]{BBI:01}, which states that a complete metric space which has $\varepsilon$-midpoints 
is already intrinsic. Note that we only have to find $\varepsilon$-midpoints for a dense subset, for which we use 
$\D_x^+\subseteq\Sigma_x^+$. So let $\varepsilon>0$, and let $[\alpha],[\beta]$ be two points in $\D_x^+$, with representatives $\alpha\in[\alpha]$, $\beta\in[\beta]$ parametrized by $\tau$-arclength. Note that by Lemma \ref{lem-K-ang-com-ang-ex-fudi}, the angle between $\alpha$ and $\beta$ exists and is finite. By strong causality and the curvature bound we find a neighborhood of $x$ such that all $\tau$-distances can be realized locally. Moreover, by the definition of the angle, Proposition \ref{lorSphericalangleNoDepK} and Proposition \ref{angleBoundImpliesTimelikeRelation}, for small enough $t>0$ we have that $s:=t e^{-\ma_x(\alpha,\beta)-1}$ satisfies $\alpha(s)\ll\beta(t)$ and both 
$|\tilde{\ma}_x^{K_-}(\alpha(s),\beta(t))-\ma_x(\alpha,\beta)|<\varepsilon$ and 
$|\tilde{\ma}_x^{K_+}(\alpha(s),\beta(t))-\ma_x(\alpha,\beta)|<\varepsilon$. Let $\gamma_{t}\colon[0,1]\rightarrow X$ be a 
distance realizer (i.e., a timelike geodesic) from $\alpha(s)$ to $\beta(t)$. For each $r\in[0,1]$ let 
$\eta_{r,t}\colon[0,1]\rightarrow X$ be a distance realizer from $x$ to $\gamma_{t}(r)$. We now claim that, for small 
enough $t$ and some $r$, the direction $[\eta_{r,t}]$ is a $4\varepsilon$-midpoint of $[\alpha]$ and $[\beta]$. We denote the points by $a=\alpha(s)$, $b=\gamma_{t}(r)$, $c=\beta(t)$. (See Figure \ref{fig-Sigma+_has_eps_midpoints} for a drawing.) Note that $a\ll b\ll c$.
\begin{figure}[h!]
\begin{center}
\begin{tikzpicture}[line cap=round,line join=round,x=2cm,y=2cm]
\draw[ smooth,samples=100,domain=-1.0:0.0] plot({\x*\x/4.0-\x/2.0-1.0/2.0-1.0/4.0},\x) node[anchor=south]{$a$};
\draw[ smooth,samples=100,domain=-1.0:1.0] plot({-\x*\x/4.0+1.0/4.0},\x) node[anchor=south]{$c$};
\draw[ smooth,samples=100,domain=0.0:1.0,color=blue] plot({\x*\x/4.0+\x/2.0-1.0/2.0-1.0/4.0},\x);
\draw [shift={(0.,-1.)},color=green,fill=green,fill opacity=0.1] (0,0) -- (135.:0.3) arc (135.:67:0.3) -- cycle;

\draw[ smooth,samples=100,domain=-1.0:0.3,color=red] plot(\x*\x/8-0.35673076923076923076923076923077*\x-0.48173076923076923076923076923077,\x) node[anchor=south]{$b$};
\draw [shift={(0.,-1.)},color=orange,fill=orange,fill opacity=0.1] (0,0) -- (135.:0.3) arc (180.:155:0.3) arc (95:60:0.5) -- cycle;

\draw (0,-1) node[anchor=north]{$x$}
({(-0.5)*(-0.5)/4.0-(-0.5)/2.0-1.0/2.0-1.0/4.0},-0.5) node[anchor=north east]{$\alpha$}
({-0.3*0.3/4.0+1.0/4.0},0.3) node[anchor=west]{$\beta$};
\draw[color=blue] ({0.5*0.5/4.0+0.5/2.0-1.0/2.0-1.0/4.0},0.5) node[anchor=south east]{$\gamma_t$};
\draw[color=red] (-0.43194444444444444444444444444445,-0.13333333333333333333333333333333) node[anchor=south west]{$\eta_{r,t}$};

\end{tikzpicture}
\end{center}
\caption{For a suitably chosen point on $\gamma_{t}$, the curve $\eta_{r,t}$ is an $4\varepsilon$-midpoint.} \label{fig-Sigma+_has_eps_midpoints}
\end{figure}
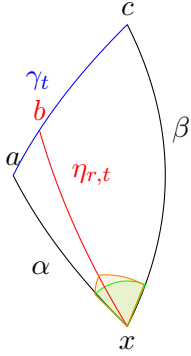

We realize this in a comparison configuration as follows: Form a comparison triangle $\Delta \bar{x}_-\bar{a}_-\bar{c}_-$ in $\lm{K_-}$, and analogously form a comparison triangle $\Delta \bar{x}_+\bar{a}_+\bar{c}_+$ in $\lm{K_+}$. As $b\in [a,c]$ and denoting the corresponding points by $\bar b_\pm\in[\bar a_\pm,\bar c_\pm]$, we conclude by using the curvature bounds, that $\tau(x,b)\leq\ubar\tau(\bar{x}_-,\bar{b}_-)$ and $\tau(x,b)\geq\bar\tau(\bar{x}_+,\bar{b}_+)$, where $\ubar\tau$ and $\bar\tau$ denote the time separation functions on $\lm{K_-}$ and $\lm{K_+}$, respectively. By the law of cosines \ref{lorLawOfCosines}, we can translate this to statements on the angles and conclude that
\begin{align}
\label{eq-sig-int-abm}\tilde{\ma}_x^{K_-}(a,b)&\leq{\ma}_{\bar{x}_-}^{\lm{K_-}}(\bar{a}_-,\bar{b}_-)\,,\\
\tilde{\ma}_x^{K_-}(b,c)&\geq{\ma}_{\bar{x}_-}^{\lm{K_-}}(\bar{b}_-,\bar{c}_-)\,,\\
\tilde{\ma}_x^{K_+}(a,b)&\geq{\ma}_{\bar{x}_+}^{\lm{K_+}}(\bar{a}_+,\bar{b}_+)\,,\\
\label{eq-sig-int-bcp}\tilde{\ma}_x^{K_+}(b,c)&\leq{\ma}_{\bar{x}_+}^{\lm{K_+}}(\bar{b}_+,\bar{c}_+)\,.
\end{align}

Moreover, since the comparison situations are planar we have that
\begin{align}
{\ma}_{\bar{x}_-}^{\lm{K_-}}(\bar{a}_-,\bar{b}_-)+{\ma}_{\bar{x}_-}^{\lm{K_-}}(\bar{b}_-,\bar{c}_-)&={\ma}_{\bar{x}_-}^{\lm{K_-}}(\bar{a}_-,\bar{c}_-)\,,\\
{\ma}_{\bar{x}_+}^{\lm{K_+}}(\bar{a}_+,\bar{b}_+)+{\ma}_{\bar{x}_+}^{\lm{K_+}}(\bar{b}_+,\bar{c}_+)&={\ma}_{\bar{x}_+}^{\lm{K_+}}(\bar{a}_+,\bar{c}_+)\,.
\end{align}

We have by the triangle inequality for angles (Theorem \ref{thm-ang-tri-equ}) that 
\begin{align}
\ma_x(\alpha,\eta_{r,t})+\ma_x(\eta_{r,t},\beta)\geq\ma_x(\alpha,\beta)\,,
\end{align}
and by \eqref{eq-sig-int-abm}, \eqref{eq-sig-int-bcp} that
\begin{align}\label{eqn-SigmaIntr-angleBounds}
\tilde\ma_x^{K_-}(a,b)+\tilde\ma_x^{K_+}(b,c)\leq\ma_{\bar{x}_-}^{\lm{K_-}}(\bar{a}_-,\bar{b}_-)+\ma_{\bar{x}_+}^{\lm{K_+}}(\bar{b}_+, \bar { c } _+)\,.
\end{align}

At this point we claim that $\ma_{\bar{x}_-}^{\lm{K_-}}(\bar{b}_-,\bar{c}_-)\geq \ma_{\bar{x}_+}^{\lm{K_+}}(\bar{b}_+,\bar{c}_+)-\varepsilon$. To this end we compare the following four triangles and their angle at $x$:
\begin{enumerate}
\item the triangle $\Delta \bar{x}_-\bar{b}_-\bar{c}_-$ in $\lm{K_-}$ with the angle $\omega_1:={\ma}_{\bar{x}_-}^{\lm{K_-}}(\bar{b}_-,\bar{c}_-)$,
\item a comparison triangle $\Delta \tilde{x}_-\tilde{b}_-\tilde{c}_-$ in Minkowski space $\lm{0}$ of $\Delta 
\bar{x}_-\bar{b}_-\bar{c}_-$ and the angle 
${\ma}_{\tilde{x}_-}^{\lm{0}}(\tilde{b}_-,\tilde{c}_-)={\ma}_{\bar{x}_-}^{\lm{0}}(\bar{b}_-,\bar{c}_-)=:\omega_2$,
\item a comparison triangle $\Delta \tilde{x}_+\tilde{b}_+\tilde{c}_+$ in Minkowski space $\lm{0}$ of $\Delta \bar{x}_+\bar{b}_+\bar{c}_+$ and the angle ${\ma}_{\tilde{x}_+}^{\lm{0}}(\tilde{b}_+,\tilde{c}_+)={\ma}_{\bar{x}_+}^{\lm{0}}(\bar{b}_+,\bar{c}_+)=:\omega_3$, and
\item the triangle $\Delta \bar{x}_+\bar{b}_+\bar{c}_+$ in $\lm{K_+}$ with the angle $\omega_4:={\ma}_{\bar{x}_+}^{\lm{K_+}}(\bar{b}_+,\bar{c}_+)$.
\end{enumerate}
Recall that all the points depend on $r,t$. Our aim is to have uniform comparison estimates on the differences of the angles $\omega_1,\ldots,\omega_4$, while varying $r$ and $t$. We claim that for all $\varepsilon>0$ there is $T>0$ such that for all $0<t<T$ we have $|\omega_1-\omega_2|<~\varepsilon$, $|\omega_2-\omega_3|<\varepsilon$ and $|\omega_3-\omega_4|<\varepsilon$ hold uniformly in $r$.

For the first and last inequality, note the triangles are comparison triangles of each other, i.e., they have the same sidelengths but different $K$, we use the second-to-last equation in the proof of Proposition \ref{lorSphericalangleNoDepK} (or the corresponding equation for $K<0$). We only show $|\omega_1-\omega_2|<\varepsilon$, the argument for $|\omega_3-\omega_4|<\varepsilon$ only differs by replacing some $-$ indices by $+$. We temporarily set $u=\tau(x,\bar{b}_-)$, $v=\tau(\bar{b}_-,\bar{c}_-)$. Then the second-to-last equation in the proof of Proposition \ref{lorSphericalangleNoDepK} (or the corresponding equation for $K<0$) (and the law of cosines for $K=0$) read
\begin{align*}
\cosh(\omega_1)&=-\frac{(u^2+t^2-v^2)(1+o(1))+o(ut)}{2ut}(1+o(1))\,,\\
\cosh(\omega_2)&=-\frac{u^2+t^2-v^2}{2ut}\,,
\end{align*}
as $u,t,v\to 0$. Here, we have $0<u,v<t$ and let $t=\tau(x,\beta(t))=\tau(x,c)\to 0$. As $\frac{u^2+t^2-v^2}{2ut}$ stays bounded (the angle exists), we have $\cosh(\omega_1)-\cosh(\omega_2)=o(1)$ as $t\to0$, hence $\omega_1-\omega_2\to0$, and similarly 
$\omega_3-\omega_4\to0$.

For the second inequality, we compare the side-lengths 
$\lambda_-=\ubar\tau(\bar{x}_-,\bar{b}_-)=\tilde\tau(\tilde{x}_-,\tilde{b}_-)$ and 
$\lambda_+=\bar\tau(\bar{x}_+,\bar{b}_+)=\tilde\tau(\tilde{x}_+,\tilde{b}_+)$, where $\tilde\tau$ denotes the time 
separation function on Minkowski spacetime $\lm{0}$. We get by the first case of Lemma \ref{lorOneSidedCalcs} (for $K_+$ and 
$K_-$), that $\frac{|\lambda_+-\lambda_-|}{t}\to0$ as $t\to0$. By the choice of $s$ we obtain
\begin{equation}\label{eq:SigmaIsIntrinsic:quantitativeTimelikeBound}
 0<t e^{-\ma_x(\alpha,\beta)-1}=s<\lambda_-<t\,,
\end{equation}
and hence we get $\frac{\lambda_+}{\lambda_-}=:q\to1$. Now plugging this into the law of cosines \ref{lorLawOfCosines} ($K=0$, $\sigma=-1$) we obtain the following relation between $\omega_2$ and $\omega_3$:
\begin{align*}
\lambda_-^2 + t^2&=\tau(b,c)^2+2\lambda_-t\cosh(\omega_2)\,,\\
\lambda_+^2 + t^2&=\tau(b,c)^2+2\lambda_+t\cosh(\omega_3)\,.
\end{align*}
This yields 
\begin{align}
\cosh(\omega_3)-\cosh(\omega_2)&=\frac{\lambda_-^2(q^2-q) + t^2(1-q)-\tau(b,c)^2(1-q)}{2\lambda_-q t}\,,
\end{align}
which goes to $0$ by \eqref{eq:SigmaIsIntrinsic:quantitativeTimelikeBound}, $q\to 1$ and $\tau(b,c)<t$.

In total, we get $|\omega_1-\omega_4|<3\varepsilon$ for small $t$, so we have proven the claim for $4\varepsilon$ and can replace the ${\ma}_{\bar{x}_+}^{\lm{K_+}}(\bar{b}_+,\bar{c}_+)$ by a ${\ma}_{\bar{x}_-}^{\lm{K_-}}(\bar{b}_-,\bar{c}_-)$ in equation \eqref{eqn-SigmaIntr-angleBounds}, introducing only an error of $4 \varepsilon$. Then all the points are in the same comparison space and we have ${\ma}_{\bar{x}_-}^{\lm{K_-}}(\bar{a}_-,\bar{b}_-)+{\ma}_{\bar{x}_-}^{\lm{K_-}}(\bar{b}_-,\bar{c}_-)={\ma}_{\bar{x}_-}^{\lm{K_-}}(\bar{a}_-,\bar{c}_-)$.

To sum up, we have
\[
\ma_x(\alpha,\eta_{r,t})+\ma_x(\eta_{r,t},\beta)\leq\ma_x(\alpha,\beta)+5\varepsilon\,,
\]
where two $\varepsilon$ come from the approximation of angles, and three $\varepsilon$ comes from the difference of ${\ma}_{\bar{x}_+}^{\lm{K_+}}(\bar{b}_+,\bar{c}_+)$ and ${\ma}_{\bar{x}_-}^{\lm{K_-}}(\bar{b}_-,\bar{c}_-)$.

Note that this argument was independent of $r$. As $\tau$ is locally continuous, and we are considering it only in a comparison neighborhood, we get that $\tilde{\ma}_x^{K_+}(a,b)=\tilde{\ma}_x^{K_+}(\alpha(s),\gamma_{t}(r))$ is a continuous function in $r$, with values $0$ for $r=0$ and $\tilde{\ma}_x^{K_+}(a,c)>\frac{\ma_x(\alpha,\beta)}{2}$ for $r=1$ (if $\varepsilon$ is small enough). In particular, there is a value of $r$ such that $\tilde{\ma}_x^{K_+}(a,b)=\frac{\ma_x(\alpha,\beta)}{2}$. This then implies that
\begin{align*}
|\ma_x(\alpha,\eta_{r,t})-\frac{\ma_x(\alpha,\beta)}{2}|&<\varepsilon\,,\\
|\ma_x(\eta_{r,t},\beta)-\frac{\ma_x(\alpha,\beta)}{2}|&<6\varepsilon\,.
\end{align*}
As $\Sigma_x^+$ is complete, \cite[Theorem 2.4.16]{BBI:01} shows it is intrinsic.

Finally, if additionally, $X$ is locally compact, $d$-compatible, regularly localizable and future and past non-lingering, then the spaces of timelike directions $\D_x^\pm$ are complete for every $x\in X$ by Proposition \ref{pop-D-plus-complete} and hence intrinsic by the above.
\end{proof}

As the previous Proposition \ref{SigmaIsIntrinsic} used two-sided curvature bounds we comment on the relationship of different curvature bounds, which was not spelled out so explicitly before. In particular, a non-trivial space cannot have arbitrary curvature bounds from below and above.
\begin{lem}[Relation of different curvature bounds]\label{lem-rel-dif-K}
Let \Xll be a \LpLS with timelike curvature bounded above by $K_+$ and below by $K_-$. If there exists a non-degenerate timelike triangle satisfying timelike size bounds for $K_-$ and $K_+$ inside a neighborhood, which is simultaneously a comparison neighborhood for $K_-$ and for $K_+$, then $K_+\leq K_-$. 

For \Xll a \LpLS with timelike curvature bounded below by $K_-$, then it also has timelike curvature bounded below by $K$ for all $K\geq K_-$. And if \Xll has timelike curvature bounded above by $K_+$, it also has timelike curvature bounded above by $K$ for all $K\leq K_+$.

In particular, for $K_+\leq K_-$, $\lm{K_-}$ has timelike curvature bounded above by $K_+$, and $\lm{K_+}$ has timelike curvature bounded below by $K_-$.
\end{lem}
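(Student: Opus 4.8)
The plan is to isolate one monotonicity property of the comparison spaces and read off all three assertions from it. Write $\bar{\tau}_K$ for the time separation of $\lm{K}$. The crux is the following claim: fix the side-lengths $a_{12}\le a_{23}\le a_{13}$ of a timelike triangle (satisfying the reverse triangle inequality and timelike size bounds for all $K$ in the relevant range) together with two points $q_1,q_2$ on two of its sides; then $K\mapsto \bar{\tau}_K(\bar{q}_1,\bar{q}_2)$, evaluated at the corresponding points of the comparison triangle in $\lm{K}$, is non-increasing, and strictly decreasing whenever the triangle is non-degenerate and $q_1,q_2$ lie on distinct sides. Granting this, all three parts follow by the sandwich estimate coming from the two-sided bound: for points on sides of a triangle in $X$ the comparison inequalities read $\bar{\tau}_{K_-}(\bar{q}_1,\bar{q}_2)\le \tau(q_1,q_2)\le \bar{\tau}_{K_+}(\bar{q}_1,\bar{q}_2)$.

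To prove the monotonicity claim, note first that by Lemma \ref{lorLawOfCosines}, Lemma \ref{lorOneSidedCalcs} and the analyticity statement of Remark \ref{rem-loc-mon}, the quantity $\bar{\tau}_K(\bar{q}_1,\bar{q}_2)$ is, for fixed data, an analytic function of $K$ (the regimes $K<0$, $K=0$, $K>0$ joining up analytically). If $q_1,q_2$ lie on the same side they share a common distance realizer, so $\bar{\tau}_K(\bar{q}_1,\bar{q}_2)=|\tau(p_i,q_1)-\tau(p_i,q_2)|$ is constant in $K$. If they lie on distinct sides they share a vertex, and applying the one-sided calculation of Lemma \ref{lorOneSidedCalcs} twice reduces $\bar{\tau}_K(\bar{q}_1,\bar{q}_2)$ to the elementary \emph{vertex-to-opposite-side} configuration, the second intermediate length being again of vertex-to-side type; the $K$-dependence through that intermediate length is then controlled by the side-length monotonicity in Remark \ref{rem-loc-mon}. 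Thus the sign only has to be checked in the vertex-to-opposite-side case, where one differentiates the explicit relation in Lemma \ref{lorOneSidedCalcs} (equivalently, uses the homothety $\lm{K}\cong|K|^{-1/2}\,\lm{\pm 1}$ to reduce to a one-parameter rescaling). This single sign computation is the one genuinely computational step and the main obstacle; non-degeneracy enters precisely because a collinear (degenerate) configuration keeps all points on one geodesic and forces $\bar{\tau}_K$ to be constant.

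With the monotonicity in hand the three parts are short. For the first part, choose $q$ interior to the longest side $[p_1p_3]$ of the given non-degenerate triangle and consider the opposite vertex $p_2$; since the ambient neighbourhood is simultaneously a $\ge K_-$- and a $\le K_+$-comparison neighbourhood and the triangle meets the size bounds for both, the sandwich gives $\bar{\tau}_{K_-}(\bar{p}_2,\bar{q})\le \bar{\tau}_{K_+}(\bar{p}_2,\bar{q})$, and strict monotonicity (the configuration is non-degenerate with $p_2$ and $q$ on distinct sides) forces $K_+\le K_-$. For the second part (bound from below; the upper case is dual with reversed inequalities), cover $X$ by $\ge K_-$-comparison neighbourhoods and shrink each to a causally convex $V$ on which $\tau<D_{K_-}$, using continuity of $\tau$; every timelike triangle in $V$ then has longest side $<D_{K_-}$, so it automatically satisfies the size bounds for $K_-$, and for any $q_1,q_2$ on its sides one gets $\tau(q_1,q_2)\ge \bar{\tau}_{K_-}(\bar{q}_1,\bar{q}_2)\ge \bar{\tau}_{K}(\bar{q}_1,\bar{q}_2)$ for every $K\ge K_-$ by monotonicity; hence $V$ is a $\ge K$-comparison neighbourhood and these cover $X$.

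Finally, the third part is the advertised ``in particular'': the model space $\lm{K_+}$ tautologically has timelike curvature bounded below by $K_+$ (comparison triangles are congruent, so $\tau=\bar{\tau}_{K_+}$ and both inequalities hold with equality), whence the lower half of the second part yields that it is bounded below by every $K_-\ge K_+$; dually $\lm{K_-}$ is bounded above by every $K_+\le K_-$. The only points requiring care beyond the sign computation above are bookkeeping ones: the joint analyticity and the side-length monotonicity used in the reduction of the two-point configuration, and the localisation in the second part (passing to sufficiently small causally convex comparison neighbourhoods so that the size bounds for $K_-$ hold automatically while strict timelike geodesic connectedness is preserved).
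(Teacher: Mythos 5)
There is a genuine problem here: your proof rests on two sign errors that happen to cancel, and the one computation that would have exposed them is exactly the step you defer. First, your central monotonicity claim is backwards. With the paper's labelling of the model spaces (Definition \ref{dfnConstCurvLor}), the map $K\mapsto\bar\tau_K(\bar q_1,\bar q_2)$ at fixed side-lengths is (strictly, in the non-degenerate vertex-to-opposite-side configuration) \emph{increasing}, not decreasing: this is precisely \cite[Lem.\ 6.1]{AGKS:21}, which the paper's own proof invokes, and it is also visible from the finite diameter $D_K=\pi/\sqrt{-K}$ (very negative $K$ compresses all distances below a small $D_K$) or from the defocusing of timelike geodesics as $K$ grows. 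It is likewise the content of the remark following Lemma \ref{lem-rel-dif-K}: in the Lorentzian setting curvature $\geq 1$ implies $\geq 2$, opposite to the metric case, exactly because $\bar\tau_K$ grows with $K$. Second, your sandwich misquotes Definition \ref{def-tri-com}: in this paper, curvature bounded \emph{below} by $K_-$ means $\tau(q_1,q_2)\leq\bar\tau_{K_-}(\bar q_1,\bar q_2)$ and bounded \emph{above} by $K_+$ means $\tau(q_1,q_2)\geq\bar\tau_{K_+}(\bar q_1,\bar q_2)$, so the two-sided bound reads $\bar\tau_{K_+}\leq\tau\leq\bar\tau_{K_-}$ --- the reverse of what you wrote (you are using the metric/CAT convention). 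Because both reversals occur, your final conclusions agree with the lemma, but the proof as written derives true statements from two false ones; had you actually carried out the "single sign computation" you flag as the main obstacle, you would have obtained the opposite sign and, with your sandwich, a contradiction rather than the lemma.

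The good news is that your architecture is essentially the paper's and survives once both signs are fixed: the paper forms comparison triangles in $\lm{K_-}$ and $\lm{K_+}$, writes the (correct) sandwich, and gets the contradiction for $K_+>K_-$ from the strict increase of the vertex-to-side distance given by \cite[Lem.\ 6.1]{AGKS:21}; the transitivity statements and the "in particular" for the model spaces then follow from the same (increasing) monotonicity exactly along the lines of your parts two and three, whose bookkeeping (shrinking to causally convex neighborhoods with $\tau<D_{K_-}$, noting $D_K\geq D_{K_-}$ for $K\geq K_-$, and the tautological self-comparison of $\lm{K_\pm}$) is fine. One further small point: your choice of configuration (vertex $p_2$ against a point $q$ on the opposite side $[p_1p_3]$) admits points $q$ that are spacelike to $p_2$, where strictness degenerates; the paper's choice ($p_1$ against $q\in[p_2p_3]$, so that $p_1\ll p_2\leq q$ automatically) avoids this, and if you keep your configuration you should take $q$ close to $p_3$ to guarantee timelike relation in both comparison spaces.
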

\begin{pr}
Let $\Delta=(p_1,p_2,p_3)$ with $p_1\ll p_2\ll p_3$ be a non-degenerate timelike triangle inside a curvature comparison neighborhood for both $K_-$ and $K_+$ satisfying the timelike size bounds for $K_-$ and $K_+$. Then we find comparison triangles $\bar{\Delta}^\pm= (\bar{p}_1^\pm,\bar{p}_2^\pm,\bar{p}_3^\pm)$ in $\lm{K_\pm}$. For points $q_1,q_2$ on the sides of $\Delta$, we get comparison points $\bar{q}_1^\pm,\bar{q}_2^\pm$ on the sides of $\bar{\Delta}^\pm$. Note we can also view $\bar{\Delta}^\pm,\bar{q}_1^\pm,\bar{q}_2^\pm$ as a comparison situation for $\bar{\Delta}^\mp,\bar{q}_1^\mp,\bar{q}_2^\mp$, respectively. 

By timelike curvature comparison, we get 
\begin{equation}\label{eq-Kpm}
\bar\tau_{\lm{K_+}}(\bar{q}_1^+,\bar{q}_2^+)\leq \tau(q_1,q_2)\leq\bar\tau_{\lm{K_-}}(\bar{q}_1^-,\bar{q}_2^-)\,.
\end{equation}
We assume indirectly that $K_+>K_-$ and choose $\bar q^\pm_1:=\bar p^\pm_1$ and $q^\pm_2\in [\bar p_2^\pm, \bar p_3^\pm]$. Then \cite[Lem.\ 6.1]{AGKS:21} yields that $\bar\tau_{\lm{K_+}}(\bar{p}_1^+,\bar{q}_2^+)> \bar\tau_{\lm{K_-}}(\bar{p}_1^-,\bar{q}_2^-)$ --- a contradiction to Equation \eqref{eq-Kpm}.
\end{pr}

Note this relation between the different curvature bounds is opposite to the direction one would expect, e.g.\ timelike curvature $\geq 1$ implies $\geq 2$, although in the metric case it is the other way round, and $\geq 1$ and $\leq 2$ contradict each other (making all small enough timelike triangles degenerate). However, it was chosen this way to be compatible with \cite{AB:08} and hence the smooth spacetime case.

\subsection{Exponential and logarithmic map}

The notion of a timelike tangent cone allows us to introduce an exponential and logarithmic map into our setting, which are indispensable geometric tools. For exponential and logarithmic maps in the metric case see \cite[p.\ 321ff.]{BBI:01}. We first define the logarithmic map, which (locally) assigns points to elements in the timelike tangent cone encoding the time separation and the equivalence class of the connecting geodesic, i.e., the timelike direction.

\begin{defi}[Logarithmic map]\label{def-log}
Let $\Xll$ be a locally uniquely timelike geodesic \LpLSn, i.e., every point has a neighborhood $U$ such that in $U$ two timelike related points can be joined by a unique timelike geodesic contained in $U$. Let $x\in X$ and let $U$ be such a neighborhood of $x$. Then, for all $y\in U$ with $x\ll y$ ($y\ll x$) there exists an unique future (past) directed timelike geodesic $\gamma$ from $x$ to $y$ contained in $U$. Then set $\log^+_x(y):=(\tau(x,y),[\gamma])\in T^+_x$ and $\log^-_x(y):=(\tau(y,x),[\gamma])\in T^-_x$, respectively. So, the logarithmic map $\log^\pm_x\colon U\cap I^\pm(x)\rightarrow T^\pm_x$ maps points to elements of the tangent cone. 
\end{defi}

Note that the image $\log_x^\pm(U\cap I^\pm(x))\subseteq T^\pm_x$ is star-shaped with respect to $(0,\star)\in T^\pm_x$ in the sense that for an element $(t,[\gamma])\in\log_x^\pm(U\cap I^\pm(x))$ and $\lambda\in[0,1]$, also the scaled element $(\lambda t,[\gamma])\in T^\pm_x$ is in $\log_x^\pm(U\cap I^\pm(x))$. 
\bigskip

At this point we introduce the \emph{exponential map}, that projects elements of the timelike tangent cone down to points in the space. Moreover, it is the inverse of the logarithmic map.

\begin{defi}[Exponential map]
  Let $\Xll$ be a locally uniquely timelike geodesic \LpLS with timelike curvature bounded below,
  let $x\in X$ and $U$ as in Definition \ref{def-log}. Then the \emph{future/past exponential map} $\exp_x^\pm\colon T^\pm_x\supseteq\log^\pm_x(U\cap I^\pm(x))\rightarrow U$ is defined by
\begin{equation}
 (r,[\gamma])\mapsto \tilde\gamma(r)\,,
\end{equation}
where $\tilde\gamma\in[\gamma]$ is parametrized with respect to $\tau$-arclength (cf.\ \cite[Cor.\ 3.35]{KS:18}).
\end{defi}

\begin{rem}
 The exponential map is well-defined: Let $\alpha,\beta$ be two timelike geodesics starting at $x$, defined on 
$[0,r]$, parametrized with respect to $\tau$-arclength, and ending at different points $\alpha(r)=p\neq q=\beta(r)$. Then, 
$\alpha,\beta$ agree on a closed set and so there is a point $\alpha(s)\in I^-(q)$ which is not on $\beta$, and 
by uniquely timelike geodesicness the concatenation of $\alpha\rvert_{[0,s]}$ and the unique timelike geodesic from $\alpha(s)$ to $q$ is strictly shorter than $\beta$. Thus we get a non-degenerate triangle $\Delta x\alpha(s)\beta(r)$. Then we conclude $\ma_x(\alpha,\beta)\geq\tilde\ma_x(\alpha(s),\beta(r))>0$ by using the timelike curvature bound from below, Corollary \ref{cor-K-mon-ang-bou} and Theorem \ref{thm-K-ang-com-fu}. Thus $[\alpha]$ and $[\beta]$ are different points in $\D_x^+$. Moreover, as we restrict to the image of $U$ under $\log^\pm_x$ we have $\tilde\gamma(r)\in U$.
\end{rem}

As we assume that the space is locally uniquely timelike geodesic and has timelike curvature bounded from below the exponential map is well-defined. In principle, one could define an exponential map also for general \LpLSn s as set-valued maps, cf.\ the discussion in the metric case \cite[Rem.\ 9.1.43]{BBI:01}.

\begin{rem}
Let $X\subseteq Y$ be two Lorentzian pre-length spaces included in each other (i.e.\ the relations and time separation function on $X$ is just the restriction of those of $Y$) with logarithm and exponential map defined, and let $x\in X$. We can consider the space of directions with respect to $X$ and with respect to $Y$. Then $(\D_x^+)^X\subseteq (\D_x^+)^Y$ and in particular, $(T_x^+)^X$ is a subset of $(T_x^+)^Y$. We then have that the logarithmic map and exponential map with respect to $Y$ are extensions of the logarithmic map and exponential map with respect to $X$.
\end{rem}

\begin{ex}
The image of the logarithm, hence the domain of the exponential map, need not be open, even in well-behaved spaces (as happens in metric geometry). Let $X=\{(t,x)\in\mb{R}^2_1:t^2+(x-1)^2<1\}\cup\{0\}\subseteq\mb{R}^2_1$ (see Figure 
\ref{fig-D+_not_complete}). As a subset of a spacetime, $X$ is a \LpLSn.
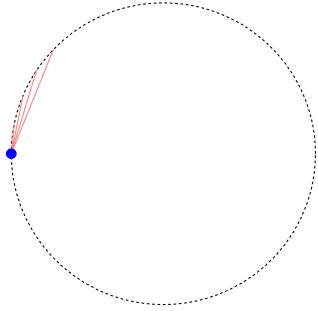
\begin{figure}[h!]
\begin{center}
\begin{tikzpicture}[x=2cm,y=2cm]
\draw[dash pattern= on 1pt off 1pt] (1,0) circle(1);
\foreach \a in {0.1,0.2,0.3,0.4} 
  \draw[color=red!50] (0,0) -- ({2*\a*\a/(\a*\a+1)},{2*\a/(\a*\a+1});
\fill[color=blue] (0,0) circle(2pt);
\end{tikzpicture}
\end{center}
\caption{The counterexample where the space of future directed timelike directions $\D^+$ is not complete.}\label{fig-D+_not_complete}
\end{figure}
It is a quite well-behaved space: It is geodesically convex and thus strictly intrinsic, $\tau$ is continuous, there exist no $\ll$-isolated points, it is a $d$-compatible set and distance realizing curves do not change their causal character, so the whole space is a regularly localizable neighborhood, and is even strongly regularly localizable. It is strongly causal and even causally simple (i.e., the relation $\leq$ is a closed subset of $X\times X$), but not globally hyperbolic. It has timelike curvature bounded above and below by $K=0$. 

However, the space of directions $\D_0^+$ at $p=0$ is isometric to $(0,\infty)$ and thus not complete, and the image $\log_0^+(I^+(0))$, which is contained in $T_0^+=\{0\}\cup I^+(0)$ of Minkowski spacetime, is not open at $0$: The sequence $(x_n)_{n\geq 2}$, $x_n\in T_0^+$ corresponding to the points $(\sqrt{1-(1-\frac{1}{n})^2},\frac{1}{n})\in I^+(0)$ in $\mb{R}^2_1)$ (on the considered circle, transferred to $T_0^+$ by the logarithm on Minkowski space) converges to $0$, but is not contained in $\log_0^+(I^+(0))$. 
\end{ex}

%
%
%
\medskip

On the one hand, it is necessary that all curves are timelike when considering their angles. On the other hand, the limit curve theorem gives only causal limit curves in general. However, when the sequence of angles is bounded the limit curve has to be timelike, given that one has curvature bounded below. 
\begin{lem}[Bounded angles imply limit curve timelike]\label{lem-bou-ang-lim-cur-tl}
 Let $\Xll$ be a strongly causal and regularly localizable \LpLS with timelike curvature bounded from below. Let $\gamma_n\colon[0,\varepsilon]\rightarrow X$ be future directed timelike geodesics, starting at $\gamma_n(0)=x$ and converging uniformly to a causal curve $\gamma\colon[0,\varepsilon]\rightarrow X$. If $(\ma_x(\gamma_n,\gamma_0))_n$ is bounded the limit curve $\gamma$ is timelike.
\end{lem}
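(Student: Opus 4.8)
The plan is to adapt the timelikeness argument from the proof of Proposition \ref{pop-D-plus-complete}, now with the \emph{fixed} geodesic $\gamma_0$ playing the role of the comparison curve. First I would record what the hypotheses give locally around $x$: since $X$ has timelike curvature bounded below it is covered by comparison neighborhoods on which $\tau$ is finite and continuous, so $I^\pm(\cdot)$ are open there; moreover, as regular localizability makes $X$ locally strictly timelike geodesically connected, Theorem \ref{thm-K-ang-com-fu} shows $X$ satisfies future $K$-monotonicity comparison from below, where $K$ is the curvature bound. On a small common interval $[0,\delta]$ inside a localizing neighborhood of $x$ the $\gamma_n$ are distance realizers, so by \cite[Prop.\ 3.17]{KS:18} the uniform limit $\gamma$ is maximizing there. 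Consequently it suffices to produce a single $s>0$ with $x\ll\gamma(s)$: then $\gamma|_{[0,s]}$ is a maximizer with timelike related endpoints and is therefore timelike by regular localizability.

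To set up comparison triangles, fix a small $t>0$ so that $x\ll\gamma_0(t)$. Since $I^-(\gamma_0(t))$ is open and contains $x=\gamma(0)$, for small enough $s$ we get $\gamma(s)\ll\gamma_0(t)$, and then by $\gamma_n(s)\to\gamma(s)$ together with continuity of $\tau$ also $\gamma_n(s)\ll\gamma_0(t)$ for all large $n$. This yields timelike triangles $x\ll\gamma_n(s)\ll\gamma_0(t)$ with $\sigma=-1$. By Corollary \ref{cor-K-mon-ang-bou} we have $\tilde\ma_x^K(\gamma_n(s),\gamma_0(t))\le\ma_x(\gamma_n,\gamma_0)\le C$ (with $C$ the assumed bound), and by Proposition \ref{lorSphericalangleNoDepK} the $K=0$ comparison angle is likewise bounded, say by $C+1$, once $s,t$ are small. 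The $K=0$ law of cosines (Lemma \ref{lorLawOfCosines}) then reads
\[
\cosh\bigl(\tilde\ma_x^0(\gamma_n(s),\gamma_0(t))\bigr)=\frac{\tau(x,\gamma_n(s))^2+\tau(x,\gamma_0(t))^2-\tau(\gamma_n(s),\gamma_0(t))^2}{2\,\tau(x,\gamma_n(s))\,\tau(x,\gamma_0(t))}=:\frac{x_n}{y_n}\,,
\]
so $1\le x_n/y_n\le\cosh(C+1)$ for all large $n$.

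Now I would let $n\to\infty$. By continuity of $\tau$, $\tau(x,\gamma_n(s))\to\tau(x,\gamma(s))$ and $\tau(\gamma_n(s),\gamma_0(t))\to\tau(\gamma(s),\gamma_0(t))$, while $\tau(x,\gamma_0(t))$ is a fixed positive number; as the ratio stays bounded, $x_n\to0$ exactly when $y_n\to0$. If $y_n\not\to0$ then $\tau(x,\gamma(s))>0$, i.e.\ $x\ll\gamma(s)$, and we are finished. The main obstacle is the degenerate case $\tau(x,\gamma(s))=0$: then $y_n\to0$ forces $x_n\to0$, which gives $\tau(\gamma(s),\gamma_0(t))=\tau(x,\gamma_0(t))$, so the reverse triangle inequality $\tau(x,\gamma_0(t))\ge\tau(x,\gamma(s))+\tau(\gamma(s),\gamma_0(t))$ holds with equality. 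Concatenating a realizer from $x$ to $\gamma(s)$ (null, length $0$) with one from $\gamma(s)$ to $\gamma_0(t)$ would then produce a distance realizer from $x$ to $\gamma_0(t)$ containing a null segment, contradicting regular localizability (item (iv)), since $x\ll\gamma_0(t)$ — unless $x=\gamma(s)$, which is impossible for $s>0$ by strong causality. This rules out the degenerate case, so $x\ll\gamma(s)$ and hence $\gamma$ is timelike. The case $K\ne0$ is handled identically via the corresponding law of cosines, only with more involved expressions for $x_n$ and $y_n$. I expect the genuinely delicate points to be precisely this degenerate limit (extracting the equality in the reverse triangle inequality and invoking regular localizability when numerator and denominator collapse together) and the book-keeping that converts the fixed-parameter bound from $K$-monotonicity into a statement about the limit; openness of $I^\pm$ and the maximality of $\gamma$ are the supporting facts that make the limiting argument legitimate.
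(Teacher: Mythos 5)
Your proposal is correct and is essentially the paper's own argument (which, like yours, is a rerun of the timelikeness step in Proposition \ref{pop-D-plus-complete}): bound the comparison angles of the triangles $\Delta x\,\gamma_n(s)\,\gamma_0(t)$ uniformly in $n$ via $K$-monotonicity (Theorem \ref{thm-K-ang-com-fu} and Corollary \ref{cor-K-mon-ang-bou}), feed this into the law of cosines so that boundedness of the ratio forces numerator and denominator to vanish together (your ratio remark is exactly the paper's case $c<b$, where the angle would blow up), and kill the remaining degenerate case $c=b$ by concatenating the null piece from $x$ to $\gamma(s)$ with a maximizer from $\gamma(s)$ to $\gamma_0(t)$, contradicting regular localizability. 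The only differences are cosmetic: you argue directly (produce one $s$ with $x\ll\gamma(s)$ and invoke the causal-character dichotomy) where the paper assumes $\gamma$ null and derives a contradiction via \cite[Thm.\ 3.18]{KS:18}, and you handle general $K$ by passing to the $K=0$ comparison angle through Proposition \ref{lorSphericalangleNoDepK}, where the paper simply treats $K=0$ and declares the other cases analogous.
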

\begin{pr}
First, by \cite[Prop.\ 3.17]{KS:18} the limit curve $\gamma$ is maximal and by \cite[Thm.\ 3.18]{KS:18} it is either timelike or null. Indirectly assume that $\gamma$ is null and without loss of generality we can assume that all curve (segments) are in one regularly localizable neighborhood, which is a comparison neighborhood as well. Let $C>0$ be such that $\ma_x(\gamma_n,\gamma_0)\leq C$ for all $n\in\N$ and let $K\in\R$ be the timelike curvature bound from below. Let $t>0$ and $s>0$ such that $\gamma(s)\ll \gamma_0(t)$ and $\gamma_0$ is a distance realizer on $[0,t]$. Then for $n$ large enough we have that $\gamma_n(s)\ll \gamma_0(t)$ and hence
\begin{align}
 C\geq \ma_x(\gamma_n,\gamma_0) \geq \tilde\ma_x^K(\gamma_n(s),\gamma_0(t))=:\omega_n\,,
\end{align}
by using Corollary \ref{cor-K-mon-ang-bou} and Theorem \ref{thm-K-ang-com-fu}.
At this point we consider the triangle $\Delta_n:=\Delta x \gamma_n(s) \gamma_0(t)$, which has side-lengths $a_n:=\tau(x,\gamma_n(s))$, $b:=\tau(x,\gamma_0(t))$ and $c_n:=\tau(\gamma_n(s),\gamma_0(t))$. Note that $a_n\to 0$ and $c_n\to \tau(\gamma(s),\gamma_0(t))=:c\leq b$, by the reverse triangle inequality. For simplicity we assume that $K=0$ --- the other cases being analogous. By the law of cosines Lemma \ref{lorLawOfCosines} for $K=0$ and $\sigma=-1$ we conclude that
\begin{equation}
 \cosh(\omega_n)=\frac{a_n^2 + b^2 - c_n^2}{2 a_n b}\,,
\end{equation}
If $c<b$, the enumerator stays positive in the limit, and the denominator converges to $0$, yielding $\omega_n\to\infty$, contradicting $\omega_n<C$ for all $n\in\N$.

If $c=b$, this is a contradiction to $X$ being regular (cf.\ \cite[Thm.\ 3.18]{KS:18}): We get that the concatenation of $\gamma$ from $x$ to $\gamma(s)$ with the null segment $\gamma(s)$ to $\gamma_0(t)$, of overall length $0+c$ has the same length as the distance realizer $\gamma_0\rvert_{[0,t]}$, which goes from $x$ to $\gamma_0(t)$ and which has the length $b=c$. Note that a maximal causal curve from $\gamma(s)$ to $\gamma_0(t)$ exists by localizability and strong causality.
\end{pr}

With the above result in hand we are able to establish that the exponential map is continuous.
 \begin{lem}[Exponential map continuous]
 Let $\Xll$ be a locally uniquely timelike geodesic, globally hyperbolic, locally causally closed, regularly localizable and future or past non-lingering, respectively, \LpLS with timelike curvature bounded below. Then $\exp_x^\pm$ is continuous away from $0\in T_x^\pm$ for all $x\in X$.
\end{lem}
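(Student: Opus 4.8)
The plan is to fix a point $(r,[\gamma])$ in the domain $\log_x^+(U\cap I^+(x))\subseteq T_x^+$ with $r>0$ and prove sequential continuity of $\exp_x^+$ there; the past case is completely analogous. Since the point lies away from the vertex, a sequence $(r_n,[\gamma_n])\to(r,[\gamma])$ converges in the cone metric exactly when $r_n\to r$ and $\ma_x(\gamma_n,\gamma)\to 0$, because $d_c((r_n,[\gamma_n]),(r,[\gamma]))^2=(r_n-r)^2+2r_nr\bigl(1-\cos\ma_x(\gamma_n,\gamma)\bigr)$ and $r>0$. Writing $\gamma_n,\gamma$ for the $\tau$-arclength representatives and $p_n:=\exp_x^+(r_n,[\gamma_n])=\gamma_n(r_n)$, $p:=\exp_x^+(r,[\gamma])=\gamma(r)$, I must show $p_n\to p$. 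Since it suffices to show that every subsequence has a further subsequence along which $p_n\to p$, I may freely pass to subsequences throughout.

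Next I extract a limit geodesic. I reparametrize each $\gamma_n$ by $d$-arclength and extend it maximally. Using the future non-lingering property of $x$ together with global hyperbolicity (compact causal diamonds) and regular localizability, I contain the relevant initial segments in a fixed compact set and apply the limit curve theorem \cite[Thm.\ 3.7]{KS:18}, exactly as in the proof of Proposition \ref{pop-D-plus-complete}: along a subsequence the $\gamma_n$ converge uniformly to a non-constant future directed causal curve $\bar\gamma$, which is maximal by \cite[Prop.\ 3.17]{KS:18}. Because $\ma_x(\gamma_n,\gamma)\to 0$ is in particular bounded, Lemma \ref{lem-bou-ang-lim-cur-tl} (with the fixed reference timelike geodesic $\gamma$) shows that $\bar\gamma$ is timelike, hence a future directed timelike geodesic starting at $x$.

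I then identify the limit with $\gamma$. Applying the lower semicontinuity of angles, Proposition \ref{pop-ang-semicont-fudi} (available since timelike curvature is bounded below), to $\gamma_n\to\bar\gamma$ and the constant sequence $\gamma\to\gamma$ yields $\ma_x(\bar\gamma,\gamma)\le\liminf_n\ma_x(\gamma_n,\gamma)=0$, so $[\bar\gamma]=[\gamma]$ in $\D_x^+$. By local uniqueness of timelike geodesics, $\bar\gamma$ and $\gamma$ then coincide as $\tau$-arclength geodesics from $x$ on their common domain: this is precisely the well-definedness argument for $\exp_x^+$ given after Definition \ref{def-log}, where distinct endpoints at equal $\tau$-parameter are shown to force a strictly positive angle. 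Thus the uniform limit of the $\gamma_n$ is $\gamma$ itself.

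Finally I match parameters. Writing $p_n=\gamma_n(u_n)$ with $u_n$ the $d$-parameter determined by $\tau(x,\gamma_n(u_n))=r_n$, the $u_n$ lie in a bounded interval, so I pass to $u_n\to u_\infty$. Uniform convergence gives $\gamma_n(u_n)\to\gamma(u_\infty)$, while continuity of $\tau$ gives $\tau(x,\gamma(u_\infty))=\lim_n r_n=r$; since $u\mapsto\tau(x,\gamma(u))$ is strictly increasing along the timelike realizer $\gamma$, this forces $\gamma(u_\infty)=p$, and hence $p_n\to p$. The step I expect to be the main obstacle is the compactness/containment in the second paragraph: non-lingering only guarantees a \emph{uniform initial} parameter of existence, so keeping the full relevant segments $\gamma_n|_{[0,r_n]}$ inside a single compact set — so that the limit curve theorem applies and the limiting parameter $u_\infty$ actually realizes $p=\gamma(r)$ rather than some earlier point — must be arranged carefully; this is handled by the global hyperbolicity and compact-diamond argument of Proposition \ref{pop-D-plus-complete}.
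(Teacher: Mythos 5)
Your proof is correct and follows essentially the same route as the paper's: extract a uniform limit of the $\gamma_n$ via the limit curve theorem (non-constant by non-lingering), show it is timelike via Lemma \ref{lem-bou-ang-lim-cur-tl} using boundedness of the angles, and identify its direction class with $[\gamma]$ via semi-continuity of angles (Proposition \ref{pop-ang-semicont-fudi}). The differences are only presentational --- you make explicit the cone-metric characterization of convergence, the sub-subsequence reduction, and the final parameter-matching step, which the paper leaves implicit once $[\tilde\gamma]=[\gamma]$ is established.
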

\begin{pr}
We only establish the future directed case, i.e., for $\exp_x^+$. The past directed case is completely analogous. Let $x\in X$ and $U$ as in definition \ref{def-log}. By global hyperbolicity and local timelike geodesic connectedness, we can shrink $U$ to be with compact closure. Let $(t_n,[\gamma_n])\in\log^+_x(I^+(x)\cap U)$ converge to $(t,[\gamma])$, and let $\gamma_n$ and $\gamma$ be defined up to parameter $t_n$ and $t$.  Then by the limit curve theorem \cite[Thm.\ 3.7]{KS:18}, we can assume (without loss of generality) that $(\gamma_n)_n$ converges to a future directed causal curve $\tilde{\gamma}$. Note that $\tilde\gamma$ is non-constant by the non-lingering property (Definition \ref{def:nonLing}). We have to show that $[\tilde{\gamma}]=[\gamma]$ and to this end we need to establish that $\tilde\gamma$ is timelike. As $[\gamma_n]\to [\gamma]$ the sequence $(\ma_x(\gamma_n,\gamma_0))_n$ is bounded and because $\gamma_n\to\tilde\gamma$ uniformly we obtain that $\tilde\gamma$ is timelike by Lemma \ref{lem-bou-ang-lim-cur-tl}.  Now, by semi-continuity of angles, Proposition \ref{pop-ang-semicont-fudi}, we get $\ma_x(\gamma,\tilde{\gamma})\leq\lim_n\ma_x(\gamma,\gamma_n)=0$, so $[\gamma]=[\tilde{\gamma}]$ and we are done.
\end{pr}

Similarly, we can show that the logarithm is continuous.
 \begin{lem}[Logarithmic map continuous]
Let $\Xll$ be a locally uniquely timelike geodesic, globally hyperbolic, locally causally closed, regularly localizable \LpLS with timelike curvature bounded above. Then $\log_x^\pm$ is continuous for all $x\in X$.
\end{lem}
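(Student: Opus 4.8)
The plan is to mirror the proof of the continuity of $\exp_x^\pm$, but to replace the lower-bound tools (semi-continuity of angles, Lemma \ref{lem-bou-ang-lim-cur-tl}) by their upper-bound counterparts, chiefly the continuity of angles between geodesics (Proposition \ref{pop-ang-cont-geodesics-fudi}). I only treat $\log_x^+$, the past case being completely analogous. Fix $x\in X$ and a neighbourhood $U$ as in Definition \ref{def-log}; using global hyperbolicity we may shrink $U$ so that $\bar U$ is compact and all timelike geodesics occurring below remain inside $U$. Let $y_n\to y$ with $y_n,y\in U\cap I^+(x)$, and let $\gamma_n,\gamma$ be the unique future directed timelike geodesics in $U$ from $x$ to $y_n$ resp.\ $y$, parametrized by $\tau$-arclength, so that $\log_x^+(y_n)=(\tau(x,y_n),[\gamma_n])$ and $\log_x^+(y)=(\tau(x,y),[\gamma])$.

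For the radial part, since $X$ has timelike curvature bounded above, $\tau$ is finite and continuous on $U\times U$, whence $\tau(x,y_n)\to\tau(x,y)=:L>0$. The real content is the directional part, namely $\ma_x(\gamma_n,\gamma)\to 0$ in $\D_x^+$. I establish this by the subsequence principle: it suffices to show that every subsequence of $(\gamma_n)_n$ admits a further subsequence along which the angle tends to $0$. Given a subsequence, the limit curve theorem \cite[Thm.\ 3.7]{KS:18} supplies a further subsequence $(\gamma_{n_k})_k$ converging uniformly to a future directed causal curve $\tilde\gamma$ from $x$ to $y$ (the endpoints converge, and the $\gamma_{n_k}$ lie in the compact diamond $J(x,z)$ for any $z\gg y$, which contains them for large $k$ since $I^-(z)$ is open). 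By \cite[Prop.\ 3.17]{KS:18} the curve $\tilde\gamma$ is maximal, so $L_\tau(\tilde\gamma)=\tau(x,y)=L>0$; as $X$ is regularly localizable, a maximal curve of positive $\tau$-length cannot be null, so $\tilde\gamma$ is timelike (cf.\ \cite[Thm.\ 3.18]{KS:18}). By continuity of $\tau$, the limit $\tilde\gamma$ is again parametrized by $\tau$-arclength, hence it is a timelike geodesic in $U$ from $x$ to $y$, and unique timelike geodesicness forces $\tilde\gamma=\gamma$. Reparametrizing all curves proportionally to $\tau$-arclength on $[0,1]$ (which does not affect their directions) gives $\gamma_{n_k}\to\gamma$ pointwise, and the upper curvature bound lets us apply Proposition \ref{pop-ang-cont-geodesics-fudi} to conclude $\ma_x(\gamma_{n_k},\gamma)\to\ma_x(\gamma,\gamma)=0$.

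Finally, combining the two convergences with $L>0$, the cone-metric distance
\[
d_c\big(\log_x^+(y_n),\log_x^+(y)\big)^2=\tau(x,y_n)^2+L^2-2\,\tau(x,y_n)\,L\cos\big(\ma_x(\gamma_n,\gamma)\big)
\]
(valid once $\ma_x(\gamma_n,\gamma)<\pi$, which holds for large $n$) tends to $L^2+L^2-2L^2=0$. Hence $\log_x^+(y_n)\to\log_x^+(y)$ in $T_x^+$, proving continuity of $\log_x^+$.

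The main obstacle is the middle step: one must guarantee that the limit curve $\tilde\gamma$ produced by the limit curve theorem is genuinely timelike and that it coincides with $\gamma$. Here regular localizability, together with the fact that $\tilde\gamma$ realizes the positive time separation $\tau(x,y)$, rules out a null limit (so that no separate non-lingering hypothesis is needed, in contrast to the exponential map), while unique timelike geodesicness pins down $\tilde\gamma=\gamma$. Once $\gamma_n\to\gamma$ is secured, the continuity of angles under the upper curvature bound makes the angular convergence immediate.
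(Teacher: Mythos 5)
Your proposal is correct and follows essentially the same route as the paper's proof: limit curve theorem, maximality of the limit curve, regular localizability (via \cite[Prop.\ 3.17, Thm.\ 3.18]{KS:18}) to exclude a null limit, unique timelike geodesicness to identify the limit with $\gamma$, and then angle (semi)continuity under the upper curvature bound to conclude $[\gamma_n]\to[\gamma]$. If anything, your citation of Proposition \ref{pop-ang-cont-geodesics-fudi} is the apt reference here, since the paper's proof cites Proposition \ref{pop-ang-semicont-fudi} (stated for curvature bounded \emph{below}) while actually using the upper-bound inequality $\ma_x(\tilde\gamma,\gamma)\geq\limsup_n\ma_x(\gamma_n,\gamma)$; your explicit cone-metric computation at the end merely makes the convergence in $T_x^+$ explicit.
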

\begin{pr}
We only establish the future directed case, i.e., for $\log_x^+$. The past directed case is completely analogous. Let $x\in X$ and $U$ as in Definition \ref{def-log}, and by global hyperbolicity and local timelike geodesic connectedness we can assume without loss of generality that $U$ is relatively compact and by shrinking $U$ further that $U$ is a regularly localizing neighborhood. Indirectly, assume that $\log_x^+$ is not continuous, so there is a sequence $(y_n)_n$ in $U\cap I^+(x)$ with $y_n\to y\in U\cap I^+(x)$ and $\log_x^+(y_n)\not\to \log_x^+(y)$. Thus, by definition, we have $\log(y_n)=(t_n,[\gamma_n])$ and $\log_x^+(y)=(t,[\gamma])$, where $t_n=\tau(x,y_n)$, $t=\tau(x,y)$, $\gamma_n$ are future directed timelike geodesics from $x$ to $y_n$ and $\gamma$ is a future directed timelike geodesic from $x$ to $y$. As $t_n=\tau(x,y_n)$ and $\tau$ is continuous (on $U$), $t_n\to t=\tau(x,y)$. By assumption we know that $\gamma_n(t_n)=y_n\to y$. By the limit curve theorem, we get a limit curve $\tilde{\gamma}$ of a subsequence of $(\gamma_n)_n$ from $x$ to $y$ and we indirectly assume $[\gamma]\neq[\tilde\gamma]$, hence $[\gamma_n]\not\to[\gamma]$. As $x\ll y$ and $X$ is chronological, $\tilde\gamma$ is non-constant. By \cite[Prop.\ 3.17]{KS:18} $\tilde{\gamma}$ is also maximizing and still contained in $\bar{U}$. From regular localizability and \cite[Thm.\ 3.18]{KS:18} we conclude that $\tilde\gamma$ is timelike, hence by uniqueness of geodesics in $\bar{U}$, $\gamma$ and $\tilde{\gamma}$ are reparametrizations of each other. Finally, by Proposition \ref{pop-ang-semicont-fudi} we estimate
\begin{align}
 0= \ma_x(\tilde\gamma,\gamma) \geq \limsup_n \ma_x(\gamma_n,\gamma)\,,
\end{align}
a contradiction to $[\gamma_n]\not\to[\gamma]$.
\end{pr}

Combining the last two results, we obtain 
\begin{cor}
The exponential map $\exp_x^\pm$ is a homeomorphism away from $0$ for locally uniquely timelike geodesic, globally hyperbolic, locally causally closed, regularly localizable, future or past non-lingering \LpLSn s with timelike curvature bounded above and below.
\end{cor}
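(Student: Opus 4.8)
The plan is to combine the two immediately preceding lemmas with the observation that $\exp_x^\pm$ and $\log_x^\pm$ are mutually inverse by construction, and then invoke the elementary fact that a continuous bijection with continuous inverse is a homeomorphism. Under the hypotheses of the corollary --- locally uniquely timelike geodesic, globally hyperbolic, locally causally closed, regularly localizable, future (resp.\ past) non-lingering, and with timelike curvature bounded both above and below --- both lemmas apply simultaneously: the lower bound together with the non-lingering property gives continuity of $\exp_x^\pm$ on $\log_x^\pm(U\cap I^\pm(x))\setminus\{0\}$, while the upper bound gives continuity of $\log_x^\pm$ on $U\cap I^\pm(x)$.

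First I would fix $x\in X$ and a neighborhood $U$ as in Definition \ref{def-log}, and recall from the definitions that $\log_x^\pm\colon U\cap I^\pm(x)\to T_x^\pm$ sends $y$ to $(\tau(x,y),[\gamma])$ (resp.\ $(\tau(y,x),[\gamma])$) for the unique connecting timelike geodesic $\gamma$, and that $\exp_x^\pm$ sends $(r,[\gamma])$ to $\tilde\gamma(r)$ for the $\tau$-arclength parametrized representative $\tilde\gamma$. By local unique timelike geodesicness these are genuinely inverse to one another: $\exp_x^\pm\circ\log_x^\pm=\id$ on $U\cap I^\pm(x)$, since reparametrizing the unique geodesic from $x$ to $y$ by $\tau$-arclength and evaluating at $r=\tau(x,y)$ returns $y$; and $\log_x^\pm\circ\exp_x^\pm=\id$ on $\log_x^\pm(U\cap I^\pm(x))$, using the well-definedness established in the remark following the exponential map (distinct directions yield distinct endpoints, so the direction is recovered). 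In particular $\exp_x^\pm$ is a bijection from $\log_x^\pm(U\cap I^\pm(x))$ onto $U\cap I^\pm(x)$.

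Restricting to the complement of the vertex, $\exp_x^\pm$ is then a continuous bijection from $\log_x^\pm(U\cap I^\pm(x))\setminus\{0\}$ onto $U\cap I^\pm(x)$ whose inverse is precisely $\log_x^\pm$, and $\log_x^\pm$ is continuous by the logarithmic map lemma. Since a continuous bijection with continuous inverse is a homeomorphism, this gives the claim.

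The only points requiring care --- and the closest thing to an obstacle here, which is bookkeeping rather than genuine difficulty --- are matching the decorations (for $\exp_x^+$ one invokes future non-lingering and for $\exp_x^-$ past non-lingering, in accordance with the ``respectively'' in the exponential map lemma) and correctly excising the vertex $0$. The latter is harmless because $\exp_x^\pm$ is only asserted continuous away from $0$, while the image of $\log_x^\pm$ never reaches $0$ in any case, as $\tau(x,y)>0$ whenever $x\ll y$; thus the homeomorphism statement is naturally and correctly phrased ``away from $0$''.
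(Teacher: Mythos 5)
Your proposal is correct and takes exactly the paper's route: the corollary is obtained there simply by combining the two preceding lemmas (continuity of $\exp_x^\pm$ from the lower curvature bound plus non-lingering, continuity of $\log_x^\pm$ from the upper bound), with the mutual-inverse property implicit. Your explicit verification that $\exp_x^\pm\circ\log_x^\pm=\id$ and $\log_x^\pm\circ\exp_x^\pm=\id$ via unique timelike geodesy and $\tau$-arclength parametrization, together with the observation that the vertex $0$ never lies in the image of $\log_x^\pm$, just fills in the bookkeeping the paper leaves to the reader.
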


\section{Angles between timelike curves of arbitrary time orientation}\label{sec-ang-arb-to}
In this final section we consider angles between curves of different time orientation. This is needed to complete the characterization of (timelike) curvature bounds via $K$-angle monotonicity and for the triangle inequality of angles.
\medskip

In fact, for a special case of the triangle inequality of angles, we need the following corollary of the straightening lemma for shoulder angles \cite[Lemma 2.4]{AB:08} (see also \cite[Lemma 3.3.2]{Kir:18}).
\begin{cor}[Straightening lemma]\label{cor-straightening-of-middle-line}
Let $K\in\mb{R}$. Let $p\ll m\ll q\leq r$ be points in $\lm{K}$ such that $\tau(p,q)+\tau(q,r)\leq \tau(p,m)+\tau(m,r)$\footnote{Note this is needed for the existence of $\tilde{\Delta}$.} and such that $p$ and $r$ lie on opposite sides of the geodesic segment $[mq]$. Also assume that $\tau(p,q)+\tau(q,r)<D_K$ to ensure that all relevant triangles satisfy timelike size bounds for $K$. We consider the causal triangles $\Delta_1=\Delta pmq$, $\Delta_2=\Delta mqr$ and an additional causal triangle $\Delta'=\Delta p'q'r'$ in $\lm{K}$ with side-lengths
\begin{align*}
\tau(p',q')&=\tau(p,q)\,,\\
\tau(q',r')&=\tau(q,r)\,,\\
\tau(p',r')&=\tau(p,m)+\tau(m,r)\,,
\end{align*}
and a point $m'$ on the side $[p'r']$ with $\tau(p',m')=\tau(p,m)$. For an illustration of the set-up see Figure \ref{fig-str-lem}.

Then we have that  $\ma_m^{\lm{K}}(p,q)> \ma_m^{\lm{K}}(q,r)$ holds if and only if $\tau(m,q)<\tau(m',q')$ holds. Moreover, the same equivalence holds for the reversed inequalities and equality on one side holds if and only if equality holds on the other.

\end{cor}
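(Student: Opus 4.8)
The plan is to reduce the whole statement to a comparison of the two shoulder sub-angles that the diagonal $[mq]$ cuts out at $m$, namely $\omega_1:=\ma_m^{\lm{K}}(p,q)$ and $\omega_2:=\ma_m^{\lm{K}}(q,r)$, and then to read off $\tau(m,q)$ versus $\tau(m',q')$ from them. First I would record that the two configurations share the four ``outer'' side-lengths $\tau(p,m)$, $\tau(m,r)$, $\tau(p,q)$, $\tau(q,r)$, and that the hypothesis $\tau(p,q)+\tau(q,r)\le\tau(p,m)+\tau(m,r)$ is exactly the reverse triangle inequality needed for $\Delta'$ to exist (Proposition \ref{lorTrianglesExist}). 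Each of $\omega_1,\omega_2$ is then a function of these four fixed lengths and of the single variable $\tau(m,q)$ via the law of cosines (Lemma \ref{lorLawOfCosines}); here I must keep the causal bookkeeping straight, since for $\omega_1$ the vertex $m$ is interior to $p\ll m\ll q$ (so $\sigma=+1$, with $[pq]$ the long timelike side), whereas for $\omega_2$ the vertex $m$ is the past endpoint of $m\ll q\le r$ (so $\sigma=-1$).

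The structural heart of the argument is the straightened configuration. Since $m'$ lies on the geodesic $[p'r']$, the directions from $m'$ to $p'$ and to $r'$ lie on one and the same timelike geodesic through $m'$; by the zero-angle convention (cf.\ Figure \ref{fig-zeroAngle}) the two shoulder angles then coincide, $\ma_{m'}^{\lm{K}}(p',q')=\ma_{m'}^{\lm{K}}(q',r')$. Because $\Delta'$ was built with $\tau(p',m')=\tau(p,m)$, $\tau(m',r')=\tau(m,r)$, $\tau(p',q')=\tau(p,q)$ and $\tau(q',r')=\tau(q,r)$, the value $\tau(m',q')$ is precisely the diagonal length at which the two shoulder angles, computed from the shared outer lengths, become equal; equivalently it is the one-sided comparison value furnished by the third case of Lemma \ref{lorOneSidedCalcs}.

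It then remains to show that the sign of $\omega_1-\omega_2$ is governed by $\tau(m',q')-\tau(m,q)$. In the flat case $K=0$, writing $a_1=\tau(p,m)$, $a_2=\tau(m,r)$, $\ell=\tau(m,q)$ and $\ell'=\tau(m',q')$, the law of cosines gives $\cosh\omega_1=\frac{\tau(p,q)^2-a_1^2-\ell^2}{2a_1\ell}$ and $\cosh\omega_2=\frac{a_2^2+\ell^2-\tau(q,r)^2}{2a_2\ell}$, and subtracting yields the identity
\[
\cosh\omega_1-\cosh\omega_2=\frac{a_1+a_2}{2\,\ell\,a_1 a_2}\,\bigl(\ell'^2-\ell^2\bigr)\,,
\]
whose prefactor is strictly positive (all of $a_1,a_2,\ell$ are positive). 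For $K\neq0$ the same computation goes through with the trigonometric, respectively hyperbolic, law of cosines and the matching one-sided formulas of Lemma \ref{lorOneSidedCalcs}, all of which join up to a single analytic expression (Remark \ref{rem-loc-mon}). Since $\cosh$ is strictly increasing on $[0,\infty)$ and $\omega_1,\omega_2\ge0$, the sign of $\omega_1-\omega_2$ equals the sign of $\ell'^2-\ell^2$, hence of $\ell'-\ell$; this delivers the strict equivalences in both directions together with the equality case. This monotonicity is exactly the content of the straightening lemma for shoulder angles \cite[Lemma 2.4]{AB:08} (see also \cite[Lemma 3.3.2]{Kir:18}), so the statement can alternatively be obtained by specializing that lemma to the present configuration.

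The step I expect to be most delicate is not the flat identity above but the uniform handling of the three curvature regimes together with the causal bookkeeping: one has to verify that the opposite-sides hypothesis and the differing signs $\sigma=\pm1$ of the two shoulder angles feed correctly into the law of cosines, and that the degenerate sub-cases (a null side $[qr]$, or $m,q,r$ collinear) do not spoil the strict positivity of the prefactor. These are precisely the situations already controlled by Lemma \ref{lorLawOfCosines} and Lemma \ref{lorOneSidedCalcs}, so the remaining work is organizational rather than conceptual.
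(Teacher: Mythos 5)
Your proof is correct, but it follows a genuinely different route from the paper's. The paper reduces the statement to the smooth straightening lemma for shoulder angles \cite[Lem.\ 2.4]{AB:08}: after translating the sign conventions so that the hypothesis of that lemma becomes $\ma_m^{\lm{K}}(p,q)\geq\ma_m^{\lm{K}}(q,r)$, it obtains $\ma_{p'}^{\lm{K}}(m',q')\leq\ma_{p}^{\lm{K}}(m,q)$ and converts this back into the comparison of $\tau(m,q)$ with $\tau(m',q')$ via the monotonicity statement of the law of cosines (Remark \ref{rem-loc-mon}); and since \cite{AB:08} is invoked for $q\ll r$, the paper needs a separate limiting argument for the null case $q\leq r$, $q\not\ll r$, plus extra care to restore strictness in the limit. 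You bypass all of this: you express both shoulder angles $\omega_1=\ma_m^{\lm{K}}(p,q)$, $\omega_2=\ma_m^{\lm{K}}(q,r)$ through the law of cosines (Lemma \ref{lorLawOfCosines}, with the correct signs $\sigma=+1$ at the interior vertex of $\Delta pmq$ and $\sigma=-1$ at the past endpoint of $\Delta mqr$) as functions of the outer lengths and $\ell=\tau(m,q)$, identify $\ell'=\tau(m',q')$ with the value furnished by case (3) of Lemma \ref{lorOneSidedCalcs}, and conclude from an explicit identity with positive prefactor. Your flat identity is correct (writing $a_1=\tau(p,m)$, $a_2=\tau(m,r)$, the one-sided formula $\ell'^2=\frac{a_1\tau(q,r)^2+a_2\tau(p,q)^2}{a_1+a_2}-a_1a_2$ makes the two sides match exactly), and your assertion that the computation persists for $K\neq0$ does check out: with $s=\sqrt{|K|}$ the cross terms recombine via the addition formulas, e.g.\ for $K>0$ one finds
\begin{equation}
\cosh\omega_1-\cosh\omega_2=\frac{\sinh(s(a_1+a_2))}{\sinh(sa_1)\sinh(s\ell)\sinh(sa_2)}\bigl(\cosh(s\ell')-\cosh(s\ell)\bigr)\,,
\end{equation}
and analogously for $K<0$ with $\sin$, $\cos$, using that $\cos$ is decreasing on $(0,\pi)$ together with the size bounds. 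What your route buys: it is self-contained (no appeal to the smooth comparison result of \cite{AB:08}), it treats a null side $[qr]$ with no approximation argument (the paper's law of cosines already covers a null side opposite the angle when $\sigma=-1$), the equality case comes for free from the identity, and the opposite-sides hypothesis is never used, since only side lengths enter. What the paper's route buys is the absence of computation and a direct link to the classical Alexandrov lemma. The one caveat---shared with the paper's own proof---is that both arguments implicitly take $m'$ and $q'$ to be causally related; should they be spacelike related, one must read your $\ell'^2$ as the (then negative) formula value of Lemma \ref{lorOneSidedCalcs}, with which the stated equivalences still follow.
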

\begin{proof}
First we assume that $q\ll r$. We want to apply the straightening Lemma for shoulder angles (or Alexandrov's Lemma), cf.\ \cite[Lemma 2.4]{AB:08}. For this, we have to make sense of the required inequality in \cite[Lemma 2.4]{AB:08}, i.e.,
\begin{equation}\label{eq:Kir}
 (1-\lambda)\angle pmq + \lambda\angle rmq \geq 0\,.
\end{equation}
To this end let $\gamma_{p',r'}:[0,1]\to\lm{K}$ be the timelike geodesic from $p'$ to $r'$, then there is a parameter $\lambda$ such that $m'=\gamma_{p',r'}(\lambda)$, which we calculate as $\lambda=\frac{\tau(p,m)}{\tau(p,m)+\tau(m,r)}$. Then the required inequality \eqref{eq:Kir} translates to 
\begin{align}
\qquad &\tau(m,r) \tau(p,m) \tau(m,q) \cosh(\ma_m^{\lm{K}}(p,q))\\
&- \tau(p,m)\tau(m,r)\tau(m,q) \cosh(\ma_m^{\lm{K}}(q,r))\geq 0\,,
\end{align}
where we used the formula of the non-normalized angle including the two side-lengths, the cosh of the angle and the sign of the angle. This reduces to 
\[
\ma_m^{\lm{K}}(p,q)\geq \ma_m^{\lm{K}}(q,r)\,,
\]
and similarly for the inequality reversed. Under this condition, the straightening Lemma yields that 
\[
\langle\dot\gamma_{p',q'}(0),\dot\gamma_{p',m'}(0)\rangle =:   \angle q'p'm'\geq \angle qpm = \langle\dot\gamma_{p,q}(0),\dot\gamma_{p,m}(0)\rangle\,,
\]
which in our terminology means that
\[
{\ma}^{\lm{K}}_{p'}(m',q')\leq{\ma}^{\lm{K}}_p(m,q)\,.
\]

Now we note that $\Delta p'm'q'$ and $\Delta pmq$ have two equal sides (all but the $[mq]$ side) and use the monotonicity statement in the law of cosines \ref{rem-loc-mon} ($\sigma=-1$, varying a short side) to get $\tau(m',q')\geq\tau(m,q)$. 

Similarly, we get $\tau(m',q')\leq\tau(m,q)$ when $\ma_m^{\lm{K}}(p,q)\leq\ma_m^{\lm{K}}(q,r)$. Finally, observe that the equality $\ma_m^{\lm{K}}(p,q)=\ma_m^{\lm{K}}(q,r)$ implies that $p,m,r$ lie on a straight line, making $p'=p$, $m'=m$, $q'=q$ and $r'=r$ satisfy all the requirements on $\Delta'$. 

Finally, we establish the general case where $q\leq r$. Let $\eps>0$ be small enough. We consider $\tilde q\in[mq]$ with $\tau(\tilde q,q)=\eps$. Then $\tilde q\ll r$, so we can apply the above: Let the causal triangle $\tilde{\Delta}'=\Delta \tilde p'\tilde q'\tilde r'$ in $\lm{K}$ with side-lengths $\tau(\tilde p',\tilde q')=\tau(p,\tilde q)$, $\tau(\tilde q',\tilde r')=\tau(\tilde q,r)$, $\tau(\tilde p',\tilde r')=\tau(p,m)+\tau(m,r)$ and a point $\tilde m'$ on the side $[\tilde p' \tilde r']$ with $\tau(\tilde p',\tilde m')=\tau(p,m)$. Thus we get that 
\begin{align}
 \ma_m^{\lm{K}}(p,q)&=\ma_m^{\lm{K}}(p,\tilde q)> \ma_m^{\lm{K}}(\tilde q,r)=\ma_m^{\lm{K}}(q,r)\\
 &\Leftrightarrow\tau(m,\tilde q)<\tau(\tilde m',\tilde q')\,,
\end{align}
the same for flipped inequalities and equality on one side holds if and only if equality holds on the other. By continuity of $\tau$ and $\ma^{\lm{K}}$ we get as $\eps\to 0$ that
\begin{align}
 \ma_m^{\lm{K}}(p,q)> \ma_m^{\lm{K}}(q,r) \Rightarrow \tau(m,q)\leq\tau(m',q')\,,\\
 \tau(m,q)<\tau(m',q') \Rightarrow \ma_m^{\lm{K}}(p,q)\geq \ma_m^{\lm{K}}(q,r)\,,
\end{align}
or with reversed inequalities, so we still need to show strictness in either of these. For this, we consider what happens in case of equality.

If $\tau(m,q)=\tau(m',q')$, we get that $\Delta_1$ and $\Delta p'm'q'$ have the same side-lengths and $\Delta_2$ and $\Delta m'q'r'$ have the same side-lengths. Thus, angle additivity in $\lm{K}$ gives us that $\ma_m^{\lm{K}}(p,q)=\ma_m^{\lm{K}}(p',q')= \ma_m^{\lm{K}}(q',r')=\ma_m^{\lm{K}}(q,r)$, so equality in the other inequality.

If $\ma_m^{\lm{K}}(p,q)=\ma_m^{\lm{K}}(q,r)$ we use angle additivity in $\lm{K}$ to get $\ma_m^{\lm{K}}(p,r)=0$ and thus  $p,m,r$ lie on a geodesic. Consequently, $p'=p$, $q'=q$, $r'=r$, $m'=m$ is actually a valid choice for $\Delta'$ and $m'$ and thus $\tau(m,q)=\tau(m',q')$.
\end{proof}

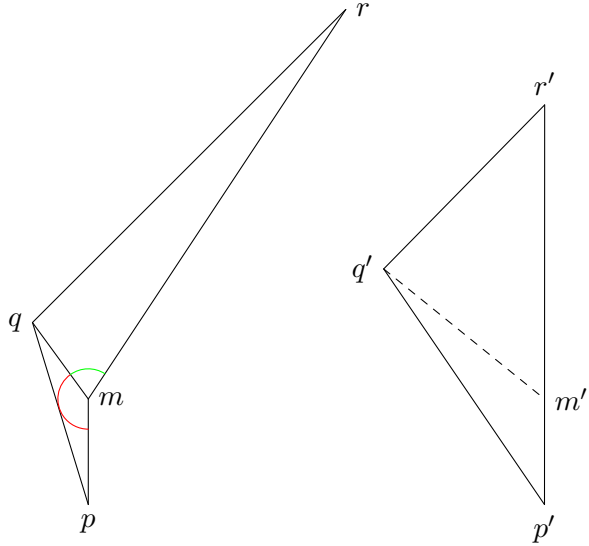
\begin{figure}[h!]
\begin{center}
 \begin{tikzpicture}[line cap=round,line join=round,>=triangle 45,x=1cm,y=1cm]
\draw (0,0)-- (-0.7340132902094361,2.414285714285714);
\draw (-0.7340132902094361,2.414285714285714)-- (0,1.4);
\draw (0,1.4)-- (0,0);
\draw (0,1.4)-- (3.3867606432599557,6.565282921073598);
\draw (3.3867606432599557,6.565282921073598)-- (-0.7340132902094361,2.414285714285714);
\draw (3.8850696994038274,3.124568798471209)-- (6,0);
\draw (3.8850696994038274,3.124568798471209)-- (6.002261946852696,5.300000482679559);
\draw (6.002261946852696,5.300000482679559)-- (6,0);
\draw [dashed] (3.8850696994038274,3.124568798471209)-- (6.000597495395052,1.4000001275002611);
\coordinate (p) at (0,0);
\coordinate (m) at (0,1.4);
\coordinate (q) at (-0.7340132902094361,2.414285714285714);
\coordinate (r) at (3.3867606432599557,6.565282921073598);
\draw (0,0) node[anchor=north]{$p$} (0,1.4) node[anchor=west]{$m$} (-0.7340132902094361,2.414285714285714) node[anchor=east]{$q$} (3.3867606432599557,6.565282921073598) node[anchor=west]{$r$} (6,0) node[anchor=north]{$p'$} (3.8850696994038274,3.124568798471209) node[anchor=east]{$q'$} (6.002261946852696,5.300000482679559) node[anchor=south]{$r'$} (6.000597495395052,1.4000001275002611) node[anchor=west]{$m'$};
\draw pic[draw,angle radius={0.4cm},color=red]{angle = q--m--p} ;
\draw pic[draw,angle radius={0.4cm},color=green]{angle = r--m--q} ;
\end{tikzpicture}
\caption{The set-up of the straightening lemma.}\label{fig-str-lem}
\end{center}
\end{figure}

\begin{defi}[Geodesic prolongation]
A \LpLS\\ $\Xll$ is said to satisfy \emph{geodesic prolongation} if all causal geodesics can be prolonged (extended) to a causal geodesic which has an open domain, i.e., a geodesic defined on a closed interval $[b,c]$ can be prolonged (as a geodesic) to $(a,e)$, where $a<b<c<e$.
\end{defi}

\begin{rem}
 Note we do not require this extension of a geodesic to be unique. Although under some assumptions, it will be, see e.g.\ \cite[Theorem 4.12]{KS:18}.
\end{rem}

This definition is complementary to \cite[Def.\ 4.5]{GKS:19}, where extendibility of geodesics was introduced. Thus we opted for calling it \emph{prolongation} instead of \emph{extendibility} to easily distinguish them.

Smooth spacetimes satisfy geodesic prolongation. However, smooth spa\-cetimes with boundary do not satisfy geodesic prolongation: There, timelike geodesics which pass non-tangentially into the boundary have (half-)closed domains and cannot be extended. For details see the following example.
\begin{ex}
Closed half-Minkowski spacetime $H=\{(t,x):x\geq 0\}\subseteq\mb{R}^{2}_1$ has very nice properties. In fact, it has timelike curvature bounded below and above by $0$, it is globally hyperbolic, causally path connected, strongly regularly localizable and it is non-lingering at each point. However, the distance realizer $\alpha(t)=(t,\frac{t_0-t}{2})$ is only defined on $(-\infty,t_0]$, has $\alpha(t_0)=(t_0,0)$ and cannot be extended as a geodesic. Thus $H$ fails to satisfy geodesic prolongation at each point on the boundary, see Figure \ref{fig-clo-hal-min}.
\end{ex}

\begin{figure}[h!]
\begin{center}
 \begin{tikzpicture}
\fill[color=black!15] (0,-3) -- (0,3) -- (-1,3) --(-1,-3) -- cycle;
\draw (0,-3) -- (0,3);

\draw[color=blue] (1,-1) -- (0,0);
\fill[color=blue] (0,0) circle(2pt);
\draw[color=blue] (1,-2) -- (0,-1);
\fill[color=blue] (0,-1) circle(2pt);
\draw[color=blue] (1,0) -- (0,1);
\fill[color=blue] (0,1) circle(2pt);
\draw[color=blue] (1,1) -- (0,2);
\fill[color=blue] (0,2) circle(2pt);

\end{tikzpicture}
\caption{Four timelike geodesics in the closed half-Minkowski spacetime that cannot be extended.}\label{fig-clo-hal-min}
\end{center}
\end{figure}
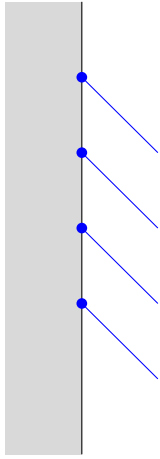

\medskip

Finally, we are in a position to establish the triangle inequality for angles for arbitrarily time oriented timelike curves. This complements Theorem \ref{thm-ang-tri-equ}.

\begin{thm}[Triangle inequality for (upper) angles --- the case of mixed time orientation]\label{thm-ang-tri-equ-oth-cas}
 Let \Xll be a strongly causal \LpLS with $\tau$ locally finite-valued and locally continuous and let $\alpha,\beta,\gamma\colon$ $[0,B)\rightarrow X$ be timelike curves starting at $x:=\alpha(0)=\beta(0)=\gamma(0)$.
 \begin{enumerate}
  \item\label{thm-ang-tri-equ-oth-cas-fufupa} If $\alpha$, $\beta$ are future directed, $\gamma$ is past directed and the angle $\ma_x(\alpha,\gamma)$ exists, we get the \emph{triangle inequality of angles}
\[
\ma_x(\alpha,\gamma)\leq\ma_x(\alpha,\beta)+\ma_x(\beta,\gamma)\,.
\]
\item\label{thm-ang-tri-equ-oth-cas-fupafu} If we assume that $X$ is locally causally closed, has timelike curvature bounded below and satisfies geodesic prolongation, the triangle inequality of angles also holds for $\alpha$, $\gamma$ being future directed curves and $\beta$ being a past directed geodesic. 
 \end{enumerate}
 See Figure \ref{fig-tri-ineq-cases} for an illustration of the two cases.
\end{thm}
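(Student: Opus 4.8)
The plan is to treat the two cases separately, in each case reducing to the all-future-directed triangle inequality of Theorem~\ref{thm-ang-tri-equ} by turning the past directed curve into a future directed one, the difference between the cases being the position of that past directed curve in the inequality and the amount of regularity available.

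For part \ref{thm-ang-tri-equ-oth-cas-fufupa} the device is a point reflection in the Minkowski comparison plane. First I would note that for small parameters the past directed $\gamma$ yields $\gamma(t)\ll x\ll\alpha(s)$ and $\gamma(t)\ll x\ll\beta(u)$, so $x$ is an interior (hence $\sigma=1$) vertex of each comparison triangle. In the plane $\lm{0}$ the reflection $p\mapsto 2\bar x-p$ through $\bar x$ is an isometry carrying $I^-(\bar x)$ onto $I^+(\bar x)$; applied to the comparison point $\bar{\gamma(t)}$ it preserves all $\bar\tau$-distances to $\bar x$ but flips the sign of the relevant inner product, so the mixed comparison angle $\tilde\ma_x(\alpha(s),\gamma(t))$ equals the future--future comparison angle between $\bar{\alpha(s)}$ and the reflected copy of $\bar{\gamma(t)}$, and likewise for $\beta$. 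I would then mimic the contradiction argument of Theorem~\ref{thm-ang-tri-equ}: assuming $\ma_x(\alpha,\gamma)>\ma_x(\alpha,\beta)+\ma_x(\beta,\gamma)+4\varepsilon$, I assemble the two comparison triangles (now with the reflected, future directed copy of $\gamma$) into one flattened planar configuration glued along the ray to $\bar\beta$, and derive a contradiction from planar angle additivity together with the reverse triangle inequality exactly as there. The hypothesis that $\ma_x(\alpha,\gamma)$ exists is what lets me pass from comparison angles to the upper angle.

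For part \ref{thm-ang-tri-equ-oth-cas-fupafu}, where the past directed curve $\beta$ is a geodesic sitting in the middle of the inequality, I would use geodesic prolongation to extend $\beta$ through $x$ to a two-sided geodesic and let $\tilde\beta$ be its future directed branch; by Lemma~\ref{lem-ang-geo}\ref{lem-ang-geo-int} one has $\ma_x(\tilde\beta,\beta)=0$. The central step is the reversal identity $\ma_x(\delta,\beta)=\ma_x(\delta,\tilde\beta)$ for each future directed $\delta\in\{\alpha,\gamma\}$. Granting it, the claim becomes $\ma_x(\alpha,\gamma)\le\ma_x(\alpha,\tilde\beta)+\ma_x(\tilde\beta,\gamma)$, which is the all-future case of Theorem~\ref{thm-ang-tri-equ}, the relevant angles existing by Lemma~\ref{lem-K-ang-com-ang-ex-fudi} through the lower bound and Theorem~\ref{thm-K-ang-com-fu}. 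To prove the identity I would exploit that points of $\beta\cup\tilde\beta$ are joined by a distance realizer through $x$, so that $\tau(\beta(t),\tilde\beta(t'))=\tau(\beta(t),x)+\tau(x,\tilde\beta(t'))$, and then compare the mixed triangle $\beta(t)\ll x\ll\delta(s)$ with the straightened configuration supplied by Corollary~\ref{cor-straightening-of-middle-line}, the curvature bound controlling the comparison angles and forcing equality in the limit.

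The main obstacle is making the reflection and reversal rigorous at the level of the $\limsup$ defining the upper angle. In part \ref{thm-ang-tri-equ-oth-cas-fufupa} the delicate point is the gluing of the two comparison triangles into a single plane and checking that the reflected future point lands on the correct side of the middle ray so that additivity applies; this is the analysis of Theorem~\ref{thm-ang-tri-equ}, but now one of the three marked points is a purely synthetic (reflected) point with no preimage in $X$. In part \ref{thm-ang-tri-equ-oth-cas-fupafu} the obstacle is upgrading the tangent-space heuristic that a past direction is minus the corresponding future direction to the actual reversal identity: this is precisely where geodesicity of $\beta$ via prolongation and Lemma~\ref{lem-ang-geo}\ref{lem-ang-geo-int}, the additivity of $\tau$ along the geodesic, and the curvature bound through Corollary~\ref{cor-straightening-of-middle-line} all enter, and verifying the size-bound and opposite-side hypotheses of that corollary for the shrinking triangles will require care.
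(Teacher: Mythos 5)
Your part (ii) is essentially the paper's own argument: prolong $\beta$ through $x$ to a future branch $\tilde\beta$, use Lemma~\ref{lem-ang-geo}\,\ref{lem-ang-geo-int}, exploit $\tau(\beta(s),\tilde\beta(t))=\tau(\beta(s),x)+\tau(x,\tilde\beta(t))$ along the prolonged geodesic, feed this and the lower curvature bound into Corollary~\ref{cor-straightening-of-middle-line}, and reduce to Theorem~\ref{thm-ang-tri-equ}. One correction, though: your ``reversal identity'' $\ma_x(\delta,\beta)=\ma_x(\delta,\tilde\beta)$ is more than these tools deliver. The lower curvature bound gives one distance inequality, $\tau(x,\delta(r))\leq\bar\tau(\bar x',\bar a')$, and the straightening lemma converts it into the one angle inequality $\tilde\ma^K_x(\delta(r),\beta(s))\geq\tilde\ma^K_x(\delta(r),\tilde\beta(t))$, hence only $\ma_x(\delta,\tilde\beta)\leq\ma_x(\delta,\beta)$; the converse direction needs an upper curvature bound or part (i) (this is exactly how the paper later proves the full equality in Corollary~\ref{cor-tri-ine-alo-geo}, \emph{using} the present theorem). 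Fortunately only the one-sided inequality is needed for the chain $\ma_x(\alpha,\gamma)\leq\ma_x(\alpha,\tilde\beta)+\ma_x(\tilde\beta,\gamma)\leq\ma_x(\alpha,\beta)+\ma_x(\beta,\gamma)$, so your reduction survives once you weaken the claim; also the existence appeals to Lemma~\ref{lem-K-ang-com-ang-ex-fudi} and Theorem~\ref{thm-K-ang-com-fu} are unnecessary, since everything here is a statement about upper angles.

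Part (i), however, has a genuine gap. The reflection $p\mapsto 2\bar x-p$ preserves the angle at $\bar x$ and distances to $\bar x$, but it destroys the correspondence between planar separations and time separations in $X$ for pairs of points away from $\bar x$: the reflected copy of $\bar{\gamma(t)}$ relates to $\bar{\alpha(s)}$ through the $\sigma=-1$ law of cosines, whereas $\tau(\gamma(t),\alpha(s))$ enters the mixed comparison angle through the $\sigma=+1$ law of cosines; indeed, for $\tau(\gamma(t),x)\approx\tau(x,\alpha(s))$ and any positive angle the reflected point is \emph{spacelike} to $\bar{\alpha(s)}$, so the reflected triple is not a comparison triangle of anything in $X$. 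Consequently the machinery of Theorem~\ref{thm-ang-tri-equ} that you invoke ``exactly as there'' has no counterpart: the timelike tetrads $x\ll a\ll b(s)\ll c$, the parameters $s_\pm$ defined by null relations, the below/above-$L$ dichotomy, and above all the reverse triangle inequality $\tau(a,c)\geq\tau(a,b)+\tau(b,c)$ all live on the actual points of $X$, and one of your three marked points no longer has a preimage there. The paper's proof of (i) is structurally different: it keeps $\bar c$ in the past, glues the comparison triangles of $\Delta cxb$ and $\Delta xab$ along $[\bar x\bar b]$ with $\bar a,\bar c$ on the \emph{same} side, varies the parameter $r$ on $\alpha$ (not on $\beta$) until $\bar a$ lands on the segment $[\bar c\,\bar b]$, and then extracts the contradiction from the chain $c\ll a\leq b$ via $\tau(c,b)\geq\tau(c,a)+\tau(a,b)$ together with mixed angle additivity; since local causal closedness is not assumed in (i), there is moreover a degenerate case in which $\bar a$ never reaches $[\bar c\,\bar b]$ and only $\bar a\to\bar b$, which requires a separate quantitative estimate on $\cosh$ of the angles. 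None of this is visible from, or recoverable by, the reflection sketch.
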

By time reversal and switching the roles of $\alpha$ and $\gamma$, this statement and that of Theorem \ref{thm-ang-tri-equ} cover all possible combinations of time orientation.
\begin{pr}
(i): We restrict to a neighborhood of $x$ such that $\tau$ is finite and continuous and all curve segments are contained in it. Let $\alpha$, $\beta$ be future directed, $\gamma$ past directed and such that the angle $\ma_x(\alpha,\gamma)$ exists.
By definition (and existence) of the angle $\ma_x(\alpha,\gamma)$ we have that for all $\eps>0$ there is a $\delta>0$ such that $\forall r,s,t\in (0,\delta)$ we have the upper bound
\begin{align*}
\tilde{\ma}_x(\alpha(r),\beta(s))&<\ma_x(\alpha,\beta)+\eps\,,\\
\tilde{\ma}_x(\beta(s),\gamma(t))&<\ma_x(\beta,\gamma)+\eps\,,\\
\tilde{\ma}_x(\alpha(r),\gamma(t))&>\ma_x(\alpha,\gamma)-\eps\,,
\end{align*}
whenever $\alpha(r)\leq\beta(s)$ or the other way around ($\gamma(t)\ll\beta(s)$ and $\gamma(t)\ll\alpha(r)$ are always satisfied, as $\gamma$ is past directed).

We denote $a=\alpha(r)$, $b=\beta(s)$ and $c=\gamma(t)$. We let $a\leq b$ (fixing $s>0$, there is an $r$ such that this is satisfied). For now assume that $\tilde\ma_x(\beta(s),\gamma(t))>0$.

We form a comparison situation in $\lm{0}$ as follows: Let $\bar{x},\bar{a},\bar{b},\bar{c}$ containing comparison triangles for $\Delta cxb$ and $\Delta xab$, where we let $\bar{a}$ and $\bar{c}$ on the same side of (the line given by the segment) $[\bar{x}\bar{b}]$. We consider the segment $L=[\bar c\bar b]$ and its relation to $\bar a$. We vary $r$ and will get two cases either $\bar{a}$ lying on $L$ is possible or if not, we approximate appropriately. For this, we use continuity of $\bar{a}$ when varying $r$. For small $r$, it is clear $\bar{a}$ will lie below $L$ (as $\bar{x}$ is below $L$). As $r\to\sup \alpha^{-1}(J^-(b))=: r^+$ we have $\tau(a,b)\to 0$ and thus $\bar{a}$ has to approach $\partial I^-(\bar{b})$. As the part of $\partial I^-(\bar{b})$ lying on the same side of $[\bar{x}\bar{b}]$ as $\bar{c}$ lies entirely above or on $L$ except for $\bar{b}$ itself. Thus either $\bar{a}$ is eventually above $L$ or $\bar{a}\to\bar{b}$ (which is on $L$). 

First, we discuss the case where $\bar{a}$ is eventually on or above $L$, so we have a value for $r$ where $\bar a$ is on $L$. Now we have the situation where $\bar{c}\ll\bar{a}\leq\bar{b}$ all lie on a straight line, i.e., $\bar\tau(\bar{c},\bar{b})=\bar\tau(\bar{c},\bar{a})+\bar\tau(\bar{a},\bar{b})$. 
As we have $\tau(c,b)\geq\tau(c,a)+\tau(a,b)$ and the side-lengths agree with the ones in the comparison triangles $\Delta\bar c\bar x\bar b$, $\Delta \bar x\bar a\bar b$ except $\tau(c,a)$, we get that $\bar\tau(\bar{c},\bar{a})\geq\tau(c,a)$. This yields for the angles that
\begin{align*}
\tilde{\ma}_x(a,b)&={\ma}_{\bar x}^{\lm{0}}(\bar{a},\bar{b})\,,\\
\tilde{\ma}_x(c,b)&={\ma}_{\bar x}^{\lm{0}}(\bar{c},\bar{b})\,,\\
\tilde{\ma}_x(c,a)&\leq{\ma}_{\bar x}^{\lm{0}}(\bar{c},\bar{a})\,,
\end{align*}
where the last inequality holds by the monotonicity of the law of cosines (Remark \ref{rem-loc-mon}, varying the longest side, $\bar\tau(\bar{c},\bar{a})\geq\tau(c,a)$). In the comparison configuration, i.e., in Minkowski spacetime we can use additivity of angles, and thus we get that
\[
\tilde{\ma}_x(c,a)\leq{\ma}_{\bar{x}}(\bar{c},\bar{a})={\ma}_{\bar{x}}(\bar{a},\bar{b})+{\ma}_{\bar{x}}(\bar{c},\bar{b})=\tilde{\ma}_x(a,b)+\tilde{\ma}_x(c,b)\,,
\]
which is what we need. We have proven that the triangle inequality of angles is not violated by more than $3\eps$. 

Now let $\bar a$ be under $L$ for all $r$\footnote{This case will not occur if the neighborhood is causally closed, since then the limit value for $r=r^+$ is allowed, for which $\tau(a,b)=0$, so $\bar a=\bar b$ or $\bar a$ is above $L$.}, then $\bar a\to\bar b$ as $r\to r^+$. We adapt the above proof to this case, and can assume that we have the above construction except for the location of $\bar a$. Now, we can take $r$ close enough to $r^+$ such that $\bar\tau(\bar{c},\bar{a})\geq\tau(c,a)-\tilde{\eps}$ for some constant $\tilde{\eps}>0$ to be specified later. This is possible as $\bar{a}\to\bar{b}$, $\tau(c,b)\geq\tau(c,a)$ and $\tau$ is continuous. Using the law of cosines (\ref{lorLawOfCosines}), we get
\begin{align*}
\cosh(\tilde{\ma}_x(a,c))&=\frac{\tau(c,a)^2-\tau(x,a)^2-\tau(c,x)^2}{2\tau(x,a)\tau(c,x)}\,,\\
\cosh({\ma}_{\bar x}^{\lm{0}}(\bar{a},\bar{c}))&=\frac{\bar\tau(\bar{c},\bar{a})^2-\tau(x,a)^2-\tau(c,x)^2}{2\tau(x,a)\tau(c,x)}\,,
\end{align*}
where we used the equality of all distances except $\tau(c,a)$ and $\bar\tau(\bar{c},\bar{a})$ in the comparison situation. The difference is
\begin{align}
\cosh({\ma}_{\bar x}^{\lm{0}}(\bar{a},\bar{c}))-\cosh(\tilde{\ma}_x(a,c))&=\frac{\bar\tau(\bar{c},\bar{a})^2-\tau(c,a)^2}{2\tau(x,a)\tau(c,x)}\\
&\geq\frac{-(\bar\tau(\bar{c},\bar{a})+\tau(c,a))\tilde{\eps}}{2\tau(x,a)\tau(c,x)}\,.
\end{align}
We have that $\tau(c,a),\bar\tau(\bar c,\bar a)\leq \tau(c,b)$ stay bounded as $r\to r^+$ and $\tau(x,a)$ is increasing and thus bounded away from $0$. Thus the right-hand-side goes to $0$ as $r\to r^+$ and thus $\tilde \eps\to 0$. As the inverse function of $\cosh$ is uniformly continuous, we find a small enough $\tilde{\eps}>0$ for each $\eps>0$ such that 
\[
{\ma}_{\bar x}^{\lm{0}}(\bar{a},\bar{c})-\tilde{\ma}_x(a,c)\geq-\eps\,.
\]
Now we have a chain of inequalities as above
\[
\tilde{\ma}_x(a,c)-\eps\leq\ma_{\bar x}^{\lm{0}}(\bar{a},\bar{c})=\ma_{\bar x}^{\lm{0}}(\bar{a},\bar{b})+\ma_{\bar x}^{\lm{0}}(\bar{b},\bar{c})=\tilde{\ma}_x(a,b)+\tilde{\ma}_x(b,c)\,,
\]
which gives analogous to the above that the triangle inequality of angles is not violated by more than $4\eps$.

For the case where we cannot achieve $\tilde\ma_x(\beta(s),\gamma(t))>0$ for any small $s,t>0$, we conclude that the concatenation of $\beta,\gamma$ is a geodesic, this means that $\tilde{\ma}_x(b,c)=\ma_x(\beta,\gamma)=0$. We now look at how the distances behave as $r\to0$: $\tau(c,b)$ is constant, and by applying appropriate limits in the law of cosines (\ref{lorLawOfCosines}) we get 
\begin{align*}
\tau(c,a(r))&=t+r\cosh(\tilde{\ma}_x(c,a(r)))+O(r^2)\,,\\
\tau(a(r),b)&=s-r\cosh(\tilde{\ma}_x(a(r),b))+O(r^2)\,.
\end{align*}

We now sum these two, noting that $\tau(c,b)=t+s$. The reverse triangle inequality yields $\tau(c,b)\geq\tau(c,a(r))+\tau(a(r),b)$, which in the limit as $r\to0$ implies

\[
\tilde{\ma}_x(a(r),b)\geq\tilde{\ma}_x(c,a(r))\,.
\] 

Together with $\tilde{\ma}_x(c,b)=0$, we have proven that the triangle inequality of angles is not violated by more than $2\varepsilon$.
\medskip

(ii): At this point we assume that $X$ is locally causally closed, has timelike curvature bounded below by $K\in\R$ and satisfies geodesic prolongation. Let $\alpha$, $\gamma$ be future directed timelike curves and $\beta$ a past directed geodesic.

First, we reduce this to the case where $\beta$ and $\gamma$ join up to a geodesic. By geodesic prolongation, we can extend $\beta$ to the future of $x$ by a future directed geodesic $\tilde{\beta}$. By Lemma \ref{lem-ang-geo}.\ref{lem-ang-geo-int}, we have that $\ma_x(\beta,\tilde{\beta})=0$. We claim that the following triangle inequalities of angles hold
\begin{align*}
\ma_x(\alpha,\tilde{\beta})&\leq\ma_x(\alpha,\beta)+\underbrace{\ma_x(\beta,\tilde{\beta})}_{=0}\,,\\
\ma_x(\tilde{\beta},\gamma)&\leq\underbrace{\ma_x(\tilde{\beta},\beta)}_{=0}+\ma_x(\beta,\gamma)\,.
\end{align*}
Note these claimed triangle inequalities are just like the desired triangle inequality, i.e., a future-past-future case. Assuming this, we get the desired triangle inequality
\begin{equation*}
\ma_x(\alpha,\gamma)\leq\ma_x(\alpha,\tilde{\beta})+\ma_x(\tilde{\beta},\gamma)\leq\ma_x(\alpha,\beta)+\ma_x(\beta,\gamma)\,,
\end{equation*}
where the first inequality is by the future-only triangle inequality of angles, i.e., Theorem \ref{thm-ang-tri-equ}. 

 So we are left with (without loss of generality) establishing $\ma_x(\alpha,\beta)\geq \ma_x(\alpha,\tilde{\beta})$. Let $U$ be a comparison neighborhood of $x$, and by strong causality we can assume that all the points and geodesic segments are contained in $U$. Let $\eps>0$ and choose some parameters $r,s,t>0$ small enough such that
\begin{align*}
\tilde{\ma}^K_x(\alpha(r),\beta(s))&<\ma_x(\alpha,\beta)+\eps\,,\\
\tilde{\ma}^K_x(\alpha(r),\tilde\beta(t))&>\ma_x(\alpha,\tilde\beta)-\eps\,,\\
\alpha(r)&\leq \tilde\beta(t)\,,
\end{align*}
where we used Proposition \ref{lorSphericalangleNoDepK} to replace the Minkowski comparison angle with the $K$-comparison angle and without loss of generality choose the time orientation in the third line (the other case being analogous). We denote $a:=\alpha(r)$, $b:=\beta(s)$ and $c:=\tilde\beta(t)$.

We now construct the situation of the straightening lemma \ref{cor-straightening-of-middle-line}: We form comparison triangles $\Delta \bar{x}\bar{a}\bar{c}$ for $\Delta xac$ and $\Delta \bar{b}\bar{x}\bar{a}$ for $\Delta bxa$ in $\lm{K}$ such that they share the side $[\bar x\bar a]$ and $\bar{b}$, $\bar{c}$ lie on different sides of (the line extending) $[\bar x\bar a]$. Note that $\bar\tau(\bar b,\bar a) + \bar\tau(\bar a,\bar c) = \tau(b,a) + \tau(a,c)\leq \tau(b,c) = \tau(b,x) + \tau(x,c) = \bar\tau(\bar b,\bar x) + \bar\tau(\bar x,\bar c)$, as required by the straightening lemma. These triangles realize the $K$-comparison angles $\ma_{\bar x}^{\lm{K}}(\bar a,\bar c) = \tilde\ma_x^K(a,c)$ and $\ma_{\bar x}^{\lm{K}}(\bar b,\bar a) = \tilde\ma_x^K(b,a)$.  Let $\Delta \bar{a}'\bar{b}'\bar{c}'$ be a comparison triangle for $\Delta abc$ and let $\bar{x}'\in[\bar b',\bar c']$ with $\bar\tau(\bar b',\bar x')=\bar\tau(\bar b,\bar x) =\tau(b,x)$. Also note that $\bar\tau(\bar b',\bar a')=\bar\tau(\bar b,\bar a)$, $\bar\tau(\bar a',\bar c') = \bar\tau(\bar a,\bar c)$ and $\bar\tau(\bar b',\bar c')= \tau(b,c) = \tau(b,x) + \tau(x,c) = \bar\tau(\bar b,\bar x) + \bar\tau(\bar x,\bar c)$. 

At this point we use the lower curvature bound to obtain $\tau(x,a)=\bar{\tau}(\bar{x},\bar{a})\leq\bar\tau(\bar{x}',\bar{a}')$. Thus by the straightening lemma \ref{cor-straightening-of-middle-line}, we get that $\tilde\ma_x^K(b,a)\geq\tilde\ma_x^K(a,c)$ which implies
\[
\ma_x(\alpha,\beta)+\eps>\tilde\ma_x^K(a,b)\geq\tilde\ma_x^K(a,c)>\ma_x(\alpha,\tilde\beta)-\eps\,.
\]
As this holds for all $\eps>0$ we conclude that $\ma_x(\alpha,\tilde\beta)\leq\ma_x(\alpha,\beta)$ and we are done.
\end{pr}

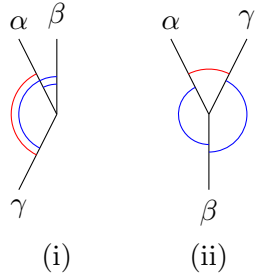
\begin{figure}[h!]
\begin{center}
 \begin{tikzpicture}
\draw (0,0) -- (-0.5,1) node[anchor=south]{$\alpha$};
\draw (0,0) -- (0,1) node[anchor=south]{$\beta$};
\draw (0,0) -- (-0.5,-1) node[anchor=north]{$\gamma$};

\coordinate (x) at (0,0);
\coordinate (a) at (-0.5,1);
\coordinate (b) at (0,1);
\coordinate (c) at (-0.5,-1);

\draw pic[draw,angle radius={0.4cm},color=blue]{angle = b--x--a} ;
\draw pic[draw,angle radius={0.5cm},color=blue]{angle = b--x--c} ;
\draw pic[draw,angle radius={0.6cm},color=red]{angle = a--x--c} ;
\draw (0,-1.9) node{(i)};

\begin{scope}[shift={(2,0)}]
\draw (0,0) -- (-0.5,1) node[anchor=south]{$\alpha$};
\draw (0,0) -- (0,-1) node[anchor=north]{$\beta$};
\draw (0,0) -- (0.5,1) node[anchor=south]{$\gamma$};

\coordinate (x) at (0,0);
\coordinate (a) at (-0.5,1);
\coordinate (b) at (0,-1);
\coordinate (c) at (0.5,1);

\draw pic[draw,angle radius={0.4cm},color=blue]{angle = a--x--b} ;
\draw pic[draw,angle radius={0.5cm},color=blue]{angle = b--x--c} ;
\draw pic[draw,angle radius={0.6cm},color=red]{angle = c--x--a} ;
\draw (0,-1.9) node{(ii)};
\end{scope}

\end{tikzpicture}
\caption{The configuration of the three geodesics in the triangle inequality for angles.}\label{fig-tri-ineq-cases}
\end{center}
\end{figure}

\begin{ex}[Necessity of the assumptions in the triangle inequality with mixed time orientations]
     If $\alpha$, $\gamma$ are future directed and $\beta$ is past directed, the triangle inequality may fail as the following example shows. We consider a causal funnel (cf.\ \cite[Ex.\ 3.19]{KS:18}) with a (Minkowski-)timelike curve $\beta$, i.e., $X=\beta((-\infty,0])\cup J^+(\beta(0))$ in $n$-dimensional Minkowski spacetime $\R^n_1$, and set $x:=\beta(0)$, see Figure \ref{fig-causal_funnel_branching}. It exhibits timelike branching, say $\beta$ branches into future directed timelike geodesics $\alpha$ and $\gamma$ at $x$. Then $\alpha$ and $\gamma$ have $\ma_x(\alpha,\gamma)>0$. However, as both the concatenation of $\beta$ and $\alpha$ and the concatenation of $\beta$ and $\gamma$ is a geodesic, we have that $\ma_x(\alpha,\beta)=\ma_x(\beta,\gamma)=0$ by Lemma \ref{lem-ang-geo},\ref{lem-ang-geo-int}. Thus the triangle inequality of angles is not satisfied: $0=\ma_x(\alpha,\beta)+\ma_x(\beta,\gamma)\not\geq\ma_x(\alpha,\gamma)$. As a causal funnel has timelike curvature unbounded below, this  shows the necessity of the timelike curvature bound below in Theorem \ref{thm-ang-tri-equ-oth-cas}.

\begin{figure}[h!]
\begin{center}
\begin{tikzpicture}
\fill[color=black!15] (0,0) -- (-2,2) -- (2,2) -- (0,0);
\draw (0,-2) -- (0,0) -- (-2,2) (0,0) -- (2,2);

\draw[color=blue] (1pt,-2) -- (1pt,0) (0,0) -- (-1,2) (0,0) -- (1,2);

\draw [color=orange,fill=orange,fill opacity=0.1] (0,0) -- (0,-0.3) arc (-90:63.45:0.3) -- cycle;
\draw [color=orange,fill=orange,fill opacity=0.1] (0,0) -- (0,-0.4) arc (-90:{-180-63.45}:0.4) -- cycle;
\draw [color=orange,fill=orange,fill opacity=0.1] (0,0) -- ({1*0.5/sqrt(5)},{2*0.5/sqrt(5)}) arc (63.45:{180-63.45}:0.5) -- cycle;

\draw[color=orange!70!black] ({2*0.3/sqrt(5)},{-1*0.3/sqrt(5)}) node[anchor=west]{$\omega_1=0$} ({-2*0.4/sqrt(5)},{-1*0.4/sqrt(5)}) node[anchor=east]{$\omega_2=0$} (0,{1*0.5})[anchor=south] node{$\omega_3>0$};

\draw[color=blue] (0,-2) node[anchor=west]{$\beta$} (-1,2) node[anchor=west]{$\gamma$} (1,2) node[anchor=east]{$\alpha$};

\end{tikzpicture}
\caption{When timelike geodesics branch with a positive angle (e.g.\ the causal funnel here), one of the triangle inequalities for angles does not hold. }\label{fig-causal_funnel_branching}
\end{center}
\end{figure}
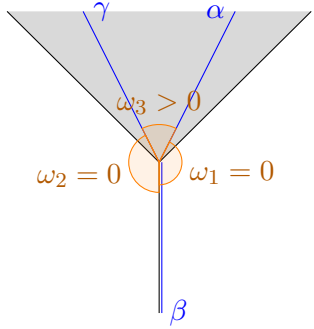
\end{ex}

At this point, we use the triangle inequality of angles to get an analogue of the metric fact that the angles along a geodesic add to $\pi$.
\begin{cor}[Triangle equality along geodesics]\label{cor-tri-ine-alo-geo}
Let \Xll be a strongly causal and locally causally closed \LpLS with timelike curvature bounded below. Let $\alpha_+\colon[0,\varepsilon)\to X$ and $\alpha_-\colon[0,\varepsilon)\to X$ be a future and a past directed geodesic with $x:=\alpha_+(0)=\alpha_-(0)$ which join up to a geodesic. Let $\beta\colon[0,\varepsilon)\to X$ be a future directed timelike curve. If the angle $\ma_x(\beta,\alpha_-)$ exists, then 
\[
\ma_x(\beta,\alpha_+)=\ma_x(\beta,\alpha_-)\,.
\]
\end{cor}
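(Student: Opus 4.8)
The plan is to sandwich $\ma_x(\beta,\alpha_+)$ and $\ma_x(\beta,\alpha_-)$ between one another by two applications of the triangle inequality for angles, exploiting that $\alpha_+$ and $\alpha_-$ subtend a zero angle at $x$. First I would record this zero angle: since $\alpha_+$ and $\alpha_-$ join up to a geodesic with $\alpha_+$ future and $\alpha_-$ past directed, Lemma \ref{lem-ang-geo}.\ref{lem-ang-geo-int} gives $\ma_x(\alpha_+,\alpha_-)=\ma_x(\alpha_-,\alpha_+)=0$. I would also note that, since $X$ has timelike curvature bounded below, every point has a comparison neighborhood on which $\tau$ is finite and continuous (Definition \ref{def-tri-com}), so the standing regularity hypotheses of Theorem \ref{thm-ang-tri-equ-oth-cas} hold locally and we may restrict to such a neighborhood containing the relevant initial segments.

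For the inequality $\ma_x(\beta,\alpha_-)\le\ma_x(\beta,\alpha_+)$ I would apply Theorem \ref{thm-ang-tri-equ-oth-cas}\,(i) to the two future directed curves $\beta,\alpha_+$ and the past directed curve $\alpha_-$, with $\alpha_+$ as the (future) middle curve. Since $\ma_x(\beta,\alpha_-)$ exists by hypothesis, part (i) applies and yields $\ma_x(\beta,\alpha_-)\le\ma_x(\beta,\alpha_+)+\ma_x(\alpha_+,\alpha_-)=\ma_x(\beta,\alpha_+)$.

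For the reverse inequality $\ma_x(\beta,\alpha_+)\le\ma_x(\beta,\alpha_-)$ I would instead take $\alpha_-$ as the middle curve, so that this becomes the future--past--future configuration of Theorem \ref{thm-ang-tri-equ-oth-cas}\,(ii), with the two future curves $\beta,\alpha_+$ and the past directed geodesic $\alpha_-$. The subtlety is that part (ii) is stated under a global geodesic prolongation hypothesis, which the corollary does not assume; however, prolongation enters the proof of (ii) only to manufacture a future geodesic extending the past directed geodesic and thereby reduce to the situation in which the past geodesic and one future side join up to a geodesic. Here that extension is furnished for free by $\alpha_+$, so the operative part of that proof --- the application of the straightening lemma, Corollary \ref{cor-straightening-of-middle-line}, which needs only strong causality, local causal closedness and the lower curvature bound --- applies directly with $\alpha_-$ and $\alpha_+$ in the roles of the past geodesic and its future prolongation, and delivers $\ma_x(\beta,\alpha_-)\ge\ma_x(\beta,\alpha_+)$. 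Combining the two inequalities gives the asserted equality of (upper) angles.

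The main obstacle is precisely this bookkeeping around part (ii): one must verify that the prolongation hypothesis is dispensable in the present situation, and that the straightening-lemma step goes through verbatim once $\alpha_+$ is substituted for the extension $\tilde\beta$ used in the proof of Theorem \ref{thm-ang-tri-equ-oth-cas}\,(ii). Beyond that, the only care required is matching the time orientations to the correct case of the triangle inequality --- part (i) for the future--past angle $\ma_x(\beta,\alpha_-)$, and the (ii)-type argument for the future--future angle $\ma_x(\beta,\alpha_+)$.
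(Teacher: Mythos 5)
Your proposal is correct and follows essentially the same route as the paper: both prove $\ma_x(\beta,\alpha_-)\leq\ma_x(\beta,\alpha_+)$ via Theorem \ref{thm-ang-tri-equ-oth-cas}(i) together with $\ma_x(\alpha_+,\alpha_-)=0$ from Lemma \ref{lem-ang-geo}, and both obtain the reverse inequality from the argument of Theorem \ref{thm-ang-tri-equ-oth-cas}(ii), observing exactly as the paper does that geodesic prolongation is dispensable because $\alpha_+$ itself provides the required future extension of $\alpha_-$.
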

\begin{proof}
As the angle $\ma_x(\beta,\alpha_-)$ exists, we use the triangle inequality of angles (Theorem \ref{thm-ang-tri-equ-oth-cas},\ref{thm-ang-tri-equ-oth-cas-fufupa}) to get that
\[
\ma_x(\beta,\alpha_-)\leq\ma_x(\beta,\alpha_+)+\underbrace{\ma_x(\alpha_+,\alpha_-)}_{=0}\,.
\]
Here we used that $\ma_x(\alpha_+,\alpha_-)=0$ by Lemma \ref{lem-ang-geo},\ref{lem-ang-geo-int}. We also apply the triangle inequality of angles to $\alpha_+,\alpha_-,\beta$: We assume a timelike curvature bound below, but technically, we did not assume geodesic prolongation, however for the proof of \ref{thm-ang-tri-equ-oth-cas},\ref{thm-ang-tri-equ-oth-cas-fupafu} we only assumed $\alpha_-$ to be extendible as a geodesic, which it is (with extension $\alpha_+$). Thus we conclude that
\[
\ma_x(\beta,\alpha_+)\leq\ma_x(\beta,\alpha_-)+\underbrace{\ma_x(\alpha_-,\alpha_+)}_{=0}\,.
\]
In total, we have $\ma_x(\beta,\alpha_+)=\ma_x(\beta,\alpha_-)$.
\end{proof}

In \cite[Thm.\ 4.12]{KS:18} we established that given a timelike lower curvature bound timelike distance realizers do not branch using either one of two additional hypotheses. The first being that the space is timelike locally uniquely geodesic and the second being that locally in $X$ maximal future directed timelike curves are strictly longer than any future directed causal curve with the same endpoints that contains a null segment. These two hypotheses are only used to establish the existence of a non-degenerate timelike triangle. Using angles we were able to remove these two hypotheses and therefore establish that $K$-monotonicity comparison from below implies timelike non-branching. However, after the fact we were able to avoid the use of angles completely, and therefore make the result even stronger because we do not need local timelike geodesicness for 
applying Theorem \ref{thm-K-ang-com-fu}, i.e., that a lower timelike curvature bound implies future $K$-monotonicity comparison from below. Thus the result obtained is now completely analogous to the metric case, see e.g.\ \cite[Lem.\ 2.4]{Shi:93} where a lower bound on curvature (in the Alexandrov sense) implies non-branching of distance realizers.

\begin{thm}[Timelike non-branching]\label{thm-non-bra}
Let $X$ be a strongly causal \LpLS with timelike curvature bounded below by some $K\in\mb{R}$. Then timelike distance realizers cannot branch.
\end{thm}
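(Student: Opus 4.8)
The plan is to argue by contradiction: from a putative branching I would manufacture a genuine timelike triangle, and then compare it in $\lm{K}$, where the branch point — sitting simultaneously on two sides — is forced to be strictly closer to a nearby vertex than the curvature bound allows. First I would fix the branching data. Suppose two future-directed timelike distance realizers coincide on a common initial segment and then separate; let $p$ be the branch point, let $m$ be a point of the common part with $m\ll p$ and $\tau(m,p)=\ell>0$, and let the two realizers issue from $p$ and disagree arbitrarily close to $p$. Using strong causality and the lower bound I would work inside a $\ge K$-comparison neighborhood $U$ of $p$ as in Definition \ref{def-tri-com}, where $\tau$ is finite and continuous, and, shrinking as in Remark \ref{lorAutomaticSizeBounds}, in which the timelike size bounds for $K$ hold automatically; all points below are taken in $U$.

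The crucial step is the construction of a non-degenerate timelike triangle. Fix a point $z$ on the second branch with $p\ll z$ and $\tau(p,z)=h$, and for small $t>0$ let $y_t$ be the point on the first branch with $\tau(p,y_t)=t$. Since $p\ll z$ and $\tau$ is continuous on $U\times U$, one has $\tau(y_t,z)\to\tau(p,z)=h>0$ as $t\searrow0$, so $y_t\ll z$ for all small $t$; hence $m\ll y_t\ll z$ is a timelike triangle. Because both realizers pass through $p$, the branch point $p$ lies on the side $[m\,z]$ at parameter $\ell$, while $m\to p\to y_t$ is the side $[m\,y_t]$.

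I would then compare the pair consisting of $p\in[m\,z]$ and the vertex $y_t$. The lower curvature bound yields $\tau(p,y_t)\le\bar\tau(\bar p,\bar y_t)$, whereas the explicit one-sided computation of Lemma \ref{lorOneSidedCalcs}, together with the law of cosines Lemma \ref{lorLawOfCosines}, evaluates the comparison distance; carrying it out (in the flat case, and analogously for $K\neq0$) gives
\[
\bar\tau(\bar p,\bar y_t)^2-\tau(p,y_t)^2=\frac{\ell}{\ell+h}\bigl(\tau(y_t,z)^2-(h-t)^2\bigr).
\]
By the reverse triangle inequality $\tau(y_t,z)\le h-t$, so the right-hand side is $\le0$, i.e.\ $\bar\tau(\bar p,\bar y_t)\le\tau(p,y_t)$, with equality precisely when $\tau(y_t,z)=h-t$, which is exactly the degeneracy of the triangle $\Delta m y_t z$. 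Combined with the curvature inequality, equality must then hold for every small $t$.

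I expect the main obstacle to be exactly this degenerate alternative — the point the authors flag as \emph{the existence of a non-degenerate timelike triangle}. If for some small $t$ the triangle is non-degenerate, the inequality above is strict, contradicting the lower bound, and we are finished. It remains to exclude the case $\tau(y_t,z)=h-t$ for all small $t$. In that case $m\to p\to y_t\to z$ is itself a distance realizer, so the first branch runs, near $p$, along a maximizer from $p$ to $z$ (the identities $\tau(y_s,z)=\tau(y_s,y_t)+\tau(y_t,z)$ for $s<t$ show this maximizer is coherent, not varying with $t$). If that maximizer is the second branch, the branches coincide near $p$, contradicting branching; the remaining possibility is that the two branches are \emph{distinct} maximizers from $p$ to $z$ sharing the segment $[m,p]$ beforehand. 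This ``re-meeting'' configuration is the genuinely delicate part, since two maximizers between the same pair of points are not excluded by comparison alone — spacelike-separated points on the two branches yield no timelike triangle — so the shared past segment $[m,p]$ together with a limiting argument in which $z$ ranges over the whole second branch is what must ultimately be leveraged to rule it out and conclude that the realizers cannot branch.
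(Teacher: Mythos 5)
Your treatment of the non-degenerate case is correct, and it is in substance the paper's own argument: you place the pre-branch point $m$ as a vertex, note that the branch point $p$ sits on the side $[m\,z]$, and play the lower bound $\tau(p,y_t)\le\bar\tau(\bar p,\bar y_t)$ against an explicit evaluation of $\bar\tau(\bar p,\bar y_t)$ via Lemma \ref{lorOneSidedCalcs} (your flat-case identity is correct, the reverse triangle inequality gives $\tau(y_t,z)\le h-t$, and the monotonicity statement of Remark \ref{rem-loc-mon} transfers the sign conclusion to $K\neq 0$). The paper runs the same configuration with the roles of the two branches interchanged and without the explicit formula, using instead that in $\lm{K}$ a broken causal geodesic through a point off a maximal side is strictly shorter than that side; this is only a cosmetic difference. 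The problem is the degenerate alternative, which you explicitly leave open: your proposal reduces branching to the statement that the two branches are distinct maximizers from $p$ to $z$ sharing the past segment $[m,p]$ — but that is just a restatement of the branching hypothesis (the two concatenations through $p$ branch at $p$), so the reduction gains essentially nothing, and the gesture toward "a limiting argument in which $z$ ranges over the whole second branch" is not an argument. Note also that one cannot hope to exclude distinct maximizers between the same endpoints per se: these are compatible with a lower curvature bound (timelike geodesics refocus in $\lm{K}$ for $K<0$ at distance $D_K$); what must be shown is that in the degenerate case the branches coincide.

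That is exactly what the paper proves, and it is where strong causality — which your argument uses only to obtain comparison neighborhoods with automatic size bounds — does its real work. Suppose every timelike-related pair of points on the two branches realizes the degenerate relation (in your notation, $\tau(y_t,z_h)=h-t$ whenever $y_t\ll z_h$, and $\tau(z_h,y_t)=t-h$ whenever $z_h\ll y_t$; the second family is needed too, and your set-up never considers it). Fix $t$ and set $h_t:=\inf\{h:\, y_t\ll z_h\}$. For $h>h_t$ degeneracy gives $\tau(y_t,z_h)=h-t$, while for $h<h_t$ one has $\tau(y_t,z_h)=0$; since $\tau$ is continuous on the comparison neighborhood, the map $h\mapsto\tau(y_t,z_h)$ cannot jump at $h_t$, so $h_t=t$. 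Hence $y_t\ll z_h$ for all $h>t$, and symmetrically $z_h\ll y_t$ for all $h<t$, so $z_s\in I(y_{s-\varepsilon},y_{s+\varepsilon})$ for every $\varepsilon>0$. By strong causality these timelike diamonds form a neighborhood basis at $y_s$, which forces $z_s=y_s$ for all small $s>0$, contradicting the assumption that the branches separate arbitrarily close to $p$. Without this jump-plus-strong-causality step (or an equivalent), your proof is incomplete: the comparison machinery alone is blind to pairs of points on the two branches that are null- or spacelike-related, which is precisely the situation the degenerate case produces.
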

\begin{proof}
Let indirectly $\alpha,\beta:[-B,B]\to X$ be timelike distance realizers parametrized by $\tau$-arclength such that $\alpha|_{[-B,0]}=\beta|_{[-B,0]}$ and $\alpha((0,B))\neq\beta((0,B))$. Define $x=\alpha(0)=\beta(0)$ the last known common point. 

We distinguish two cases. First, we assume that for all $s,t>0$, such that $\alpha(s)\ll\beta(t)$, we have $\tau(\alpha(s),\beta(t))=t-s$ and for $\beta(t)\ll\alpha(s)$, we have $\tau(\beta(t),\alpha(s))=s-t$. Thus, e.g.\ in the former case we can form the timelike triangle $x\ll\alpha(s)\ll\beta(t)$ and for any $r>0$ the degenerate timelike triangle $\alpha(-r)=\beta(-r)\ll\alpha(s)\ll\beta(t)$. Similarly, in the latter case. 
Now for $t>0$ consider $s_t:=\sup\{s:\alpha(s)\ll\beta(t)\}$. Then $\lim_{s\nearrow s_t} \tau(\alpha(s),\beta(t))=t-s_t$ and $\tau(\alpha(s),\beta(t))=0$ for $s>s_t$, so $s\mapsto\tau(\alpha(s),\beta(t))$ has a jump of size $=t-s_t$ at $s=s_t$. As $\tau$ is continuous, the jump-size has to be $t-s_t=0$. Consequently, for all $s<t$ we have $\alpha(s)\ll\beta(t)$ and hence $\tau(\alpha(s),\beta(t))=t-s$. Analogously we have for all $t<s$ that $\beta(t)\ll\alpha(s)$ and $\tau(\beta(t),\alpha(s))=s-t$. Now, we get that $\alpha(s-\varepsilon)\ll\beta(s)\ll\alpha(s+\varepsilon))$, i.e., $\beta(s)\in I(\alpha(s-\varepsilon),\alpha(s+\varepsilon))$ for all $\varepsilon>0$. By strong causality these timelike diamonds form a neighborhood base of $\alpha(s)$, and as this being along a timelike curve, we conclude that $\alpha(s)=\beta(s)$. As this holds for all $s$ we obtain $\alpha=\beta$ --- a contradiction.

In the second case, there are $s,t\in(0,B)$ such that, without loss of generality, $x\ll\alpha(s)\ll\beta(t)$ is a non-degenerate timelike triangle. Thus also for every $r>0$ the timelike triangle $p:=\alpha(-r)=\beta(-r)\ll\alpha(s)=:q\ll\beta(t)=:z$ is non-degenerate. 
At this point we can argue as in the proof of \cite[Thm.\ 4.12]{KS:18}. For the sake of completeness we include the argument here. Let $\bar\Delta=\Delta \bar p \bar q \bar z$ be a comparison triangle of $\Delta p q z$ in $\lm{K}$. Let $\bar x_1\in[\bar p,\bar q]$ correspond to $x$ and $\bar x_2\in[\bar p,\bar z]$ correspond to $x$ as well, i.e., $\bar\tau(\bar p,\bar x_i)=\tau(p,x)=r$ for $i=1,2$. As $\bar\Delta$ is non-degenerate we have $\bar x_1\not\in [\bar p,\bar z]$. Consequently, $\bar\tau(\bar x_1,\bar z)<\bar\tau(\bar x_2,\bar z)$ as otherwise the broken geodesic going from $\bar p$ to $\bar x_1$ to $\bar z$ is as long as the unbroken geodesic $[\bar p,\bar z]$. Finally, we arrive at
\begin{equation}
 \tau(x,z) = \bar\tau(\bar x_2,\bar z) > \bar\tau(\bar x_1,\bar z) \geq \tau(x,z)\,,
\end{equation}
where we used the timelike curvature bound from below in the last inequality --- a contradiction.
\end{proof}

\subsection{Monotonicity comparison}\label{subsec-ang-com}
In this final subsection we introduce a version of $K$-monotonicity which holds for an arbitrary angle in a timelike geodesic triangle by using the signed angle $\ma^{K,\mathrm{S}}$. The main result is that $K$-monotonicity is equivalent to curvature bounded by $K$.

\begin{defi}[$K$-monotonicity]\label{def-ang-com}
 Let $\Xll$ be a \LpLS and let $K\in\R$. We say that $X$ satisfies \emph{timelike $K$-monotonicity comparison from below (above)} 
if every point in $X$ possesses a neighborhood $U$ such that
\begin{enumerate}
\item $\tau|_{U\times U}$ is finite and continuous.
\item \label{def:cbb-max-cc} Whenever $x$, $y \in U$ with $x\ll y$, there exists a future-directed maximal timelike geodesic $\alpha$ in $U$ from $x$ to $y$.
%

\item Whenever $\alpha:[0,a]\to U,\beta:[0,b]\to U$ are timelike distance realizers in $U$ with $x:=\alpha(0)=\beta(0)$, we define the function $\theta\colon (0,a]\times(0,b]\supseteq D\rightarrow \mb{R}$ by
\begin{equation}
 \theta(s,t):=\tilde\ma_x^{K,\mathrm{S}}(\alpha(s),\beta(t)),
\end{equation}
where $(s,t)\in D$ precisely when $\alpha(s),\beta(t)$ are causally related. We require this to be monotonically increasing (decreasing) in $s$ and $t$ for timelike $K$-monotonicity comparison from below (above).

\end{enumerate}
\end{defi}

A direct consequence of $K$-monotonicity comparison is that it implies the existence of angles between timelike geodesics depending on monotonicity comparison from below or above and the time orientation of the geodesics. The analogous result in metric geometry (see e.g.\ \cite[Prop.\ 4.3.2]{BBI:01} or \cite[Prop.\ II.3.1]{BH:99}) does not have this conditional dependence, of course.

\begin{lem}[Monotonicity implies existence of angles between geodesics of arbitrary time orientation]\label{lem-K-ang-com-ang-ex}
Let $\Xll$ be a \LpLSn. If $X$ satisfies timelike $K$-monotonicity comparison from above or below, the angle between any two timelike geodesics starting at the same point $x$ (both future or past directed or one future, one past directed) exists if it is finite\footnote{The limit superior in the definition of upper angles is a limit, but we do not know if it is finite}.

Moreover, in case $X$ satisfies timelike $K$-monotonicity comparison from below and one of the two timelike geodesics is future directed and one is past directed (i.e., the $\sigma=1$ case), the angle between them exists.

Similarly, in case $X$ satisfies timelike $K$-monotonicity comparison from above and the two timelike geodesics are both future directed or both past directed (i.e., the $\sigma=-1$ case), the angle between them exists.


\end{lem}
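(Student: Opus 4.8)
The plan is to reduce the entire statement to a single fact about \emph{monotone functions of two variables}, and then to overcome the purely Lorentzian difficulty that the natural domain is not a rectangle.

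First I would invoke Proposition~\ref{lorSphericalangleNoDepK}: for every $K\in\R$ one has $\ma_x(\alpha,\beta)=\limsup_{(s,t)\in A_K,\,s,t\searrow 0}\tilde\ma_x^K(\alpha(s),\beta(t))$, and this limit superior is a genuine limit which is finite if and only if the angle exists. Hence it suffices to work with the $K$-comparison angle and to decide when its limit superior is a limit. After restricting $\alpha,\beta$ to initial distance-realising segments inside a monotonicity neighbourhood $U$ as in Definition~\ref{def-ang-com}, and to $s,t$ so small that the timelike size bounds for $K$ hold, I set $f(s,t):=\tilde\ma_x^K(\alpha(s),\beta(t))$ on the set $D$ of those $(s,t)$ for which $\alpha(s)$ and $\beta(t)$ are causally related. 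Since $f=\sigma\,\theta$ with $\theta(s,t)=\tilde\ma_x^{K,\mathrm{S}}(\alpha(s),\beta(t))$ and the fixed sign $\sigma$ of Definition~\ref{def-ang-com}, $K$-monotonicity from below (above) --- which asserts that $\theta$ is monotonically increasing (decreasing) in each of $s,t$ on $D$ --- translates into: $f$ is \emph{nondecreasing} in each variable exactly in the cases (mixed orientation, from below) and (same orientation, from above), and \emph{nonincreasing} in each variable in the cases (same orientation, from below) and (mixed orientation, from above).

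The core is then the claim that a function on $D$ which is monotone in each variable admits a limit as $(s,t)\to(0,0)$ along $D$, equal to $\inf_D f$ in the nondecreasing case and to $\sup_D f\in[0,\infty]$ in the nonincreasing case. In the metric setting this is routine because the domain is a full rectangle; here it is the main obstacle, since for \emph{same} time orientation $\alpha(s)$ and $\beta(t)$ are spacelike related in a wedge about the diagonal, so $D$ is only a neighbourhood of the two coordinate axes and the intermediate comparison point used in the rectangle argument may leave $D$. I would resolve this by splitting $D=D_1\cup D_2$ with $D_1=\{\alpha(s)\leq\beta(t)\}$ and $D_2=\{\beta(t)\leq\alpha(s)\}$ and noting the crucial stability: if $(s,t)\in D_1$ then $(s,t_0)\in D_1$ for $t_0\geq t$ (because $\alpha(s)\leq\beta(t)\leq\beta(t_0)$), while if $(s,t)\in D_2$ then $(s_0,t)\in D_2$ for $s_0\geq s$ (because $\beta(t)\leq\alpha(s)\leq\alpha(s_0)$). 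Thus, reading the one-variable monotonicity as a statement about $f$ on the whole set $D$, for any reference $(s_0,t_0)\in D$ and any $(s,t)\in D$ with $s\leq s_0$, $t\leq t_0$, at least one of the two corner points $(s,t_0)$, $(s_0,t)$ lies in $D$, and chaining the two one-variable inequalities through it gives $f(s,t)\leq f(s_0,t_0)$ (respectively $\geq$ in the nonincreasing case).

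With this comparison in hand the conclusion is immediate. In the nondecreasing case I put $L:=\inf_D f\geq 0$; choosing $(s_0,t_0)\in D$ with $f(s_0,t_0)<L+\varepsilon$ forces $f\in[L,L+\varepsilon)$ on the corner box $(0,s_0]\times(0,t_0]\cap D$, which meets every neighbourhood of $(0,0)$ (as noted after the definition of the upper angle, causally related pairs with $s,t\searrow 0$ always exist); hence the limit exists, equals $L$, and is finite thanks to $f\geq 0$. This yields existence of the angle in the cases (mixed, below) and (same, above), which are precisely the two special assertions of the lemma. In the nonincreasing case the same argument with $\sup$ produces a limit in $[0,\infty]$, giving via Proposition~\ref{lorSphericalangleNoDepK} the first assertion (the limit superior is a limit, finite iff the angle exists) in the remaining cases. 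The only genuine difficulty, where I would concentrate the care, is the domain issue just described; everything else is bookkeeping of the sign $\sigma$ together with the law-of-cosines continuity already packaged into Proposition~\ref{lorSphericalangleNoDepK}.
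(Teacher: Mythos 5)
Your proof is correct and follows essentially the same route as the paper: restrict to a monotonicity neighbourhood, use that the signed comparison angle $\theta=\sigma f$ is monotone in each variable so its limit as $s,t\searrow 0$ exists, transfer this to the angle via Proposition \ref{lorSphericalangleNoDepK}, and read off finiteness from the sign bound ($\theta\geq 0$ when $\sigma=1$, $\theta\leq 0$ when $\sigma=-1$), which singles out exactly the two cases of the lemma. Your one genuine addition is the $D_1/D_2$ corner argument (which, up to the harmless time reversal needed when both curves are past directed, works in all cases); it rigorously justifies the existence of the limit on the non-rectangular causal domain $D$, a step the paper's proof passes over with ``$\theta$ is monotone \dots and so this sequence has a limit''.
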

\begin{pr}
Let $U$ be a comparison neighborhood given by Definition \ref{def-ang-com} above and let $\alpha,\beta:[0,\varepsilon)\to U$ be two future or past directed timelike geodesics with $x:=\alpha(0)=\beta(0)$. Without loss of generality we can assume that either both are future directed or $\alpha$ is past directed and $\beta$ is future directed.
Let $s>0$ and $t>0$ such that $\alpha(s)$ and $\beta(t)$ are causally related. Then by $K$-monotonicity comparison from below (or above) we get that $\theta(s,t)=\tilde\ma_x^{K,\mathrm{S}}(\alpha(s),\beta(t))$ is monotone (decreasing if we have $K$-monotonicity comparison from below, increasing if we have it from above), and so this sequence has a limit. 
By Corollary \ref{lorSphericalangleNoDepK} this limit is $\ma_x^\mathrm{S}(\alpha,\beta)$, but we do not yet know it is finite. Now if $\sigma=1$, we have $0$ as a lower bound for all signed comparison angles, and if $\sigma=-1$, we have $0$ as an upper bound. So the angles are finite (and thus exist) in two cases: In case of $K$-monotonicity comparison from below and $\sigma=1$, and in case of $K$-monotonicity comparison from above and $\sigma=-1$.
\end{pr}

Analogous to Corollary \ref{cor-K-mon-ang-bou} a direct consequence of $K$-monotonicity comparison and Proposition \ref{lorSphericalangleNoDepK} is that, given a curvature bound, $K$-comparison angles bound the angle between geodesics.
\begin{cor}[Monotonicity implies bound on signed angle of geodesics]\label{cor-K-mon-ang-bou-sig}
Let $\Xll$ be a \LpLS that satisfies $K$-monoton\-ic\-ity comparison from below (above) for some $K\in\R$. Then for any $x\in X$ and $\alpha,\beta\colon[0,B]\rightarrow X$ timelike geodesics with $\alpha(0)=\beta(0)=x$ one has that
\begin{equation}
 \ma_x^{\mathrm{S}}(\alpha,\beta)\leq \tilde\ma_x^{K,\mathrm{S}}(\alpha(s),\beta(t)) \qquad \Bigl(\ma_x^{\mathrm{S}}(\alpha,\beta)\geq \tilde\ma_x^ {K,\mathrm{S}}(\alpha(s),\beta(t))\Bigr)\,,
\end{equation}
for all $s,t\in[0,B]$ small enough (with respect to timelike size bounds for $K$) such that $\alpha(s)$ and $\beta(t)$ are causally related.
\end{cor}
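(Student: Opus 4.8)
The plan is to recognise the assertion as a monotone–limit statement for the signed comparison angle, viewed as a function of the two curve parameters. Fix a comparison neighbourhood $U$ furnished by Definition \ref{def-ang-com}, and (shrinking $B$) arrange that $\alpha,\beta$ and the comparison triangles of interest lie in $U$ and obey the timelike size bounds for $K$. Set $\theta(s,t):=\tilde\ma_x^{K,\mathrm{S}}(\alpha(s),\beta(t))$ on its natural domain $D=\{(s,t):\alpha(s),\beta(t)\text{ causally related}\}$. Since the sign $\sigma$ of the signed angle is determined by the (fixed) time orientations of $\alpha,\beta$ and does not vary with $(s,t)$, condition (iii) of Definition \ref{def-ang-com} says exactly that $\theta$ is monotonically increasing (for comparison from below) or decreasing (for comparison from above) in each of $s$ and $t$ separately.

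First I would pin down the value at the origin. By Lemma \ref{lem-K-ang-com-ang-ex} the defining limit superior of the angle is an honest limit, and by Proposition \ref{lorSphericalangleNoDepK} (applied to the unsigned angle and then multiplied by $\sigma$) this limit equals $\ma_x^{\mathrm{S}}(\alpha,\beta)$; that is, $\theta(s',t')\to\ma_x^{\mathrm{S}}(\alpha,\beta)$ as $(s',t')\in D$ with $s',t'\searrow 0$. It then remains to compare this limit with the value $\theta(s,t)$ at a fixed admissible pair. In the from-below case I would take any sequence $(s_n,t_n)\in D$ with $s_n,t_n\searrow 0$ realising the limit; for large $n$ one has $s_n\le s$, $t_n\le t$, and coordinatewise monotonicity gives $\theta(s_n,t_n)\le\theta(s,t)$, whence $\ma_x^{\mathrm{S}}(\alpha,\beta)\le\theta(s,t)=\tilde\ma_x^{K,\mathrm{S}}(\alpha(s),\beta(t))$. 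The from-above case is word-for-word the same with the two inequalities reversed, $\theta$ being decreasing so that its limit at the origin is its supremum over $D$.

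The step requiring care --- and the one I would treat as the main obstacle --- is upgrading coordinatewise monotonicity to the joint comparison $\theta(s_n,t_n)\le\theta(s,t)$, since $D$ need not be a product set. When $\sigma=1$ (mixed orientation) this is harmless: as noted after the definition of the angle between curves, $\beta(t)\ll x\ll\alpha(s)$ forces causal relatedness for all $(s,t)$, so $D$ is the full rectangle and the chain $\theta(s_n,t_n)\le\theta(s_n,t)\le\theta(s,t)$ is immediate. When $\sigma=-1$ (common orientation, say future directed) one routes through a corner of the rectangle spanned by $(s_n,t_n)$ and $(s,t)$ and checks it lies in $D$ by transitivity along the curves: if $\alpha(s_n)\ll\beta(t_n)$ then $\alpha(s_n)\ll\beta(t_n)\ll\beta(t)$ puts $(s_n,t)$ in $D$, while if $\beta(t_n)\ll\alpha(s_n)$ then $\beta(t_n)\ll\alpha(s_n)\ll\alpha(s)$ puts $(s,t_n)$ in $D$; either way a two-step monotone chain yields $\theta(s_n,t_n)\le\theta(s,t)$. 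Finally, the degenerate possibility $\ma_x^{\mathrm{S}}(\alpha,\beta)=\mp\infty$ is consistent with the claimed inequality and needs no separate treatment.
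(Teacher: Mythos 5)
Correct, and essentially the paper's own route: the paper states this corollary without proof, as a ``direct consequence'' of Definition \ref{def-ang-com} and Proposition \ref{lorSphericalangleNoDepK}, which is exactly the monotone-limit argument you give (monotone $\theta$ converges to the signed angle as $s,t\searrow 0$, hence the limit bounds the values), with your corner-routing through the domain $D$ merely filling in a detail the paper leaves implicit. One cosmetic point: since ``causally related'' includes null relations, your case split in the $\sigma=-1$ step should read $\alpha(s_n)\leq\beta(t_n)$ or $\beta(t_n)\leq\alpha(s_n)$ rather than with $\ll$; this is harmless, because $p\leq q\ll r$ still forces $p\ll r$ via the reverse triangle inequality, so the intermediate corner stays in $D$.
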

\medskip

Monotonicity comparison also directly influences the length of the third side in a comparison hinge.
\begin{cor}[Monotonicity comparison implies hinge comparison]
Let \Xll be a \LpLS that satisfies $K$-monotonicity comparison from below (above) for some $K\in\R$. Let $(\alpha,\beta)$ be a hinge, i.e., $x\in X$ and $\alpha\colon[0,A]\to X$, $\beta\colon[0, B]\to X$ timelike geodesics of arbitrary time orientation with $\alpha(0)=\beta(0)=x$, which satisfies the timelike size bounds for $K$. Then one can form a comparison hinge $(\bar{\alpha},\bar{\beta})$ in $\lm{K}$ and we have 

\begin{equation}
 \tau(\alpha(A),\beta(B))\geq \bar{\tau}(\bar\alpha(A),\bar\beta(B)) \qquad \Bigl(\tau(\alpha(A),\beta(B))\leq \bar{\tau}(\bar\alpha(A),\bar\beta(B))\Bigr)\,.
\end{equation}
\end{cor}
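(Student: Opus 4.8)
The plan is to reduce everything to the signed-angle bound of Corollary~\ref{cor-K-mon-ang-bou-sig} combined with the monotone dependence of the side opposite the angle on that angle in the law of cosines. First I would parametrize $\alpha$ and $\beta$ by $\tau$-arclength, so that the two sides issuing from $x$ have $\tau$-lengths $A=L_\tau(\alpha)$ and $B=L_\tau(\beta)$, and set $\omega:=\ma_x(\alpha,\beta)$ with sign $\sigma$. By construction the comparison hinge $(\bar\alpha,\bar\beta)$ reproduces exactly these two side-lengths, the same time orientation (hence the same $\sigma$), and the angle $\omega$ at $\bar x$; thus $\bar\alpha(A)$ and $\bar\beta(B)$ are the two endpoints of a model triangle whose sides at $\bar x$ are $A,B$ and whose angle there is $\omega$, while the actual configuration $x,\alpha(A),\beta(B)$ is a triangle with the same two sides $A,B$ but with the a priori different comparison angle measured in $X$.

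Assume first that $\alpha(A)$ and $\beta(B)$ are causally related, so that $\omega':=\tilde\ma_x^{K}(\alpha(A),\beta(B))$ is defined and $c:=\tau(\alpha(A),\beta(B))$ (or the reversed separation) is the side opposite the angle. Monotonicity comparison from below, via Corollary~\ref{cor-K-mon-ang-bou-sig}, yields $\ma_x^{\mathrm{S}}(\alpha,\beta)\le\tilde\ma_x^{K,\mathrm{S}}(\alpha(A),\beta(B))$, that is $\sigma\omega\le\sigma\omega'$. Inspecting the analytic law of cosines (Lemma~\ref{lorLawOfCosines} and Remark~\ref{rem-loc-mon}) in each of the cases $K=0$, $K<0$, $K>0$ shows that, with the adjacent sides $A,B$ held fixed, the opposite side is a strictly increasing function of $\sigma\cosh(\omega)$, and $\sigma\cosh(\omega)$ is in turn an increasing function of the signed angle $\sigma\omega$ (using that $\cosh$ is even and increasing on $[0,\infty)$). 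Hence $\sigma\omega\le\sigma\omega'$ forces $\bar c:=\bar\tau(\bar\alpha(A),\bar\beta(B))\le c$, which is the asserted inequality from below. For monotonicity from above, Corollary~\ref{cor-K-mon-ang-bou-sig} reverses the angle inequality and therefore all subsequent ones, producing $\tau(\alpha(A),\beta(B))\le\bar\tau(\bar\alpha(A),\bar\beta(B))$.

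The step requiring genuine care, and the main obstacle, is the case where the side opposite the angle fails to be timelike, which can only occur for $\sigma=-1$. When $\sigma=+1$ the chain $\beta(B)\ll x\ll\alpha(A)$ automatically makes $\alpha(A)$ and $\beta(B)$ timelike related, so the argument above applies verbatim. When $\sigma=-1$ the endpoints may be null- or spacelike-separated, so that $\omega'$ is only available through the extended law of cosines (Corollary~\ref{cor-ext-loc}). For the lower bound I would dispose of the spacelike subcase thus: there $c=0$, and I must check $\bar c=0$, i.e.\ that $\bar\alpha(A)$ and $\bar\beta(B)$ are not timelike related. The extended law of cosines lets the monotone dependence of the opposite side on $\sigma\cosh(\omega)$ persist as that side passes from timelike through null to spacelike; since monotonicity from below gives $\omega\ge\omega'$ and the actual side is already spacelike (large $\omega'$), the model side, carrying the even larger angle $\omega$, is spacelike as well, whence $\bar c=0\le c$. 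For the upper bound a spacelike actual side gives $c=0\le\bar c$ for free, while the only nontrivial subcase there (actual side timelike) is already covered by the main argument, which yields $\bar c\ge c$. The whole proof is therefore bookkeeping of causal relations and signs across the null threshold for the opposite side, with Corollary~\ref{cor-ext-loc} serving as the tool that makes the law-of-cosines monotonicity survive that threshold.
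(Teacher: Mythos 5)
Your treatment of the causally related case is exactly the paper's argument: Corollary \ref{cor-K-mon-ang-bou-sig} gives $\sigma\omega\leq\sigma\omega'$ (resp.\ $\geq$), and the monotonicity statement of the law of cosines (Remark \ref{rem-loc-mon}), read as monotone dependence of the opposite side on the signed angle, converts this into the desired comparison between $\bar\tau(\bar\alpha(A),\bar\beta(B))$ and $\tau(\alpha(A),\beta(B))$. Your upper-bound boundary cases are also sound: a non-causally-related actual side is trivial there, and a spacelike model side is excluded because $\omega\leq\omega'\leq$ the null-threshold angle, which is a legitimate repackaging of the paper's contradiction argument via Corollary \ref{cor-ext-loc}.

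The genuine gap is in the lower-bound case with $\sigma=-1$ when $a:=\alpha(A)$ and $b:=\beta(B)$ are \emph{not} causally related. You argue that ``monotonicity from below gives $\omega\geq\omega'$ and the actual side is already spacelike (large $\omega'$)'', but no such $\omega'$ exists: the comparison angle $\tilde\ma_x^K(a,b)$ is only defined for causally related points, and Corollary \ref{cor-K-mon-ang-bou-sig} (as well as Definition \ref{def-ang-com} itself) only constrains angles of causally related pairs. For spacelike-related $a,b$ the data $\bigl(A,B,\tau(a,b)=0\bigr)$ do not single out a model configuration, and Corollary \ref{cor-ext-loc} is a statement about genuine configurations in $\lm{K}$, not a substitute comparison angle in $X$; so the inequality you invoke is vacuous. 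Nor can it be repaired by taking limits of honest comparison angles along $\alpha$: applying Corollary \ref{cor-K-mon-ang-bou-sig} to pairs $(\alpha(s),b)$ with $s<s^*:=\sup\{s:\alpha(s)\ll b\}$ and letting $s\nearrow s^*$ only yields (for $K=0$) $\cosh\omega\geq\frac{(s^*)^2+B^2}{2s^*B}$, and since $s\mapsto\frac{s^2+B^2}{2sB}$ is decreasing for $s<B$ and increasing for $s>B$, this does \emph{not} imply the needed threshold bound $\cosh\omega\geq\frac{A^2+B^2}{2AB}$ when $B\leq s^*<A$.

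What is missing is the paper's $\tau$-level limiting argument: using continuity of $\tau$ on the comparison neighborhood, pick $a_\eps$ on $\alpha$ with $\tau(a_\eps,b)=\eps>0$, apply the already-proven causally related case to the hinge with endpoint $a_\eps$ to get $\bar\tau(\bar a_\eps,\bar b)\leq\eps$, then use the reverse triangle inequality along the model geodesic $\bar\alpha$ (namely $\bar\tau(\bar\alpha(A),\bar b)\leq\bar\tau(\bar a_\eps,\bar b)$ whenever $\bar\alpha(A)\leq\bar b$) and let $\eps\to 0$ to conclude $\bar\tau(\bar\alpha(A),\bar\beta(B))=0$. This comparison of $\tau$-values along the side inside the model space is an idea your angle-only bookkeeping cannot supply, so this case of your proof does not go through as written.
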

\begin{proof}
We only establish the case of $K$-monotonicity comparison from below, in the other case the previous Corollary \ref{cor-K-mon-ang-bou-sig} yields a reversed inequality and so gives an analogous proof.

We denote the points by $x$, $a=\alpha(A)$ and $b=\beta(B)$. We first assume that they are causally related, say $a\leq b$. We consider the comparison hinge $(\bar{\alpha},\bar{\beta})$ in $\lm{K}$, and consider it as a triangle $\bar{\Delta}=\Delta\bar{x} \bar{a} \bar{b}$, where $\bar{x}:=\bar{\alpha}(0)=\bar{\beta}(0)$, $\bar{a}:=\bar{\alpha}(A)$ and $\bar{b}:=\bar{\beta}(B)$. We assume $\bar a\leq \bar b$.
Furthermore, we consider a comparison triangle $\bar{\Delta}'=\Delta\bar{x}' \bar{a}' \bar{b}'$ for $\Delta x a b$. These two triangles have two agreeing side-lengths, i.e.,
\begin{align*}
\bar\tau(\bar{x}',\bar{a}')&=\tau(x,a)=\bar{\tau}(\bar{x},\bar{a}) \text{ (or with arguments flipped if $\alpha$ is past directed)}\,,\\
\bar\tau(\bar{x}',\bar{b}')&=\tau(x,b)=\bar{\tau}(\bar{x},\bar{b}) \text{ (or with arguments flipped if $\beta$ is past directed)}\,.
\end{align*}
For the angles we have
\begin{align*}
\ma_{\bar{x}'}^{\lm{K}}(\bar{a}',\bar{b}')&=\tilde{\ma}_x^K(a,b)\,,\\
\ma_{\bar{x}}^{\lm{K}}(\bar{a},\bar{b})&=\ma_x(\alpha,\beta)\,.
\end{align*}
By the previous Corollary \ref{cor-K-mon-ang-bou-sig}, we know that $\ma_x^{\mathrm{S}}(\alpha,\beta)\leq\tilde\ma_x^{K,\mathrm{S}}(a,b)$.

Now we distinguish the cases where the angle is at. If $\sigma=1$, we have that $\ma_{\bar{x}'}^{\lm{K}}(\bar{a}',\bar{b}')$ is at least as big as $\ma_{\bar{x}}^{\lm{K}}(\bar{a},\bar{b})$, and the monotonicity statement of the law of cosines (Remark \ref{rem-loc-mon}, varying the longest side) gives that 
\begin{equation}
\bar\tau(\bar{a},\bar{b})\leq \bar\tau(\bar{a}',\bar{b}')=\tau(a,b)\,.
\end{equation}

If $\sigma=-1$, we have that $\ma_{\bar{x}}^{\lm{K}}(\bar{a},\bar{b})$ is at least as big as $\ma_{\bar{x}'}^{\lm{K}}(\bar{a}',\bar{b}')$, and the monotonicity statement of the law of cosines (Remark \ref{rem-loc-mon}, varying a non-longest side) gives that 
\begin{equation}
\bar\tau(\bar{a},\bar{b})\leq \bar\tau(\bar{a}',\bar{b}')=\tau(a,b)\,.
\end{equation}

Now for the case that $a$ and $b$ are not causally related (thus $\sigma=-1$). In case of monotonicity comparison from above there is nothing to do, so let $X$ satisfy monotonicity comparison from below. We need to establish that $\bar\tau(\bar a, \bar b)=0$. Let $\eps>0$ small enough, then $s_\eps=\sup\{s\in[0,A]:\tau(\alpha(s),b)>\eps\}<A$, so $a_\eps:=\alpha(s_\eps)$ is the last point on $\alpha$ with $\tau(a_\eps,b)=\eps$. Then as above we get a point $\bar a_\eps$ in the comparison hinge, and the above argument yields that  $\eps=\tau(a_\eps,b)\geq\bar\tau(\bar a_\eps,\bar b)\geq \bar\tau (\bar a,\bar b)$. Now let $\eps\to0$ to get $\bar\tau (\bar a,\bar b)=0$.

Now for the case that $\bar a$ and $\bar b$ are not causally related (thus $\sigma=-1$). In case of monotonicity comparison from below there is nothing to do, so let $X$ satisfy monotonicity comparison from above. We need to establish that $a, b$ are not timelike related, so indirectly assume $a\ll b$. We have two triangles with two equal side-lengths: $\bar\Delta$ with a non-timelike side and $\bar\Delta'$ which is timelike. Now the monotonicity statement of the extended law of cosines (Corollary \ref{cor-ext-loc}) yields that the angle in the non-causal triangle is bigger, i.e., $\ma_{\bar x'}^{\lm{K}}(\bar a',\bar b')<\ma_{\bar x}^{\lm{K}}(\bar a,\bar b)$. But by monotonicity comparison from above we get that $\ma_x(\alpha,\beta)\leq \tilde\ma_x^K(a,b)$, i.e., $\ma_{\bar x}^{\lm{K}}(\bar a,\bar b)\leq\ma_{\bar x'}^{\lm{K}}(\bar a',\bar b')$, a contradiction. 
\end{proof}

We are now in a position to prove the main result of this final section. We establish that timelike curvature bounds via triangle comparison (Definition \ref{def-tri-com}) are equivalent to monotonicity comparison (Definition \ref{def-ang-com}).

\begin{thm}[Equivalence of triangle and monotonicity comparison]\label{thm-K-ang-com}
 Let $\Xll$ be a locally strictly timelike geodesically connected \LpLS and let $K\in\R$. Then $X$ has timelike curvature bounded below (above) by $K$ if and only if it satisfies $K$-monotonicity comparison from below (above).
\end{thm}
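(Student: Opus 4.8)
The plan is to notice that both implications rest on a single dictionary furnished by the monotonicity statement of the law of cosines (Remark~\ref{rem-loc-mon}): in a causal triangle in $\lm{K}$ with a vertex $v$ whose two adjacent sides have fixed lengths $a,b$, the signed $K$-comparison angle $\tilde\ma_v^{K,\mathrm{S}}$ at $v$ and the time separation $c$ of the two points realizing $a$ and $b$ stand in a strictly monotone correspondence. Reading off Lemma~\ref{lorLawOfCosines} in each causal configuration, the angle sits opposite the side $c$, and with $a,b$ held fixed $\tilde\ma_v^{K,\mathrm{S}}$ is a strictly monotone function of $c$. It suffices to treat curvature and monotonicity \emph{from below}, the \emph{above} case being identical with all inequalities reversed; and since $\theta(s,t)$ must be monotone in $s$ and in $t$ separately while the set-up is symmetric in $\alpha$ and $\beta$, I would fix $t$ and only establish monotonicity in $s$.

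For the implication that timelike curvature bounded below implies $K$-monotonicity from below, fix two timelike distance realizers $\alpha,\beta$ from $x$ inside a comparison neighborhood $U$ (invoking Remark~\ref{lorAutomaticSizeBounds} to secure size bounds), and let $s_1<s_2$ with $(s_1,t),(s_2,t)\in D$. I would form the timelike triangle on $x$, $\alpha(s_2)$, $\beta(t)$ — a genuine triangle since $U$ is strictly timelike geodesically connected — regard $\alpha(s_1)$ as a point on the side $[x,\alpha(s_2)]$, and apply the curvature bound from below (Definition~\ref{def-tri-com}) to the pair $\alpha(s_1)$, $\beta(t)$, obtaining $\tau(\alpha(s_1),\beta(t))\leq\bar\tau(\overline{\alpha(s_1)},\overline{\beta(t)})$. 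The crucial identity is that the angle subtended at $\bar x$ by $\overline{\alpha(s_1)}$ and $\overline{\beta(t)}$ equals the full-triangle angle $\tilde\ma_x^K(\alpha(s_2),\beta(t))$, because $\overline{\alpha(s_1)}$ lies on the ray $[\bar x,\overline{\alpha(s_2)}]$; hence both $\tau(\alpha(s_1),\beta(t))$ and $\bar\tau(\overline{\alpha(s_1)},\overline{\beta(t)})$ are governed by the same adjacent side-lengths $\tau(x,\alpha(s_1))$, $\tau(x,\beta(t))$. Feeding this distance inequality back through the dictionary yields the required monotonicity of $\theta(s,t)=\tilde\ma_x^{K,\mathrm{S}}(\alpha(s),\beta(t))$ in $s$, the fixed sign $\sigma$ producing the correct direction in the future, past, and mixed cases alike.

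For the converse, let $\Delta p_1p_2p_3$ be a timelike triangle in a monotonicity neighborhood and let $q_1,q_2$ lie on its sides. If they lie on the same side the comparison is an equality (both sides are realizers), so I would assume they lie on two distinct sides; the combinatorial key is that any two sides of the triangle share exactly one vertex $v\in\{p_1,p_2,p_3\}$. The two sides through $v$ are timelike distance realizers from $v$, so $K$-monotonicity from below at $v$ applies: using monotonicity in \emph{both} variables to pass from the interior configuration to the endpoint configuration, it compares $\tilde\ma_v^{K,\mathrm{S}}(q_1,q_2)$ with the full-triangle signed angle. Since the latter again equals the angle subtended at $\bar v$ by the comparison points $\bar q_1,\bar q_2$, and both $\tau(q_1,q_2)$ and $\bar\tau(\bar q_1,\bar q_2)$ are determined by the common adjacent side-lengths $\tau(v,q_1)$, $\tau(v,q_2)$, the same dictionary converts the angle inequality into the desired $\tau(q_1,q_2)\leq\bar\tau(\bar q_1,\bar q_2)$. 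Running through the three choices of shared vertex covers the future ($v=p_1$), past ($v=p_3$) and mixed ($v=p_2$) configurations.

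The main obstacle, and the reason this is more delicate than its metric counterpart, is the causal bookkeeping. One must verify in each configuration that the auxiliary triangles genuinely exist and that the relevant points are causally related (so the comparison angle is defined), i.e.\ control the shape of the domain $D$ and its behaviour at the boundary where points become null-related. One must also select the correct branch of Remark~\ref{rem-loc-mon}: in the $\sigma=-1$ (future/past) cases the angle is opposite a \emph{non-longest} side, so one uses that $\omega$ is strictly decreasing in that side, whereas in the $\sigma=+1$ (mixed) case the angle is opposite the \emph{longest} side and $\omega$ is strictly increasing — conflating the two would reverse an inequality. The last technical point is the degenerate boundary case in which an adjacent side-length tends to $0$ (the side becoming null or collapsing), handled by the continuity of $\tau$ and of the comparison angle together with Proposition~\ref{lorSphericalangleNoDepK}.
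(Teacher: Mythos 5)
Your skeleton coincides with the paper's proof: both implications are run through comparison triangles, the identity that the angle subtended at $\bar x$ by the comparison points of interior points equals the full-triangle comparison angle, and the monotonicity statement of the law of cosines (Remark \ref{rem-loc-mon}) as the dictionary between the distance inequality of Definition \ref{def-tri-com} and the angle inequality of Definition \ref{def-ang-com}; your branch selection for $\sigma=\pm 1$ (angle opposite the longest side versus a non-longest side) is also exactly what the paper uses. The gap lies in what you set aside as ``causal bookkeeping''. Your dictionary exists only for \emph{causal} triangles in $\lm{K}$, but causal relatedness does not transfer between $X$ and the comparison configuration, and the problematic cases are not boundary or limiting cases: they are configurations in which one of the two triangles you want to compare simply is not causal, so the correspondence you invoke has no side-length $c$ to speak of. Concretely, in the direction (monotonicity from below $\Rightarrow$ curvature bounded below) you must \emph{exclude} that $q_1\ll q_2$ in $X$ while the comparison points $\bar q_1,\bar q_2$ are not causally related in $\lm{K}$; otherwise $\tau(q_1,q_2)>0=\bar\tau(\bar q_1,\bar q_2)$ and the desired inequality is outright false. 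Here the configuration $(\bar x,\bar q_1,\bar q_2)$ has a spacelike side, Remark \ref{rem-loc-mon} says nothing about it, and the paper instead invokes the \emph{extended} law of cosines (Corollary \ref{cor-ext-loc}): with the same adjacent sides, the angle over a spacelike side strictly exceeds the angle over any causal side, which together with the monotonicity hypothesis yields the needed contradiction. The mirror case in the curvature-bounded-above direction ($\tau(q_1,q_2)=0$ in $X$ but $\bar q_1\ll\bar q_2$) is excluded by yet another argument: a null-related point $\hat p$ on the side plus \emph{strict} law-of-cosines monotonicity, and, if the neighborhood is not causally closed, a limiting argument. The same spacelike-side issue reappears in your first direction when the curvature bound is from above.

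Your proposed repair for these cases --- ``continuity of $\tau$ and of the comparison angle together with Proposition \ref{lorSphericalangleNoDepK}'' --- does not work: Proposition \ref{lorSphericalangleNoDepK} concerns the independence of the limit angle from $K$ and says nothing about acausal comparison configurations, and continuity cannot bridge them, because in those configurations the required inequality is not a limit of instances already established; the configuration itself has to be shown impossible. So the missing ingredient is precisely Corollary \ref{cor-ext-loc} (together with the null-approximation argument for the $\tau(q_1,q_2)=0$ case). Without it, both implications are proved only under the additional, unjustified assumption that causal relatedness of pairs of points on the sides is preserved when passing to and from the comparison triangle --- which is exactly the part of the theorem that has no metric-space counterpart.
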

\begin{pr}
The first two conditions to be satisfied in timelike triangle comparison and $K$-monotonicity comparison are the same under the condition of local timelike geodesic connectedness when shrinking $U$ to be a strictly timelike geodesically connected neighborhood. Moreover, let $U$ be a comparison neighborhood given either by the timelike triangle comparison or by $K$-monotonicity comparison.

 First, let $X$ have timelike curvature bounded from below (above) by $K$ and let $\Delta=\Delta x y z$ be a timelike geodesic triangle in $U$ that satisfies timelike size bounds for $K$. We only prove the case where the vertex at which we consider the angle at is $x$, and we will point out where the other cases are not analogous. Let $z'\in[x,z]$, $y'\in[x,y]$ with $y'\neq x\neq z'$. We assume $y'< z'$, the case $z'< y'$ is analogous
. Moreover, let $\bar\Delta=\ct$ be a comparison triangle of $\Delta$ in $\lm{K}$ and let $\bar y',\bar z'$ be the corresponding points of $y',z'$ assuming first that $\bar y'\leq\bar z'$. Then by the timelike curvature bound we get that $\tau(y',z')\leq \bar\tau(\bar y',\bar z')$ (or $\tau(y',z')\geq \bar\tau(\bar y',\bar z')$). Set $\alpha:=\tilde\ma_x^K(y,z)=\ma_{\bar x}^{\lm{K}}(\bar y,\bar z)=\ma_{\bar x}^{\lm{K}}(\bar y',\bar z')$. Let $(\bar x,\bar y'',\bar z'')$ be a comparison triangle of $\Delta x y' z'$, see Figure \ref{fig-angle_comparison}. If indirectly $\tilde\ma_x^K(y',z')< \alpha$ for $\sigma=-1$ or $\tilde\ma_x^K(y',z')> \alpha$ for $\sigma=1$, the monotonicity statement of the law of cosines (Remark \ref{rem-loc-mon}) implies that $\tau(y',z')=\bar\tau(\bar y'',\bar z'')>\tau(\bar y',\bar z')$ --- a contradiction to $\tau(y',z')\leq \bar\tau(\bar y',\bar z')$.  
Analogously for the case of timelike curvature bounded above. 

However, in case of timelike curvature bounded above there is an issue if $\bar y'$ is not causally related to $\bar z'$, so $\sigma=-1$.  Then the monotonicity statement of the extended law of cosines (Corollary \ref{cor-ext-loc}) gives $\alpha=\ma_{\bar x}^{\lm{K}}(\bar y,\bar z)=\ma_{\bar x}^{\lm{K}}(\bar y',\bar z')>\ma_{\bar x}^{\lm{K}}(\bar y'',\bar z'')$, which is what we wanted to prove.

In total, $X$ satisfies $K$-monotonicity comparison from below (or above, respectively).
 \medskip
 
Second, let $X$ satisfy $K$-monotonicity comparison from below (above), let $\Delta=\Delta x y z$ be a timelike geodesic triangle in $U$ satisfying timelike size bounds for $K$ and let $\bar\Delta=\ct$ be a comparison triangle of $\Delta$ in $\lm{K}$. Let $p\in[x,y]$, $q\in[x,z]$, the other cases work nearly analogously by considering the angles at the other vertices. Moreover, if $p=x$ or $q=x$ there is nothing to do so let $p\neq x\neq q$. First we assume that $p$ and $q$ are causally related, say $p< q$. Let $\bar p,\bar q$ be corresponding points of $p,q$ on $\bar \Delta$, and we assume $\bar p\leq \bar q$. By assumption we have $\tilde\ma_x^{K,\mathrm{S}}(p,q)\leq\tilde\ma_x^{K,\mathrm{S}}(y,z)$ (or $\tilde\ma_x^{K,\mathrm{S}}(p,q)\geq\tilde\ma_x^{K,\mathrm{S}}(y,z)$). Let $(\bar x,\bar p',\bar q')$ be a comparison triangle of $\Delta x p q$. Note that $\tilde\ma_x^K(p,q)=\ma_{\bar x}^{\lm{K}}(\bar p',\bar q')$ and $\tilde\ma_x^K(y,z)=\ma_{\bar x}^{\lm{K}}(\bar p,\bar q)$. Then the law of cosines (Remark \ref{rem-loc-mon}) implies that $\tau(p,q)=\bar\tau(\bar p',\bar q')\leq\bar\tau(\bar p,\bar q)$ (or $\tau(p,q)\geq\bar\tau(\bar p,\bar q)$, respectively). 

Now we consider the case where $\bar p$ and $\bar q$ not causally related, which can only happen if $\sigma=-1$. We need to exclude $p\ll q$, so let indirectly $p \ll q$. For the timelike curvature bounded above case, we need $\tau(p,q)\geq \bar\tau(\bar p,\bar q)=0$ which is automatically satisfied. For the timelike curvature bounded below case, we need $\tau(p,q)\leq \bar\tau(\bar p,\bar q)=0$, i.e.\ we need to exclude this case. We compare the triangles $\Delta\bar x \bar p \bar q$ and $\Delta\bar x \bar p' \bar q'$. Two of the sides are equal, and in the first triangle the other side is spacelike. Thus the monotonicity statement of the extended law of cosines (Corollary \ref{cor-ext-loc}) gives us that $\tilde\ma_x^K(y,z)=\ma_{\bar x}^{\lm{K}}(\bar p,\bar q)> \ma_{\bar x}^{\lm{K}}(\bar p',\bar q')=\tilde\ma_x^K(p,q)$, in contradiction to $K$-monotonicity comparison from below (note $\sigma=-1$).

In case $p$ and $q$ are not causally related ($\sigma=-1$) we cannot form timelike triangles, so the main argument does not work. For the timelike curvature bounded below case, we need $0=\tau(p,q)\leq \bar\tau(\bar p,\bar q)$ which is automatically satisfied. For the timelike curvature bounded above case, we need $0=\tau(p,q)\geq \bar\tau(\bar p,\bar q)$, i.e.\ $\bar p\not\ll\bar q$. 
First assume $U$ is a causally closed neighbourhood and we indirectly assume the weaker $\bar p\leq\bar q$. We get that there exists a point $\hat{p}\in[x,y]$ with $\hat{p}\leq q$ null related. We compare the triangles $\Delta(x,\hat{p},q)$ and $\Delta(\bar{x},\bar{p},\bar{q})$: we have $\tau(x,\hat{p})<\tau(x,p)$, $\tau(\hat{p},q)=0\leq\bar{\tau}(\bar{p},\bar{q})$ and the longest side-length is equal. Thus the strict law of cosines monotonicity (Remark \ref{rem-loc-mon}) gives that $\tilde{\ma}_x^K(\hat{p},q)> \ma_{\bar{x}}^{\lm{K}}(\bar{p},\bar{q})$, which contradicts the inequality given by $K$-monotonicity comparison from above, i.e., $\tilde{\ma}_x^K(\hat{p},q)\leq\tilde{\ma}^K_x(p,q) =\ma_{\bar{x}}^{\lm{K}}(\bar{p},\bar{q})$. Without $U$ being a causally closed neighbourhood, one does not necessarily get a null related point $\hat{p}$ but a sequence $\hat{p}_n$ with $\tau(\hat{p}_n,q)\searrow 0$. Indirectly assuming $\bar p\ll\bar q$, a limiting argument gives the claim.

In total, $X$ has timelike curvature bounded below (above) by $K$.
\end{pr}

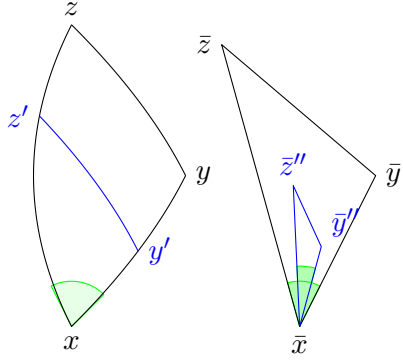
\begin{figure}[h!]
\begin{center}
\begin{tikzpicture}[line cap=round,line join=round,x=2.0cm,y=2.0cm]
\draw [shift={(0.,-1.)},color=green,fill=green,fill opacity=0.1] (0,0) -- (45.:0.3) arc (45.:116.56505117707799:0.3) -- cycle;
\draw[ smooth,samples=100,domain=-1.0:1.0] (0,-1) node[anchor=north]{$x$} plot({\x*\x/4.0-1.0/4.0},\x) node[anchor=south]{$z$};
\draw[ smooth,samples=100,domain=0.0:1.0] plot({-\x*\x/4.0-\x/2.0+1.0/2.0+1.0/4.0},\x);
\draw[ smooth,samples=100,domain=-1.0:0.0] plot({-\x*\x/4.0+\x/2.0+1.0/2.0+1.0/4.0},\x) node[anchor=west]{$y$};
\draw[color=blue, smooth,samples=100,domain=0.0:0.89564392373896] (0.4375,-0.5) node[anchor=west]{$y'$} plot({-\x*\x/4.0-\x/2.0+0.4375},{\x-0.5}) node[anchor=east]{$z'$};

\begin{scope}[xshift=4cm]
\fill [shift={(-0.5,-1.)},draw=green,fill=green!30,] (0,0) -- (74.9598086914095:0.4) arc (74.9598086914095:92.56650651538125:0.4) -- cycle;
\fill [shift={(-0.5,-1.)},draw=green,fill=green!30,] (0,0) -- (63.43494882292201:0.3) arc (63.43494882292201:105.34925915763955:0.3) -- cycle;
\draw[] (-0.5,-1.) node[anchor=north]{$\bar x$} -- (0.,0.) node[anchor=west]{$\bar y$} -- (-1.0128640109411684,0.868402399749306) node[anchor=east]{$\bar z$} -- cycle;
\draw[color=blue] (-0.5418540240528695,-0.06625804298215877) node[anchor=south]{$\bar z''$} -- (-0.5,-1.) -- (-0.3578657479246279,-0.4710322504642892) node[anchor=south west]{$\bar y''$} -- cycle;
\end{scope}
\end{tikzpicture}
\caption{When, comparing with $\lm{K}$, the interior side is shorter than expected, the $K$-angle has to behave accordingly.}\label{fig-angle_comparison}
\end{center}
\end{figure}
 \bigskip

We now give a proof of the fact that signed angles are semi-continuous, given a curvature bound by using the equivalence of timelike curvature bounds and $K$-monotonicity comparison established in Theorem \ref{thm-K-ang-com}. In particular, this implies the case of geodesics with the same time orientation, i.e., Proposition \ref{pop-ang-semicont-fudi}. In the metric setting the situation is of course easier as one does not have different time orientations and thus there is no need for signed angles, cf.\ \cite[Thm.\ 4.3.11]{BBI:01} and \cite[Prop.\ II.3.3]{BH:99}.

\begin{pop}[Semi-continuity of angles] \label{pop-ang-semicont}
 Let $\Xll$ be a locally strictly timelike geodesically connected \LpLS with timelike curvature bounded below (above). Then signed angles of timelike geo\-desics are upper (lower) semicontinuous, i.e., $\alpha,\alpha_n,\beta,\beta_n\colon[0,B]\rightarrow X$ future or past directed timelike geodesics at $x$ with $\alpha_n\to\alpha$, $\beta_n\to\beta$ pointwise, then
\begin{equation}
\ma_x^{\mathrm{S}}(\alpha,\beta)\geq \limsup_n \ma_x^{\mathrm{S}}(\alpha_n,\beta_n)\qquad (\ma_x^{\mathrm{S}}(\alpha,\beta)\leq \liminf_n \ma_x^{\mathrm{S}}(\alpha_n,\beta_n))\,.
\end{equation}
\end{pop}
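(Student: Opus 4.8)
The plan is to reduce the statement to two earlier results: the equivalence between curvature bounds and $K$-monotonicity comparison (Theorem \ref{thm-K-ang-com}), and the one-sided bound on the signed angle that it yields (Corollary \ref{cor-K-mon-ang-bou-sig}). I treat the curvature-bounded-below case (upper semicontinuity); the bounded-above case is obtained verbatim by reversing every inequality. Since $\alpha_n\to\alpha$ and $\beta_n\to\beta$, the time orientations stabilise, so all pairs $(\alpha_n,\beta_n)$ and the limit pair $(\alpha,\beta)$ carry the same sign $\sigma$, and all signed comparison angles below share this common sign.

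First I would pass, by strong causality, to a neighborhood $U$ of $x$ which by Theorem \ref{thm-K-ang-com} is a $K$-monotonicity comparison neighborhood from below, on which $\tau$ is finite and continuous and where small timelike triangles automatically satisfy the size bounds for $K$ (Remark \ref{lorAutomaticSizeBounds}); all initial geodesic segments may be assumed to lie in $U$. Fix $\varepsilon>0$. Using Corollary \ref{cor-K-mon-ang-bou-sig} for the limit pair — together with Proposition \ref{lorSphericalangleNoDepK}, which identifies the monotone limit of the $K$-comparison angles with $\ma_x(\alpha,\beta)$ — I choose $s,t>0$ small enough that $\alpha(s)$ and $\beta(t)$ are \emph{strictly} timelike related and
\begin{equation*}
 \tilde\ma_x^{K,\mathrm{S}}(\alpha(s),\beta(t)) < \ma_x^{\mathrm{S}}(\alpha,\beta) + \varepsilon\,.
\end{equation*}
Such $(s,t)$ exist: in the $\sigma=1$ case one has $\beta(t)\ll x\ll\alpha(s)$ automatically, and in the $\sigma=-1$ case an initial segment of one curve lies in the chronological past (resp.\ future) of a fixed point on the other.

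The key step is to freeze this $(s,t)$ and let $n\to\infty$. Because $\alpha(s)\ll\beta(t)$ strictly and $\tau$ is continuous on $U$, we get $\tau(\alpha_n(s),\beta_n(t))\to\tau(\alpha(s),\beta(t))>0$, so $\alpha_n(s)$ and $\beta_n(t)$ are timelike related for all large $n$ and $\tilde\ma_x^{K,\mathrm{S}}(\alpha_n(s),\beta_n(t))$ is defined. Its three side lengths converge to those of the limit triangle, and by the analyticity of the law of cosines (Remark \ref{rem-loc-mon}) the signed comparison angle depends continuously on them, so
\begin{equation*}
 \tilde\ma_x^{K,\mathrm{S}}(\alpha_n(s),\beta_n(t)) \longrightarrow \tilde\ma_x^{K,\mathrm{S}}(\alpha(s),\beta(t))\,.
\end{equation*}
On the other hand, Corollary \ref{cor-K-mon-ang-bou-sig} applied to each pair $(\alpha_n,\beta_n)$ gives $\ma_x^{\mathrm{S}}(\alpha_n,\beta_n)\leq \tilde\ma_x^{K,\mathrm{S}}(\alpha_n(s),\beta_n(t))$ at these small parameters. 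Taking $\limsup_n$ and combining,
\begin{equation*}
 \limsup_n \ma_x^{\mathrm{S}}(\alpha_n,\beta_n) \leq \tilde\ma_x^{K,\mathrm{S}}(\alpha(s),\beta(t)) < \ma_x^{\mathrm{S}}(\alpha,\beta) + \varepsilon\,,
\end{equation*}
and $\varepsilon\to 0$ proves upper semicontinuity. For curvature bounded above, Corollary \ref{cor-K-mon-ang-bou-sig} reverses, so the $K$-comparison angles increase up to $\ma_x^{\mathrm{S}}(\alpha,\beta)$, and the same argument with $\liminf$ gives lower semicontinuity.

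The main obstacle is the bookkeeping at the frozen pair $(s,t)$: I must guarantee both that the causal relation survives the limit — so that the comparison angle is even defined along the sequence — and that the sign $\sigma$ is unchanged. Choosing $(s,t)$ to give a \emph{strict} timelike relation rather than merely a causal one is what makes this robust, since then continuity of $\tau$ on $U$ keeps $\alpha_n(s)\ll\beta_n(t)$ for large $n$; a purely null related choice could be destroyed by the perturbation. The remaining ingredient, continuity of the signed comparison angle in its side lengths, is exactly the analyticity recorded in Remark \ref{rem-loc-mon}, and Proposition \ref{pop-ang-semicont-fudi} then follows as the special case $\sigma=-1$.
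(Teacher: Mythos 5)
Your proof is correct and follows essentially the same route as the paper's: both convert the curvature bound into $K$-monotonicity comparison via Theorem \ref{thm-K-ang-com}, apply Corollary \ref{cor-K-mon-ang-bou-sig} to the approximating pairs at a frozen strictly timelike pair $(s,t)$, and pass to the limit in $n$ using convergence of the side lengths (continuity of $\tau$) together with continuity in the law of cosines — the paper merely organizes the final step as a double limit along sequences $(s_k,t_k)\searrow 0$ rather than with an $\varepsilon$. The only caveat is that your $\varepsilon$-formulation tacitly assumes $\ma_x^{\mathrm{S}}(\alpha,\beta)$ is finite; in the infinite case one replaces $\ma_x^{\mathrm{S}}(\alpha,\beta)+\varepsilon$ by an arbitrary constant $C$, a one-line fix which the paper's double-limit phrasing absorbs automatically.
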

\begin{pr}
First, we use the fact that the timelike curvature bound implies the corresponding $K$-monotonicity comparison by Theorem \ref{thm-K-ang-com}.
We will prove the case of timelike curvature bounded below by $K\in\R$ --- the other case is completely analogous.

For $s,t>0$ such that $\alpha_n(s)$ and $\beta_n(t)$ are causally related set $\theta_n(s,t):=\tilde\ma_x^{K,\mathrm{S}}(\alpha_n(s),\beta_n(t))$ and analogously for $s,t>0$ such that $\alpha(s)$ and $\beta(t)$ are causally related we define $\theta(s,t):=\tilde\ma_x^{K,\mathrm{S}}(\alpha(s),\beta(t))$. As the side lengths of the (comparison) triangles converge we get $\theta_n\to \theta$ pointwise if both are defined.

Without loss of generality we assume that $\beta$ is future directed. To see that the comparison angles of the approximating sequence exist pick sequences $s_k, t_k\searrow 0$ with $\alpha(s_k)\ll\beta(t_k)$. 
By the openness of the timelike relation $\ll$ (cf.\ \cite[Prop.\ 2.13]{KS:18}) we get that $\alpha_n(s_k)\ll \beta_n(t_k)$ for large enough $n\in\N$ (depending on $k$), so both $\theta$ and $\theta_n$ are defined at $(s_k,t_k)$. 

Thus Corollary \ref{cor-K-mon-ang-bou-sig} gives for $n,k$ large enough that
\begin{align}
 \ma_x^{\mathrm{S}}(\alpha_n,\beta_n)\leq \theta_n(s_k,t_k)\,.
\end{align}
Letting $n\to \infty$ gives
\begin{align}
 \limsup_{n\to\infty}\ma_x^{\mathrm{S}}(\alpha_n,\beta_n)\leq \theta(s_k,t_k)\,,
\end{align}
and taking $k\to\infty$ then yields $\limsup_{n\to\infty}\ma_x^{\mathrm{S}}(\alpha_n,\beta_n)\leq \ma_x^{\mathrm{S}}(\alpha,\beta)$, as required.

%
\end{pr}

Under weak additional assumptions we even get continuity of angles.
\begin{pop}[Continuity of angles]\label{pop-ang-cont-geodesics}
Let $\Xll$ be a locally strictly timelike geodesically connected and locally causally closed \LpLSn. Then angles are continuous between geodesics in the following cases. Let $x\in X$ and $\alpha,\alpha_n,\beta,\beta_n\colon[0,B]$ $\rightarrow X$ future or past directed timelike geodesics starting at $x$ with $\alpha_n\to\alpha$, $\beta_n\to\beta$ pointwise (in particular, $\alpha_n$ is future directed if and only if $\alpha$ is and similarly for $\beta$).
\begin{enumerate}
 \item The space $X$ has timelike curvature bounded above.
 \begin{enumerate}
  \item The geodesics $\alpha,\beta$ have the same time orientation or
  \item if $\alpha,\beta$ have different time orientations we assume that $X$ is strongly causal and the angles $\ma_x(\alpha,\beta)$, $\ma_x(\alpha,\beta_n)$ and $\ma_x(\alpha_n,\beta_n)$ are finite for all $n\in\N$.
 \end{enumerate}
 \item The space $X$ has timelike curvature bounded below, is strongly causal and satisfies geodesic prolongation.
\end{enumerate}
 Then
\begin{equation}
\ma_x(\alpha,\beta)= \lim_n\ma_x(\alpha_n,\beta_n)\,.
\end{equation}
\end{pop}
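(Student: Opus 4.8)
The plan is to combine the one-sided semicontinuity already supplied by Proposition \ref{pop-ang-semicont} with the triangle inequality for angles, the missing ingredient being an approximation statement that $\ma_x(\gamma_n,\gamma)\to 0$ whenever $\gamma_n\to\gamma$ pointwise with a common time orientation. Throughout I would restrict to a comparison neighbourhood $U$ of $x$ (so that $\tau$ is finite and continuous there and the relevant triangle inequalities apply) and recall, via Theorem \ref{thm-K-ang-com}, that the curvature bound is equivalent to the corresponding $K$-monotonicity comparison, so that Lemma \ref{lem-K-ang-com-ang-ex} governs existence of the angles. Writing $\ma_x^{\mathrm{S}}=\sigma\ma_x$ with $\sigma=-1$ for equal and $\sigma=+1$ for opposite time orientation, Proposition \ref{pop-ang-semicont} yields in the curvature-below case $\sigma\ma_x(\alpha,\beta)\ge\limsup_n\sigma\ma_x(\alpha_n,\beta_n)$ and in the curvature-above case the reverse inequality with $\liminf$. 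Translating, this gives exactly one of the two desired semicontinuities of the \emph{un}signed angle in each sign/bound combination, and the task is to produce the complementary inequality.

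Next I would establish the approximation lemma. In the curvature-above case it is immediate: for $\gamma_n\to\gamma$ of equal orientation ($\sigma=-1$) the already-available upper semicontinuity, applied with the constant sequence $\gamma$, gives $\limsup_n\ma_x(\gamma_n,\gamma)\le\ma_x(\gamma,\gamma)=0$ by Lemma \ref{lem-ang-geo}.\ref{lem-ang-geo-same}. The curvature-below case is the main obstacle, since there the free semicontinuity for equal orientations points the wrong way (it only yields the trivial $0\le\liminf_n\ma_x(\gamma_n,\gamma)$). I would circumvent this using geodesic prolongation: extend $\gamma$ to a geodesic whose past-directed half $\gamma^-$ starts at $x$, so that $\ma_x(\gamma,\gamma^-)=0$ by Lemma \ref{lem-ang-geo}.\ref{lem-ang-geo-int}. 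By Corollary \ref{cor-tri-ine-alo-geo} one has $\ma_x(\gamma_n,\gamma)=\ma_x(\gamma_n,\gamma^-)$, the angle on the right existing by Lemma \ref{lem-K-ang-com-ang-ex} as it is a $\sigma=+1$ configuration under a lower bound. Now $(\gamma_n,\gamma^-)$ has \emph{opposite} orientation, for which the curvature-below semicontinuity \emph{is} upper semicontinuity of the unsigned angle; applied with the constant sequence $\gamma^-$ it gives $\limsup_n\ma_x(\gamma_n,\gamma^-)\le\ma_x(\gamma,\gamma^-)=0$, hence $\ma_x(\gamma_n,\gamma)\to 0$.

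Finally I would close the gap with the triangle inequality, inserting $\alpha,\beta$ between $\alpha_n,\beta_n$ (or conversely) and killing the cross terms $\ma_x(\alpha_n,\alpha)$, $\ma_x(\beta_n,\beta)$ by the approximation lemma. When $\alpha,\beta$ share an orientation I use the unconditional Theorem \ref{thm-ang-tri-equ}, e.g. $\ma_x(\alpha_n,\beta_n)\le\ma_x(\alpha_n,\alpha)+\ma_x(\alpha,\beta)+\ma_x(\beta,\beta_n)$, and pass to $\limsup_n$ to obtain the upper semicontinuity missing in case 1(a), the lower semicontinuity in case 2 with $\sigma=-1$ being entirely analogous. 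When $\alpha,\beta$ have opposite orientations (cases 1(b) and 2 with $\sigma=+1$) I chain two applications of Theorem \ref{thm-ang-tri-equ-oth-cas}.\ref{thm-ang-tri-equ-oth-cas-fufupa} (once as stated, once in its time-reversed form), for instance $\ma_x(\alpha_n,\beta_n)\le\ma_x(\alpha_n,\alpha)+\ma_x(\alpha,\beta_n)\le\ma_x(\alpha_n,\alpha)+\ma_x(\alpha,\beta)+\ma_x(\beta,\beta_n)$; here the existence hypothesis of that theorem is met by the finiteness assumption in case 1(b) together with Lemma \ref{lem-K-ang-com-ang-ex}, and automatically in case 2, where a $\sigma=+1$ lower bound forces existence. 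Taking $\limsup_n$ (respectively $\liminf_n$) and using that the cross terms vanish yields the complementary semicontinuity, which together with the free one from Proposition \ref{pop-ang-semicont} gives the claimed continuity.
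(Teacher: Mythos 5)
Your proposal is correct and uses essentially the same ingredients and structure as the paper's proof: kill the cross terms $\ma_x(\alpha_n,\alpha)$, $\ma_x(\beta_n,\beta)$ via Proposition \ref{pop-ang-semicont} together with Lemma \ref{lem-ang-geo}, then sandwich $\ma_x(\alpha_n,\beta_n)$ and $\ma_x(\alpha,\beta)$ between two triangle-inequality chains, handling the curvature-below case with geodesic prolongation and Corollary \ref{cor-tri-ine-alo-geo}. The one organizational difference is in case 2: the paper chains through the prolonged curves $\check\alpha,\check\beta$, obtains $\lim_n\ma_x(\alpha_n,\beta_n)=\ma_x(\check\alpha,\check\beta)$, and only at the end applies Corollary \ref{cor-tri-ine-alo-geo} twice to identify $\ma_x(\check\alpha,\check\beta)=\ma_x(\alpha,\beta)$; you instead apply the corollary once per approximating sequence, rewriting each cross term as $\ma_x(\gamma_n,\gamma)=\ma_x(\gamma_n,\gamma^-)$ so that the curvature-below semicontinuity (which for opposite orientations is upper semicontinuity of the unsigned angle) forces it to zero, and then chain through $\alpha,\beta$ directly. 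The ingredients are identical, but your decomposition has the small advantage that the final chains only ever need case (i) of Theorem \ref{thm-ang-tri-equ-oth-cas} (once as stated, once time-reversed), whose existence hypotheses are exactly the three angles assumed finite in case 1(b); the paper instead cites case (ii) there, which formally carries a lower curvature bound hypothesis.

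One bookkeeping slip, which is harmless but worth fixing: in case 1(a) (curvature bounded above, $\sigma=-1$) the inequality Proposition \ref{pop-ang-semicont} gives for free is the \emph{upper} one, $\limsup_n\ma_x(\alpha_n,\beta_n)\le\ma_x(\alpha,\beta)$, so the missing half is the \emph{lower} inequality, obtained from the reverse chain $\ma_x(\alpha,\beta)\le\ma_x(\alpha,\alpha_n)+\ma_x(\alpha_n,\beta_n)+\ma_x(\beta_n,\beta)$; symmetrically, in case 2 with $\sigma=-1$ the free inequality is the lower one and the chain you wrote (with $\limsup$) supplies the missing upper one. Your labels for which semicontinuity is ``missing'' are swapped between these two cases. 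This does not affect the proof: since your approximation lemma makes both cross terms vanish, the two-sided sandwich yields $|\ma_x(\alpha_n,\beta_n)-\ma_x(\alpha,\beta)|\le\ma_x(\alpha_n,\alpha)+\ma_x(\beta_n,\beta)\to0$, i.e.\ full continuity, without ever invoking the free semicontinuity for the pair $(\alpha_n,\beta_n)$ — which is in fact exactly how the paper argues.
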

\begin{pr}
In both cases the timelike curvature bound implies $K$-monotonicity comparison by Theorem \ref{thm-K-ang-com} and all geodesic segments are assumed to lie in a $K$-monotonicity comparison neighborhood.

\begin{enumerate}
 \item Let $X$ have timelike curvature bounded above. By Proposition \ref{pop-ang-semicont} we have that $\ma_x(\alpha_n,\alpha)\to 0$ and $\ma_x(\beta_n,\beta)\to 0$ as $\ma_x(\alpha,\alpha)=\ma_x(\beta,\beta)=0$ by Lemma \ref{lem-ang-geo},\ref{lem-ang-geo-same} and the fact that $\alpha_n$ and $\alpha$ have the same time orientation, as well as $\beta_n$ and $\beta$.
 
 At this point we use the triangle inequality of angles. In case (a) we use Theorem \ref{thm-ang-tri-equ} and in case (b) we use Theorem \ref{thm-ang-tri-equ-oth-cas}.\ref{thm-ang-tri-equ-oth-cas-fupafu}. In case (b) we have by assumption that the angles $\ma_x(\alpha,\beta)$, $\ma_x(\alpha,\beta_n)$ and $\ma_x(\alpha_n,\beta_n)$ are finite and by Lemma \ref{lem-K-ang-com-ang-ex} these angles exist for all $n\in\N$. Thus in both cases we estimate
\begin{align}
 \ma_x(\alpha_n,\beta_n) &\leq \ma_x(\alpha_n,\alpha) + \ma_x(\alpha,\beta) + \ma_x(\beta,\beta_n)\,,\\
 \ma_x(\alpha,\beta) &\leq \ma_x(\alpha,\alpha_n) + \ma_x(\alpha_n,\beta_n) + \ma_x(\beta_n,\beta)\,.
\end{align}
Consequently, in both cases we obtain 
 \begin{equation}
 |\ma_x(\alpha_n,\beta_n)-\ma_x(\alpha,\beta)|\leq\ma_x(\alpha,\alpha_n)+\ma_x(\beta,\beta_n)\to0\,,
 \end{equation}
 as claimed.
 
 \item Let $X$ have timelike curvature bounded below. By geodesic prolongation, there is a timelike geodesic $\check{\alpha}\colon[0,\varepsilon)\to X$ having the opposite time orientation as $\alpha$ and such that the concatenation of the time reversed $\check\alpha$ and $\alpha$ is a timelike geodesic. In particular, $\ma_x(\check{\alpha},\alpha)=0$. Similarly, we find $\check\beta\colon(-\varepsilon,0]\to X$ with $\ma_x(\check{\beta},\beta)=0$. By Proposition \ref{pop-ang-semicont} we have that $\ma_x(\alpha_n,\check\alpha)\to 0$ and $\ma_x(\beta_n,\check\beta)\to 0$. 

At this point we use the triangle inequality of angles (Theorem \ref{thm-ang-tri-equ-oth-cas}). Note that angles between geodesics of different time orientation always exist by Lemma \ref{lem-K-ang-com-ang-ex}. Thus we estimate
\begin{align}
 \ma_x(\alpha_n,\beta_n) &\leq \ma_x(\alpha_n,\check\alpha) + \ma_x(\check\alpha,\check\beta) + \ma_x(\check\beta,\beta_n)\,,\\
 \ma_x(\check\alpha,\check\beta) &\leq \ma_x(\check\alpha,\alpha_n) + \ma_x(\alpha_n,\beta_n) + \ma_x(\beta_n,\check\beta)\,.
\end{align}
Consequently, in both cases we obtain
 \begin{equation}
 |\ma_x(\alpha_n,\beta_n)-\ma_x(\check\alpha,\check\beta)|\leq\ma_x(\check\alpha,\alpha_n)+\ma_x(\check\beta,\beta_n)\to0\,.
 \end{equation}
Finally, we apply Corollary \ref{cor-tri-ine-alo-geo} twice to obtain $\ma_x(\check\alpha,\check\beta)=\ma_x(\alpha,\beta)$, which gives the claim.

\end{enumerate}
\end{pr}

\section*{Conclusion}
\addcontentsline{toc}{section}{Conclusion}
In this article we introduced hyperbolic angles into the setting of Lorentzian length spaces and characterized timelike curvature bounds by an angle monotonicity property. We also introduced a diversity of metric tools, and thereby furthering the development of the theory considerably. Moreover, these tools and methods have further applications in Lorentzian geometry and General Relativity: The most striking example is the recently established \emph{splitting theorem for \LLSn s with non-negative curvature} in \cite{BORS:22}. The splitting theorem can be understood as a rigidity statement of the classical Hawking-Penrose singularity theorem \cite{HP:70} and as such is of fundamental importance in Einstein's theory of gravity.
\bigskip

Finally, let us point out several directions of future research. First, hyperbolic angles could be used to define coordinates in Lorentzian (pre-)\linebreak length spaces with timelike curvature bounded below or above. Second, a natural continuation is to establish the variation formula and an angle comparison condition as in \cite{BMS:22} also for curvature bounded from above (and for curvature bounded from below by a non-zero $K\in\R$ for the former). Further, we expect angles to play a significant role in globalization theorems for curvature bounds and in a Lorentzian version of billiards (cf.\ \cite{BFK:98}) with possible applications to particle collisions.

\section*{Acknowledgment}
We would like to thank Christian Ketterer, Michael Kunzinger, Robert McCann and Felix Rott for helpful and stimulating discussions. Moreover, we are grateful to the anonymous referee for many valuable comments and suggestions, which improved the article and enhanced its presentation. TB's research is supported by the Vienna School of Mathematics of the University of Vienna and the Technical University of Vienna. CS's research is supported by research grant J4305 of the Austrian Science Fund FWF. TB and CS additionally acknowledge support of research grant P33594 of the Austrian Science Fund FWF.

\appendix
\addcontentsline{toc}{section}{Appendix}
\section{Law of Cosines}

\begin{proof}\label{PrLorLOC}(of Lemma \ref{lorLawOfCosines} and Remark \ref{rem-loc-mon})
We first establish the case where the triangle is timelike, i.e., $a,b,c>0$. For $K=0$, we are in Minkowski space and the sides of this triangle are just straight lines. Equation (2.1) of \cite{AB:08}, gives us $-c^2=-a^2-b^2-2ab\sigma\cosh(\omega)$ by noting that their $c$ is $-c^2$ here etc., which is the desired formula.

If $K\neq0$, the sides of the triangle are future or past directed timelike geodesics $\gamma_{pq},\gamma_{qr},\gamma_{pr}$, which we assume to be defined on the domain $[0,1]$. The energy $E(\gamma_{p_ip_j})$ is given by $-\tau(p_i,p_j)^2$ if $p_i\ll p_j$. Now we apply \cite[Thm.\ 3.1.3]{Kir:18} to get
\begin{align*}
\cos(\sqrt{KE(\gamma_{pr})}) &= \cos(\sqrt{KE(\gamma_{pq})}) \cos(\sqrt{KE(\gamma_{qr})})\\
&+\left<\gamma_{qp}'(0),\gamma_{qr}'(0)\right> \frac{\sin(\sqrt{KE(\gamma_{pq})})}{\sqrt{E(\gamma_{pq})}} \frac{\sin(\sqrt{KE(\gamma_{qr})})}{\sqrt{E(\gamma_{qr})}}.
\end{align*}
Note that this uses imaginary numbers, and $\cos(ix)=\cosh(x)$, $\sin(ix)=i\sinh(x)$. \\
For $K<0$, this equation reads
\begin{align}
\cos(sc) &= \cos(sa) \cos(sc)+\sigma a b \cosh(\omega) \frac{\sin(sa)}{ia} \frac{\sin(sb)}{ib}\,,\\
\cos(sc) &= \cos(sa) \cos(sb)-\sigma\cosh(\omega) \sin(sa) \sin(sb)\,,
\end{align}
where $\sigma$ is the sign of the inner product (which is as described in the statement) and the minus comes from an $i^2$ in the denominator.\\
For $K>0$, this equation reads
\begin{align}
\cos(isc) &= \cos(isa) \cos(isb)+\sigma a b \cosh(\omega)\frac{\sin(isa)}{ia} \frac{\sin(isb)}{ib}\,,\\
\cosh(sc) &= \cosh(sa) \cosh(sb)+\sigma\cosh(\omega) \sinh(sa) \sinh(sb)\,,
\end{align}
where the $i$s in the denominator get canceled by the $i$s appearing due to $\sin(ix)=i\sinh(x)$.
\medskip

At this point, we establish the case where $c=0$, i.e., $\sigma=-1$ and without loss of generality $q\ll p\leq r$ with $p,r$ null related. We consider the timelike geodesic $\alpha$ from $q$ to $p$. For each $\eps>0$ we consider the point $p_\eps:=\alpha(1-\eps)$, then $\tau(p_\eps,r)>0$ and $\tau(p_\eps,r)\to 0$ as $\eps\to0$. We also have $\ma_q^{\lm{K}}(p_\eps,r)=\ma_q^{\lm{K}}(p,r)$. We now apply the timelike case above to the timelike triangle $q\ll p_\eps\ll r$. We note that the side-lengths converge as follows: $a_\eps=\tau(q,p_\eps)\to a=\tau(q,p)$, $b=\tau(q,r)$ is constant and $c_\eps=\tau(p_\eps,r)\to 0$. As the term containing $\omega$ stays finite, the law of cosines continues to hold in the limit $\eps\to0$, giving the claimed equality.
\bigskip

For the monotonicities, we without loss of generality assume that $K\in\{-1,0,1\}$. We look at each side varying separately. In each case, we say that $\omega$ and one of the side-lengths depend on some additional variable $t$ (i.e., we insert $\omega(t)$ and $a(t)$ or $b(t)$ or $c(t)$, respectively, into the law of cosines equations) and take the derivative of the law of cosines equations. Then $\sgn(a'(t)) = \sgn(\omega'(t))$ (or $\sgn(c'(t))$) makes $\omega$ monotonically increasing in $a$ (or $c$) and $\sgn(a'(t)) = -\sgn(\omega'(t))$ monotonically decreasing.

For $K=0$ and varying $a$, we get the equation
\begin{equation*}
a'(2a+2b\sigma\cosh(\omega))=\omega'(-2ab\sigma\sinh(\omega))\,.
\end{equation*}
If $c$ is the longest side, $\sigma=1$ and the signs of the factors are clear, we get $\sgn(a')=-\sgn(\omega')$. Otherwise $\sigma=-1$. If $b$ is the longest side, it is clear that $2b\cosh(\omega)\geq 2a$ and so $\sgn(a')=\sgn(\omega')$. If $a$ is the longest side, we have to be more careful. We use the law of cosines formula and transform it to
\begin{align*}
a^2+b^2&=c^2+2ab\cosh(\omega)\,,\\
2a^2-2ab\cosh(\omega)&=2a(a-b\cosh(\omega))=c^2-b^2+a^2\geq0\,,
\end{align*}
thus the positive $2a$ term dominates over the negative $2b\sigma\cosh(\omega)$ and so $\sgn(a')=\sgn(\omega')$.

For $K=-1$, we get the equation
\begin{align}
a'(-\sinh(a)\cosh(b)-\sigma\cosh(\omega)\cosh(a)\sinh(b))\\
=\omega'(\sigma\sinh(\omega)\sinh(a)\sinh(b))\,.
\end{align}
If $c$ is the longest side, $\sigma=1$ and the signs of the factors are clear, giving $\sgn(a')=-\sgn(\omega')$. Otherwise $\sigma=-1$. If $b$ is the longest side, $\cosh(a)\sinh(b)-\sinh(a)\cosh(b)=\sinh(b-a)\geq0$, thus $\sgn(a')=\sgn(\omega')$. If $a$ is the longest side, we have to be more careful. We multiply the law of cosines formula by $\cosh(a)$ and transform it into
\begin{align*}
\cosh(c)&=\cosh(a)\cosh(b)+\cosh(\omega)\sinh(a)\sinh(b)\,,\\
\cosh(a)\cosh(c)&=\cosh(a)^2\cosh(b)\\
&\quad+\cosh(\omega)\sinh(a)\cosh(a)\sinh(b)\,,\\
\cosh(a)\cosh(c)&=(1+\sinh(a)^2)\cosh(b)\\
&\quad+\cosh(\omega)\sinh(a)\cosh(a)\sinh(b)\,,\\
\cosh(a)\cosh(c)-\cosh(b)&=\sinh(a)(\sinh(a)\cosh(b)\\
&\quad+\cosh(\omega)\cosh(a)\sinh(b))\,.
\end{align*}
So the sign of the factor of $a'$ is minus the sign of the left-hand-side here (as $\sinh(a)>0$). Now we use the reverse triangle inequality $b\leq a-c$ and monotonicity of $\cosh$ to get $\cosh(a)\cosh(c)-\cosh(b)\geq \cosh(a)\cosh(c)-\cosh(a-c)=\sinh(a)\sinh(c)\geq0$, thus the factor of $a'$ is positive and $\sgn(a')=\sgn(\omega')$.

For $K=1$, we get the equation
\begin{equation*}
a'(\sin(a)\cos(b)+\sigma\cosh(\omega)\cos(a)\sin(b))=\omega'(-\sigma\sinh(\omega)\sin(a)\sin(b))\,.
\end{equation*}
If $c$ is the longest side, $\sigma=1$ and the signs of the factors are clear, $\sgn(a')=-\sgn(\omega')$. Otherwise $\sigma=-1$. If $b$ is the longest side, $\cos(a)\sin(b)-\sin(a)\cos(b)=\sin(b-a)\geq0$, thus $\sgn(a')=\sgn(\omega')$. If $a$ is the longest side, we have to be more careful. We multiply the law of cosines formula by $\cos(a)$ and transform it into
\begin{align*}
\cos(c)&=\cos(a)\cos(b)-\cosh(\omega)\sin(a)\sin(b)\,,\\
\cos(a)\cos(c)&=\cos(a)^2\cos(b)-\cosh(\omega)\sin(a)\cos(a)\sin(b)\,,\\
\cos(a)\cos(c)&=(1-\sin(a)^2)\cos(b)-\cosh(\omega)\sin(a)\cos(a)\sin(b)\,,\\
\cos(a)\cos(c)-\cos(b)&=-\sin(a)(\sin(a)\cos(b)+\cosh(\omega)\cos(a)\sin(b))\,.
\end{align*}
So the sign of the factor of $a'$ is minus the sign of the left-hand-side here ($0<a<\pi$, so $-\sin(a)<0$). Now we use the reverse triangle inequality $b\leq a-c$ and monotonicity of $\cos$ to get $\cos(a)\cos(c)-\cos(b)\leq \cos(a)\cos(c)-\cos(a-c)=-\sin(a)\sin(c)\leq0$, thus the factor of $a'$ is positive and $\sgn(a')=\sgn(\omega')$.
\end{proof}

\begin{proof}(of Lemma \ref{lorOneSidedCalcs})
The analyticity and order of convergence statements directly follow from inserting the power series of $\cos$, $\sin$, $\cosh$ and $\sinh$.

For (1), we consider the triangles $\Delta p_1p_2p_3$ and $\Delta p_1p_2q$: They share one side-length and the angle $\omega=\ma_{p_2}^{\lm{K}}(p_1,p_3)=\ma_{p_2}^{\lm{K}}(p_1,q)$. We only do the $K=0$ case since the other cases work analogously. By the law of cosines \ref{lorLawOfCosines}, we get the two equations
\begin{align*}
a^2+(b+c)^2&=d^2-2a(b+c)\cosh(\omega)\,,\\
a^2+b^2&=x^2-2ab\cosh(\omega)\,.
\end{align*}
Canceling $\omega$ and solving for $x$, we get
\[
x^2=\frac{bd^2+a^2c}{b+c}-bc=\frac{a^2+d^2}{2}+\frac{(a^2-d^2)(c-b)}{2(b+c)}-bc\,.
\]

Note (2) is just the time-reversed statement of (1).

For (3), we have two cases: $q\ll p_2$ and $p_2\ll q$. We consider the triangles $\Delta p_1p_2p_3$ and $\Delta p_1qp_2$: They share one side-length and the angle $\omega={\ma}_{p_1}^{\lm{K}}(p_2,p_3)={\ma}_{p_1}^{\lm{K}}(q,p_2)$ agrees by the law of cosines, which does not distinguish the two cases. First, we establish the $K=0$ case. By the law of cosines \ref{lorLawOfCosines}, we get two equations
\begin{align*}
a^2+(c+d)^2&=b^2+2a(c+d)\cosh(\omega)\,,\\
a^2+c^2&=x^2+2ac\cosh(\omega)\,.
\end{align*}
Canceling $\omega$ and solving for $x$, we get
\[
x^2=\frac{b^2c+a^2d}{c+d}-cd\,.
\]

Now for the case $K>0$. By the law of cosines \ref{lorLawOfCosines}, we get the two equations (where $s=\sqrt{|K|}$)
\begin{align*}
\cosh(sb)&=\cosh(sa)\cosh(s(c+d))+\sinh(sa)\sinh(s(c+d))\cosh(\omega)\,,\\
\cosh(sx)&=\cosh(sa)\cosh(sc)+\sinh(sa)\sinh(sc)\cosh(\omega)\,.
\end{align*}
Canceling $\omega$ and solving for $x$, we get
\begin{align}
\cosh(sx)&=\frac{\cosh(sb)\sinh(sc)-\cosh(sa)\cosh(s(c+d))\sinh(sc)}{\sinh(s(c+d))}\\
&\qquad+\cosh(sa)\cosh(sc)\,.
\end{align}

Now for the case $K<0$. By the law of cosines \ref{lorLawOfCosines}, we get the two equations (where $s=\sqrt{|K|}$)
\begin{align*}
\cos(sb)&=\cos(sa)\cos(s(c+d))+\sin(sa)\sin(s(c+d))\cosh(\omega)\,,\\
\cos(sx)&=\cos(sa)\cos(sc)+\sin(sa)\sin(sc)\cosh(\omega)\,.
\end{align*}
Canceling $\omega$ and solving for $x$, we get
\[
\cos(sx)=\frac{\cos(sb)\sin(sc)-\cos(sa)\cos(s(c+d))\sin(sc)}{\sin(s(c+d))}+\cos(sa)\cos(sc)\,.
\]
\end{proof}

\phantomsection
\addcontentsline{toc}{section}{References}
\bibliographystyle{alphaabbr}
\bibliography{Master}

\end{document}